\newcounter{imagerow}[figure]
\newcounter{imagecolumn}[imagerow]
\newcolumntype{I}{>{\refstepcounter{imagecolumn}}{c}<{}}
\newlength{\imagewidth}
  \newcommand*\textmathversion{\csname textmv@\math@version\endcsname}
  \newcommand*\textmv@normal{m}
  \newcommand*\textmv@bold{b}
\newtheorem{theorem}{Theorem}[section]
\newtheorem{proposition}[theorem]{Proposition}
\newtheorem{lemma}[theorem]{Lemma}
\newtheorem{claim}[theorem]{Claim}
\theoremstyle{definition}
\newtheorem{definition}[theorem]{Definition}
\newtheorem{assumption}{Assumption}
\newtheorem{fact}[theorem]{Fact}
\newcommand{\Eb}{\mathbf{E}}
\newcommand{\bA}{\bm{A}}
\newcommand{\bE}{\bm{E}}
\newcommand{\bF}{\bm{F}}
\newcommand{\bG}{\bm{G}}
\newcommand{\bH}{\bm{H}}
\newcommand{\bI}{\bm{I}}
\newcommand{\bM}{\bm{M}}
\newcommand{\bU}{\bm{U}}
\newcommand{\bV}{\bm{V}}
\newcommand{\bW}{\bm{W}}
\newcommand{\bX}{\bm{X}}
\newcommand{\bY}{\bm{Y}}
\newcommand{\bu}{\bm{u}}
\newcommand{\bv}{\bm{v}}
\newcommand{\cP}{\mathcal{P}}
\newcommand{\cT}{{\mathcal{T}}}
\newcommand{\cU}{\mathcal{U}}
\newcommand{\cV}{\mathcal{V}}
\newcommand{\cW}{\mathcal{W}}
\newcommand{\bLambda}{\bm{\Lambda}}
\newcommand{\bSigma}{\bm{\Sigma}}
\newcommand{\R}{\mathbb{R}}
\newcommand{\RR}{{\mathbb R}}
\newcommand{\NN}{{\mathbb N}}
\newcommand{\SSS}{{\mathbb S}}
\newcommand{\rank}{\operatorname{rank}}
\newcommand{\Tr}{\operatorname{tr}}
\newcommand{\EE}{\operatorname{\mathbb E}}
\newcommand{\PP}{\operatorname{\mathbb P}}
\newcommand{\supp}{\operatorname{supp}}
\DeclarePairedDelimiter{\dotp}{\langle}{\rangle}
\def\shortdisplay{\setlength{\abovedisplayskip}{5pt}%
	\setlength{\belowdisplayskip}{5pt}%
	\setlength{\abovedisplayshortskip}{2pt}%
	\setlength{\belowdisplayshortskip}{2pt}}
\let\oldselectfont\selectfont
\def\selectfont{\oldselectfont\shortdisplay}
\newcommand{\FDR}{\operatorname{FDR}}
\newcommand{\FDRt}{\widetilde{\FDR}}
\newcommand{\rh}{\widehat r}
\newcommand{\Gh}{\widehat{G}}
\newcommand{\Gdh}{\widehat{G}'}
\newcommand{\as}{\xrightarrow{a.s.}}
\begin{document}

\title{
Controlling the False Discovery Rate in Subspace Selection
}
\author{
Mateo D\'\i az\thanks{Department of Applied Mathematics and Statistics, Johns Hopkins University, Baltimore, MD 21218, USA.} \and
    Venkat Chandrasekaran\thanks{Departments of Computing and Mathematical Sciences and of Electrical Engineering, California Institute of Technology, Pasadena, CA 91125, USA.} 
    }
\date{}

\maketitle

\begin{abstract}

Controlling the false discovery rate (FDR) is a popular approach to multiple testing, variable selection, and related problems of simultaneous inference.  In many contemporary applications, models are not specified by discrete variables, which necessitates a broadening of the scope of the FDR control paradigm.  Motivated by the ubiquity of low-rank models for high-dimensional matrices, we present methods for subspace selection in principal components analysis that provide control on a geometric analog of FDR that is adapted to subspace selection.  Our methods crucially rely on recently-developed tools from random matrix theory, in particular on a characterization of the limiting behavior of eigenvectors and the gaps between successive eigenvalues of large random matrices.  Our procedure is parameter-free, and we show that it provides FDR control in subspace selection for common noise models considered in the literature. 
 We demonstrate the utility of our algorithm with numerical experiments on synthetic data and on problems arising in single-cell RNA sequencing and hyperspectral imaging.
\end{abstract}

\textbf{Keywords}: multiple testing, principal components analysis, random matrix theory, rigidity of eigenvalues, spectral methods

\section{Introduction}\label{sec:intro}

Controlling the false discovery rate (FDR) is a prominent approach to simultaneous testing that was pioneered by Benjamini and Hochberg \cite{benjamini1995controlling}.  They defined the false discovery rate associated with a multiple testing procedure $\hat{R}$ as
\begin{equation}
    \label{eq:classic-fdr}
    \FDR(\widehat R) = \EE \left[\frac{|\widehat R \cap R^c|}{\max\{|\widehat R|, 1\}}\right],
\end{equation}
where $R$ denotes the set of true positives and $\widehat R$ denotes the set of rejected nulls.  Furthermore, they also introduced an innovative methodology for FDR control \cite{benjamini1995controlling}, which is less conservative and yields more discoveries than classic methodologies that control familywise error.  As a result, the paradigm of FDR control has played a prominent role in the practice of simultaneous testing and related problems such as variable selection, and these have had a profound impact in numerous scientific fields in recent decades.  Due to the evolving nature of modern applications, models for high-dimensional data in many contemporary problem domains are not always specified by discrete variables.  Selecting models that exhibit many discoveries and small false positive error in such contexts requires a broadening of the scope of the FDR control paradigm.

Low-dimensional subspaces offer perhaps the simplest and most popular model for low-complexity geometric structure in high-dimensional data.  As a result, subspace estimation is a fundamental task in a wide array of applications \cite{ vidal2005generalized, baumgartner2004subspace, charisopoulos2021low, wang2015subspace, cape2019two, cai2021subspace, charisopoulos2021composite, diaz2019nonsmooth}.  In this paper, we consider the problem of subspace estimation from a perspective motivated by multiple testing.  Specifically, given noisy data about a subspace of interest $\cU \subset \R^n$, we wish to obtain an estimate $\widehat \cU \subset \R^n$ such that the following quantity is controlled at a desired level:
\begin{equation} \label{eq:fdr}
    \FDR(\widehat \cU) = \EE \left[\frac{\Tr\left(P_{\widehat \cU} P_{\cU^{\perp}}\right)}{\max\{\dim (\widehat \cU), 1\}}\right].
\end{equation}
Here $\cU^{\perp}$ is the orthogonal complement of $\cU$, $P_{\widehat \cU}$ and $P_{\cU^\perp}$ denote projections onto the subspaces $\widehat \cU$ and $\cU^\perp$, and $\dim(\widehat \cU)$ is the dimension of the subspace $\widehat \cU$.  The numerator inside the expectation is the sum of the squares of the cosines of the principal angles between $\widehat \cU$ and $\cU^\perp$, and it evaluates the extent to which the estimate $\widehat \cU$ is not aligned with $\cU$.  The denominator normalizes by the total amount of discovery contained in the estimate $\widehat \cU$. 
 Thus, the quantity $\FDR(\widehat \cU)$ is a geometric analog of the original FDR notion \eqref{eq:classic-fdr}; in particular, when the subspaces $\cU$ and $\widehat \cU$ are axis-aligned \eqref{eq:fdr} reduces to \eqref{eq:classic-fdr}.

The quantity \eqref{eq:fdr} as well as its unnormalized analog $\mathrm{FD}(\widehat \cU) = \EE \left[\Tr\left(P_{\widehat \cU} P_{\cU^{\perp}}\right)\right]$ were proposed as false positive error control measures for subspace estimation in a recent paper \cite{taeb2020false}.  Thus, while subspace estimation has received considerable attention over several decades owing to the ubiquity of low-rank matrix models in applications, a viewpoint grounded in false positive error control is not common in this vast literature.  In particular, we are not aware of any prior work that controls FDR in subspace estimation; the methodological focus of the recent effort \cite{taeb2020false} was only on controlling the unnormalized variant FD (see Section~\ref{sec:related} for more details).

Concretely, we investigate the model problem of estimating the column space of a rank-$r$ matrix $\bA$ corrupted by additive noise $\bE$:
\begin{equation} \label{eq:spike}
\bX = \bA + \bE.
\end{equation}
We consider two cases of this model, one in which $\bA \in \SSS^n_+$ is positive semidefinite and $\bE \in \SSS^n$ is symmetric, and a second in which both $\bA, \bE \in \R^{n \times m}$ are general nonsquare matrices.  For simplicity, our exposition and development will focus on the symmetric case, with extensions to the nonsquare case studied in Section~\ref{sec:asymmetric}.  The model \eqref{eq:spike} was first introduced by Johnstone \cite{johnstone2001distribution} and has been widely studied in the literature \cite{bai2012sample, cai2021subspace, abbe2022l, agterberg2022entrywise, cape2019two}, although these results do not directly provide false positive error control (see Section~\ref{sec:related} for more details).  In contrast our objective in this paper is the following -- given $\bX$ and $\alpha \in (0,1)$, we wish to obtain an estimate $\widehat \cU \subset \R^n$ for the column space of $\bA$ such that $\widehat \cU$ has as large a dimension as possible while ensuring that $\FDR(\widehat \cU) \leq \alpha$.  We consider the estimator $\widehat \cU = \widehat \cU_{k}$ corresponding to the span of the $k$ leading eigenvectors of $\bX$, which is the most natural and widely used in practice.  Thus, our goal is to identify as large a threshold $k$ as possible such that $\FDR(\widehat \cU_{k}) \leq \alpha$.

As false positive error control for subspace estimation is an objective that has only been articulated recently, it is natural to ask whether there are any benefits to controlling $\FDR$ compared to the unnormalized $\mathrm{FD}$ in the model problem \eqref{eq:spike}.  The next numerical illustration demonstrates that the virtues of controlling $\FDR$ over $\mathrm{FD}$ are indeed analogous to what one observes with variable selection.   Specifically, Figure~\ref{fig:FDvsFDR} provides plots of $\mathrm{FD}(\widehat \cU_{k})$ and $\FDR(\widehat \cU_{k})$ against $k$ for two problem instances of size $n = 1000$ and true rank $r = 20, 40$, with the noise $\bE$ being drawn from the Gaussian Orthogonal Ensemble (GOE).  Comparing the plots of $\FDR$ and the unnormalized $\mathrm{FD}$, we observe that controlling $\FDR$ in subspace estimation offers the prospect of more powerful selection procedures that are also robust to the (unknown) dimension of the true subspace, much like with variable selection.

\begin{figure}[t]
    \centering
    \begin{subfigure}{0.45\textwidth}
        \centering False Discovery
        \vspace{0.2em}
        \includegraphics[width = \textwidth]{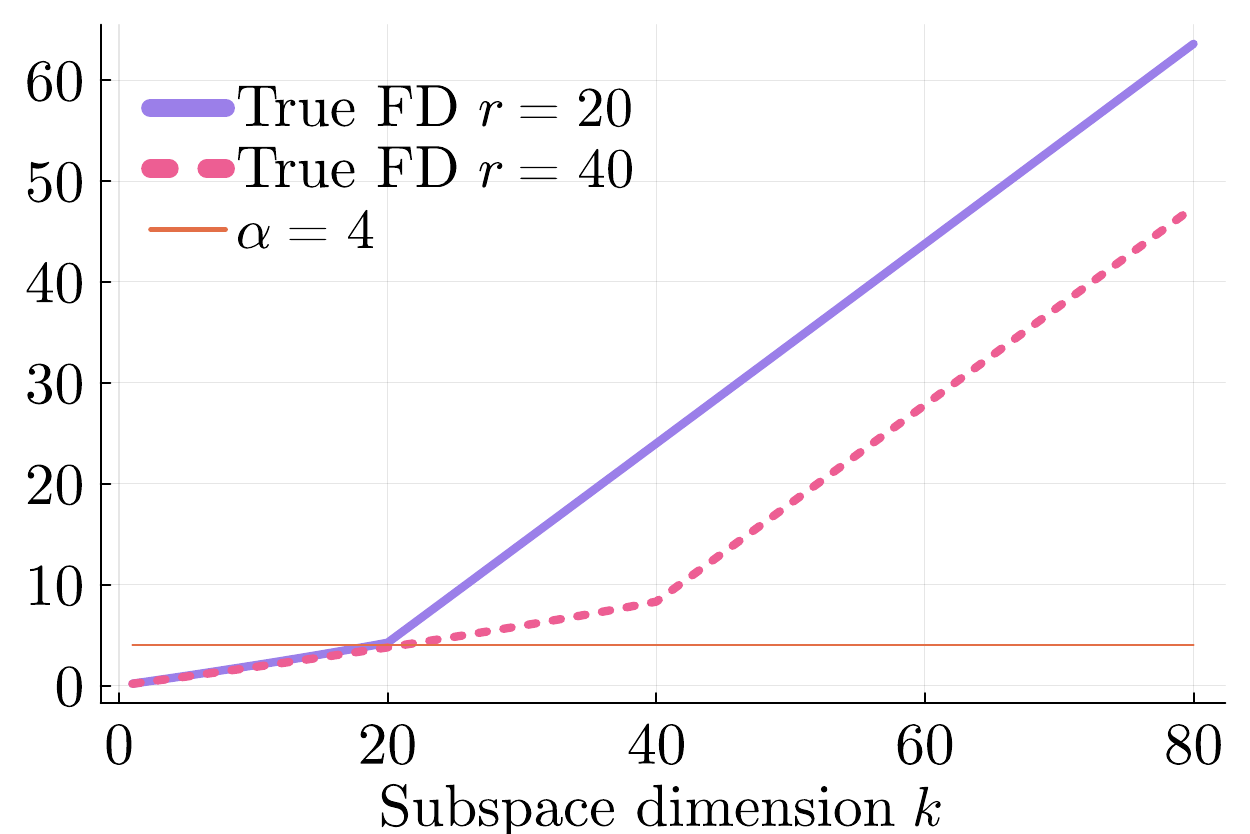}
    \end{subfigure}
    \begin{subfigure}{0.45\textwidth}
        \centering
        False Discovery Rate
        \vspace{0.2em}
        \includegraphics[width = \textwidth]{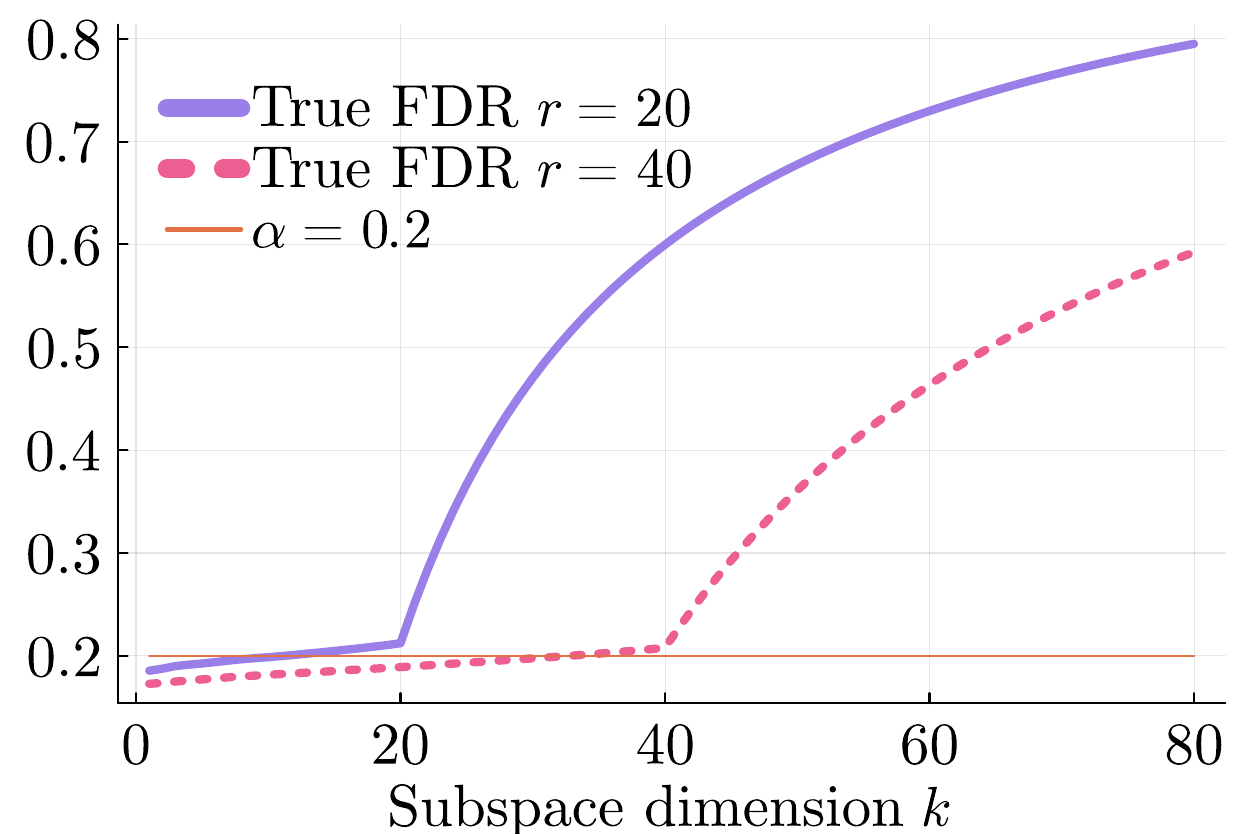}
    \end{subfigure}
    \caption{$\mathrm{FD}(\widehat \cU_{k})$ as a function of subspace dimension $k$ on the left and $\FDR(\widehat \cU_{k})$ as a function of subspace dimension $k$ on the right.  We consider two problem instances of size $n = 1000$ and true rank $r = \{20, 40\}$.  The noise $\bE$ is drawn from a GOE.  The horizontal red lines correspond to $\alpha = 4$ for FD and $\alpha =0.2$ for FDR. }
    \label{fig:FDvsFDR}
\end{figure}

\subsection{Our Contributions}
The core difficulty with our approach is that $\FDR(\widehat \cU_{k})$ cannot be computed directly from $\bX$, and therefore it is unclear how to select a suitable threshold $k$.  The point of departure for our development is to address this challenge by considering an asymptotic characterization of $\FDR(\widehat \cU_{k})$.  Specifically, consider a setting in which we have a sequence of growing-sized problem instances $\{\bX_n = \bA_n + \bE_n ~|~ \bA_n \in \SSS^n_+, ~ \bE_n \in \SSS^n\}$, and let $\widehat \cU^{(n)}_k$ denote the preceding $k$-dimensional principal subspace estimator on a problem instance of size $n$.  If the rank of $\bA_n$ is bounded above by a fixed $r \in \NN$ for all $n$, the noise $\bE_n$ has a compactly supported limiting spectral measure $\mu_{\bE}$ as $n \rightarrow \infty$,\footnote{The existence of a limiting spectral measure might sound too constraining, but several natural distributions satisfy this assumption; e.g., if $\bG_n$ is a standard Gaussian matrix, then the spectral distribution of the Gaussian Orthogonal Ensemble $\bE_n = \frac{1}{\sqrt{2n}} (\bG_n+\bG_n^\top)$ converges to the semicircle law.} and the spectrum of $\bA_n$ is `well-separated' from that of $\bE_n$, then we have the following asymptotic expression for $\FDR(\widehat \cU^{(n)}_k)$:
\begin{equation}\label{eq:key-limit}
    \lim_{n \rightarrow \infty} \FDR(\widehat \cU^{(n)}_k) = \lim_{n \rightarrow \infty} 1 + \frac{1}{k} \sum_{i=1}^{\min\{k, r\}} \frac{G_{\mu_{\bE}}\left(\lambda_i(\bX_n)\right)^2}{G'_{\mu_{\bE}}\left(\lambda_i(\bX_n)\right)}
\end{equation}
where the quantity $G_{\mu}$ is the \emph{Cauchy transform} of a compactly supported measure $\mu$ on the real line:
\begin{equation}
\label{eq:cauchy}
G_{\mu}(z) =\int \frac{1}{z-t} d\mu(t)\qquad \text{for all }z \notin \supp(\mu).
\end{equation}
Intuitively, the eigenvalues of $\bX_n$ associated with the noise spectrum $\lambda(\bE_n)$ form a bulk, and by `well-separated' we mean that the eigenvalues associated with the signal $\bA_n$ do not `mix' into the bulk. We formally quantify this separation in Section~\ref{sec:algo}, and we adapt this formula to more general settings in which the spectra of $\bA_n$ and of $\bE_n$ become `mixed.'  The expression \eqref{eq:key-limit} is derived based on the results from \cite{benaych2011eigenvalues, benaych2012singular} on the spectra of fixed-rank perturbations of large random matrices; see Section~\ref{sec:related} for a discussion of prior approaches to low-rank estimation that employ a similar approach.

Returning to the case of a fixed $n$ (and suppressing the explicit subscript), the upshot of the expression \eqref{eq:key-limit} is that the formula $1 + \frac{1}{k} \sum_{i=1}^{\min(k, r)} \frac{G_{\mu_{\bE}}\left(\lambda_i(\bX)\right)^2}{G'_{\mu_{\bE}}\left(\lambda_i(\bX)\right)}$ on the right-hand-side is more amenable to computation and estimation from $\bX$, with the potential to obtain accurate estimates for large problem sizes.  However, there still remain two difficulties corresponding to a lack of information about the limiting spectral measure $\mu_{\bE}$ and a lack of knowledge about the true rank $r$.  We describe how to address these challenges next, which yields an algorithm for controlling FDR in subspace estimation.

Assuming first that an estimate $\widehat r$ of the rank $r$ is available, the Cauchy transform $G_{\mu_{\bE}}$ and its derivative $G'_{\mu_{\bE}}$ may be estimated empirically via $\widehat G(y) = \frac{1}{n - \rh}\sum_{j = \rh + 1}^{n} \frac{1}{y - \lambda_{j}(\bX)}$ and $\widehat G'(y) = -  \frac{1}{n - \rh}\sum_{j = \rh +1}^{n} \frac{1}{(y-\lambda_{j}(\bX))^{2}}$, where the eigenvalues $\lambda(\bX)$ are ordered as $\lambda_1(\bX) \geq \cdots \geq \lambda_n(\bX)$.  With this estimate, we summarize our method for controlling FDR in subspace estimation in Algorithm~\ref{alg:FDR}; here, Steps~2 and 3 employ the above empirical estimates.

\begin{algorithm}[h]
	\caption{FDR control for eigensubspaces\hfill} %
	\label{alg:FDR}
	
	{\bf Input}: Observations $\bX$, level $\alpha$, and (observable) rank estimator $\rh$, e.g., via \cref{alg:rank-estimate}.\\
	{\bf Output}: Estimate $\widehat k$ of number of top components to select. \vspace{.2cm} \\ 
	{\bf Step 1} Obtain an eigenvalue factorization $\bX = \widehat \bU \bLambda \widehat \bU^{\top}$ with eigenvalues $\lambda_{1} \geq \dots \geq \lambda_{n}$.\\
	{\bf Step 2} Define estimates of the Cauchy transform and its derivative
	\begin{equation*}
	\widehat G(y) = \frac{1}{n - \rh}\sum_{j = \rh + 1}^{n} \frac{1}{y - \lambda_{j}} \quad \text{and} \quad \widehat G'(y) = -  \frac{1}{n - \rh}\sum_{j = \rh +1}^{n} \frac{1}{(y-\lambda_{j})^{2}}.
	\end{equation*}
	{\bf Step 3} Let $\widehat \cU_{k}$ be the span of the first $k$ columns of $\bU$ and estimate the FDR for different $k$:
	\begin{equation} \label{eq:fdr-estimate}
	\widehat{\FDR}(\widehat \cU_{k}) = 1 + \frac{1}{k}\sum_{i=1}^{\min\{k,\rh\}} \frac{\Gh(\lambda_{i})^{2}}{\Gdh(\lambda_{i})} %
	\end{equation}
	{\bf Step 4} Output $$\widehat{k} = \max\left\{k \in [n] \,\Big|\, \widehat{\FDR}(\widehat \cU_{k}) \leq \alpha \right\}.$$
\end{algorithm}

To ensure that Algorithm~\ref{alg:FDR} works as desired, the approximation of the Cauchy transforms needs to be accurate, which in turn requires a good estimate of the rank $r$.  Figure~\ref{fig:wrong_rank} illustrates the results of FDR estimates using \eqref{eq:fdr-estimate} with different rank estimates $\widehat r$.  We consider two problem instances of sizes $n = 500, 2000$, and in each case, the rank of $\bA$ is $r = 20$ and the noise $\bE$ is drawn from the GOE.  
In the right-hand-side of \eqref{eq:key-limit}, the rank $r$ is replaced by the estimates $\widehat r = r/2, r, 2r$.  As the figures demonstrate, an overestimate of the rank $r$ yields an underestimate of the true $\FDR$, which would ultimately result in subspace selection procedures that do not provide control at the desired level; however, these $\FDR$ underestimates appear to become less significant for larger $n$.  An underestimate of the rank $r$ yields an overestimate of the true $\FDR$, which would ultimately yield a procedure that provides control at the desired level but with nontrivial loss in power.  Therefore, high-quality estimates of the correct rank are crucial for obtaining a powerful procedure for subspace estimation that provides the desired $\FDR$ control.

\begin{figure}[t]
	\centering
	\begin{subfigure}{0.45\textwidth}
		\centering$\qquad n = 500$
		\vspace{0.2em}
		\includegraphics[width = \textwidth]{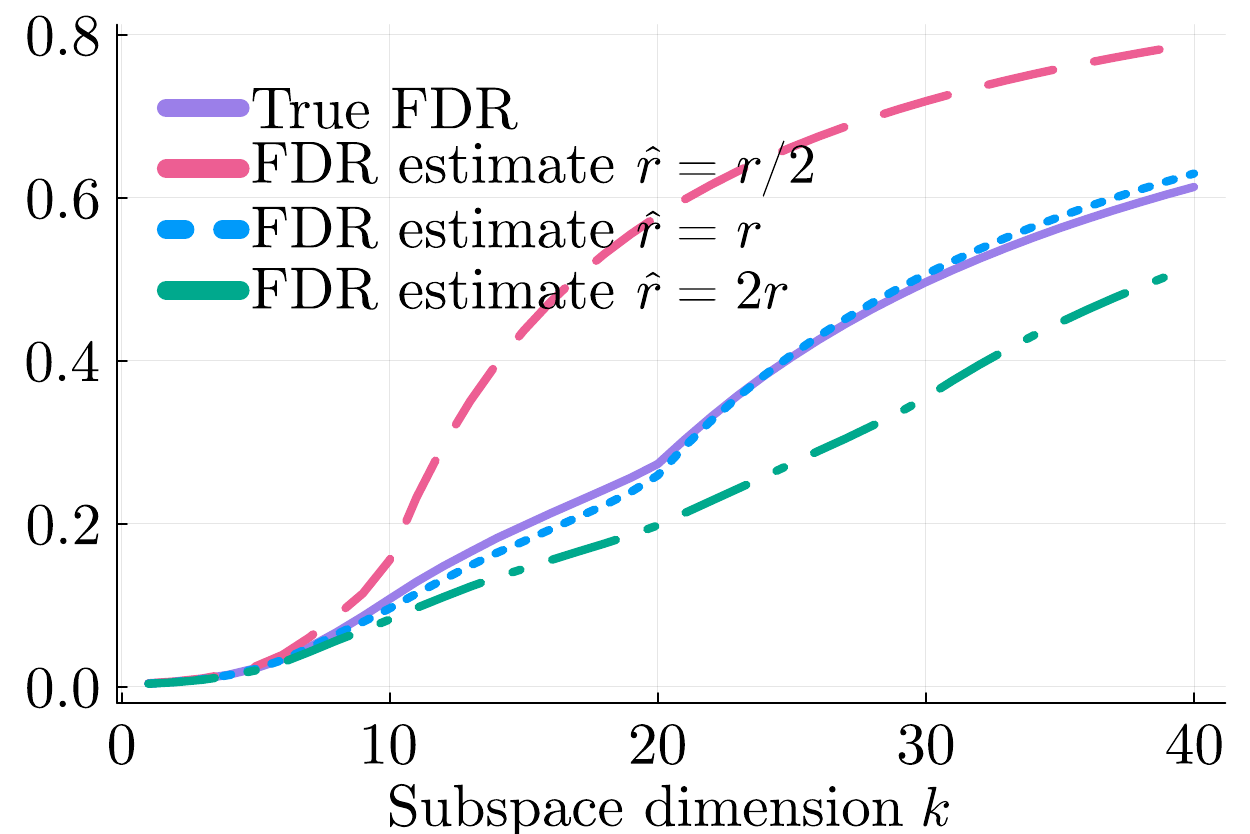}
	\end{subfigure}
	\begin{subfigure}{0.45\textwidth}
		\centering
		$\qquad n = 2000$
		\vspace{0.2em}
		\includegraphics[width = \textwidth]{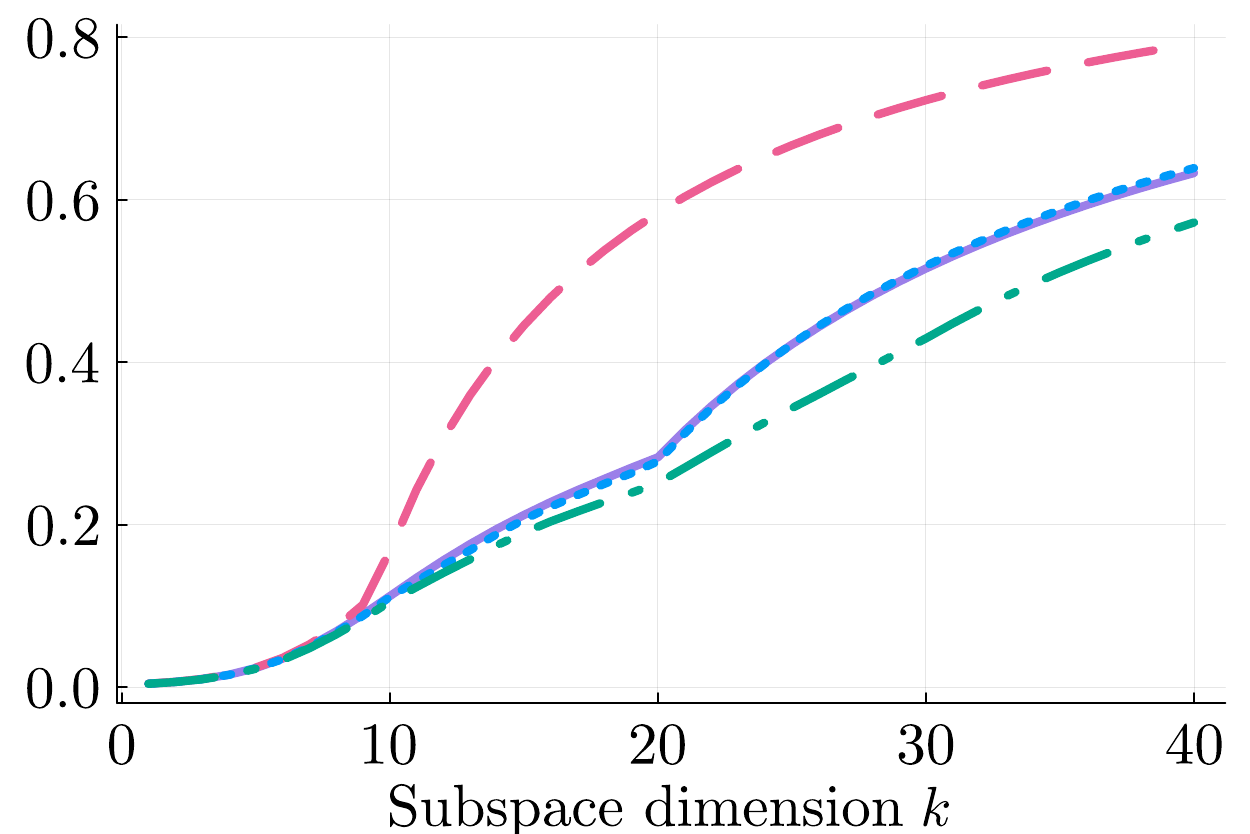}
	\end{subfigure}
	\caption{FDR estimates using \eqref{eq:fdr-estimate} against subspace dimension $k$ for different rank estimates $\widehat r$.  The noise is drawn from a GOE, and the true rank of $\bA$ is $r = 20$. The two figures display results for problem sizes $n = 500$ (left) and $n = 2000$ (right).}
	\label{fig:wrong_rank}
\end{figure}

To estimate the rank of $\bA$ from $\bX$, we consider the same setup as in Figure~\ref{fig:wrong_rank} and we plot the eigenvalues of $\bX$ in Figure~\ref{fig:eigenvalues}; the true rank $r = 20$ is hardly a distinctive feature and is therefore difficult to estimate directly from these scree plots.  To address this challenge, we again appeal to a basic observation from random matrix theory -- the gaps between consecutive eigenvalues of random matrices are controlled as $n$ grows large.  Unlike i.i.d. random variables, the eigenvalues of random matrices are known to repel each other and form a rather rigid grid with gaps of similar size.  This phenomenon was observed at the inception of random matrix theory. Indeed, Wigner's seminal work on resonance absorption identified eigenvalue spacings as key objects of study \cite{wigner1956results}:
\begin{quote}{\it
    Perhaps I am now too courageous when I try to guess the distribution of the distances between successive levels (of energies of heavy nuclei).  
     ... 
    The question is simply, what are the distances of the characteristic values of a symmetric matrix with random coefficients?}
\end{quote}
In that work, Wigner proposed a distribution for the spacings --- now known as Wigner's surmise --- which aimed to capture the repelling behavior. Subsequently, several works found connections with other problems in mathematics and the sciences \cite{mehta2004random, torquato2018hyperuniform, wolchover2018birds}; famously, Montgomery established the relation of eigenvalue spacings to the zeros of the Riemann zeta function \cite{montgomery1973pair}.  One of the main conclusions from the extensive line of work \cite{tao2010random, tao2011random, bourgade2012bulk, erdHos2015gap, tao2013asymptotic, nguyen2017random, bourgade2020random, benaych2016lectures, pillai2014universality} studying eigenvalue spacings is that they decrease at a controlled rate when $n$ goes to infinity. In particular, for eigenvalues in the interior of the bulk, the spacings are more or less uniform, i.e., $\lambda_i - \lambda_{i+1} = O(n^{-1})$. In comparison, spacings close to an edge of the bulk converge asymptotically to the Tracy-Widom distribution and tend to be more spread out, i.e., $\lambda_i - \lambda_{i+1} = O(n^{-2/3}).$ In both cases --- whether within the bulk or at the edge --- spacings decrease at a much faster rate than $n^{-1/2}$, and this is the property we exploit to identify the rank. 

\begin{figure}[t]
	\centering
	\begin{subfigure}{0.45\textwidth}
		\centering$\qquad n = 500$
		\vspace{0.2em}
		\includegraphics[width = \textwidth]{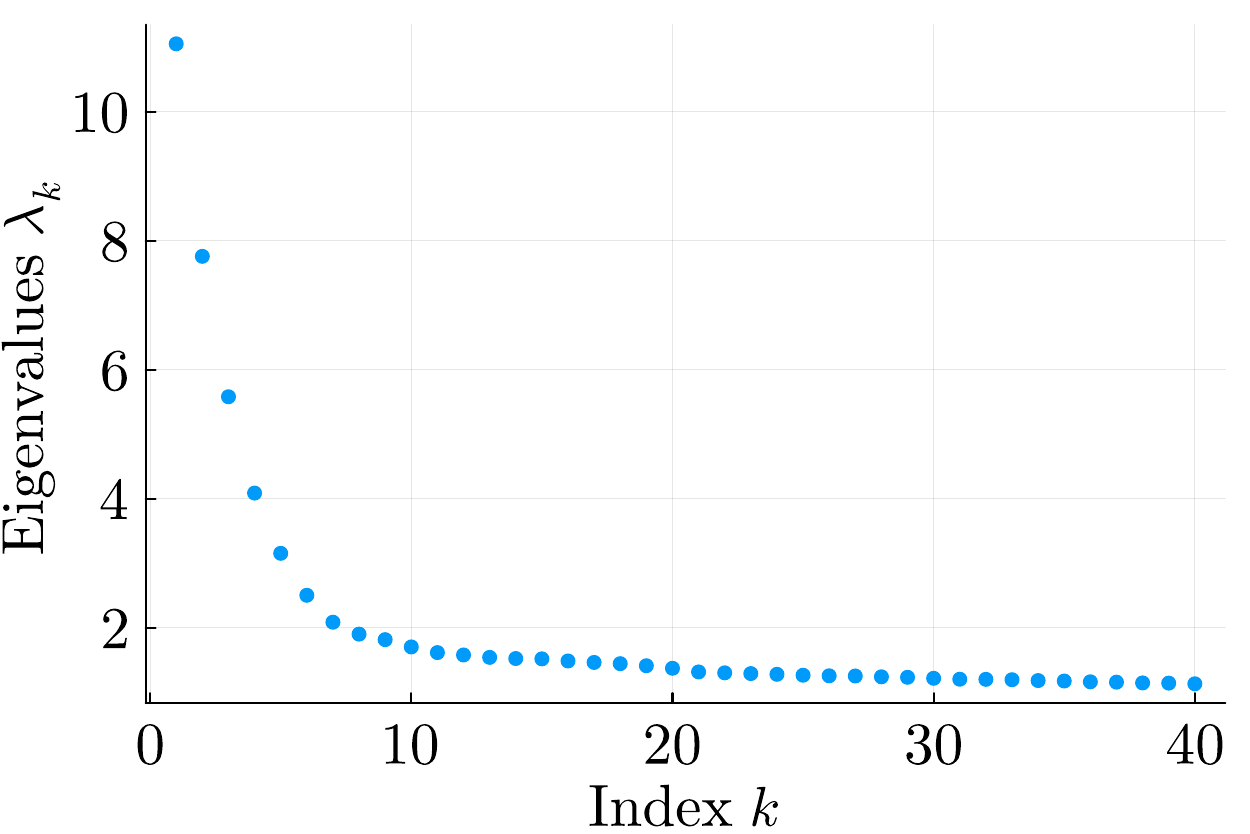}
	\end{subfigure}
	\begin{subfigure}{0.45\textwidth}
		\centering
		$\qquad n = 2000$
		\vspace{0.2em}
		\includegraphics[width = \textwidth]{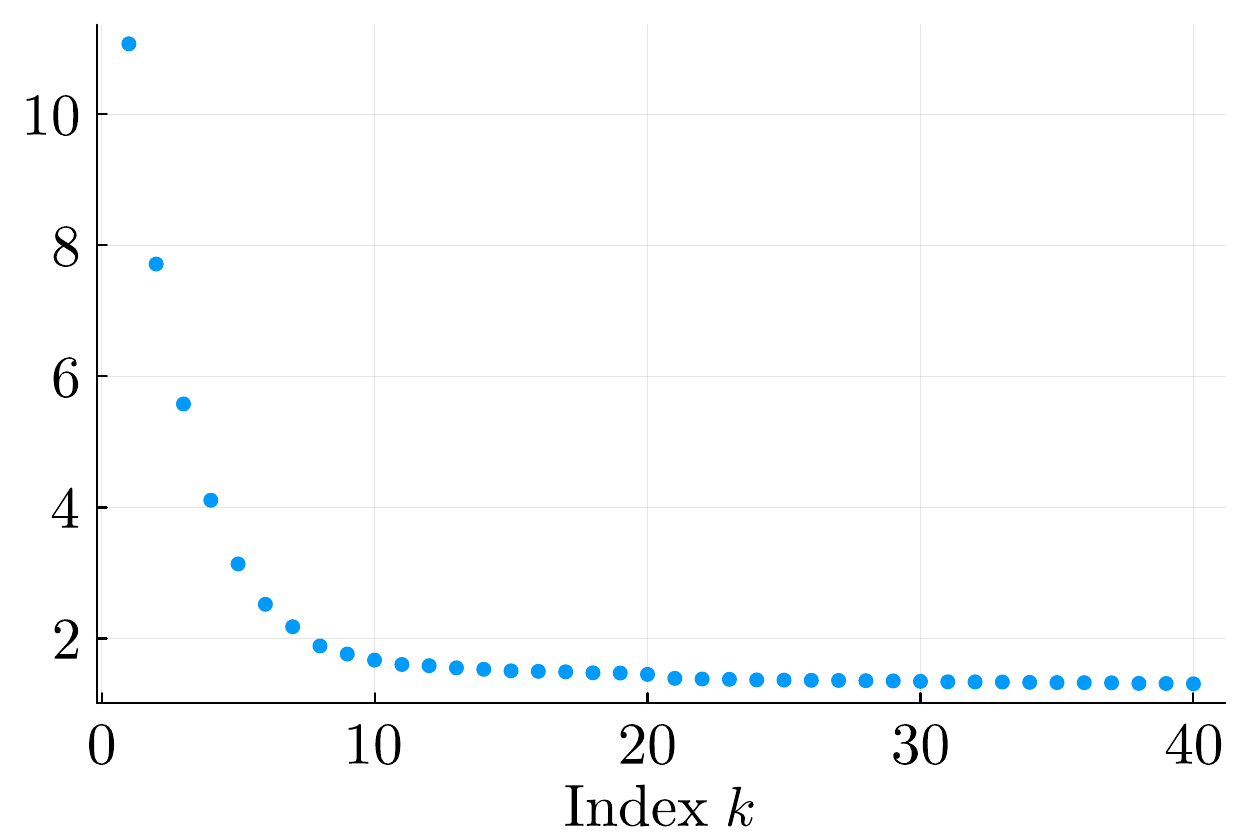}
	\end{subfigure}
	\caption{Eigenvalues against subspace dimension $k$. Same setting as in Figure~\ref{fig:wrong_rank}. }
	\label{fig:eigenvalues}
\end{figure}

To estimate the rank $r$ of $\bA$, Algorithm~\ref{alg:rank-estimate} computes the spacings between eigenvalues of $\bX$ and finds the largest index for which the spacing exceeds $p \cdot n^{-1/2}$ where $p > 0$ is a user-provided constant. Intuitively, the gaps associated with the noise $\bE$ should be much smaller than $p \cdot n^{-1/2}$ as discussed above, while the gaps associated with the signal $\bA$ have constant size; thus, the largest index for which the associated gap is above the threshold of $p \cdot n^{-1/2}$ should yield the components of $\bX$ that correspond to the signal $\bA$.  Figure~\ref{fig:spacings} provides an illustration of this approach in the problem instances from Figure~\ref{fig:eigenvalues}.
    \begin{algorithm}[h!]
  \caption{\textsc{RankEstimate}} %
  \label{alg:rank-estimate}
  {\bf Input}: Nonincreasing spectrum $\lambda(\bX)$, %
  {\it optional} $p > 0$ (default to $p = 1$).\\
{\bf Output}: A rank estimate $\widehat r$.\vspace{.2cm}\\
{\bf Step 1} For each $j  \leq n-1$ compute the eigenvalue spacing
$$ \Delta_{j} \leftarrow {\lambda_{j-1}(\bX) - \lambda_{j}(\bX)}.$$
{\bf Step 2} 
Compute
$$
\widehat r \leftarrow \max\left\{j \in [n/2]\,\Big| \, \Delta_{j+1} > p \cdot n^{-1/2}\right\}.
$$
\end{algorithm}

\begin{figure}[t]
    \centering
    \begin{subfigure}{0.45\textwidth}
        \centering$\qquad n = 500$
        \vspace{0.2em}
        \includegraphics[width = \textwidth]{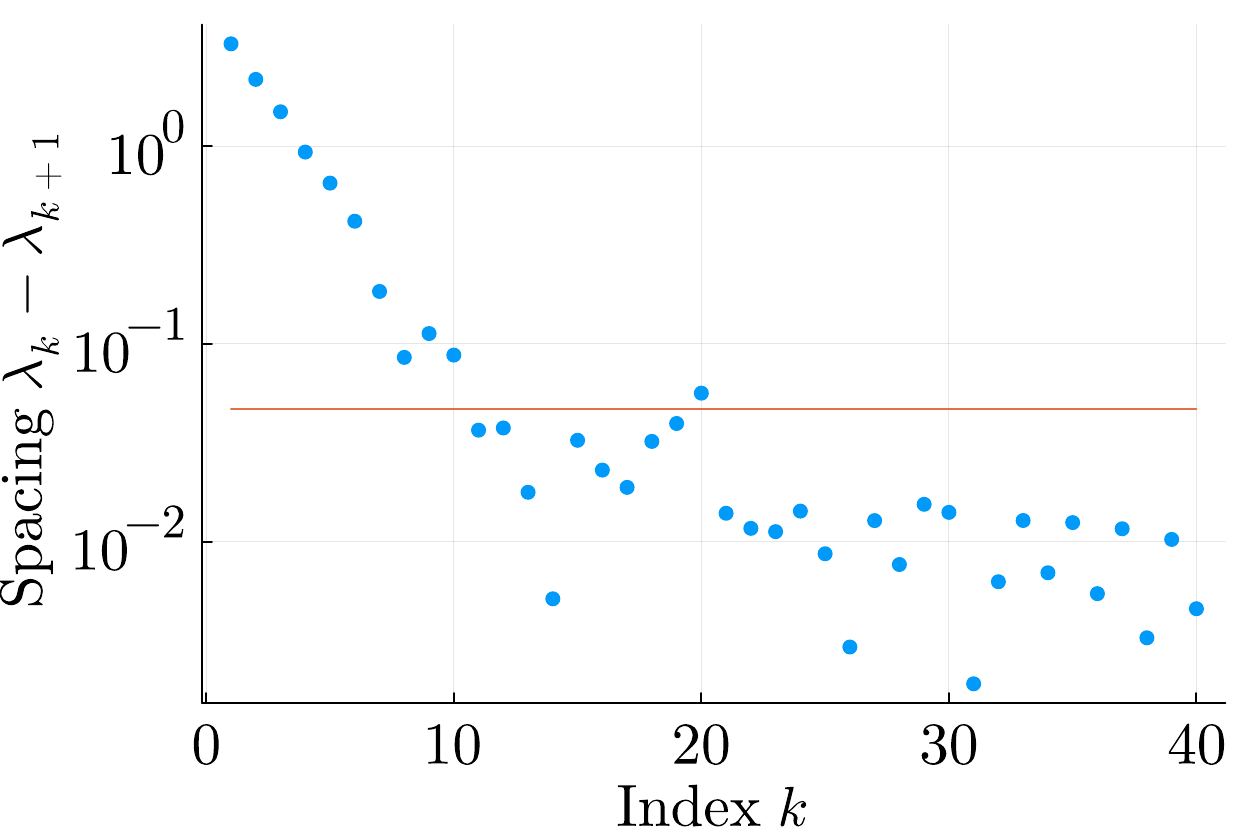}
    \end{subfigure}
    \begin{subfigure}{0.45\textwidth}
        \centering
        $\qquad n = 2000$
        \vspace{0.2em}
        \includegraphics[width = \textwidth]{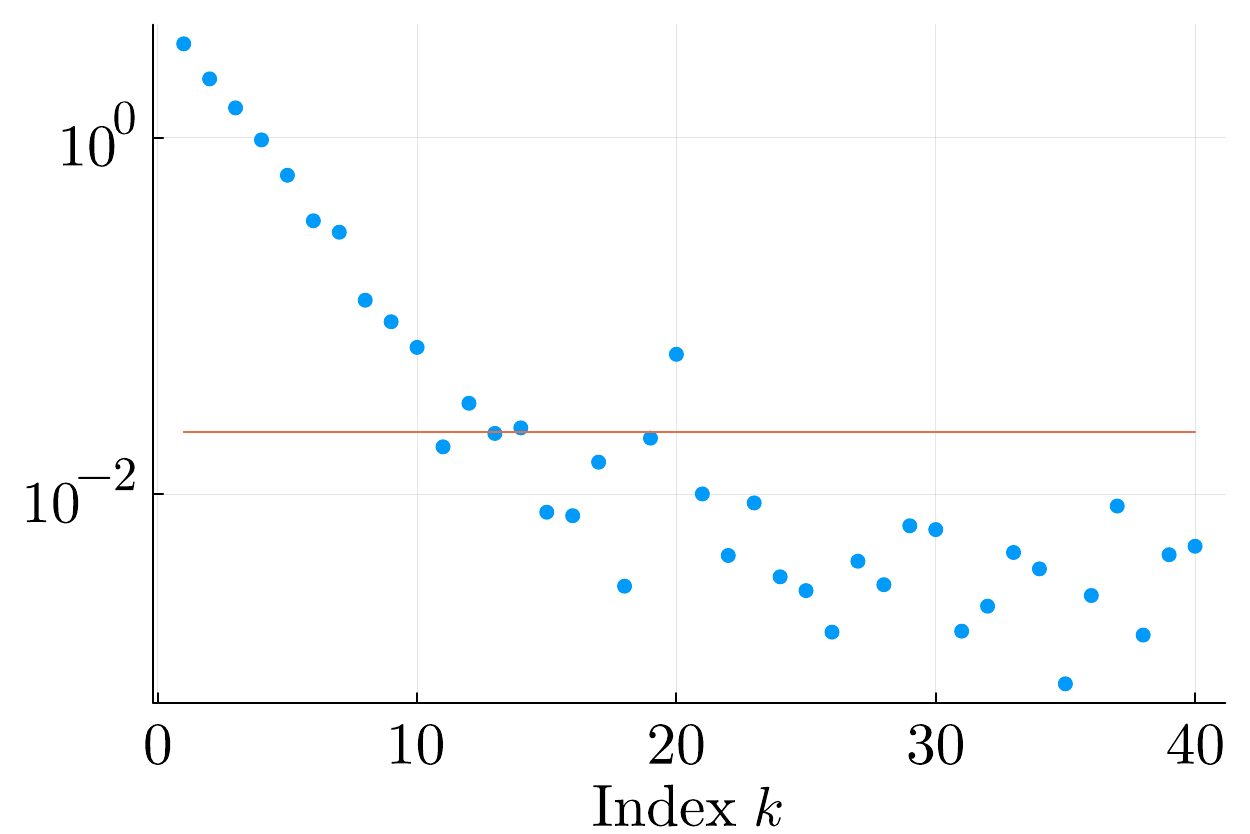}
    \end{subfigure}
    \caption{Eigenvalue spacings (log-scale) against subspace dimension $k$. Same setting as in Figure~\ref{fig:wrong_rank}. The red line shows the threshold that we use in our procedure (\cref{alg:rank-estimate}). }
    \label{fig:spacings}
\end{figure}

In summary, Algorithms~\ref{alg:FDR} and \ref{alg:rank-estimate} constitute our methodology for $\FDR$ control in subspace estimation.  Our procedure is parameter-free (the choice of $p$ in Algorithm~\ref{alg:rank-estimate} is optional and can default to $p=1$), and as we demonstrate in this paper, our method provably provides FDR control in subspace estimation under reasonable statistical assumptions.

\paragraph{Outline.} The remainder of this paper is organized as follows.  In Section~\ref{sec:algo}, we discuss the technical assumptions that form the basis of our theoretical guarantees.  Our main results on $\FDR$ control in subspace estimation are provided in Section~\ref{sec:guarantees}; we also provide examples of natural matrix ensembles that satisfy the conditions required for our theorems. 
Section~\ref{sec:asymmetric} provides extensions of our methods, assumptions, and results to the asymmetric case.  We describe the results of numerical experiments on synthetic and real data in Section~\ref{sec:experiments}. Section~\ref{sec:proofs} presents proofs of our theorems from Sections~\ref{sec:guarantees} and~\ref{sec:asymmetric}. Finally, Section~\ref{sec:future} closes the paper with future directions.

\subsection{Related Work}\label{sec:related}

\paragraph{False discovery rate control beyond classic settings.}
We are not aware of methods for controlling $\FDR$ in problems beyond multiple testing, variable selection, and related variants \cite{benjamini1995controlling, foster2008alpha, goeman2008multiple, benjamini2006adaptive, doi:10.1198/016214507000001373, 10.1214/009053606000000074, doi:10.1198/jasa.2010.tm09329, barber2015controlling,javanmard2018online, wang2022false}.  A line of work initiated by \cite{blanchard2014testing} studies FDR control in the setting of a continuum of binary hypotheses. The work of Taeb et al. \cite{taeb2020false} first proposed the $\FDR$ notion for subspace estimation, yet the methodological focus was on controlling the unnormalized variant $\mathrm{FD}$ of Figure~\ref{fig:FDvsFDR}.  Moreover, even for this unnormalized variant, the results in \cite{taeb2020false} do not provide control at a user-specified level.  In contrast, the present work provides control on $\FDR$ at a desired level for the particular case of a low-rank matrix corrupted by additive noise.
Finally, we remark that our methods share conceptual similarity with the work of Storey~\cite{storey2002direct} on multiple testing in that $\FDR$ control is provided by first estimating $\FDR$ and then controlling the estimate.
Indeed, the key steps of Algorithm~\ref{alg:FDR} center on estimating $\FDR$, and much of our analysis is focused on establishing the accuracy of these estimates.

\paragraph{Principal component analysis.}
The stylized model we study \eqref{eq:spike} was first introduced by Johnstone \cite{johnstone2001distribution} and has since been extensively investigated \cite{bai2012sample, cai2021subspace, abbe2022l, agterberg2022entrywise, cape2019two}.  There has been recent progress on the information-theoretic limits of PCA in detecting a low-rank signal \cite{perry2018optimality}.  A related line of work \cite{cai2021subspace, abbe2022l, agterberg2022entrywise, cape2019two} has focused on error bounds for the recovered eigenvectors and singular vectors.  There is also a line of work initiated by \cite{choi2017selecting} in which a testing approach is employed to threshold the singular values.  In broad overview, our work departs qualitatively from this prior literature by providing methods that control false discovery rate in subspace estimation, and we are not aware of prior efforts in this direction.

\paragraph{Matrix recovery via asymptotics.} Building on the random matrix theory results of Beynach-Georges and Nadakuditi~\cite{benaych2011eigenvalues, benaych2012singular}, several authors, including Donoho, Gavish, and Romanov~\cite{donoho2023screenot} and Nadakuditi~\cite{nadakuditi2014optshrink} developed methodology for denoising low-rank matrices corrupted by additive noise.  Broadly, the approach in these papers is based on identifying exact formulas that hold in the limit of large $n$ and employing data-driven estimates of these asymptotic quantities for the case of finite $n$.  The approach in the present paper is similar to this prior body of work but with the important distinction that we seek subspace estimates with $\FDR$ control guarantees rather than a low-rank matrix estimate with the smallest mean-squared error (MSE).  The precise asymptotic formulas corresponding to these two criteria are different.  Moreover, controlling the MSE for low-rank approximation is a distinct goal than controlling the FDR for subspace estimation; in some applications identifying such a subspace with control on false positive error is significant for subsequent steps in the data analysis pipeline such as dimension reduction.

\paragraph{Eigenvalue spacings.} Several papers have studied eigenvalue gaps in random matrices \cite{tao2010random, tao2011random, bourgade2012bulk, erdHos2015gap, tao2013asymptotic, nguyen2017random, bourgade2020random}, and the underlying phenomemon that the eigenvalue gaps decay at a predictable rate is arguably a universal one, i.e., it holds in many settings.  To our knowledge, this phenomenon has not been explicitly employed in low-rank or subspace estimation previously.  As described above, the problem of subspace estimation with false positive error control brings to the fore the challenge of estimating the rank of the underlying low-rank signal well.  In this context, we crucially leverage the eigenvalue rigidity of large random matrices~\cite{pillai2014universality, erdHos2012rigidity, benaych2016lectures}.

\subsection{Notation}
Given a real-valued function $f\colon \RR \rightarrow \RR$ and a constant $c \in \RR$, we denote the right-sided limit by $f(c^{+}) = \lim_{t \downarrow c}f(t)$ and the left-sided limit by $f(c^{-})=\lim_{t\uparrow c}f(t)$. We use $\delta_{ij}$ to denote the indicator function of $i = j$. We employ the shorthand $[n] = \{1, \dots n\}.$ %
The symbols $\SSS^n$ and $\SSS^n_+$ denote the sets of symmetric and (symmetric) positive semidefinite matrices, respectively. We index eigenvalues and singular values in nonincreasing order, e.g., $\lambda_1(\bM) \geq \cdots \geq \lambda_n(\bM)$ for any symmetric matrix $\bM \in \SSS^n$. Given a sequence $\{\bM_{n}\}$ of symmetric random matrices of increasing dimension, we use $\mu_{\bM_{n}}$ to denote the empirical eigenvalue distribution of $\bM_{n}$, i.e.,
  \begin{equation}
    \label{eq:empirical}
    \mu_{\bM_{n}} = \frac{1}{n}\sum_{j=1}^{n} \delta_{\lambda_{i}(\bM_{n})}.
  \end{equation}
Similarly, for a sequence of asymmetric matrix $\{\bM_n\}\subseteq \RR^{n\times m}$ we use $\mu_{\bM_n}$ to denote the empirical singular value distribution of $\bM_n.$
Given sequences $(a_n)_n$ and $(b_n)_n$, we write $a_n = o(b_n)$ if $\lim_{n\rightarrow \infty} a_n/b_n = 0.$  The $L^p$ norm of a random variable $\mathbf{Z}$ is denoted $\|\mathbf{Z}\|_p := \EE[|\mathbf{Z}|^p]^{1/p}$.

\section{Background and Assumptions}\label{sec:algo}

In this section we describe the main assumptions that form the basis of our development in the symmetric case.  Theoretical guarantees for the symmetric case are presented in Section~\ref{sec:guarantees}, with the proofs given in Section~\ref{sec:proofs}.  The nonsquare case is discussed in Section~\ref{sec:asymmetric}.

As described in Section~\ref{sec:intro}, the core of Algorithm~\ref{alg:FDR} is an estimate-then-control approach for selecting a subspace -- we estimate the FDR and use our estimate as a proxy to select the number of top components $k$. The key insight underlying this approach comes from random matrix theory: for a sequence of problem instances of increasing size $\{\bX_n = \bA_n + \bE_n ~|~ \bA_n \in \SSS^n_+, ~ \bE_n \in \SSS^n\}$ satisfying certain conditions, the $\FDR$ associated to the estimator that selects the principal eigenspaces of $\bX_n$ converges asymptotically to a quantity that is amenable to estimation from the spectrum of $\bX_n$.  We describe next the assumptions on problem sequences that facilitate our development. The first assumption primarily concerns the signal sequence $\{\bA_n\}$.
\begin{assumption}\label{ass:model1}
    The following conditions hold for the sequence $\{\bA_n\}$ for all $n$.
    \begin{itemize}[leftmargin=.4cm]
       \item[] (\textbf{Independence}) The distributions of $\bA_{n} \in \SSS^n_+$ and $\bE_{n}\in \SSS^n$ are independent.
      \item[] (\textbf{Spectrum of the low-rank signal}) There exists a constant $r \in \NN$ and deterministic real numbers $\theta_{1} \geq \dots \geq \theta_{r} > 0$ such that $\rank(\bA_{n})= r$ and $\lambda_i(\bA_n) = \theta_{i}$ for all $i\leq r$.
      \item[] (\textbf{Invariance}) The eigenvector distribution of $\bA_n$ or of $\bE_{n}$ is orthogonally invariant.
    \end{itemize}
\end{assumption}
\noindent The second assumption centers around properties of the noise sequence $\{\bE_n\}$.
\begin{assumption}\label{ass:model2}
    The following conditions hold for the sequence $\{\bE_n\}$.
    \begin{itemize}[leftmargin=.4cm]
      \item[] (\textbf{Spectrum}) The empirical eigenvalue distribution $\mu_{\bE_{n}}$, defined as in \eqref{eq:empirical}, converges almost surely weakly to a deterministic compactly supported measure $\mu_{\bE}$ as $n$ goes to infinity.
      \item[] (\textbf{Edge convergence})  Let the infimum and supremum of $\supp(\mu_{\bE})$ be denoted by $a$ and $b$.  The smallest and largest eigenvalues of $\bE_{n}$ converge almost surely to $a$ and to $b$, respectively.
      \item[] (\textbf{Decay at the edge}) The limiting distribution $\mu_{\bE}$ has a density $f_{\bE}$ such that as $t \rightarrow b$ with $t < b$, we have that $f_{\bE}(t) \sim c \cdot (b-t)^{\alpha}$ for some constant $c > 0$ and exponent $\alpha \in (0, 1]$. An analogous decay holds for the lower edge $a.$
    \end{itemize}
    \end{assumption}

\noindent We comment briefly on these assumptions. The requirement that $\bA_n$ is positive semidefinite is not strict; one could drop this condition by defining a method that considers components at both ends of the spectrum. However, we keep this assumption since it simplifies the presentation.  The invariance condition in Assumption~\ref{ass:model1} is necessary for our analysis, but it could potentially be slightly relaxed; see \cite[Section 2]{benaych2012singular}. 
 The three requirements in Assumption~\ref{ass:model2} might appear restrictive at first sight, but they hold for several natural random matrix ensembles such as the GOE from the introduction; we refer the reader to~\cref{sec:guarantees-matrix} for further examples.

Assumptions~\ref{ass:model1} and \ref{ass:model2} underpin the success of Algorithm~\ref{alg:FDR}; see Theorem~\ref{thm:main} in Section~\ref{sec:guarantees}. Under these assumptions, the asymptotic behavior of the eigenvalues and eigenvector of $\bX_n$ can be pinned down in terms of the distribution $\mu_{\bE}$ and the spectrum of $\bA_n$ based on recent advances in random matrix theory \cite{baik2005phase, benaych2011eigenvalues, benaych2012singular, au2023bbp}.  Broadly speaking, the spectrum of $\bX_n$ has two components: a bulk of eigenvalues coming from the noise and a small set of eigenvalues corresponding to those of $\bA_{n}$.  While the empirical distribution of the bulk converges towards $\mu_{\bE}$, the eigenvalues associated to those of $\bA_n$ exhibit a sharp phase transition. In particular, for any $i \in [r]$,
     \begin{equation}\label{eq:bbp}
         \lambda_i(\bX_n) \rightarrow \begin{cases}
             G^{-1}_{\mu_{\bE}}\left({1}/{\theta_i}\right) & \text{if $\theta_i > 1/G_{\mu_{\bE}}(b^+)$},\\
             b & \text{otherwise,}
         \end{cases}
     \end{equation}
where $G_{\mu_{\bE}}$ is the Cauchy transform \eqref{eq:cauchy} of $\mu_{\bE}$.  This type of phenomenon is known as the \emph{BBP phase transition}, named after Baik, Ben Arous, and Peche, whose pioneering work \cite{baik2005phase} unveiled this behavior for the first time. Intuitively, if an eigenvalue of $\bA_n$ is above the noise level, i.e., $\theta_i > 1/G_{\mu_{\bE}}(b^+)$, then the associated component of $\bX_n$ is not subsumed by the bulk. On the other hand, if an eigenvalue is below the noise level, then its associated eigenvalue is absorbed inside the bulk.  Further, correlations between the eigenvectors of $\bA_n$ and $\bX_n$ can be characterized in terms of the Cauchy transform of $\mu_E$ and the spectrum of $\bA_n$; we defer the additional details to \cref{sec:background}.  By understanding such asymptotic properties of $\bX_n$, one can characterize the limiting $\FDR$ for any fixed $k$ as follows: 
\begin{equation}\label{eq:key-limit-refined}
         \limsup_{n \rightarrow \infty} \FDR(\widehat \cU^{(n)}_k) \leq \lim_{n \rightarrow \infty} 1 + \frac{1}{k} \sum_{i=1}^{\min\{k, r^\star\}} \frac{G_{\mu_{\bE}}\left(\lambda_i(\bX_n)\right)^2}{G'_{\mu_{\bE}}\left(\lambda_i(\bX_n)\right)}
\end{equation} 
where $r^\star$ denotes the so-called \emph{observable rank}, which corresponds to the number of components that are not subsumed by the noise:
\begin{equation}
      \label{eq:identifiable-rank}
      r^{\star} := \# \{j \mid \theta_{j} > 1/G_{\mu_{\bE}}(b^{+})\}.
\end{equation}
Expression~\eqref{eq:key-limit-refined} is a generalization of \eqref{eq:key-limit} from Section~\ref{sec:intro} as it allows for problem instances in which the signal spectrum is not well-separated from that of the noise; see Proposition~\ref{lem:fdr-convergence} for a proof of \eqref{eq:key-limit-refined}.  
We shall see in~\cref{sec:guarantees} that when $r = r^\star$, the above inequality holds with equality and the lim-sup can be replaced by a limit.

Turning our attention next to assumptions under which Algorithm~\ref{alg:rank-estimate} provides high-quality rank estimates, we begin by elaborating on the rationale provided in Section~\ref{sec:intro}.  Recall that Algorithm~\ref{alg:rank-estimate} chooses the largest index for which the associated eigenvalue gap is greater than $p \cdot n^{-1/2}$.  As articulated in Section~\ref{sec:intro}, the reasoning behind this choice is based on the properties of the eigenvalue spacings of the noise $s_i = \lambda_i(\bE_n) - \lambda_{i+1}(\bE_n)$.  These spacings decrease at a rate of $s_i = O(n^{-1})$ inside the bulk and more slowly $O(n^{-2/3})$ near the edges \cite{tao2010random, tao2011random, bourgade2012bulk, erdHos2015gap, benaych2016lectures, pillai2014universality}.  In principle, we could have picked $n^{-\omega}$ with any $\omega < 2/3$ in Step 2 of~\cref{alg:rank-estimate}, but we have observed that the more aggressive $n^{-1/2}$ exhibits good practical performance.  The next assumption codifies this discussion, and it provides the basis for the success of Algorithm~\ref{alg:rank-estimate}.

\begin{assumption}\label{ass:spacings}
   The following conditions hold for the sequence $\bE_n$.
   \begin{enumerate}[leftmargin=.4cm]
       \item[] (\textbf{Connected support}) The support of the asymptotic spectral measure $\mu_{\bE}$ is connected.
       \item[] (\textbf{Spacings}) Let $s_{j}^{(n)} = \lambda_{j}(\bE_{n}) - \lambda_{j+1}(\bE_{n})$, for all large enough $n$ and all $i \leq n$
     $$
     s_{i}^{(n)} = o \left( n^{-1/2}\right) \qquad\text{almost surely.}
     $$
   \end{enumerate}
\end{assumption}

Without the connected support assumption, there could be large eigenvalue spacings within the noise, and thus, our method could fail to identify the right rank. One way to bypass the connectedness assumption is to assume that we have access to an upper bound on the rank $R$ of the signal sequence $\{\bA_n\}$.  With this information, one could only focus on the first $R-1$ eigenvalue spacings, and for sufficiently large $n$, Algorithm~\ref{alg:rank-estimate} would exclude large spacings produced by the noise. In Section~\ref{sec:guarantees-matrix}, we describe natural matrix ensembles that satisfy Assumptions~\ref{ass:model2} and \ref{ass:spacings}.

\section{Main Results}\label{sec:guarantees}

We present here the main theoretical results of our work for the symmetric case.  Our first set of results in Section~\ref{sec:guarantees-fdr-control} pertain to guarantees that Algorithms~\ref{alg:FDR} and~\cref{alg:rank-estimate} are correct under the assumptions described in Section~\ref{sec:algo} for all large $n$, i.e., they provide $\FDR$ control at the desired level as well as accurate rank estimates.  Next, we show in Section~\ref{sec:guarantees-matrix} that a number of natural random matrix ensembles satisfy Assumptions~\ref{ass:model2} and~\ref{ass:spacings} of Section~\ref{sec:algo}.

\subsection{Guarantees for FDR Control}\label{sec:guarantees-fdr-control}

Recall that we consider a sequence of growing-sized problem instances $\{\bX_n = \bA_n + \bE_n ~|~ \bA_n \in \SSS^n_+, ~ \bE_n \in \SSS^n\}$.  We denote the column space of $\bA_n$ by $\cU^{(n)}$, and we denote the $k$-dimensional principal subspace of $\bX_n$, i.e., the span of the $k$ eigenvectors of $\bX_n$ corresponding to the $k$ largest eigenvalues, by $\widehat{\cU}^{(n)}_k$.

One challenge with characterizing $\Tr\left( P_{\widehat{\cU}_{k}^{(n)}} P_{\cU^{\perp}}\right)$ in the definition of $\FDR(\widehat{\cU}^{(n)}_k)$ from \eqref{eq:fdr} -- beyond the case of well-separated signal and noise spectra as in \eqref{eq:key-limit} -- is that the eigenvectors of $\bX_n$ associated with eigenvalues that are subsumed by the noise in the BBP transition, i.e., $\theta_j \leq 1/G_{\mu_{\bE}}(b^+)$, are not clearly characterized in terms of their correlation with the corresponding eigenvectors of $\bA_n$.  Intuitively, one would expect that these pairs of eigenvectors are asymptotically uncorrelated; whether this is the case is a significant open question in random matrix theory that has only recently been answered in more restrictive settings \cite[Theorem 1.3]{au2023bbp}, and might require additional assumptions \cite[Remark 2.6]{benaych2011eigenvalues}.  We circumvent this difficulty by considering only the span $\bar{\cU}^{(n)}$ of the first $r^\star$ components of $\bA_n$ corresponding to the observable rank \eqref{eq:identifiable-rank}, and defining the following quantities:
\begin{equation} 
    \label{eq:upper-fdr-main}
\overline{\FDR}(\widehat{\cU}^{(n)}_k) 
= \EE \left[ \frac{\Tr \left( P_{\widehat{\cU}^{(n)}_k} P_{\bar{\cU}^{(n)^{\perp}}} \right)}{k}\right] \quad \text{and} \quad \FDR^{\infty}(k) =\lim_{n\rightarrow \infty } \overline{\FDR}(\widehat{\cU}^{(n)}_k).
\end{equation}

We prove in Appendix~\ref{sec:proof-main} that $\FDR^\infty$ is well-defined.  As $\bar{\cU}^{(n)} \subset \cU^{(n)}$, we have that $\FDR(\widehat{\cU}^{(n)}_k) \leq \overline{\FDR}(\widehat{\cU}^{(n)}_k)$.  We leverage this observation in our next result by proving that Algorithm~\ref{alg:FDR} provides control on $\overline{\FDR}(\widehat{\cU}^{(n)}_k)$ at the desired level, which in turn implies control on $\FDR(\widehat{\cU}^{(n)}_k)$. The proof of this result is deferred to Section~\ref{sec:proof-main}.

\begin{theorem}[\textbf{FDR control}]\label{thm:main}
Consider a sequence of problem instances $\{\bX_n = \bA_n + \bE_n ~|~ \bA_n \in \SSS^n_+, ~ \bE_n \in \SSS^n\}$ for which Assumptions~\ref{ass:model1} and~\ref{ass:model2} hold. Further, suppose the rank estimate $\widehat r$ required for Algorithm~\ref{alg:FDR} satisfies $$r^\star \leq \widehat r \leq r$$ almost surely for all large $n$.  For a generic $\alpha \in (0,1)$ and for all large $n$, the output of \cref{alg:FDR} satisfies $\overline{\FDR}(\widehat{\cU}^{(n)}_k) \leq \alpha$, which in turn implies that $\FDR(\widehat{\cU}^{(n)}_k) \leq \alpha$. Moreover, if $\alpha \leq \FDR^{\infty}(r)$, the output $\widehat k$ of \cref{alg:FDR} satisfies 
  $$\widehat k = \max\{k \in [r] \mid \overline{\FDR}(\widehat{\cU}^{(n)}_k) \leq \alpha\}.$$
\end{theorem}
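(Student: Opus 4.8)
The plan is to decompose the argument into two pieces: (i) a deterministic/asymptotic analysis showing that the empirical FDR estimate $\widehat{\FDR}(\widehat\cU_k)$ computed in Algorithm~\ref{alg:FDR} converges to the limiting quantity $\FDR^\infty(k)$ uniformly over the relevant range of $k$, and (ii) a comparison showing that $\FDR^\infty(k)$ dominates $\limsup_n \overline{\FDR}(\widehat\cU^{(n)}_k)$ so that controlling the estimate controls the true quantity. First I would invoke the refined limit \eqref{eq:key-limit-refined} (i.e., Proposition~\ref{lem:fdr-convergence}) to establish that $\overline{\FDR}(\widehat\cU^{(n)}_k) \to \FDR^\infty(k) = 1 + \frac1k\sum_{i=1}^{\min\{k,r^\star\}} G_{\mu_{\bE}}(\ell_i)^2 / G'_{\mu_{\bE}}(\ell_i)$, where $\ell_i$ is the BBP limit $G^{-1}_{\mu_{\bE}}(1/\theta_i)$ from \eqref{eq:bbp} for $i \le r^\star$. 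Since $r$ is a fixed constant, only finitely many values of $k$ in $[r]$ matter for the "moreover" claim, so uniformity over $k\in[r]$ is automatic once pointwise convergence is known.

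The heart of step (i) is showing $\widehat G(\lambda_i) \to G_{\mu_{\bE}}(\ell_i)$ and $\widehat G'(\lambda_i) \to G'_{\mu_{\bE}}(\ell_i)$ for each $i \le r^\star$. Here I would use: the edge-convergence and spectrum assumptions in Assumption~\ref{ass:model2} to get that the empirical measure $\frac{1}{n-\widehat r}\sum_{j=\widehat r+1}^n \delta_{\lambda_j(\bX_n)}$ converges weakly a.s. to $\mu_{\bE}$ (the removal of at most $r$ spikes plus the fact that the bulk of $\bX_n$ inherits the limiting measure of $\bE_n$ — this is standard for finite-rank perturbations); that $\lambda_i(\bX_n) \to \ell_i > b$ for $i \le r^\star$, so we are evaluating the Cauchy transform and its derivative at points bounded away from $\supp(\mu_{\bE})$, where $G_{\mu_{\bE}}$ and $G'_{\mu_{\bE}}$ are continuous and bounded; and the fact that $\widehat r$ lies between $r^\star$ and $r$, both constants, so it does not affect the limit of the bulk average. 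A subtlety: for indices $i$ with $r^\star < i \le \min\{k,\widehat r\}$, the corresponding $\lambda_i(\bX_n)$ converges to the edge $b$, and the summand $\widehat G(\lambda_i)^2/\widehat G'(\lambda_i)$ need not vanish; this is exactly why the theorem only claims an \emph{upper} bound via $\overline{\FDR}$ rather than an exact limit, and why $\widehat r \le r$ (not $\widehat r = r^\star$) is harmless — adding such extra nonpositive terms (note $\widehat G' < 0$, so each summand is $\le 0$) can only decrease $\widehat{\FDR}$, making control easier, and one must check it does not break the monotone/selection structure. Actually the cleanest route is to argue $\widehat{\FDR}(\widehat\cU_k) \le 1 + \frac1k\sum_{i=1}^{\min\{k,r^\star\}}(\cdots) + o(1)$ and separately that $\overline{\FDR}(\widehat\cU^{(n)}_k) \le$ the same right-hand side, both by the decay-at-the-edge assumption controlling the behavior of $\widehat G'$ near $b$.

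For step (ii), I would combine the eigenvector-correlation asymptotics from \cite{benaych2011eigenvalues} (deferred to Section~\ref{sec:background}) that give $\Tr(P_{\widehat\cU^{(n)}_k} P_{\bar\cU^{(n)\perp}}) \to$ the stated limit, together with the bound $\FDR(\widehat\cU^{(n)}_k) \le \overline{\FDR}(\widehat\cU^{(n)}_k)$ from $\bar\cU^{(n)}\subseteq\cU^{(n)}$ already noted in the text. Then, once $\widehat{\FDR}(\widehat\cU_k) \to \FDR^\infty(k)$ and $\overline{\FDR}(\widehat\cU^{(n)}_k) \to \FDR^\infty(k)$ for each fixed $k \le r$, the output $\widehat k = \max\{k : \widehat{\FDR}(\widehat\cU_k)\le\alpha\}$ satisfies, for all large $n$, $\widehat{\FDR}(\widehat\cU_{\widehat k}) \le \alpha$; since the gap between $\widehat{\FDR}(\widehat\cU_{\widehat k})$ and $\overline{\FDR}(\widehat\cU^{(n)}_{\widehat k})$ is $o(1)$ and $\alpha$ is generic (so $\FDR^\infty(k)\neq\alpha$ for all $k\le r$, avoiding boundary ties), we get $\overline{\FDR}(\widehat\cU^{(n)}_{\widehat k})\le\alpha$ for large $n$; and when $\alpha \le \FDR^\infty(r)$ one checks $\widehat k \le r$ because for $k$ just above $r$ the estimate $\widehat{\FDR}(\widehat\cU_k) = 1 + \frac1k\sum_{i=1}^{r^\star}(\cdots)$ with larger denominator... — more carefully, for $k>r$ the sum has only $\min\{k,\widehat r\}\le r$ terms while $k$ grows, so $\widehat{\FDR}(\widehat\cU_k)\to 1$ as $k\to\infty$ and is $\ge \widehat{\FDR}(\widehat\cU_r)\to\FDR^\infty(r)\ge\alpha$ along the relevant range; the genericity of $\alpha$ rules out equality, forcing $\widehat k\le r$, and combined with the large-$n$ agreement of $\widehat{\FDR}$ and $\overline{\FDR}$ on $[r]$ yields the displayed characterization of $\widehat k$.

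The main obstacle I anticipate is the non-monotone and non-exact behavior of the terms indexed by $r^\star < i \le r$: these correspond to "signal" eigenvalues that the BBP transition pushes into the bulk edge, and both the true $\Tr(P P)$ contribution and the empirical summand $\widehat G(\lambda_i)^2/\widehat G'(\lambda_i)$ must be controlled as $\lambda_i(\bX_n)\to b$. This requires the decay-at-the-edge hypothesis (Assumption~\ref{ass:model2}, third bullet) to ensure $G_{\mu_{\bE}}(b^+)$ is finite and $G'_{\mu_{\bE}}(b^+)$ is finite (the exponent $\alpha\in(0,1]$ guarantees integrability of $(b-t)^{-1}$ and $(b-t)^{-2}$ against the density near $b$ — wait, $(b-t)^{-2}(b-t)^\alpha$ is integrable only if $\alpha>1$; so in fact $G'_{\mu_{\bE}}(b^+)$ may be $-\infty$, in which case the summand $\to 0$, which is even better for the bound). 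Disentangling these cases — finite vs. infinite edge derivative, and the corresponding behavior of $\widehat G'(\lambda_i)$ when $\lambda_i(\bX_n)\to b$ at a rate governed by Tracy–Widom-type fluctuations — is where the delicate work lies, and it is precisely what forces the $\limsup$/$\overline{\FDR}$ formulation rather than a clean two-sided limit. I would handle it by proving the one-sided inequality $\limsup_n \overline{\FDR}(\widehat\cU^{(n)}_k) \le \lim_n \widehat{\FDR}(\widehat\cU_k) = \FDR^\infty(k)$ and noting that this one-sided control is exactly what "FDR control at level $\alpha$" requires.
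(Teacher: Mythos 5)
Your overall architecture is the same as the paper's: show that the algorithm's estimate $\widehat{\FDR}^{(n)}(k)$ and the upper-bounding quantity $\overline{\FDR}^{(n)}(k)$ converge to the common limit $\FDR^{\infty}(k)$ for each fixed $k$, then use genericity of $\alpha$ and the inequality $\FDR \leq \overline{\FDR}$ (from $\bar{\cU}^{(n)} \subseteq \cU^{(n)}$) to conclude. However, your two hedges land exactly where the paper does the real work, and the fallback you propose would not prove the theorem. First, you assert that the summands with $r^\star < i \leq \widehat r$ ``need not vanish'' and that this is why the statement is phrased through $\overline{\FDR}$; both halves are off. Under the decay-at-the-edge condition with exponent in $(0,1]$, one has $G_{\mu_{\bE}}(b^+) < \infty$ while $|G'_{\mu_{\bE}}(b^+)| = \infty$ (Proposition~\ref{prop:nice-edge}), so these terms do vanish; the nontrivial step is transferring this to the empirical transform, i.e.\ proving $|\widehat G'(\lambda_i^{(n)})| \to \infty$ as $\lambda_i^{(n)} \to b$, which the paper does by a truncation-plus-Fatou argument combined with the $[-1,0]$ bound of Lemma~\ref{lem:ratio-properties} — no Tracy--Widom input is involved. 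The use of $\overline{\FDR}$ has a different cause entirely: eigenvector correlations below the BBP threshold are not characterized (an open question), so $\FDR$ itself cannot be computed asymptotically and is instead dominated via $\bar{\cU}^{(n)} \subseteq \cU^{(n)}$; the convergences $\overline{\FDR}^{(n)}(k) \to \FDR^{\infty}(k)$ and $\widehat{\FDR}^{(n)}(k) \to \FDR^{\infty}(k)$ in Proposition~\ref{lem:fdr-convergence} are genuine two-sided limits.

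Second, retreating to the one-sided inequality $\limsup_n \overline{\FDR}^{(n)}(k) \leq \lim_n \widehat{\FDR}^{(n)}(k)$ is insufficient even for the control claim, and certainly for the ``moreover'' clause. The output $\widehat k$ is defined through $\widehat{\FDR}^{(n)}$, so you must also rule out asymptotic underestimation: if $\widehat{\FDR}^{(n)}(k)$ could fall below $\alpha$ at some $k$ with $\FDR^{\infty}(k) > \alpha$, the algorithm would select too large a subspace and control would fail (this is exactly the failure mode illustrated by overestimated ranks in Figure~\ref{fig:wrong_rank}). Likewise, identifying $\widehat k$ with $\max\{k \in [r] \mid \overline{\FDR}^{(n)}(k) \leq \alpha\}$ requires two-sided convergence of $\overline{\FDR}^{(n)}(k)$ as well, so that both maximizers stabilize at the same $k^\infty$; an upper bound alone would allow $\overline{\FDR}^{(n)}(k^\infty+1)$ to dip below $\alpha$. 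The two-sided convergence of $\widehat{\FDR}^{(n)}$ is precisely where the vanishing of the below-threshold terms you were unsure about is needed, and the paper's exclusion of $k > k^\infty$ additionally uses the monotonicity of $k \mapsto \widehat{\FDR}^{(n)}(k)$, which rests on Lemma~\ref{lem:ratio-properties} ($x \mapsto G_{\mu}(x)^2/G'_{\mu}(x)$ is $[-1,0]$-valued and nonincreasing, via Jensen and Cauchy--Schwarz); your argument for $k > r$ (the sum freezes at $\min\{k,\widehat r\}$ terms) partially substitutes for this, but the monotone/selection structure you say ``one must check'' is never actually supplied. With the two-sided limits restored and that lemma in place, your plan coincides with the paper's proof.
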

In words, this theorem states that the true FDR is controlled by Algorithm~\ref{alg:FDR} for large enough $n.$  Moreover, Algorithm~\ref{alg:FDR} maximizes power for sufficienty small $\alpha$ in the sense that it selects the largest-dimensional principal subspaces of $\bX_n$ that satisfy $\overline{\FDR}(\widehat{\cU}^{(n)}_k) \leq \alpha$.  

We comment on several aspects of the result.  First, the requirement that $\alpha$ is generic is to ensure that $\alpha$ does not equal $\FDR^\infty(k)$ for any $k \in \NN$, and this is arguably an artifact of our analysis. Nonetheless, our theorem holds for any value of $\alpha$ outside of this set of measure zero.  Second, in terms of power maximization, the bound $\alpha \leq \FDR^\infty(r)$ is appropriate as any value of $\alpha$ greater than this quantity could lead to the selection of more components than the rank of $\bA_n$; we expect these components to arise from the noise $\bE_n$ and to have little correlation with the signal.  Finally, it is natural to wonder whether the inequality $\FDR(\widehat{\cU}^{(n)}_k) \leq \overline{\FDR}(\widehat{\cU}^{(n)}_k)$ is loose, which would lead to our method being too conservative.  Numerical experiments suggest that this is not the case, and in fact, it appears that $\FDR(\widehat{\cU}^{(n)}_k)$ converges to $\FDR^{\infty}(k)$ as $n \rightarrow \infty$.  It would be interesting to establish such convergence in the future; such an analysis would rely on resolving the open question we described in the discussion preceding \eqref{eq:upper-fdr-main} on obtaining a characterization of the correlation between eigenvectors of $\bX_n$ and $\bA_n$ below the BBP threshold.

Next, we recall that Algorithm~\ref{alg:FDR} must be supplied with an accurate rank estimate.  The following result shows that for noise matrices with controlled eigenvalue spacings,~\cref{alg:rank-estimate} produces accurate estimates.
  
\begin{theorem}[\textbf{Rank estimate}]\label{thm:rank-estimate} Consider a sequence of problem instances $\{\bX_n = \bA_n + \bE_n ~|~ \bA_n \in \SSS^n_+, ~ \bE_n \in \SSS^n\}$ for which Assumptions \ref{ass:model1}, \ref{ass:model2}, and \ref{ass:spacings} hold.  Then, the output of Algorithm~\ref{alg:rank-estimate}, $\textsc{RankEstimate}(\bX_{n}),$  almost surely satisfies 
  $$
  r^\star \leq \textsc{RankEstimate}(\bX_{n}) \leq r \qquad \text{for all large enough }n.
  $$
  \end{theorem}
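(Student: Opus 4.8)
The plan is to sandwich $\widehat r = \textsc{RankEstimate}(\bX_n)$ between $r^\star$ and $r$ using three inputs: Weyl's inequalities for the rank-$r$ perturbation $\bX_n = \bA_n + \bE_n$, the BBP limits \eqref{eq:bbp}, and the uniform spacing control of Assumption~\ref{ass:spacings}. I would work on the intersection of the almost-sure events on which \eqref{eq:bbp} holds, the edge convergence of Assumption~\ref{ass:model2} holds, and $n^{1/2}\max_{1\le i\le n-1} s_i^{(n)}\to 0$; this intersection has probability one, so it suffices to verify the two inequalities there for all large $n$. Recall the algorithm sets $\widehat r=\max\{j\in[n/2]\mid \Delta_{j+1}>p\,n^{-1/2}\}$ with $\Delta_{j+1}=\lambda_j(\bX_n)-\lambda_{j+1}(\bX_n)$.

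\textbf{Upper bound $\widehat r\le r$.} It is enough to show that, uniformly over all $j$ with $r<j\le n/2$, one has $\Delta_{j+1}\le p\,n^{-1/2}$ for all large $n$. Since $\bA_n\succeq 0$ has rank $r$, Weyl's inequalities give $\lambda_j(\bX_n)\le\lambda_{r+1}(\bA_n)+\lambda_{j-r}(\bE_n)=\lambda_{j-r}(\bE_n)$ (valid because $j\ge r+1$) and $\lambda_{j+1}(\bX_n)\ge\lambda_{j+1}(\bE_n)$ (from $\bX_n\succeq\bE_n$). Subtracting and telescoping,
\[
\Delta_{j+1}\ \le\ \lambda_{j-r}(\bE_n)-\lambda_{j+1}(\bE_n)\ =\ \sum_{l=j-r}^{j}s_l^{(n)}\ \le\ (r+1)\max_{1\le i\le n-1}s_i^{(n)}\ =\ o(n^{-1/2}),
\]
where the estimate is uniform in $j$ precisely because only a fixed number $r+1$ of consecutive noise spacings is ever summed. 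Hence for large $n$ no index $j>r$ meets the threshold, so $\widehat r\le r$.

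\textbf{Lower bound $\widehat r\ge r^\star$.} If $r^\star=0$ this is vacuous, so assume $r^\star\ge 1$; I would show $\Delta_{r^\star+1}$ converges to a strictly positive constant. By the definition \eqref{eq:identifiable-rank} we have $\theta_{r^\star}>1/G_{\mu_{\bE}}(b^+)$, so \eqref{eq:bbp} gives $\lambda_{r^\star}(\bX_n)\to G_{\mu_{\bE}}^{-1}(1/\theta_{r^\star})$; since $G_{\mu_{\bE}}$ is a strictly decreasing bijection from $(b,\infty)$ onto $(0,G_{\mu_{\bE}}(b^+))$ — with $G_{\mu_{\bE}}(b^+)<\infty$ thanks to the decay-at-the-edge hypothesis — this limit is strictly larger than $b$. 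On the other hand $\lambda_{r^\star+1}(\bX_n)\to b$: when $r^\star<r$ this is \eqref{eq:bbp} applied to index $r^\star+1$, whose $\theta_{r^\star+1}\le 1/G_{\mu_{\bE}}(b^+)$; when $r^\star=r$ it follows from the Weyl sandwich $\lambda_{r+1}(\bE_n)\le\lambda_{r+1}(\bX_n)\le\lambda_1(\bE_n)$ together with edge convergence and $\lambda_1(\bE_n)-\lambda_{r+1}(\bE_n)\le r\max_i s_i^{(n)}\to 0$. Thus $\Delta_{r^\star+1}\to G_{\mu_{\bE}}^{-1}(1/\theta_{r^\star})-b>0$, which exceeds $p\,n^{-1/2}$ for all large $n$; since also $r^\star\le r\le n/2$ eventually, the index $r^\star$ lies in the maximized set and $\widehat r\ge r^\star$.

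Combining the two bounds on the probability-one event gives $r^\star\le\widehat r\le r$ for all large $n$. I expect the only genuinely delicate point to be the uniformity in the upper bound: the conclusion must hold simultaneously for all $j$ up to the growing cutoff $n/2$, which is exactly why one needs the uniform-in-$i$ spacing bound of Assumption~\ref{ass:spacings} rather than a pointwise one — and it works only because each Weyl comparison telescopes a bounded number of noise gaps. Everything else is either elementary linear algebra or a direct invocation of \eqref{eq:bbp}.
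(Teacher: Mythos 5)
Your proof is correct and follows essentially the same route as the paper's: Weyl interlacing plus telescoping a fixed number ($r+1$) of noise spacings against the $o(n^{-1/2})$ bound of Assumption~\ref{ass:spacings} for the upper bound, and the BBP limits $\lambda_{r^\star}(\bX_n)\to\rho_{r^\star}>b$, $\lambda_{r^\star+1}(\bX_n)\to b$ for the lower bound. If anything, you are slightly more careful than the paper in handling the case $r^\star=r$ (where the limit of $\lambda_{r+1}(\bX_n)$ is not covered by the cited eigenvalue theorem and needs the Weyl sandwich with edge convergence) and the vacuous case $r^\star=0$.
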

This result follows from an application of Weyl's interlacing inequalities, and its proof is presented in~\cref{proof:rank-estimate}. Numerical experiments suggest that Algorithm~\ref{alg:rank-estimate} outputs $r^\star$ even when $r \neq r^\star$; nonetheless, existing results from random matrix theory do not appear to be strong enough to establish this equality.  We leave this as an interesting open question for future work. %

\subsection{Matrix Ensembles satisfying Assumptions~\ref{ass:model2} and~\ref{ass:spacings}}\label{sec:guarantees-matrix}

In this section, we identify two ensembles — Wigner and Wishart matrices — for which Assumptions~\ref{ass:model2} and~\ref{ass:spacings} hold.  For Assumption~\ref{ass:spacings} in particular, we leverage results from random matrix theory concerning the rigidity of the empirical spectrum \cite{erdHos2012rigidity, pillai2014universality}.  Intuitively, rigidity amounts to the $i$'th eigenvalue of an $N \times N$ random matrix being anchored around the $i/N$-quantile of their asymptotic spectral distribution; this phenomenon ensures that eigenvalue spacings shrink at a controlled rate.

We start by describing Wigner matrices, a broad family of symmetric random matrices.
\begin{definition} \label{def:wigner}
    A \emph{Wigner ensemble} is a sequence of symmetric random matrices $\left\{\bH_n \in \SSS^n\right\}$ for which the following conditions hold for all $n$ (we suppress the subscript $n$ in stating these conditions).
    \begin{enumerate}
        \item (\textbf{Independence}) The upper-triangular entries $\{\bH_{ij} \mid 1 \leq i \leq j \leq n\}$ are independent. 
        \item (\textbf{Mean and variance}) For each $i,j$ we have $\EE \bH_{ij} = 0$ and $\EE |\bH_{ij}|^2 = \frac{1}{n} (1+O(\delta_{ij}))$; the factor $(1+O(\delta_{ij}))$ requires the off-diagonal entries to have variances equal to $\frac{1}{n}$ but it allows for the diagonal entries to have nonuniform variances that scale as $\frac{1}{n}$ as $n$ grows large.
        \item (\textbf{Moment bounds}) For each $p \in \NN$, there exists a constant $C_p > 0$ such that $\|\sqrt{n} \bH_{ij} \|_p \leq  C_p$ for all indices $i,j$.
    \end{enumerate}
\end{definition}

The first two conditions are standard.  The last condition entails some degree of concentration on the entries of $\bH$. In particular, with $C_p \asymp \sqrt{p}$ the third condition reduces to subgaussian decay of the entries, while with $C_p \asymp p$ the condition reduces to subexponential decay \cite{vershynin2018high}.  A popular example of Wigner matrices is the GOE, that is, $(\bG + \bG^\top)/\sqrt{2n}$ with $\bG$ being a random matrix with i.i.d. standard Gaussian entries.

Wigner ensembles have been widely studied over the last century, beginning with the seminal work of Eugene Wigner on energy levels of heavy nuclei \cite{wigner1993characteristic, wigner1958distribution}.  It is now well-known that the empirical distribution of the spectrum of Wigner matrices converges weakly almost surely 
to the semicircle law, i.e.,  to the distribution with density
$$
\varrho(t) = \frac{1}{2\pi} \sqrt{(4 - t^2)_+},
$$
which is supported on the interval $(-2, 2)$. Wigner matrices satisfy Assumption~\ref{ass:model2} whenever the distribution of the eigenvectors is rotationally invariant, e.g., when the entries are Gaussian. The next result shows that the spacings of Wigner matrices satisfy~\cref{ass:spacings}. The proof follows from an eigenvalue rigidity result first established in \cite{erdHos2012rigidity}, and we present it in~\cref{proof:wigner-spacings}.

  \begin{proposition}[\textbf{Wigner matrix}]\label{cor:wigner}
    Suppose the sequence of matrices $\{\bE_{n} \in \SSS^n\}$ is a Wigner ensemble. We have that Assumptions~\ref{ass:model2} and \ref{ass:spacings} hold.
  \end{proposition}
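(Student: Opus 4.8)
The plan is to verify the three conditions of Assumption~\ref{ass:model2} and the two conditions of Assumption~\ref{ass:spacings} in turn, drawing on standard Wigner-matrix facts and on the rigidity estimates of \cite{erdHos2012rigidity, pillai2014universality}. For Assumption~\ref{ass:model2}, the \textbf{Spectrum} condition is exactly the classical semicircle law: under the independence, mean/variance, and moment-bound hypotheses of Definition~\ref{def:wigner}, $\mu_{\bE_n}$ converges almost surely weakly to the semicircle law $\varrho$, which is compactly supported on $[-2,2]$. The \textbf{Edge convergence} condition is the well-known almost-sure convergence $\lambda_1(\bE_n) \to 2$ and $\lambda_n(\bE_n) \to -2$; this follows from the moment bounds in Definition~\ref{def:wigner} (finite moments of all orders suffice for the largest eigenvalue to stick to the edge, via e.g.\ the moment method or the results cited in \cite{erdHos2012rigidity}). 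The \textbf{Decay at the edge} condition is immediate from the explicit form of $\varrho$: as $t \uparrow 2$, $\varrho(t) = \frac{1}{2\pi}\sqrt{(2-t)(2+t)} \sim \frac{1}{2\pi}\sqrt{4}\,(2-t)^{1/2} = \frac{1}{\pi}(2-t)^{1/2}$, so the square-root decay holds with exponent $\alpha = 1/2 \in (0,1]$ and constant $c = 1/\pi$; the lower edge is symmetric.

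For Assumption~\ref{ass:spacings}, the \textbf{Connected support} condition is trivial since $\supp(\varrho) = [-2,2]$ is an interval. The substantive part is the \textbf{Spacings} bound $s_i^{(n)} = o(n^{-1/2})$ almost surely, uniformly in $i$. Here I would invoke the eigenvalue rigidity theorem for Wigner matrices (\cite{erdHos2012rigidity}, see also \cite{pillai2014universality}): denoting by $\gamma_i$ the classical location defined by $\int_{\gamma_i}^{2}\varrho(t)\,dt = i/n$, rigidity gives, with overwhelming probability, $|\lambda_i(\bE_n) - \gamma_i| \le n^{-2/3+\epsilon}\,\widehat{i}^{-1/3}$ for all $i$, where $\widehat i = \min\{i, n+1-i\}$. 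Consequently $s_i^{(n)} = \lambda_i - \lambda_{i+1} \le |\lambda_i - \gamma_i| + |\gamma_{i+1} - \lambda_{i+1}| + (\gamma_i - \gamma_{i+1})$. The first two terms are $O(n^{-2/3+\epsilon})$ by rigidity. For the classical-location gap, a short computation with the semicircle density shows $\gamma_i - \gamma_{i+1} = O(n^{-1})$ for $i$ in the bulk and $O(n^{-2/3})$ near the edges (this is the standard square-root-density edge scaling). In all cases the bound is $O(n^{-2/3+\epsilon})$, which is $o(n^{-1/2})$ for $\epsilon$ small. Finally, upgrading "with overwhelming probability for each large $n$" to "almost surely for all large $n$" is a routine Borel--Cantelli argument: the overwhelming-probability bounds decay faster than any polynomial, hence are summable in $n$.

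The main obstacle is making the spacings bound genuinely \emph{uniform in $i$} and handling the edge regime carefully: near $i = 1$ or $i = n$, both the rigidity fluctuation scale and the classical-location gap degrade from $n^{-1}$ to $n^{-2/3}$, so one must check that the worst case over all $i$ is still comfortably below $n^{-1/2}$ — it is, since $2/3 > 1/2$. A secondary technical point is matching the precise moment hypotheses in Definition~\ref{def:wigner} (entrywise $L^p$ bounds $\|\sqrt n \bH_{ij}\|_p \le C_p$, allowing non-subgaussian tails when $C_p$ grows like $p$) with the hypotheses under which the cited rigidity results are proven; the rigidity estimates in \cite{erdHos2012rigidity, pillai2014universality} are established under exactly such subexponential-type moment assumptions, so this is a matter of citing the right version rather than new work. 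The only other item worth a sentence is that Assumption~\ref{ass:model2} additionally requires orthogonal invariance of the eigenvectors of $\bE_n$ (or of $\bA_n$); as noted in the surrounding text this holds for the Gaussian case (GOE), and the proposition's scope is understood accordingly.
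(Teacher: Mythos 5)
Your proposal is correct and follows essentially the same route as the paper's proof: the spacings bound is obtained by the triangle inequality through the classical locations $\gamma_i$, the quantile gaps are bounded by $O(n^{-2/3})$ uniformly (bulk vs.\ edge), eigenvalue rigidity from \cite{erdHos2012rigidity} controls $|\lambda_i - \gamma_i|$, and a Borel--Cantelli argument upgrades the overwhelming-probability bounds to almost-sure statements, with $2/3 > 1/2$ closing the argument. Your verification of Assumption~\ref{ass:model2} (semicircle law, edge convergence, square-root decay with $\alpha = 1/2$) is simply a more explicit spelling-out of what the paper treats as standard, so there is no substantive difference.
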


Next, we turn to another salient class of random matrices known as Wishart ensembles. These arise commonly in applications by virtue of being sample covariance matrices of data with independent coordinates. 
\begin{definition}\label{def:wishart}
A \emph{Wishart ensemble} is a sequence of symmetric matrices $\{\bW_n = \bY_{n \times m} \bY_{n \times m}^\top \in \SSS^n\}$ with $\{\bY_{n \times m} \in \RR^{n \times m}\}$ being a sequence of random matrices for which the following hold for all $n,m$ (we suppress the subscripts $n,m$ in stating these conditions).
    \begin{enumerate}
        \item (\textbf{Independence}) The 
        entries $\{\bY_{ij} \mid 1 \leq i  \leq n , 1 \leq j \leq m\}$ are independent. 
        \item (\textbf{Mean and variance}) For all indices $i,j$ we have $\EE \bY_{ij} = 0$ and $\EE |\bY_{ij}|^2 = \frac{1}{\sqrt{m}}.$
        \item (\textbf{Tail decay}) There exists a constant $c > 0$ such that for all $i, j$ we have that $\PP(|\bY_{ij}| \geq t) \leq c \exp\left(-(t\sqrt{m})^{c}\right)$.
    \end{enumerate}
Finally, we have the following requirement on the sizes $n,m$.
\begin{enumerate}
\setcounter{enumi}{3}
\item (\textbf{Limiting size}) The asymptotic ratio of the matrix dimensions $\vartheta := \lim_{n,m \rightarrow \infty } n/m$ satisfies $$ 0< \vartheta  < \infty. $$
\end{enumerate}
\end{definition}
All these conditions are rather standard in the literature. The third condition plays an analogous role to that of the moment bound in the third condition of \cref{def:wigner}, and it reduces to the tails being subgaussian and subexponential when $\vartheta = 2$ and $\vartheta = 1$, respectively \cite{vershynin2018high}. A common example of Wishart ensembles are sample covariance matrices of standard Gaussian vectors, i.e., $\bW = \frac{1}{m} \bG \bG^\top$ with the entries of $\bG$ being i.i.d. standard Gaussians.

Vladimir Marchenko and Leonid Pastur's influential work \cite{marchenko1967distribution} established the asymptotic convergence of the spectrum of certain Wishart ensembles to the Marchenko-Pastur distribution, which is characterized by the following density:
  \begin{equation}
      \label{eq:mar-pas}
      \nu(t) = \frac{1}{2\pi t} \sqrt{\left((c_+ - t)(t - c_-)\right)_+} \quad \text{with}\quad c_{\pm} = \left(1 \pm \sqrt{\vartheta}\right)^2.
  \end{equation} 
  Subsequent efforts showed the universality of this distribution for general Wishart ensembles \cite{pillai2014universality}. %
  Unlike Wigner matrices, Wishart ensembles might fail to satisfy the decay condition %
  Thus, to satisfy~\cref{ass:model2}, the matrices $\bY$ have to be asymptotically rectangular so that $\vartheta \neq 1$. Interestingly, an asymptotically rectangular shape is also required to obtain control of the eigenvalue spacings at the lower end of the spectrum. 

  \begin{proposition}[\textbf{Wishart matrices}] \label{prop:covariance}
  Suppose the sequence of matrices $\{\bE_{n} \in \SSS^n\}$ is a Wishart ensemble with $\vartheta \neq 1$. We have that the Assumptions~\ref{ass:model2} and~\ref{ass:spacings} hold.
  \end{proposition}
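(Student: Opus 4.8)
The plan is to check the two sets of conditions in turn, using the Marchenko--Pastur law together with the universality and rigidity theory for sample covariance matrices. We take $0<\vartheta<1$, the regime in which $\bE_n=\bY_{n\times m}\bY_{n\times m}^\top$ is almost surely of full rank; when $\vartheta>1$ one removes the deterministic $(n-m)$-dimensional kernel of $\bE_n$ and runs the same argument on the nonzero part of the spectrum.

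\emph{Verifying \cref{ass:model2}.} Under the independence, zero-mean, variance, and stretched-exponential tail conditions of \cref{def:wishart}, the Marchenko--Pastur theorem and its universality extensions \cite{marchenko1967distribution, pillai2014universality} show that $\mu_{\bE_n}$ converges almost surely weakly to the deterministic, compactly supported law $\nu$ in \eqref{eq:mar-pas}, whose support is the single interval $[c_-,c_+]$ with $c_\pm=(1\pm\sqrt\vartheta)^2$; this is the Spectrum condition. That $\lambda_1(\bE_n)\to c_+$ and $\lambda_n(\bE_n)\to c_-$ almost surely --- i.e.\ no outliers and no eigenvalue detaching from either edge --- follows from Bai--Yin-type extreme-eigenvalue results valid under the same tail hypothesis \cite{pillai2014universality}, which gives Edge convergence. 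For the decay exponent, expanding \eqref{eq:mar-pas} near $c_+$ gives $\sqrt{(c_+-t)(t-c_-)}\sim\sqrt{c_+-c_-}\,(c_+-t)^{1/2}$ while $\tfrac{1}{2\pi t}\to\tfrac{1}{2\pi c_+}$, so $\nu(t)\sim c\,(c_+-t)^{1/2}$ with exponent $\alpha=\tfrac12\in(0,1]$; since $\vartheta\neq1$ forces $c_->0$, the identical computation at the lower edge gives $\nu(t)\sim c'\,(t-c_-)^{1/2}$. This is exactly where $\vartheta\neq1$ is needed: at $\vartheta=1$ one has $c_-=0$ and $\nu(t)\sim c''\,t^{-1/2}\to\infty$, so Decay at the edge fails.

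\emph{Verifying \cref{ass:spacings}.} Connected support is immediate from the interval $[c_-,c_+]$ identified above. For the spacings, let $\gamma_1\ge\cdots\ge\gamma_n$ be the classical eigenvalue locations, i.e.\ the $j/n$-quantiles of $\nu$. The square-root vanishing of $\nu$ at both edges, together with the continuity and strict positivity of $\nu$ on $(c_-,c_+)$, yields $\gamma_j-\gamma_{j+1}=O\big(n^{-2/3}(\min\{j,n+1-j\})^{-1/3}\big)$, which is $O(n^{-1})$ in the bulk and $O(n^{-2/3})$ at the edges. Eigenvalue rigidity for sample covariance matrices \cite{pillai2014universality} (see also \cite{erdHos2012rigidity}) states that for every $\epsilon,D>0$, with probability at least $1-n^{-D}$ for all large $n$, $|\lambda_j(\bE_n)-\gamma_j|\le n^{-2/3+\epsilon}(\min\{j,n+1-j\})^{-1/3}$ uniformly in $j$; a Borel--Cantelli argument upgrades this to an almost-sure bound holding for all large $n$. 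Subtracting the rigidity estimates for consecutive indices and adding the classical spacing bound gives $s_j^{(n)}=\lambda_j(\bE_n)-\lambda_{j+1}(\bE_n)=O(n^{-2/3+\epsilon})$ almost surely, uniformly in $j$; choosing $\epsilon<\tfrac16$ yields $s_j^{(n)}=o(n^{-1/2})$, which is the Spacings condition.

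The main obstacle is the last step: one must verify that the stretched-exponential tail condition of \cref{def:wishart} places $\bE_n$ within the hypotheses of the rigidity theorem of \cite{pillai2014universality}, in particular that it delivers rigidity \emph{at the hard lower edge} $c_-$ --- the feature that degenerates when $\vartheta=1$ --- and one must extract a uniform-in-$j$, almost-sure-for-all-large-$n$ conclusion rather than the ``with overwhelming probability, for fixed $n$'' form in which such estimates are usually phrased, which is precisely what the polynomial failure probabilities in the rigidity bound are used for via Borel--Cantelli.
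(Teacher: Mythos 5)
Your proof is correct and takes essentially the same route as the paper: Marchenko--Pastur convergence with the square-root edge behavior (which is exactly where $\vartheta \neq 1$ enters) for Assumption~\ref{ass:model2}, and the Pillai--Yin rigidity estimate combined with the classical-location spacing bound, a Borel--Cantelli upgrade, and the choice $\varepsilon < 1/6$ for Assumption~\ref{ass:spacings}. The paper merely sketches this by deferring to its Wigner argument, so your write-up is if anything more explicit (e.g., on the lower-edge degeneracy at $\vartheta = 1$); the only soft spot is your parenthetical treatment of $\vartheta > 1$, where the atom at zero would genuinely disconnect the support and break the spacing condition, but this is a case the paper itself silently ignores.
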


  The proof of this proposition is completely analogous to that of \cref{cor:wigner} and leverages an eigenvalue rigidity result established in \cite{pillai2014universality}. We give a proof sketch in~\cref{proof:wishart-spacings}.

\section{Extension to Asymmetric Matrices}\label{sec:asymmetric}

In this section, we extend our methods to asymmetric matrices. The results and algorithms that we present are conceptually analogous to those in the preceding sections concerning the symmetric case. However, the lack of symmetry makes several of the statements slightly more involved. We consider the problem of estimating the column and row spaces of a rank-$r$ matrix $\bA\in \RR^{n\times m}$ given the observation
\begin{equation*}
 \bX = \bA + \bE
\end{equation*}
where $\bE$ represents additive noise.  We denote the column and row spaces of $\bA$ by $\cU$ and $\cV$, respectively, and these two subspaces represent the signal we wish to recover.  As before, our approach is to our goal is to control the false discovery rate incurred by picking the top $k$ components of the singular value decomposition (SVD) of $\bX = \widehat \bU \bSigma \widehat \bV^\top$. Namely, we wish to control  
\begin{equation}\label{eq:as-FDR}
    \FDR_{\texttt{l}}\left(\widehat \cU^{(k)}\right) = \EE \left[\frac{\Tr\left(P_{\widehat \cU_k} P_{\cU^{\perp}}\right)}{\max\{k, 1\}}\right] \quad \text{and} \quad \FDR_{\texttt{r}}\left( \widehat {\cV}^{(k)}\right) = \EE \left[\frac{\Tr\left(P_{\widehat \cV_k} P_{\cV^{\perp}}\right)}{\max\{k, 1\}}\right]
\end{equation}
where $\widehat\cU^{(k)} = \text{span}\{\widehat \bu_1, \dots \widehat \bu_k\}$ and $\widehat\cV^{(k)} = \text{span}\{\widehat \bv_1, \dots \widehat \bv_k\}$ are the subspaces spanned by the top $k$ left and right singular vectors of $\bX$, respectively.  We note here that in some contexts a data analyst may only seek FDR control for one of these subspaces; for example, one might only wish to perform dimension reduction of the feature space in some applications. Accordingly, the algorithm we describe in the section can control just one or both FDRs concurrently. Throughout this section, we assume without loss of generality that $n \leq m$.

\subsection{FDR Control for Column and Row Spaces}

We begin by presenting extensions of Algorithms~\ref{alg:FDR} and~\ref{alg:rank-estimate} based on a similar template, but with appropriate adaptations to use singular values and to estimate the relevant quantities for the asymmetric FDR \eqref{eq:as-FDR}.

\begin{algorithm}[h]
  \caption{FDR control for singular subspaces\hfill} %
  \label{alg:FDR-as}

  {\bf Input}: Observation $\bX \in \RR^{n \times m}$, level $\alpha$, and rank estimator $\rh$ (e.g., via \cref{alg:rank-estimate-as}).\\
  {\bf Output}: Estimate $\widehat k$ of the number of top components to be selected. \vspace{.2cm} \\ 
  {\bf Step 1} Obtain the SVD of $\bX = \widehat \bU \bSigma \widehat \bV^{\top}$ with singular values $\sigma_{1} \geq \dots \geq \sigma_{n}$.\\
  {\bf Step 2} For any $q\in(0, 1]$ define the following transform estimates and their derivatives
  \begin{align*}
    &\widehat \varphi(y; q) = \frac{q}{n - \rh}\cdot\sum_{j = \rh + 1}^{n} \frac{y}{y^2 - \sigma_{j}^2} + \frac{1-q}{y}, \quad  \quad \widehat \varphi'(y; q) = -\frac{q}{n - \rh}\cdot\sum_{j = \rh + 1}^{n} \frac{y^2 + \sigma^2_j}{(y^2 - \sigma_{j}^2)^2} - \frac{1-q}{y^2},\\
    & \quad \widehat D(y) = \widehat \varphi\big(y; 1\big) \cdot \widehat \varphi\left(y ; \frac{n}{m}\right), \quad  \text{and}  \quad \widehat D'(y) = \widehat \varphi'\big(y; 1\big) \cdot \widehat \varphi\left(y ; \frac{n}{m}\right) + \widehat \varphi\big(y; 1\big) \cdot \widehat \varphi'\left(y ; \frac{n}{m}\right)
  \end{align*}

  {\bf Step 3} Let $\widehat \cU^{(k)}$ (resp. $\widehat \cV^{(k)}$)be the span of the first $k$ left (resp. right) singular vectors, set $q = 1 $ (resp. $q = \frac{n}{m}$), and estimate the left FDR (resp. right) for different $k$'s
    \begin{equation*}
    \widehat{\FDR}_{\texttt{l}}(\widehat \cU^{(k)}) = 1 - \frac{2}{k}\sum_{i=1}^{\min\{k ,\rh\}} \frac{\widehat D(\sigma_i) \cdot \varphi\left(\sigma_i; q\right)}{\widehat{D}'(\sigma_i)}. 
  \end{equation*}
    {\bf Step 4} To control the left FDR (resp. right), output $$\widehat{k} = \max\left\{k \in [n] \,\Big|\, \widehat{\FDR}_{\texttt{l}}(k) \leq \alpha \right\}.$$
     To simultaneously control left and right FDR, use $\max\left\{\widehat{\FDR}_{\texttt{l}}(k), \widehat{\FDR}_{\texttt{r}}(k)\right\} \leq \alpha$ instead.
    
\end{algorithm}

To facilitate the analysis of this algorithm, we consider as before a sequence of growing-size problem instances $\left\{\bX_n = \bA_n + \bE_n \in \RR^{n \times m}\right\}$. The second dimension $m = m_n$ grows with $n$; we omit the subscript to avoid cluttering notation. We make the following two assumptions.

\begin{assumption}\label{ass:amodel1}
    The sequence $\left\{\bX_{n} = \bA_{n} + \bE_{n} \in \RR^{n \times m} \right\}$  satisfies the following for all $n$.
    \begin{itemize}[leftmargin=.4cm]
       \item[] (\textbf{Independence}) The distributions of $\bA_{n}$ and $\bE_{n}$ are independent.
      \item[] (\textbf{Spectrum of the low-rank signal}) There exists a constant $r \in \NN$ and deterministic real numbers $\theta_{1} \geq \dots \geq \theta_{r} > 0$ such that $\rank(\bA_{n})= r$ and $\sigma_i(\bA_n) = \theta_{i}$ for all $i\leq r$.
      \item[] (\textbf{Invariance}) For either $\bA_n$ or $\bE_n$, the distributions of left and right singular vectors are independent of each other and orthogonally invariant.
      \item[] (\textbf{Asymptotic size}) The ratio between the dimensions $n, m$ converges $n/m \rightarrow \vartheta$ with $\vartheta \in (0, 1].$ 
    \end{itemize}
\end{assumption}
  \begin{assumption}\label{ass:amodel2}
    The sequence $\left\{\bE_{n} \in \SSS^n \right\}$  satisfies the following.
    \begin{itemize}[leftmargin=.4cm]
      \item[] (\textbf{Spectrum}) The empirical singular value distribution $\mu_{\bE_{n}}$, defined as 
      \begin{equation}
          \label{eq:empirical-singular-as}
          \mu_{\bE_n} = \frac{1}{n} \sum_{i=1}^n \delta_{\sigma_i(\bE_n)}
      \end{equation}
      converges almost surely weakly to a deterministic compactly supported measure $\mu_{\bE}$ as $n\rightarrow \infty$.
      \item[] (\textbf{Edge convergence})  Let the infimum and supremum of $\supp(\mu_{\bE})$ be denoted by $a$ and $b$.  The smallest and largest singular values of $\bE_{n}$ converge almost surely to $a$ and to $b$, respectively.
      \item[] (\textbf{Decay at the edge}) The limiting distribution $\mu_{\bE}$ has a density $f_{\bE}$ such that as $t \rightarrow b$ with $t < b$, we have that $f_{\bE}(t) \sim c \cdot (b-t)^{\alpha}$ for some constant $c > 0$ and exponent $\alpha \in (0, 1]$. 
    \end{itemize}
    \end{assumption}

These assumptions closely parallel Assumptions~\ref{ass:model1} and \ref{ass:model2}. The only additional assumption that we impose is that the limiting ratio $n/m \as \vartheta \in (0, 1].$ Thus, our matrices are asymptotically wide; this is not a significant constraint as we can transform $\bX$ into $\bX^\top$ when we have a tall matrix.

In what follows, we will focus our discussion on controlling FDR for column spaces, i.e., the quantity $\FDR_{\texttt{l}}$, and we highlight the necessary changes for controlling $\FDR_{\texttt{r}}$. Under the preceding assumptions, the singular values also experience a BBP transition, and some of the singular values of $\bA_n$ might get subsumed by the noise.  To quantify this, we define a notion of observable rank $r^\star,$ akin to the case of eigenvalues, which in the present context reduces to
\begin{equation*}
    r^\star := \# \{i \in [r] \mid \theta_i > D_{\mu_{\bE}}(b^+)^{1/2}\}.
\end{equation*}
The function $D_{\mu_{\bE}}$ is known as the D-transform of $\mu_{\bE}$ and it is to the asymmetric case what the Cauchy transform is for the symmetric case.   In particular, for any $q\in (0, 1]$ let 
\begin{equation*}
 {\varphi}_{\mu_{\bE}}(z; q) := q \cdot \int \frac{z}{z^2 - t^2} d\mu(t) + \frac{1 - q}{z}
\end{equation*}
Then the $D$-transform of $\mu_{\bE}$ 
is
given by
\begin{equation*}
    D_{\mu_{\bE}}(x) = \varphi_{\mu_{\bE}}(x; 1) \cdot \varphi_{\mu_{\bE}}(x; \vartheta)
\end{equation*}
where $\vartheta$ is the limiting aspect ratio from Assumption~\ref{ass:amodel1}.
As in the symmetric case, we let 
$\bar \cU^{(n)} = \text{span}\{u_1, \dots, u_{ {r}^\star}\}$ be the subspace generated by top $r^\star$ components of $\bA_n$ and we define the upper-bounding FDR
\begin{equation} 
    \label{eq:upper-fdr-as}
\overline{\FDR}_{\texttt{l}}(\widehat \cU^{(n)}_k) = \EE \left[ \frac{\Tr \left( P_{\widehat{\cU}^{(n)}_{k}} P_{\bar{\cU}^{(n)^{\perp}}} \right)}{k}\right] \quad \text{and} \quad \FDR^{\infty}_{\texttt{l}}(k) =\lim_{n\rightarrow \infty } \overline{\FDR}_{\texttt{l}}(\widehat \cU^{(n)}_k).
\end{equation}
Controlling $\overline{\FDR}_{\texttt{l}}$ provides control on $\FDR_{\texttt{l}}$ as $\FDR_{\texttt{l}} \leq \overline{\FDR}_{\texttt{l}}.$
Moreover, one can obtain an asymptotic characterization of the upper-bounding FDR for any fixed $k\in \NN$, 
\begin{equation} \label{eq:key-idea-as}
\FDR_{\texttt{l}}^\infty(k) = \lim_{n\rightarrow\infty }1-\frac{2}{k}\sum_{i=1}^{k \wedge r^*} \frac{D_{\mu_{\bE}}\left(\sigma_i(\bX_n)\right) \cdot \varphi_{\mu_{\bE}}\left(\sigma_i(\bX_n); 1\right)}{D_{\mu_{\bE}}'(\sigma_i(\bX_n))}.\end{equation}
The quantity $\FDR^\infty_{\textit{r}}(k)$ is controlled by an analogous limit where we substitute $\varphi_{\mu_{\bE}}\left(\sigma_i(\bX_n); 1\right)$ by $\varphi_{\mu_{\bE}}\left(\sigma_i(\bX_n); \vartheta\right)$.  The expression \eqref{eq:key-idea-as} presents the key insight underlying \cref{alg:FDR-as}: the method employs the spectrum of $\bX_n$ and a rank estimate $\widehat r$ to estimate the right-hand-side of \eqref{eq:key-idea-as} and uses this estimate as a proxy for the true FDR.

The following is our main result concerning columnspace FDR control; rowspace FDR control can be obtained with straightforward modifications.

\begin{theorem}[\textbf{FDR control for asymmetric matrices}]\label{thm:main-as}
Consider a sequence of growing-size problem instances $\left\{\bX_n = \bA_n + \bE_n \in \RR^{n \times m}\right\}$ that satisfy Assumptions~\ref{ass:amodel1} and~\ref{ass:amodel2}. Further, suppose the rank estimate $\widehat r$ required for \cref{alg:FDR-as} is such that $$r^\star \leq \widehat r \leq r$$ almost surely for all large $n$.  For a generic $\alpha \in (0,1)$ and for all large $n$, the output of \cref{alg:FDR-as} satisfies $\FDR_{\texttt{l}}\left(\widehat \cU^{(n)}_k\right) \leq \alpha$, which in turn implies that $\FDR_{\texttt{l}}\left(\widehat \cU^{(n)}_k\right)\leq \alpha$. Moreover, if $\alpha \leq \FDR^{\infty}_{\texttt{l}}(r)$, the output $\widehat k$ of \cref{alg:FDR-as} satisfies 
  $$\widehat k = \max\{k \in [r] \mid \overline{\FDR}_{\texttt{l}}\left(\widehat \cU^{(n)}_k\right) \leq \alpha\}.$$
\end{theorem}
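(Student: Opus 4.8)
The proof mirrors that of \cref{thm:main}, with eigenvalues and the Cauchy transform $G_{\mu_{\bE}}$ replaced throughout by singular values and the $D$-transform $D_{\mu_{\bE}}$. It rests on two pillars. The first is an asymptotic description of the upper-bounding FDR: the identity~\eqref{eq:key-idea-as}, together with the facts that $\FDR^{\infty}_{\texttt{l}}(k)$ is well-defined and nondecreasing in $k$ and that $\FDR_{\texttt{l}} \le \overline{\FDR}_{\texttt{l}}$ (immediate from $\bar{\cU}^{(n)} \subseteq \cU^{(n)}$). The second is the consistency of the empirical transforms $\widehat{\varphi}(\cdot\,;q)$, $\widehat{D}$ and their derivatives from Step~2 of \cref{alg:FDR-as}. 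Granting both, the conclusion follows exactly as in the symmetric case. Fix a generic $\alpha$, i.e.\ $\alpha \notin \{\FDR^{\infty}_{\texttt{l}}(k): k \in \NN\}$. For every fixed $k$ one gets $\widehat{\FDR}_{\texttt{l}}(\widehat{\cU}^{(k)}) \to \FDR^{\infty}_{\texttt{l}}(k)$ almost surely, so for all large $n$ the index set $\{k \in [r]: \widehat{\FDR}_{\texttt{l}}(\widehat{\cU}^{(k)}) \le \alpha\}$ equals the initial segment $\{k \in [r]: \FDR^{\infty}_{\texttt{l}}(k) \le \alpha\}$. For $k > \widehat{r}$ the sum defining $\widehat{\FDR}_{\texttt{l}}(\widehat{\cU}^{(k)})$ is truncated at the fixed index $\widehat{r}$, so it takes the form $1 - 2 S_n/k$ for an $n$-dependent constant $S_n$ and is therefore monotone in $k$; combined with $\widehat{\FDR}_{\texttt{l}}(\widehat{\cU}^{(r)}) \to \FDR^{\infty}_{\texttt{l}}(r) > \alpha$ (using $\alpha \le \FDR^{\infty}_{\texttt{l}}(r)$ and genericity), this forces $\widehat{\FDR}_{\texttt{l}}(\widehat{\cU}^{(k)}) > \alpha$ for every $k \ge r$ and large $n$, ruling out $\widehat{k} > r$. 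Hence $\widehat{k} = \max\{k \in [r]: \FDR^{\infty}_{\texttt{l}}(k) \le \alpha\} = \max\{k \in [r]: \overline{\FDR}_{\texttt{l}}(\widehat{\cU}^{(n)}_k) \le \alpha\}$ for large $n$, and since $\overline{\FDR}_{\texttt{l}}(\widehat{\cU}^{(n)}_{\widehat{k}}) \to \FDR^{\infty}_{\texttt{l}}(\widehat{k}) < \alpha$ we conclude $\FDR_{\texttt{l}}(\widehat{\cU}^{(n)}_{\widehat{k}}) \le \overline{\FDR}_{\texttt{l}}(\widehat{\cU}^{(n)}_{\widehat{k}}) \le \alpha$.

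For the first pillar I would expand $\Tr\big(P_{\widehat{\cU}^{(n)}_k} P_{\bar{\cU}^{(n)^{\perp}}}\big) = k - \sum_{i=1}^{k}\sum_{j=1}^{r^{\star}} \langle \widehat{\bu}_i, \bu_j\rangle^2$ and apply the rectangular analog of the BBP theorem, due to Benaych-Georges and Nadakuditi~\cite{benaych2012singular}: under \cref{ass:amodel1,ass:amodel2}, for each $i \le r^{\star}$ the singular value $\sigma_i(\bX_n)$ converges almost surely to a limit $\rho_i > b$ determined by $\theta_i$ and $D_{\mu_{\bE}}$, and the squared overlap $\langle \widehat{\bu}_i, \bu_j\rangle^2$ converges almost surely to the summand appearing in~\eqref{eq:key-idea-as} when $i = j \le r^{\star}$ and to $0$ when $i \ne j$. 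The edge-decay clause of \cref{ass:amodel2} is what guarantees that $D_{\mu_{\bE}}(b^+)$ is finite — so that the BBP threshold is meaningful and $\rho_i > b$ strictly for the detectable components — while forcing $D'_{\mu_{\bE}}(b^+) = -\infty$, a fact used below. Substituting, passing to the limit under the expectation by bounded convergence (the integrand lies in $[0,1]$) and dividing by $k$ yields~\eqref{eq:key-idea-as}; the well-definedness and monotonicity of $\FDR^{\infty}_{\texttt{l}}$ are obtained as in the symmetric case (cf.\ \cref{sec:proof-main} and \cref{lem:fdr-convergence}).

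For the second pillar, observe that $\widehat{\varphi}(y;q)$ and $\widehat{\varphi}'(y;q)$ are precisely $q\int \frac{y}{y^2 - t^2}\,d\mu(t) + \frac{1-q}{y}$ and its derivative, integrated against the empirical singular-value measure of $\bX_n$ with its top $\widehat{r}$ atoms removed. Since $r^{\star} \le \widehat{r} \le r$ with $r$ fixed, removing these finitely many atoms does not affect the weak limit $\mu_{\bE_n} \to \mu_{\bE}$, and by the edge-convergence clause of \cref{ass:amodel2} the surviving singular values remain in a fixed compact set whose maximum tends to $b$; hence $\widehat{\varphi}(\cdot\,;q), \widehat{\varphi}'(\cdot\,;q), \widehat{D}, \widehat{D}'$ converge uniformly to their population counterparts on compact subsets of $(b,\infty)$. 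Evaluated at $\sigma_i(\bX_n) \to \rho_i > b$ for $i \le r^{\star}$, this gives termwise convergence to the summands of~\eqref{eq:key-idea-as}; for the surplus indices $r^{\star} < i \le \widehat{r}$ one has $\sigma_i(\bX_n) \to b$, and here $D'_{\mu_{\bE}}(b^+) = -\infty$ (and the matching blow-up of $\widehat{D}'$) makes the terms $\widehat{D}(\sigma_i)\widehat{\varphi}(\sigma_i;q)/\widehat{D}'(\sigma_i)$ vanish in the limit, so they do not corrupt the estimate. This yields $\widehat{\FDR}_{\texttt{l}}(\widehat{\cU}^{(k)}) \to \FDR^{\infty}_{\texttt{l}}(k)$ almost surely for every fixed $k$, closing the loop.

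The main obstacle is the first pillar, and within it the two difficulties already present in the symmetric analysis: handling possible coincidences among the $\theta_i$, where the individual vectors $\widehat{\bu}_i, \bu_j$ are not well-defined and one must argue instead with the spectral projectors onto the corresponding isotypic blocks (this is where one relies most heavily on~\cite{benaych2012singular}); and the components with $\theta_j$ below the BBP threshold, whose singular vectors are genuinely uncharacterized — the open problem flagged before~\eqref{eq:upper-fdr-main} — which we sidestep by design by passing to the observable-rank subspace $\bar{\cU}^{(n)}$, at the cost of bounding rather than computing $\FDR_{\texttt{l}}$. The surplus-term control in the second pillar when $\widehat{r} > r^{\star}$ is a lesser technical point, but it, too, is dispatched uniformly by the edge-decay assumption. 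All remaining ingredients — uniform consistency of the transform estimates and the extraction of $\widehat{k}$ — are a routine transcription of the symmetric argument.
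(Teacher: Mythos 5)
Your high-level route coincides with the paper's: transcribe the proof of \cref{thm:main}, replacing \cref{thm:raj}, \cref{prop:nice-edge}, and \cref{prop:zero-correlation} by their asymmetric counterparts (\cref{thm:raj-as}, \cref{prop:nice-edge-as}, \cref{prop:zero-correlation-as}) and eigenvalues/Cauchy transforms by singular values/$D$-transforms. The gap is that you dismiss as ``obtained as in the symmetric case'' precisely the one ingredient the paper singles out as \emph{not} transcribing: the asymmetric ratio lemma. In the symmetric case, the facts that the FDR estimates and $\FDR^\infty$ take values in $[0,1]$ and are nondecreasing in $k$, and that the per-component ratios lie in $[-1,0]$, all come from \cref{lem:ratio-properties}, a short Jensen/Cauchy--Schwarz computation for $G_\mu^2/G'_\mu$. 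Its asymmetric analog --- that $x\mapsto 2\,D_\mu(x)\varphi_\mu(x;1)/D'_\mu(x)$ is continuous, $[-1,0]$-valued and nonincreasing on $(b,\infty)$ --- is \cref{lem:ratio-properties-as}, and its proof (the identities \eqref{eq:oh-nice}--\eqref{eq:oh-nice2}, the reduction to $\Theta=1/\Phi$, and the comparison in $q$ of \cref{claim:omg}) is the bulk of the paper's argument in Section~\ref{sec:proof-main-as}; the product structure $D_\mu=\varphi_\mu(\cdot;1)\varphi_\mu(\cdot;\vartheta)$ makes it genuinely harder than the symmetric lemma. Without it, your claims that $\FDR^\infty_{\texttt{l}}$ is nondecreasing and that the summands are nonpositive are unsupported; note also that ``$1-2S_n/k$ is therefore monotone in $k$'' has the direction you need only once the sign of $S_n$ is pinned down, which is exactly the nonpositivity statement of the lemma.

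A second, related soft spot is your treatment of the surplus indices $r^\star<i\le\widehat r$: uniform convergence of $\widehat\varphi,\widehat D,\widehat D'$ on compact subsets of $(b,\infty)$ does not apply at $\sigma_i(\bX_n)\to b$, and the \emph{empirical} numerator $\widehat D(\sigma_i)\,\widehat\varphi(\sigma_i;q)$ can blow up when $\sigma_i$ is extremely close to $\sigma_{\widehat r+1}(\bX_n)$; finiteness of the population quantity $D_{\mu_{\bE}}(b^+)$ does not control this. The paper's argument (mirroring the symmetric $T_2$ bound in \cref{lem:fdr-convergence}) handles this by applying the ratio lemma to the empirical measure to get the a priori $[-1,0]$ bound, and then drives the derivative to infinity via the truncation/Fatou argument together with \cref{prop:nice-edge-as}. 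On the positive side, your way of ruling out $\widehat k>r$ --- checking the finitely many $k\in[r]$ by pointwise convergence and exploiting that the sum is frozen for $k>\widehat r$ --- is a legitimate, slightly lighter alternative to the paper's appeal to monotonicity of the estimated FDR in $k$; but it still requires the sign/boundedness information above, so \cref{lem:ratio-properties-as} (or at least its $[-1,0]$-valued part) cannot be bypassed and must be stated and proved.
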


The proof of this theorem is given in Section~\ref{sec:proof-main-as}.

\subsection{Rank Estimation}

The extension of our rank estimator for the asymmetric case is direct -- we substitute eigenvalues with singular values. 
\begin{algorithm}[h!]
  \caption{\textsc{RankEstimate} for asymmetric matrices} %
  \label{alg:rank-estimate-as}
  {\bf Input}: Nonincreasing spectrum $\sigma(\bX)$, %
  {\it optional} $p > 0$ (default to $p = 1$).\\
{\bf Output}: Estimate of the identifiable rank $\widehat r$.\vspace{.2cm}\\
{\bf Step 1} For each $j  \leq n-1$ compute the eigenvalue spacing
$$ \Delta_{j} \leftarrow {\sigma_{j-1}(\bX) - \sigma_{j}(\bX)}.$$
{\bf Step 2} 
Compute
$$
\widehat r \leftarrow \max\left\{j \in \left[ n/2 \right] \,\Big| \, \Delta_{j+1} > p \cdot n^{-1/2}\right\}.
$$
\end{algorithm}
The method relies on an equivalent version of Assumption~\ref{ass:spacings}.
   \begin{assumption}\label{ass:spacings-as}
   The following conditions hold for the sequence $\bE_n$. 
   \begin{enumerate}[leftmargin=.4cm]
       \item[] (\textbf{Connected support}) The support of the asymptotic distribution $\mu_{\bE}$ is connected.
       \item[] (\textbf{Singular value spacings}) Let $s_{j}^{(n)} = \sigma_{j}(\bE_{n}) - \sigma_{j+1}(\bE_{n})$, for all large $n$ and all $i \leq n,$
     $$
     s_{i}^{(n)} = o \left( n^{-1/2}\right) \qquad\text{almost surely.}
     $$
   \end{enumerate}
\end{assumption}

Next, we obtain the asymmetric counterpart of Theorem~\ref{thm:rank-estimate}.
\begin{theorem}[\textbf{Rank estimate}]\label{thm:rank-estimate-as} Consider a sequence of problem instances $\{\bX_n = \bA_n + \bE_n ~|~ \bA_n \in \SSS^n_+, ~ \bE_n \in \SSS^n\}$ for which Assumptions \ref{ass:amodel1}, \ref{ass:amodel2}, and \ref{ass:spacings-as} hold.  Then, the output of~\cref{alg:rank-estimate-as}, $\textsc{RankEstimate}(\bX_{n}),$  almost surely satisfies 
  $$
  r^\star \leq \textsc{RankEstimate}(\bX_{n}) \leq r \qquad \text{for all large enough }n.
  $$
  \end{theorem}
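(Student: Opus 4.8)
\emph{Overall approach.} The plan is to transcribe the proof of Theorem~\ref{thm:rank-estimate} to singular values (the setting of Assumptions~\ref{ass:amodel1}--\ref{ass:amodel2} and Algorithm~\ref{alg:rank-estimate-as}): replace Weyl's eigenvalue interlacing by its singular-value analogue, and the eigenvalue BBP transition~\eqref{eq:bbp} by the singular-value BBP transition of~\cite{benaych2012singular}. Throughout I would work on the almost-sure event on which the conclusions of Assumptions~\ref{ass:amodel2} and~\ref{ass:spacings-as} hold --- weak convergence $\mu_{\bE_n} \Rightarrow \mu_{\bE}$, convergence of the extreme singular values of $\bE_n$ to the edges $a,b$, and $\max_{i \le n} s_i^{(n)} = o(n^{-1/2})$ --- together with the almost-sure singular-value BBP transition for $\bX_n = \bA_n + \bE_n$, which is licensed by the independence and orthogonal-invariance hypotheses of Assumption~\ref{ass:amodel1}. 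Since $r$ and $r^\star$ are fixed integers, the range restriction $j \in [n/2]$ in Step~2 of Algorithm~\ref{alg:rank-estimate-as} is vacuous for large $n$ at the indices we care about.

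\emph{Upper bound $\widehat r \le r$.} Here I would use only interlacing and spacings. Since $\rank(\bA_n) = r$, the Weyl-type singular-value inequality $\sigma_{i+j-1}(\bM+\bN) \le \sigma_i(\bM)+\sigma_j(\bN)$, applied to $\bX_n = \bA_n + \bE_n$ and to $\bE_n = \bX_n + (-\bA_n)$, gives the two-sided bound
\[
\sigma_{j+r}(\bE_n) \;\le\; \sigma_j(\bX_n) \;\le\; \sigma_{j-r}(\bE_n) \qquad (j > r),
\]
so that for $r < j \le n/2$ the gap telescopes:
\[
\sigma_j(\bX_n) - \sigma_{j+1}(\bX_n) \;\le\; \sigma_{j-r}(\bE_n) - \sigma_{j+r+1}(\bE_n) \;=\; \sum_{i=j-r}^{j+r} s_i^{(n)} \;\le\; (2r+1)\max_{i \le n} s_i^{(n)} \;=\; o(n^{-1/2}),
\]
uniformly in $j$. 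Hence for all large $n$ every index $j > r$ violates the threshold condition $\Delta_{j+1} > p\,n^{-1/2}$, so $\widehat r \le r$.

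\emph{Lower bound $\widehat r \ge r^\star$.} This is where the BBP transition does the work; the case $r^\star = 0$ is trivial, so take $r^\star \ge 1$. By the singular-value BBP transition, $\sigma_i(\bX_n) \to \rho_i$ for some $\rho_i > b$ for each $i \le r^\star$, whereas for $r^\star < i \le r$ the corresponding signal singular value is absorbed into the bulk edge, $\sigma_i(\bX_n) \to b$; and for $i > r$ the interlacing bound above, combined with $\sigma_1(\bE_n) \to b$ and $\sigma_k(\bE_n) \to b$ for every fixed $k$, forces $\sigma_i(\bX_n) \to b$ as well. Thus $\sigma_{r^\star+1}(\bX_n) \to b$ regardless of whether $r^\star < r$ or $r^\star = r$, and so
\[
\Delta_{r^\star+1}(\bX_n) = \sigma_{r^\star}(\bX_n) - \sigma_{r^\star+1}(\bX_n) \;\longrightarrow\; \rho_{r^\star} - b \;>\; 0 ,
\]
which exceeds $p\,n^{-1/2} \to 0$ for all large $n$; since also $r^\star \le n/2$, the index $j = r^\star$ is eligible in Step~2 and $\widehat r \ge r^\star$. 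Together with the upper bound this yields $r^\star \le \widehat r \le r$ for all large enough $n$, almost surely.

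\emph{Anticipated difficulty.} The one non-elementary ingredient is the almost-sure singular-value BBP transition from~\cite{benaych2012singular}: I need it precisely to conclude that $\sigma_{r^\star+1}(\bX_n)$ relaxes to exactly the bulk edge $b$ (rather than protruding), which is what makes the limiting gap $\rho_{r^\star} - b$ strictly positive, and it is here that the orthogonal invariance of Assumption~\ref{ass:amodel1} and the edge-decay condition of Assumption~\ref{ass:amodel2} enter. Everything else reduces to singular-value interlacing and the spacing bound of Assumption~\ref{ass:spacings-as}; the only other point needing a short argument is that $\sigma_k(\bE_n) \to b$ for each fixed $k$, which I would deduce from $\sigma_1(\bE_n) \to b$ together with $\mu_{\bE_n} \Rightarrow \mu_{\bE}$ (whose support reaches $b$).
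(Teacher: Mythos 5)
Your proposal is correct and follows essentially the same route as the paper's proof: singular-value Weyl interlacing plus the spacing bound of Assumption~\ref{ass:spacings-as} to kill all gaps beyond index $r$, and the singular-value BBP transition (Theorem~\ref{thm:raj-as}) to keep the gap at index $r^\star$ bounded away from zero, hence above $p\,n^{-1/2}$ eventually. Your extra argument that $\sigma_{r^\star+1}(\bX_n)\to b$ even when $r^\star=r$ (via interlacing and $\sigma_k(\bE_n)\to b$ for fixed $k$) is a slightly more careful treatment of a point the paper handles by direct citation of Theorem~\ref{thm:raj-as}.
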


The proof of this theorem is given in Section~\ref{sec:proof-rank-estimate-as}.

\subsection{Asymmetric Matrix Ensemble satisfying Assumptions~\ref{ass:amodel2} and~\ref{ass:spacings-as}}

We conclude this section with an example of a matrix ensemble that satisfies our assumptions. 
\begin{definition}\label{def:data}
A \emph{Wishart-factor ensemble} is a sequence of 
$\{\bY \in \RR^{n \times m}\}$ with $n \leq m$ for which the following hold.
    \begin{enumerate}
        \item (\textbf{Independence}) The entries 
        $\{\bY_{ij} \mid 1 \leq i \leq n, 1 \leq j \leq m\}$ are independent. 
        \item (\textbf{Mean and variance}) For all $n$ and all $i,j \leq n$ we have $\EE \bY_{ij} = 0$ and $\EE |\bY_{ij}|^2 = \frac{1}{\sqrt{m}}.$
        \item (\textbf{Tail decay}) There exists a constant $c > 0$ such that for all $n$ and all $i, j\leq n$ we have that $\PP(|\bY_{ij}| \geq t) \leq c \exp\left(-(t\sqrt{m})^{c}\right)$.
        \item (\textbf{Limiting size}) The asymptotic ratio of the matrix dimensions $\vartheta := \lim_{n \rightarrow \infty } n/m$ satisfies $$ 0< \vartheta  \leq 1. $$
    \end{enumerate}
\end{definition}
These matrices simply correspond to the factors in the definition of a Wishart ensemble. The next proposition shows that Wishart-factor ensembles satisfy the desired assumptions.
\begin{proposition}[\textbf{Wishart factors}] \label{prop:data}
  Assume that the matrices $\{\bE_{n}\}$ form a Wishart-factor ensemble with $\vartheta \neq 1$ and that their singular vector distribution is orthogonally invariant. Then, Assumptions~\ref{ass:amodel2} and~\ref{ass:spacings-as} hold.
\end{proposition}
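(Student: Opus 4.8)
The plan is to reduce Proposition~\ref{prop:data} to results already established in the symmetric case, specifically Proposition~\ref{prop:covariance} and the rigidity tools cited there. The observation that makes this work is the singular-value/eigenvalue dictionary: if $\bY_n \in \RR^{n\times m}$ is a Wishart-factor ensemble, then the nonzero singular values of $\bY_n$ are exactly the square roots of the nonzero eigenvalues of the Wishart matrix $\bW_n = \bY_n\bY_n^\top \in \SSS^n$, and $\bW_n$ is precisely a Wishart ensemble in the sense of Definition~\ref{def:wishart} (the four defining conditions on $\bY$ are identical in the two definitions, and the constraint $\vartheta \le 1$ is compatible with $\vartheta \ne 1$). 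So every statement about $\sigma_i(\bE_n)$ can be translated into a statement about $\sqrt{\lambda_i(\bW_n)}$, and we can import what Proposition~\ref{prop:covariance} gives for $\bW_n$ through the map $x \mapsto \sqrt{x}$.

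\textbf{Step 1 (Spectrum).} The empirical singular value distribution $\mu_{\bE_n}$ defined in \eqref{eq:empirical-singular-as} is the pushforward of $\mu_{\bW_n}$ under $x\mapsto\sqrt{x}$. Since Proposition~\ref{prop:covariance} gives that $\mu_{\bW_n}$ converges almost surely weakly to the Marchenko--Pastur law $\nu$ in \eqref{eq:mar-pas}, and $x\mapsto\sqrt{x}$ is continuous and bounded on the compact support of $\nu$ (note $c_- = (1-\sqrt\vartheta)^2 > 0$ since $\vartheta\ne 1$), the pushforward converges weakly to the pushforward of $\nu$, which is compactly supported. This verifies the Spectrum condition of Assumption~\ref{ass:amodel2}.

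\textbf{Step 2 (Edge convergence and decay).} Again by continuity of $\sqrt{\cdot}$ away from $0$, the largest (resp.\ smallest) singular value $\sigma_1(\bE_n) = \sqrt{\lambda_1(\bW_n)}$ (resp.\ $\sigma_n(\bE_n)=\sqrt{\lambda_n(\bW_n)}$) converges a.s.\ to $\sqrt{c_+}$ (resp.\ $\sqrt{c_-}$), which are exactly the edges $b,a$ of $\supp(\mu_{\bE})$; this is where $\vartheta\ne 1$ is essential, since it keeps the lower edge $\sqrt{c_-}$ bounded away from the singularity of $\sqrt{\cdot}$. For the edge-decay exponent: the Marchenko--Pastur density $\nu$ vanishes like a square root at $c_\pm$ (exponent $1/2$), and a change of variables $t = \sqrt{s}$ shows the pushforward density near $\sqrt{c_\pm}$ also vanishes like a square root, so the decay condition holds with $\alpha = 1/2 \in (0,1]$. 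Orthogonal invariance of the singular-vector distribution is assumed in the hypothesis, so the invariance requirement built into our analysis is met. This completes Assumption~\ref{ass:amodel2}.

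\textbf{Step 3 (Connected support and spacings).} The support of the pushforward of $\nu$ is $[\sqrt{c_-},\sqrt{c_+}]$, an interval, so the connected-support condition of Assumption~\ref{ass:spacings-as} holds. For the spacings: $s_i^{(n)} = \sigma_i(\bE_n) - \sigma_{i+1}(\bE_n) = \sqrt{\lambda_i(\bW_n)} - \sqrt{\lambda_{i+1}(\bW_n)}$. Using the elementary bound $\sqrt{x}-\sqrt{y} \le \frac{x-y}{2\sqrt{y}}$ for $0 < y \le x$, and the fact that $\lambda_{i+1}(\bW_n)$ stays above a positive constant for all $i \le n/2$ once $n$ is large (which follows from a.s.\ convergence of the bulk of $\bW_n$'s spectrum to $\nu$ with lower edge $c_- > 0$, again using $\vartheta\ne 1$), the singular-value spacing is bounded by a constant multiple of the eigenvalue spacing $\lambda_i(\bW_n)-\lambda_{i+1}(\bW_n)$. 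The latter is $o(n^{-1/2})$ a.s.\ for all $i$ by Proposition~\ref{prop:covariance} (its Spacings conclusion for the Wishart ensemble $\bW_n$). Hence $s_i^{(n)} = o(n^{-1/2})$ a.s., establishing Assumption~\ref{ass:spacings-as}. One cautionary note is that Proposition~\ref{prop:covariance} as stated controls spacings for all $i\le n$, so restricting to $i\le n/2$ is only a convenience; the argument needs the lower-edge separation just to handle the division by $\sqrt{\lambda_{i+1}}$.

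\textbf{The main obstacle} I anticipate is being careful about the lower edge of the spectrum where the map $x\mapsto\sqrt{x}$ has unbounded derivative: this is exactly why the hypothesis $\vartheta\ne 1$ appears, and every one of the three steps above silently uses that $c_- = (1-\sqrt\vartheta)^2$ is bounded away from zero. If one instead wanted to allow $\vartheta = 1$, the smallest singular values of $\bE_n$ would pile up near $0$, the edge-decay exponent would change (the Marchenko--Pastur density blows up like $t^{-1/2}$ near $0$, violating the ``Decay at the edge'' form), and the spacing-transfer bound $\sqrt{x}-\sqrt{y}\le(x-y)/(2\sqrt y)$ would degrade — so the restriction is genuinely needed and not a mere artifact. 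Beyond that, the proof is essentially a change-of-variables bookkeeping exercise layered on top of Proposition~\ref{prop:covariance}, which is why a proof sketch rather than a full proof is appropriate, and indeed the paper defers to~\cref{proof:wishart-spacings} for the analogous sketch.
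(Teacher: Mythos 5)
Your proposal is correct and follows essentially the same route as the paper: form $\bW_n = \bE_n\bE_n^\top$, invoke Proposition~\ref{prop:covariance} for the Wishart ensemble, and transfer the spectrum, edges, and spacings through the map $x\mapsto\sqrt{x}$, which is Lipschitz on the relevant interval precisely because $\vartheta\neq 1$ keeps the lower edge $c_-$ away from zero. Your explicit square-root bound $\sqrt{x}-\sqrt{y}\le (x-y)/(2\sqrt{y})$ and the edge-decay check with exponent $1/2$ are just slightly more detailed versions of the paper's Lipschitz/pushforward-density bookkeeping.
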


The proof of this result is given in Section~\ref{sec:proof-data}.

\section{Numerical Experiments} \label{sec:experiments}

In this section, we present the results experiments that demonstrate the utility of our methods.  The first set of experiments is focused on synthetically-generated data while the second set concerns datasets from RNA sequencing and hyperspectral imaging.  All the code used to generate the experimental results and figures in this paper can be found at
\begin{quote}\centering
\url{https://github.com/mateodd25/FDRSubspaces.jl}.\end{quote}
 In these experiments, we employ a data-driven approach for selecting the parameter $p$ in Algorithm~\ref{alg:rank-estimate} by setting it to 
$\frac{3}{5} \cdot \texttt{median}\left( \left\{ \Delta_j \right\}^{n-1}_{j=1}\right) \cdot n$.
This choice aims to provide a robust estimate of noise scaling (although any positive constant $p$ is in theory adequate under Assumption~\ref{ass:spacings}).

\subsection{Synthetic Data} \label{sec:generated}

We generate matrices of the form $\bX = \bA + \bE$ with a variety of ensembles for the noise $\bE$ and spectral models for the low-rank signal matrices $\bA$, which we now summarize.

\paragraph{Noise.} We consider seven noise ensembles; see Table~\ref{table:noise}.
These are generated as a product of a matrix drawn from a GOE and another matrix for which we vary the distribution as follows:
\begin{equation}\label{eq:correlated-matrix}\bF = \bG \bW.\end{equation}
Here $\bG \in \RR^{n \times m}$ has identically distributed independent entries with $\bG_{ij} \sim N(0, 1/n)$ and $\bW \in \RR^{{m \times m}}$ has a different distribution depending on the problem, thus leading to matrices with correlated entries.  The noise matrix $\bE$ is derived from $\bF$ in one of several ways; see Table~\ref{table:noise}.  The noise models in Table~\ref{table:noise} are known to satisfy Assumption~\ref{ass:model2} and~\ref{ass:amodel2} in all cases, see for example \cite{donoho2023screenot}.  However, to the best of our knowledge, it is not known whether the last four models in Table~\ref{table:noise} have a rigid spectrum in the sense of Assumptions~\ref{ass:spacings} and~\ref{ass:spacings-as}. Nonetheless, we observe that our methods perform reasonably well for all of them.

\begin{table}[t]
	\renewcommand{\arraystretch}{1.3}
	\begin{center}
		\begin{tabular}{lllll}
			\toprule
			\textbf{Distribution}
			                       & \textbf{Symmetric}
			                       & \textbf{Template for} $\bE$                                                                       %
			                       & \textbf{Distribution of} $\bW$
			\\
			\midrule
			\texttt{Wigner}        & Yes                            & $(\bF + \bF^{\top})/2$ & Deterministic $\bW =\bI$                   \\
			\texttt{Wishart}       & Yes                            & $\bF \bF^{\top}$       & Deterministic $\bW = \bI$                  \\
			\texttt{WishartFactor} & No                             & $\bF$                  & Deterministic $\bW = \bI$                  \\
			\texttt{Fisher}        & Yes                            & $(\bF + \bF^{\top})/2$ & $\bW \sim \texttt{Wishart}$                \\
			\texttt{FisherFactor}  & No                             & $\bF$                  & $\bW \sim \texttt{Wishart}$                \\
			\texttt{Uniform}       & Yes                            & $(\bF + \bF^{\top})/2$ & Diagonal $\bW_{ii} = \textbf{Unif}([0,1])$ \\
			\texttt{UniformFactor} & No                             & $\bF$                  & Diagonal $\bW_{ii} = \textbf{Unif}([0,1])$ \\
			\bottomrule
		\end{tabular}
	\end{center}
	\caption{Noise distributions: the matrices $\bF$ is derived from $\bW$ as described by \eqref{eq:correlated-matrix}.}\label{table:noise}
\end{table}

\paragraph{Low-rank signal.} For each noise model from Table~\ref{table:noise}, we consider three types of low-rank signal models: \texttt{well-separated}, \texttt{barely-separated}, and \texttt{entangled}.  Each of these matrices has rank $20$, and their spectrum is given as follows:
\begin{equation}\label{eq:exp-spectrum}
	\theta_{i} = {\texttt{BBP}} + \texttt{shift} + 10 \cdot 1.3^{1-i}\quad \text{for all }i \in \{1,\dots, 20\}.
\end{equation}
The $\theta_i$'s correspond to eigenvalues for symmetric problems and to singular values for asymmetric problems.  In the preceding expression, $\texttt{BBP}$ is an upper estimate of the BBP transition point that changes depending on the noise distribution, \texttt{shift} changes depending on the type of low-rank signal as follows: %
\begin{equation}
	\label{eq:shift}
	\texttt{shift} := \begin{cases}
		1.0                 & \text{ if generating a \texttt{well-separated} instance,}  \\
		0.0                 & \text{ if generating a \texttt{barely-separated} instance,} \\
		- 10 \cdot 1.3^{-9} & \text{ if generating a \texttt{entangled} instance.}
	\end{cases}
\end{equation}
Intuitively, well-separated instances correspond to low-rank signals having spectra that are easier to distinguish from the noise, barely-separated instances to low-rank signals having spectra that are barely distinguishable from the noise, and entangled instances to low-rank signals having some spectral components that are indistinguishable from the noise.  With the above selection of the $\texttt{shift}$ parameter, the observable rank $r^{\star}$ is equal to $20$ (which is the rank $r$ of the low-rank signal) for the first two instances and to $10$ for entangled instances.

 \begin{figure}[t!]
    \def\arraystretch{0}%
    \setlength{\imagewidth}{\dimexpr \textwidth - 4\tabcolsep}%
    \divide \imagewidth by 3
    \hspace*{\dimexpr -\baselineskip - 2\tabcolsep}%
    \begin{tabular}{@{}cIII@{}}
      \stepcounter{imagerow}\raisebox{0.35\imagewidth}{\rotatebox[origin=c]{90}%
        {\strut \texttt{well-separated}}} &
      \includegraphics[width=\imagewidth]{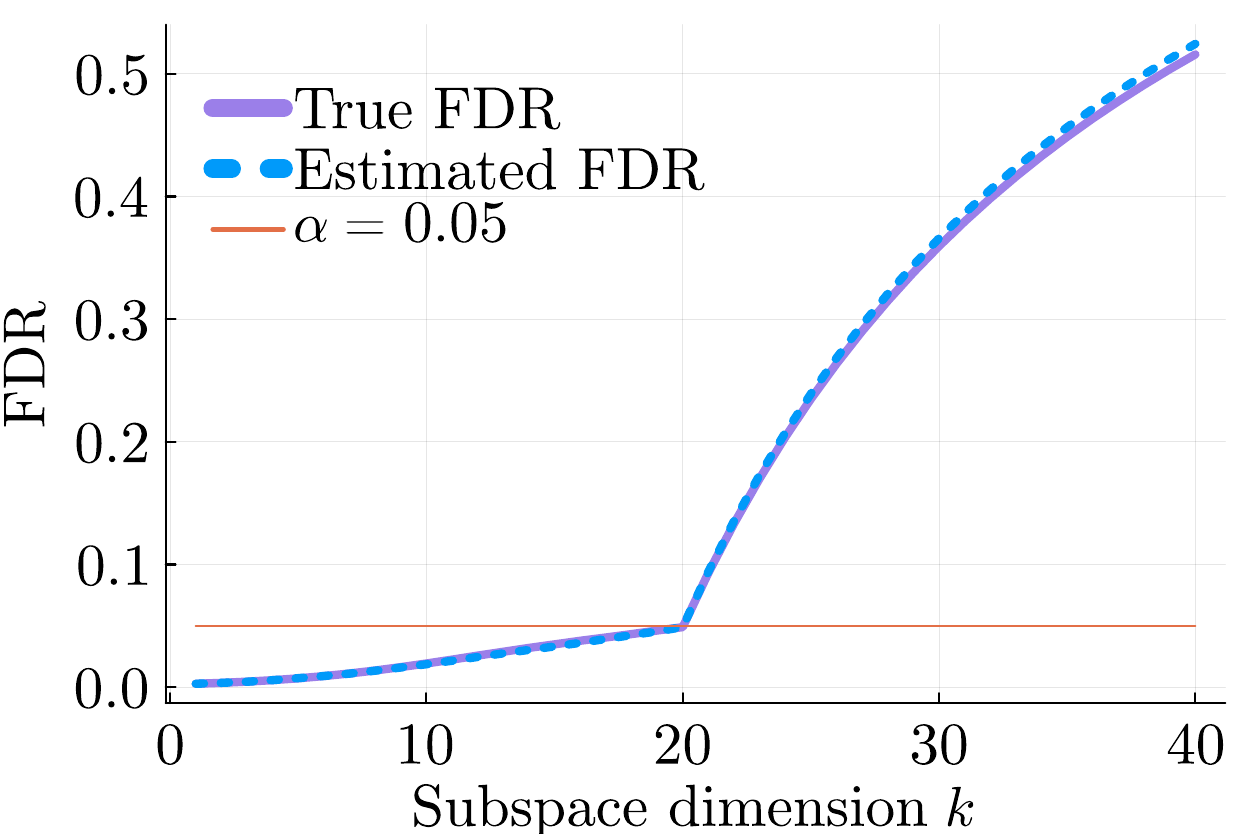} &
      \includegraphics[width=\imagewidth]{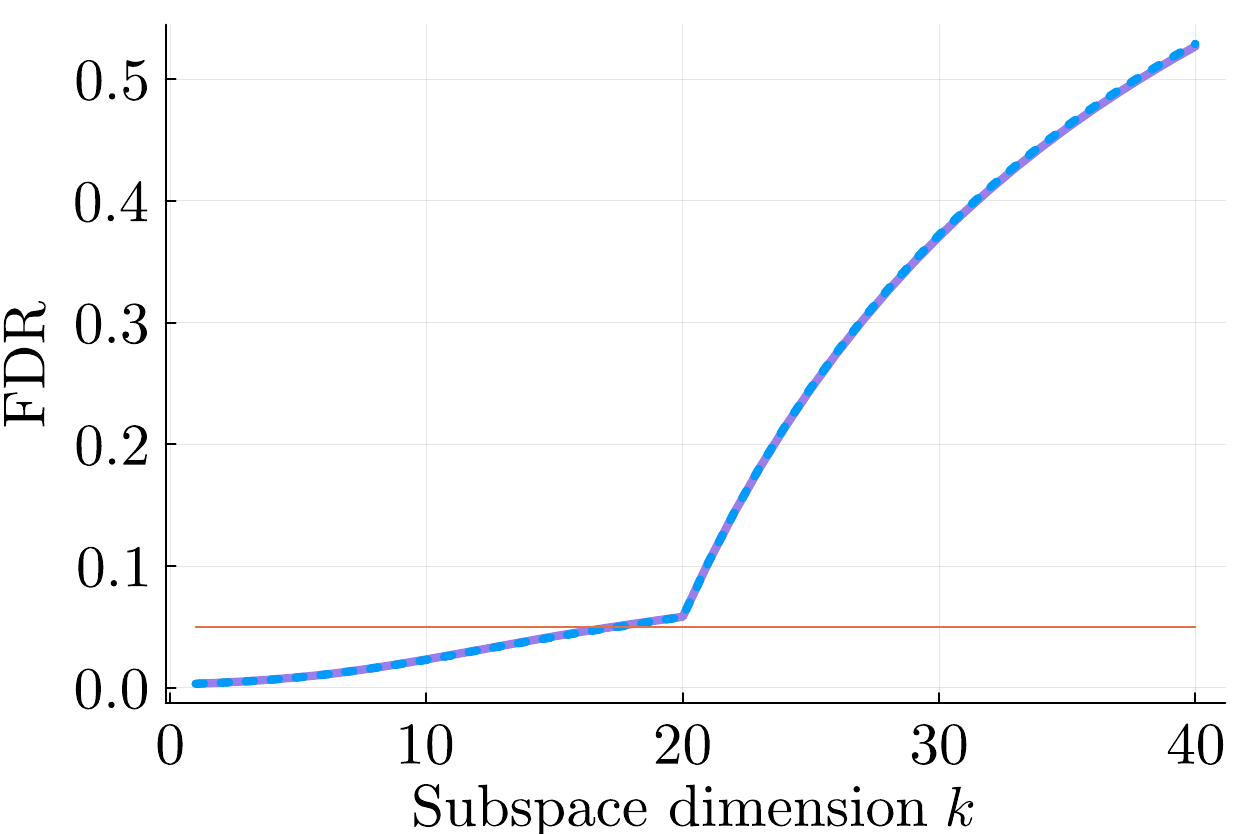}\label{example} &
      \includegraphics[width=\imagewidth]{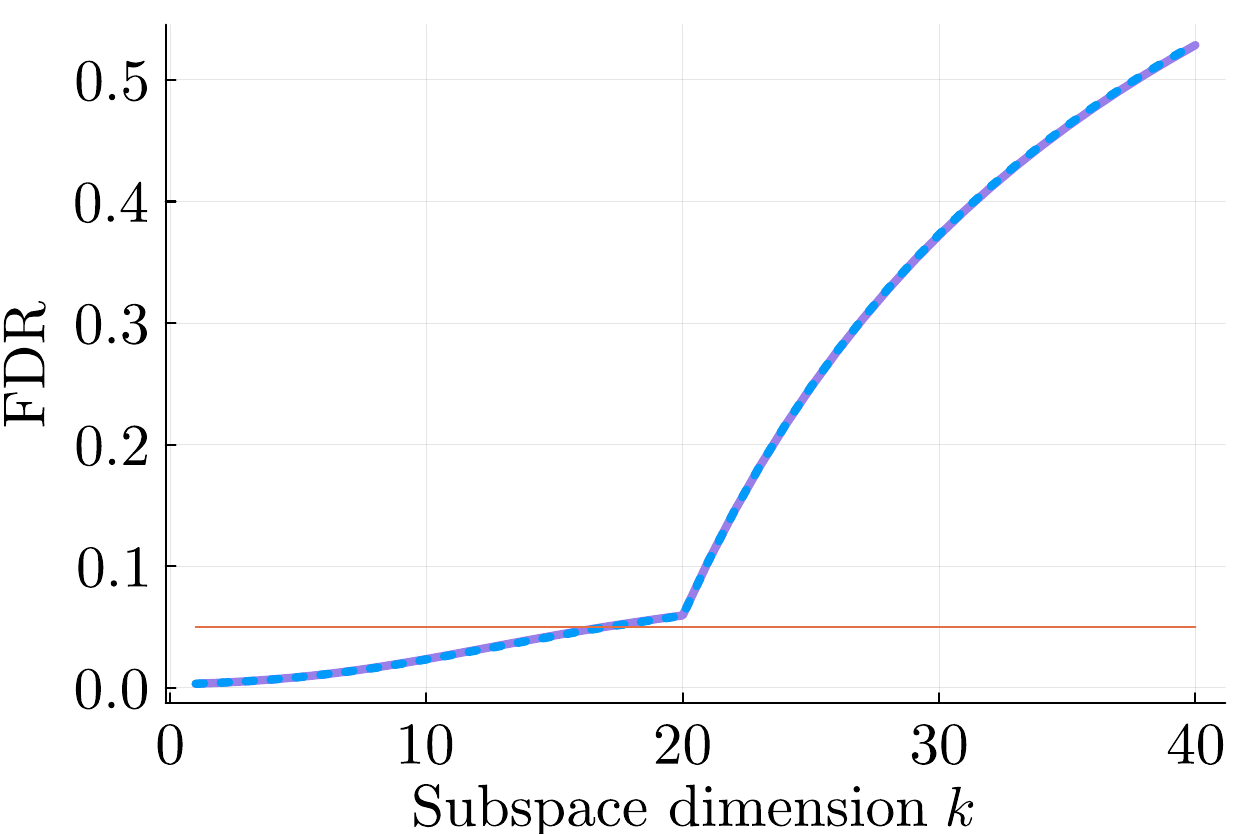} \\[2\tabcolsep]%
      \stepcounter{imagerow}\raisebox{0.35\imagewidth}{\rotatebox[origin=c]{90}%
        {\strut \texttt{barely-separated}}} &
      \includegraphics[width=\imagewidth]{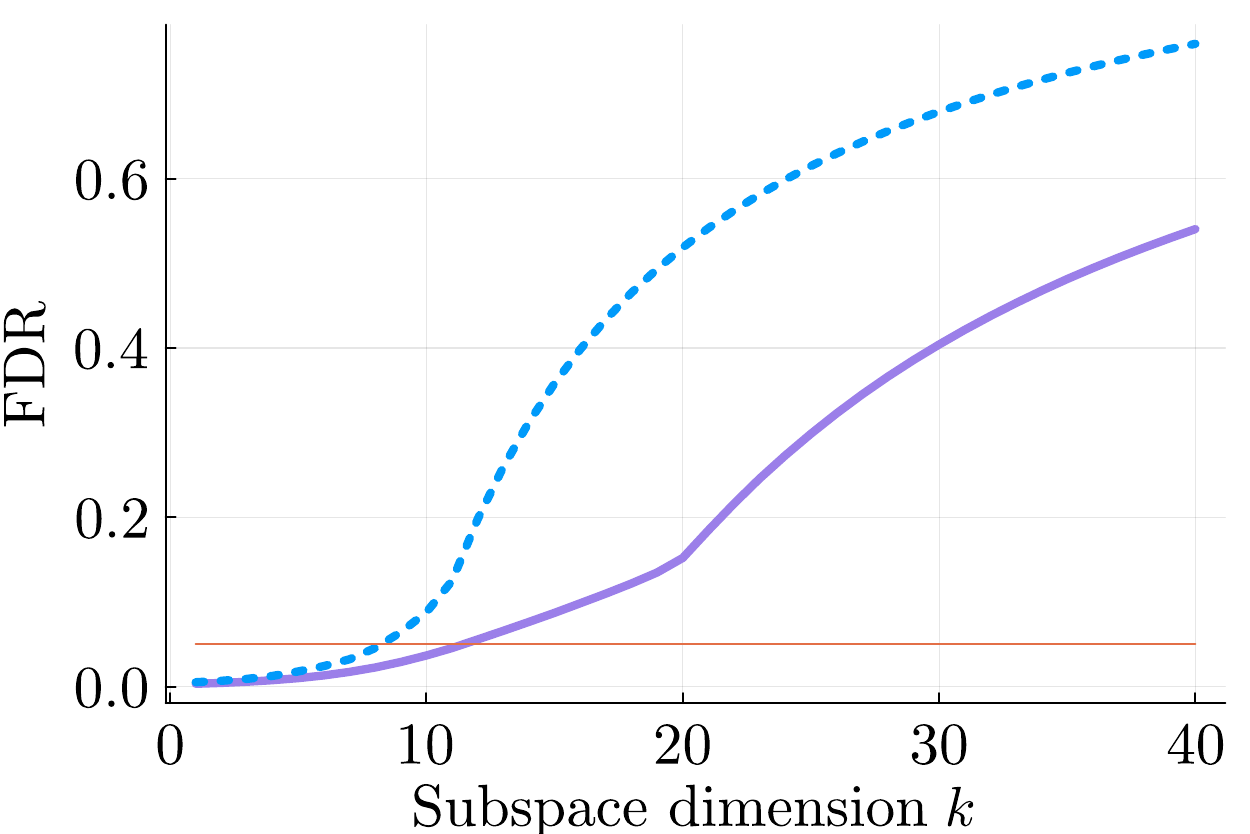} &
      \includegraphics[width=\imagewidth]{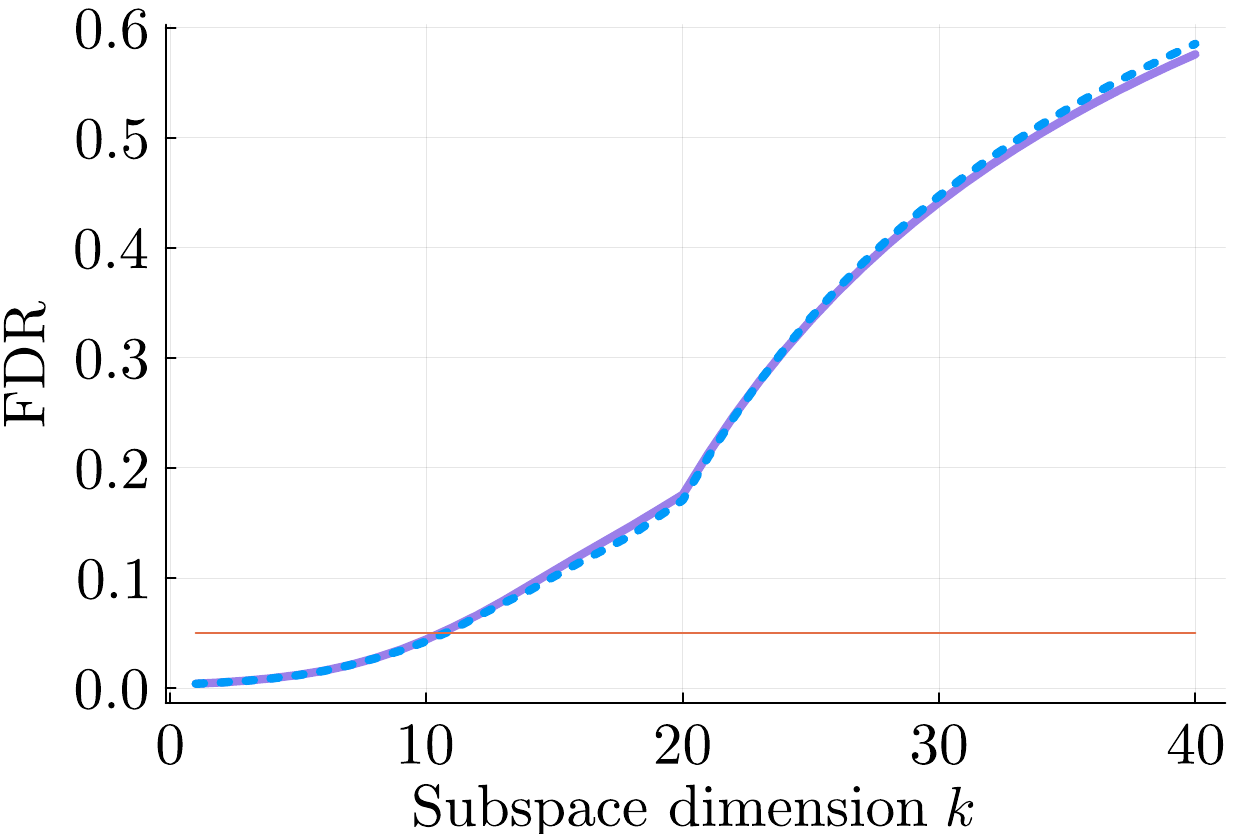} &
      \includegraphics[width=\imagewidth]{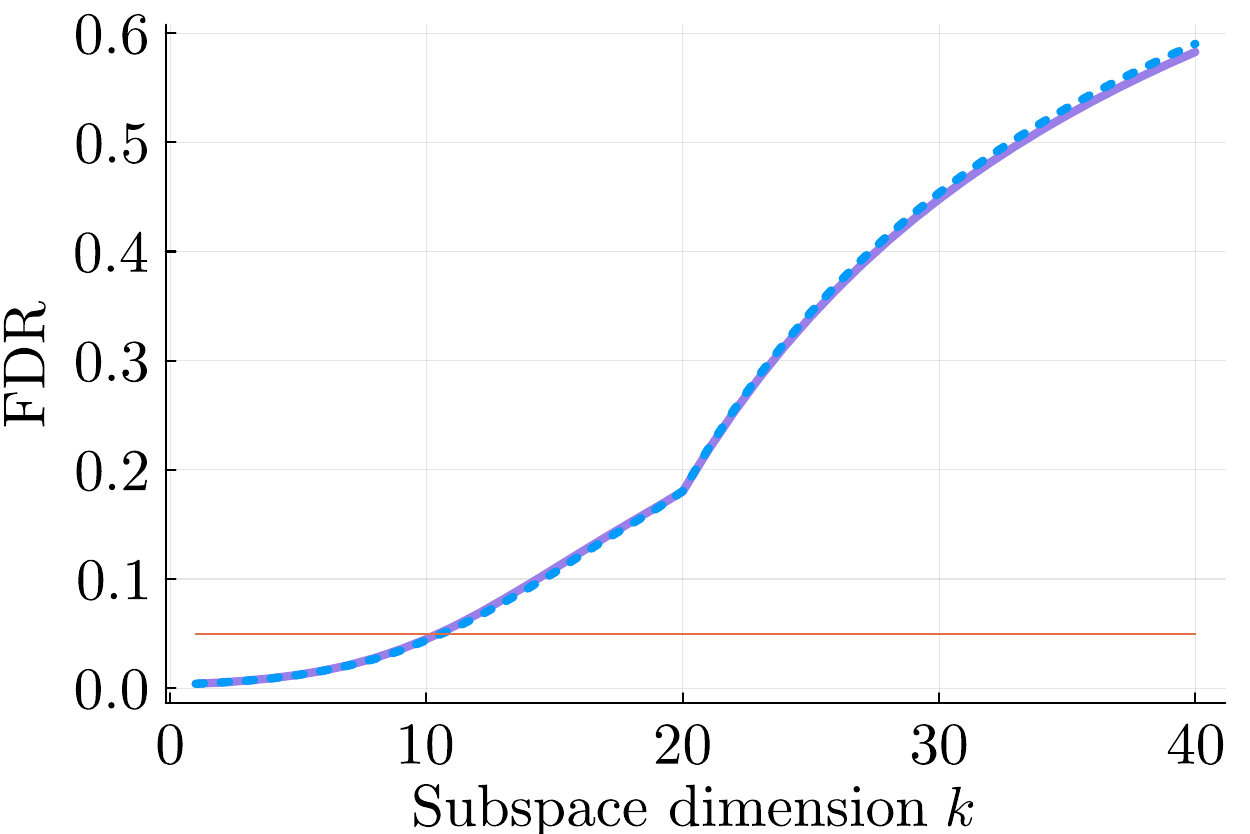} \\[2\tabcolsep]
      \stepcounter{imagerow}\raisebox{0.35\imagewidth}{\rotatebox[origin=c]{90}%
     {\strut \texttt{entangled}}} &
      \includegraphics[width=\imagewidth]{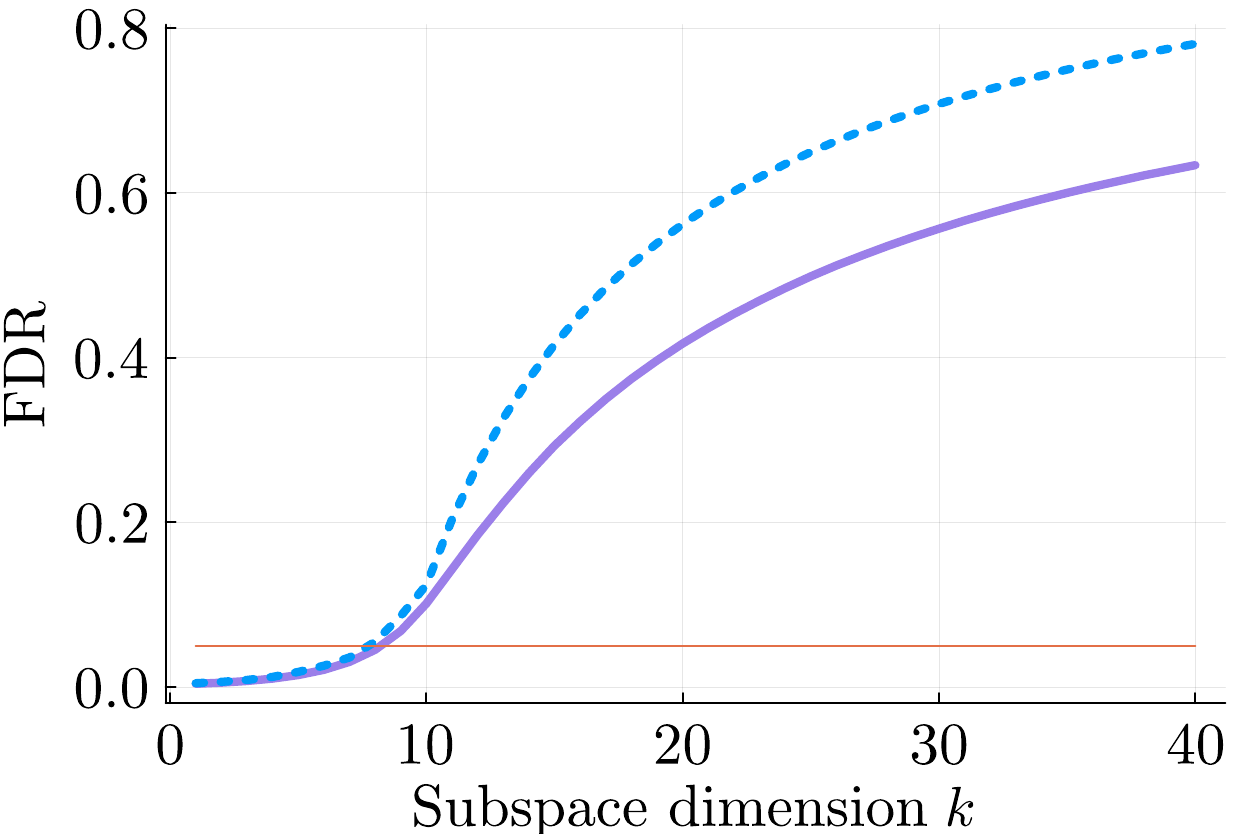} &
      \includegraphics[width=\imagewidth]{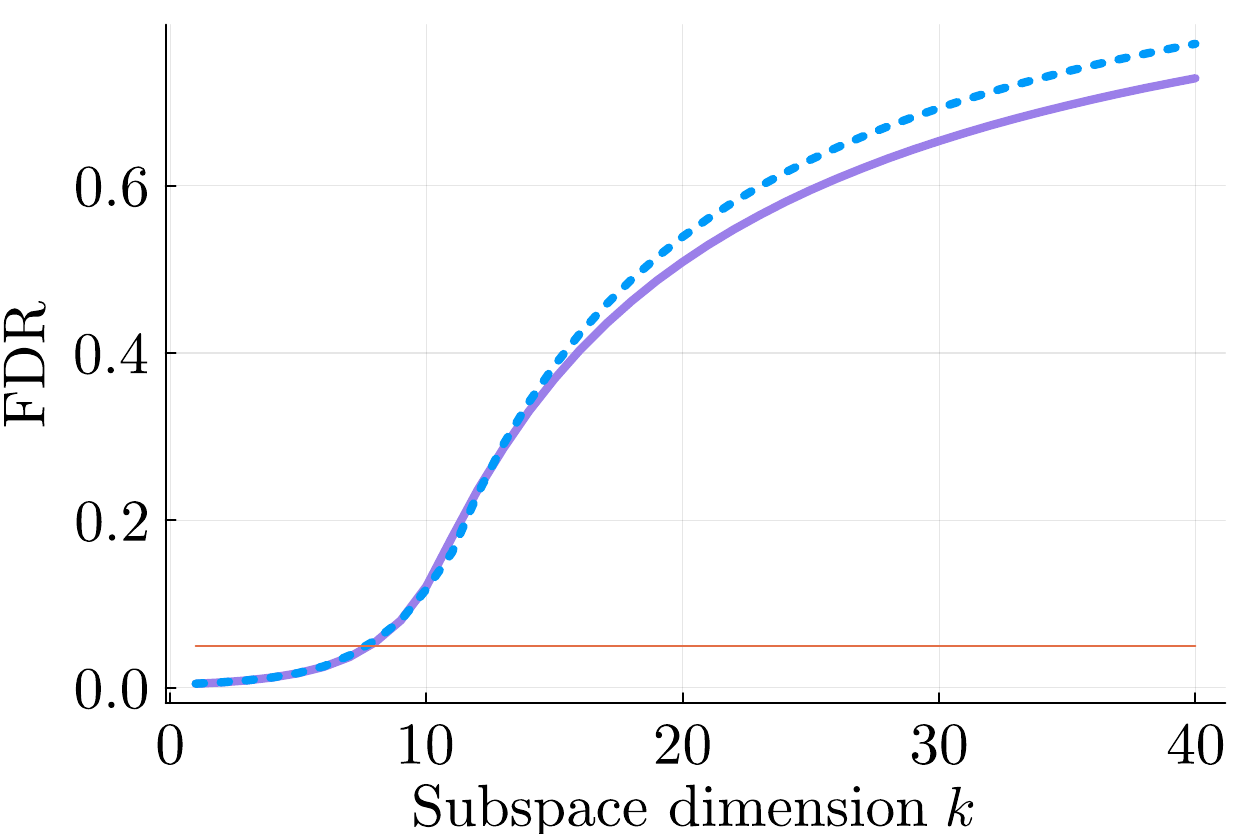} &
      \includegraphics[width=\imagewidth]{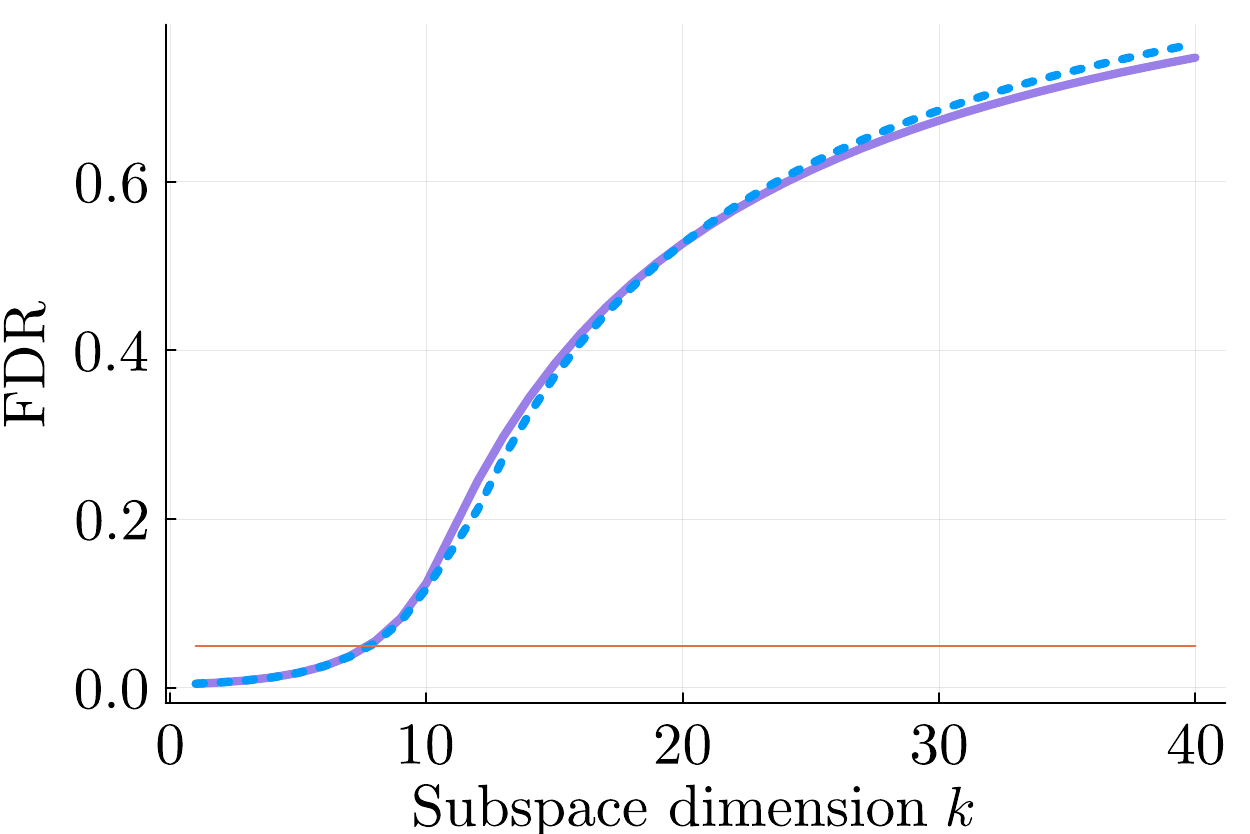} \\[2\tabcolsep]
        \setcounter{imagecolumn}{0} &%
        Dimension $n = 100$ &
        Dimension $n = 500$ &
        Dimenson $n = 1000$
    \end{tabular}
    \caption{Results on FDR estimation for the \texttt{Wigner} ensemble using Algorithms~\ref{alg:FDR} and~\ref{alg:rank-estimate}: the columns correspond to problem sizes $n= 100, 500, 1000$ and the rows correspond to the three models for the spectrum of the low-rank signal from~\eqref{eq:exp-spectrum}.}
	\label{fig:res-wigner}
  \end{figure}

We report results for two noise models in this section, namely the \texttt{Wigner} (symmetric) and the \texttt{WishartFactor} (asymmetric) ensembles.  The conclusions are analogous in the remaining cases and therefore we defer their description to Appendix~\ref{appendix:experiments}.  The results for the \texttt{Wigner} and the \texttt{WishartFactor} cases are given in Figures~\ref{fig:res-wigner} and~\ref{fig:res-wishart-factor}, which plot the true FDR and the estimated FDR against the subspace dimension $k$.  The `true FDR' is obtained using a Monte Carlo method with $100$ repetitions.  The `estimated FDR' is computed using Algorithms~\ref{alg:FDR} and \ref{alg:rank-estimate} for the symmetric case and Algorithms~\ref{alg:FDR-as} and \ref{alg:rank-estimate-as} for the asymmetric case.  

We make two observations regarding our results.  First, our FDR estimates tend to conservative by virtue of being larger than the true FDR; this is a consequence of our rank estimation method yielding smaller values than the actual rank.  This is a feature of both the \texttt{barely-separated} and the \texttt{entangled} cases, particularly for the smallest problem dimension (corresponding to $n = 100$).  Nonetheless, our approach reliably produces subspace estimates for which the false discovery rate is controlled at the desired level.  Second, we observe that the estimated FDR converges rapidly to the true FDR despite the asymptotic nature of our theoretical results, which suggests that our method remains practical even for modest problem dimensions $n$.  In particular, this convergence is notably quicker for smaller values of $k$, which are the cases of most practical interest in obtaining FDR control below a desired threshold.

 \begin{figure}[t!]
    \def\arraystretch{0}%
    \setlength{\imagewidth}{\dimexpr \textwidth - 4\tabcolsep}%
    \divide \imagewidth by 3
    \hspace*{\dimexpr -\baselineskip - 2\tabcolsep}%
    \begin{tabular}{@{}cIII@{}}
      \stepcounter{imagerow}\raisebox{0.35\imagewidth}{\rotatebox[origin=c]{90}%
        {\strut \texttt{well-separated}}} &
      \includegraphics[width=\imagewidth]{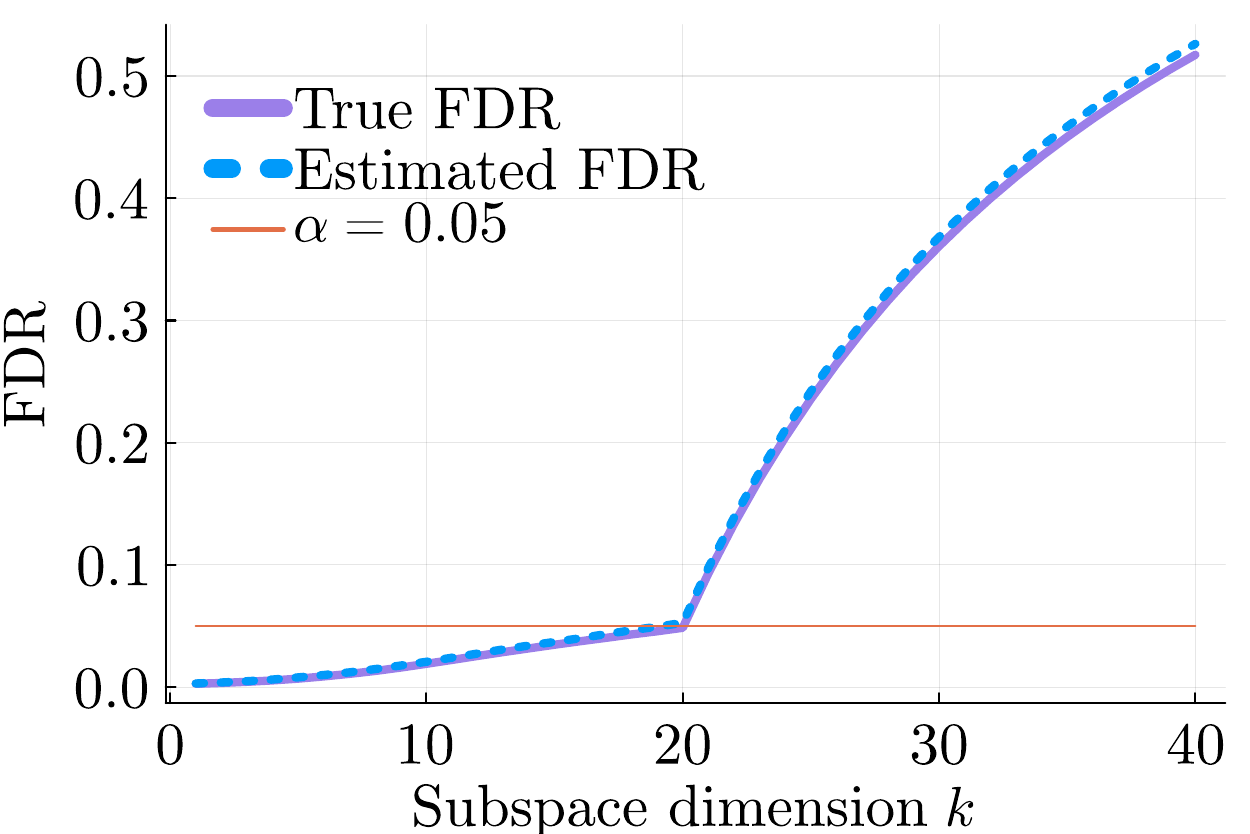} &
      \includegraphics[width=\imagewidth]{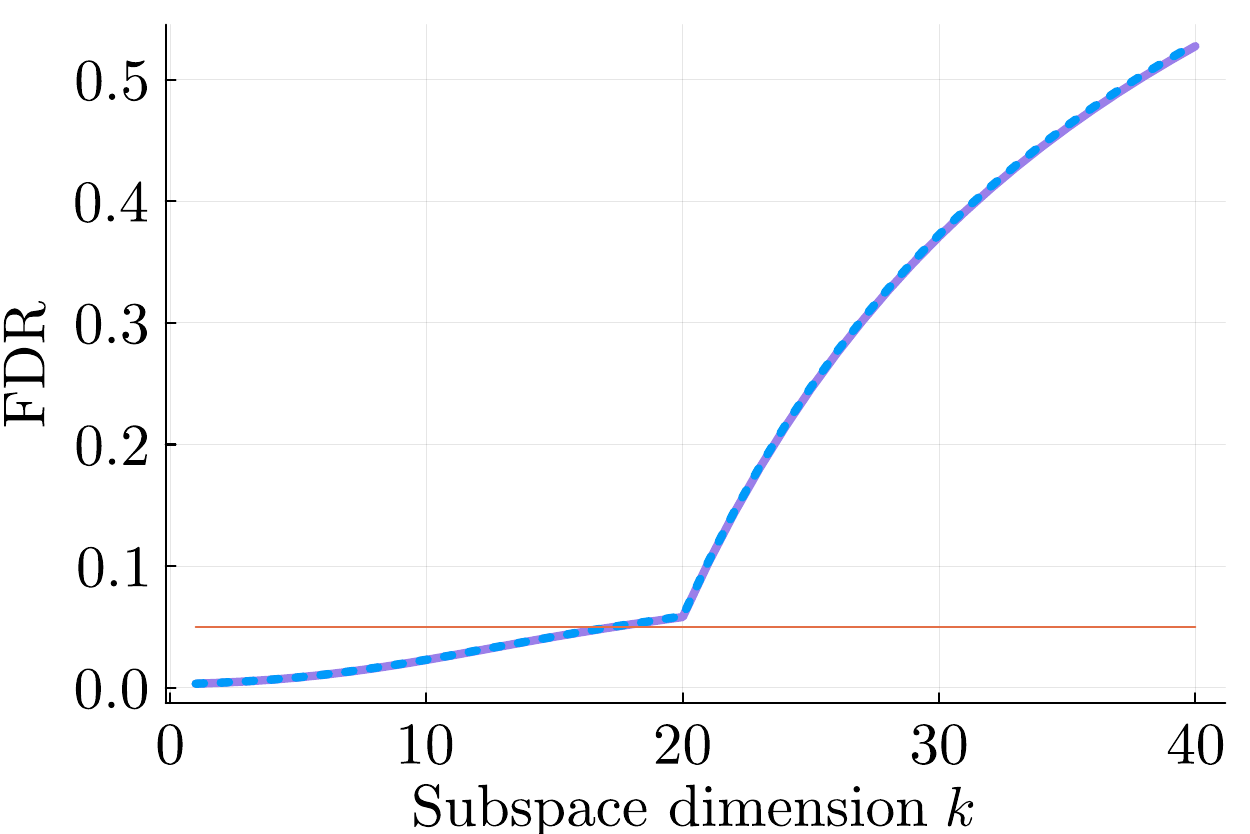}\label{example} &
      \includegraphics[width=\imagewidth]{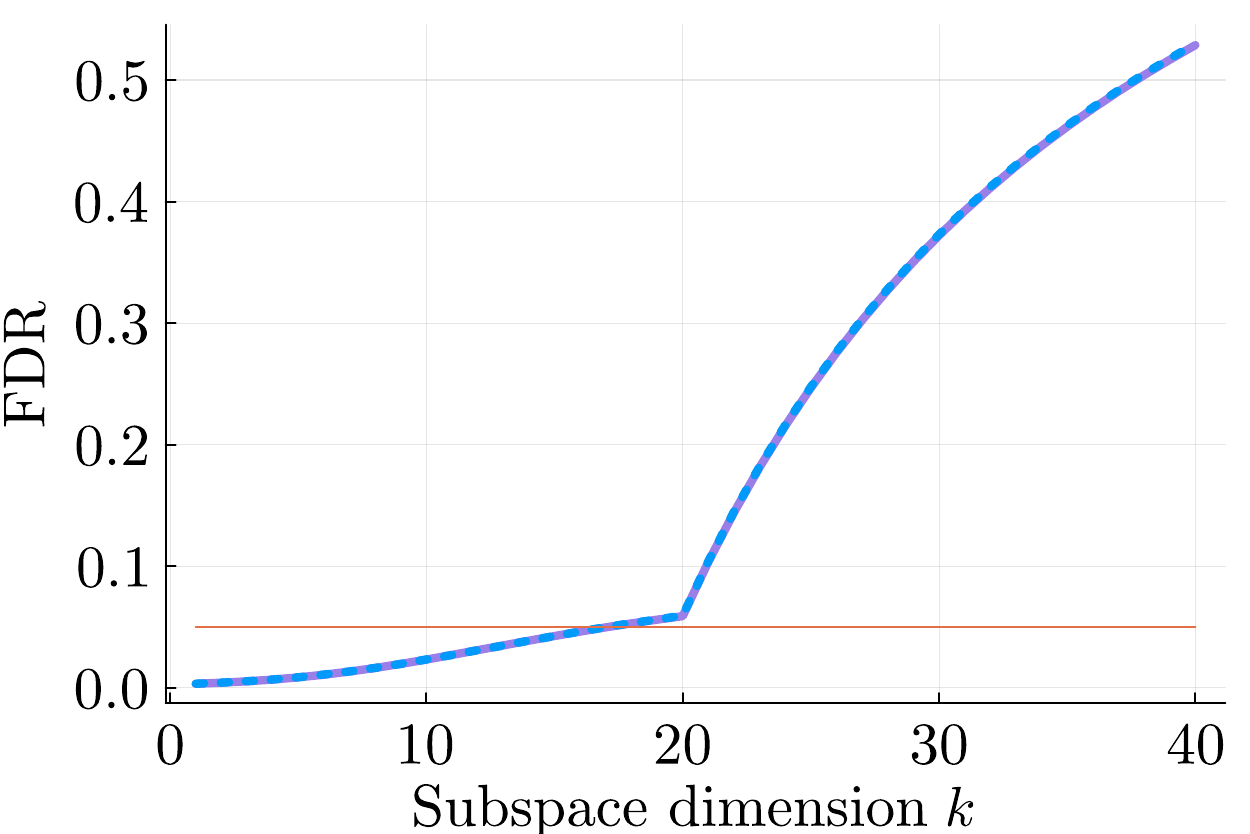} \\[2\tabcolsep]%
      \stepcounter{imagerow}\raisebox{0.35\imagewidth}{\rotatebox[origin=c]{90}%
        {\strut \texttt{barely-separated}}} &
      \includegraphics[width=\imagewidth]{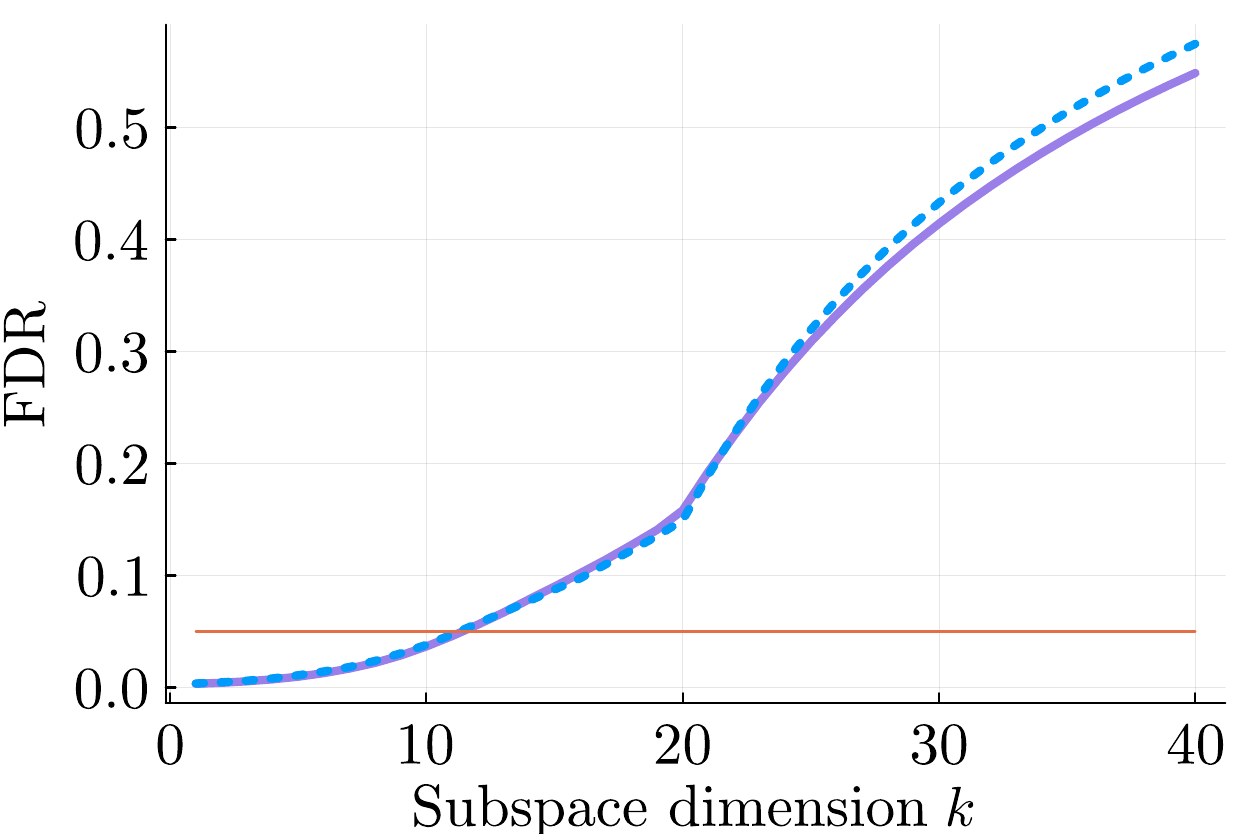} &
      \includegraphics[width=\imagewidth]{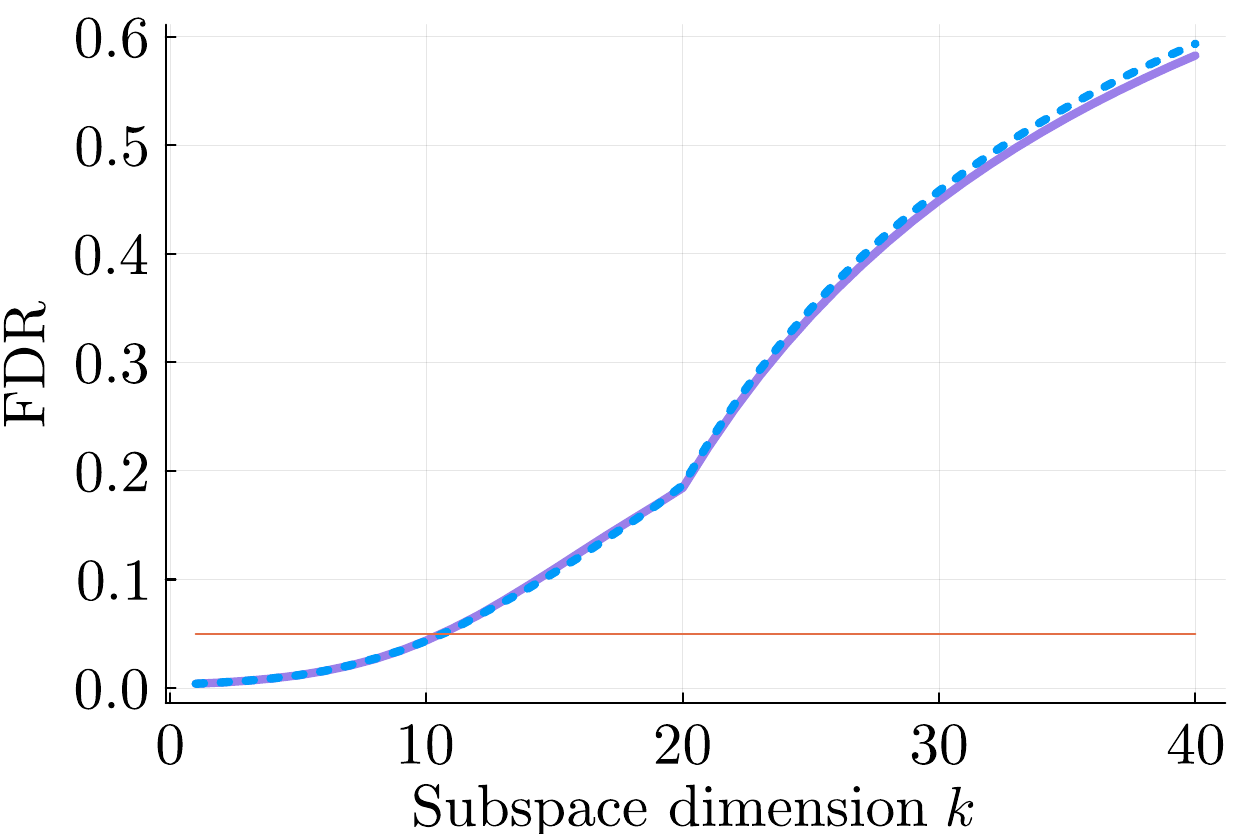} &
      \includegraphics[width=\imagewidth]{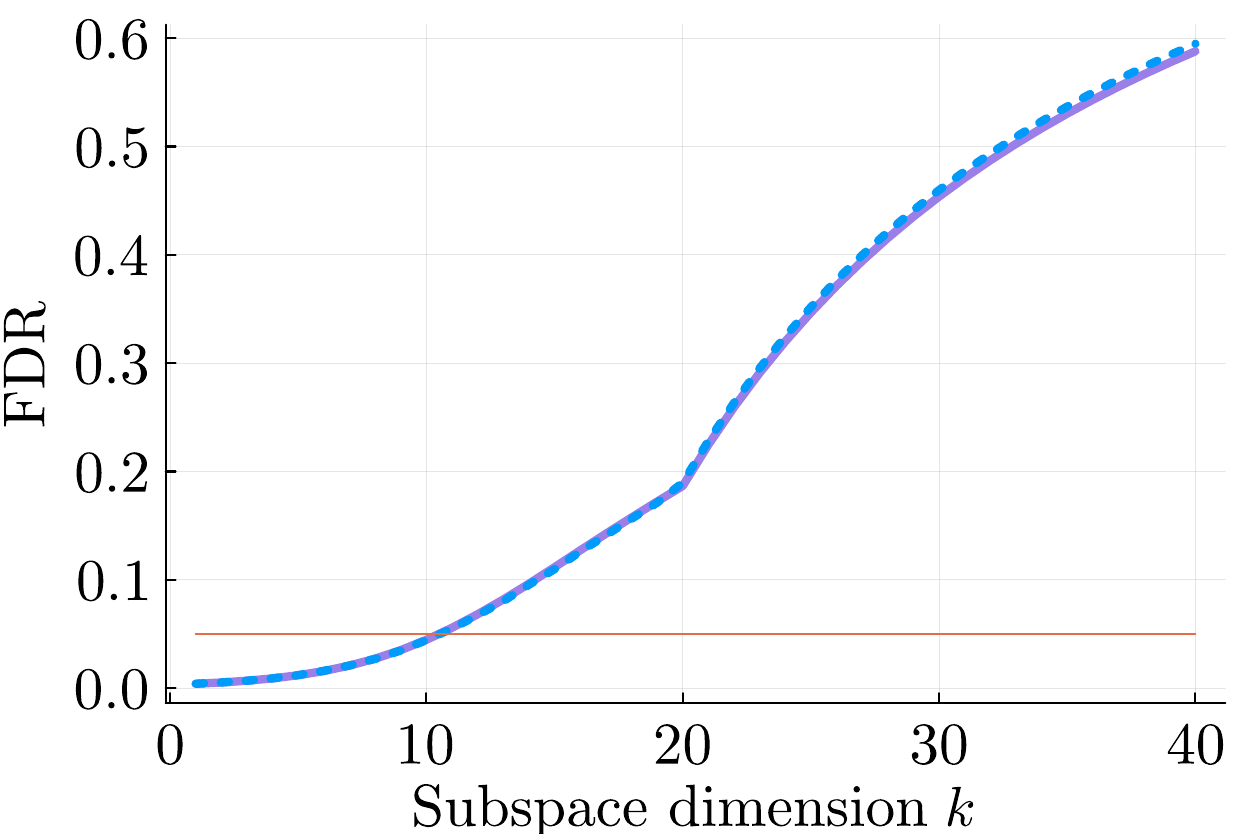} \\[2\tabcolsep]
      \stepcounter{imagerow}\raisebox{0.35\imagewidth}{\rotatebox[origin=c]{90}%
     {\strut \texttt{entangled}}} &
      \includegraphics[width=\imagewidth]{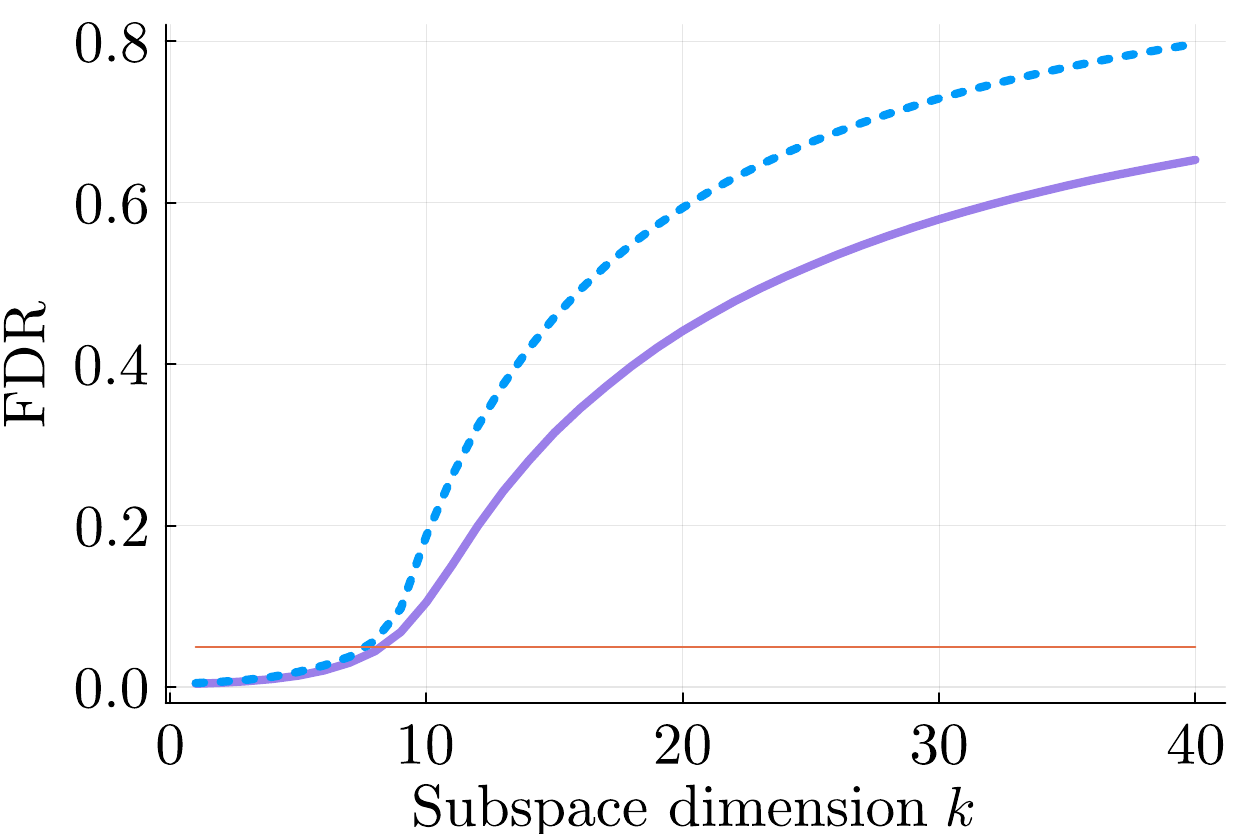} &
      \includegraphics[width=\imagewidth]{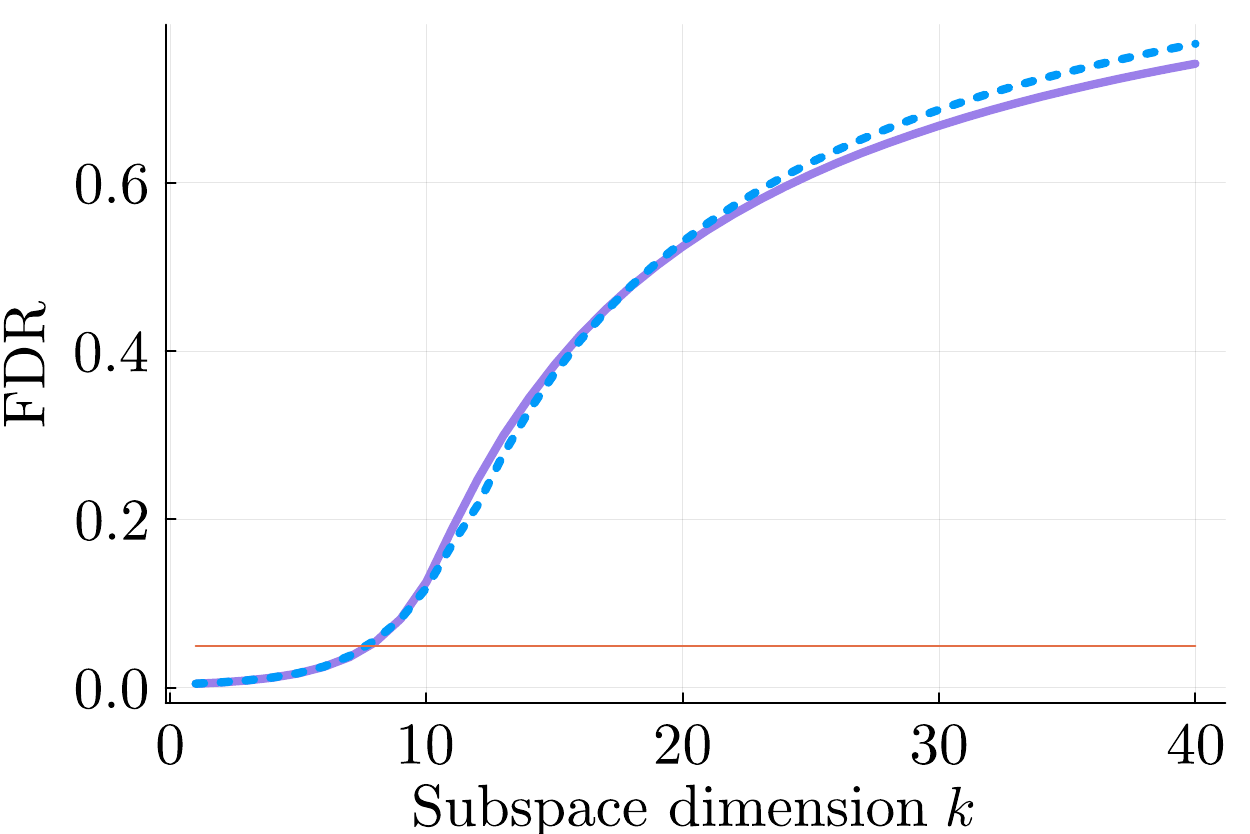} &
      \includegraphics[width=\imagewidth]{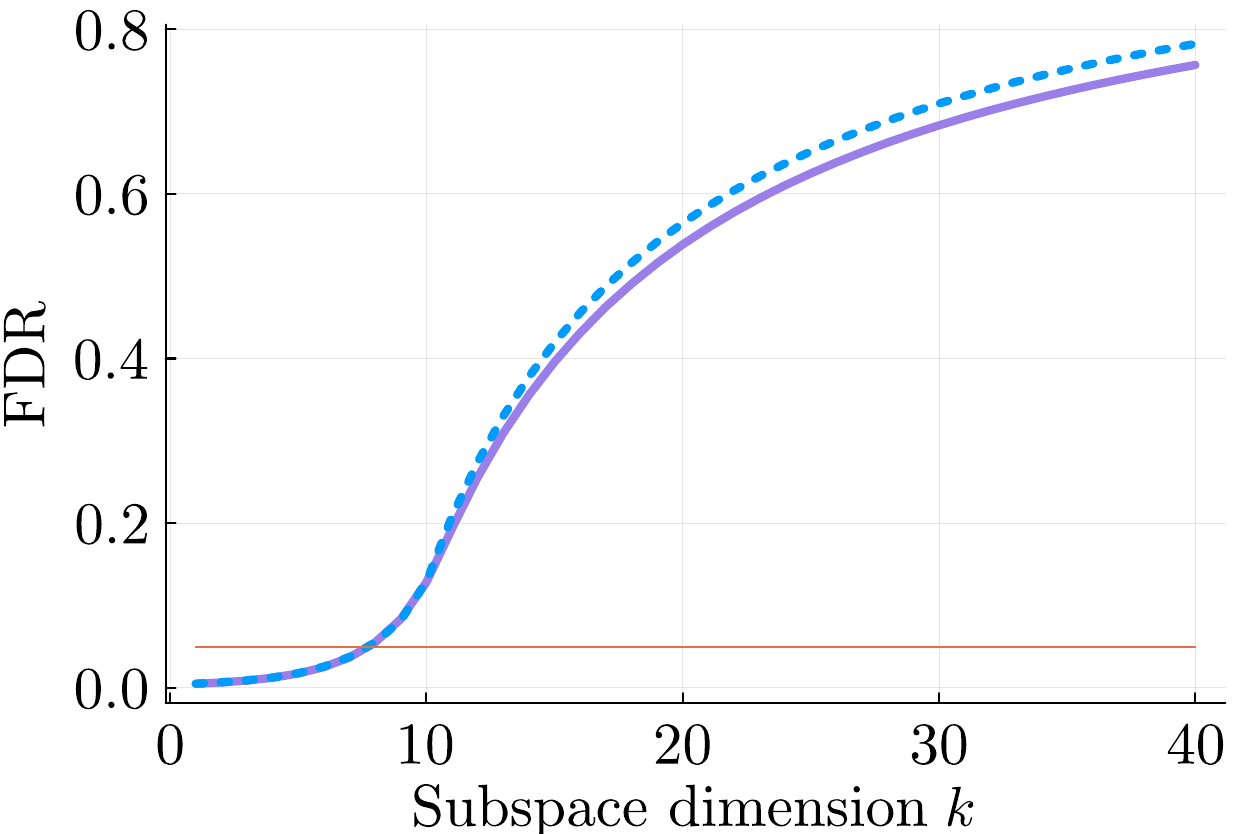} \\[2\tabcolsep]
        \setcounter{imagecolumn}{0} &%
        Dimension $n = 100$ &
        Dimension $n = 500$ &
        Dimenson $n = 1000$
    \end{tabular}
    \caption{Results on FDR estimation for the \texttt{WishartFactor} ensemble using Algorithms~\ref{alg:FDR} and~\ref{alg:rank-estimate}: the columns correspond to problem sizes $n= 100, 500, 1000$ and the rows correspond to the three models for the spectrum of the low-rank signal from~\eqref{eq:exp-spectrum}.  The proportion between dimensions is set to $ n/m = 1/2$.}
	\label{fig:res-wishart-factor}
\end{figure}

\subsection{Experiments with Real Data}
Here we present the results obtained by applying our methods to data from single-cell RNA sequencing and from hyperspectral imaging.  We compare rank estimates obtained using $p = \frac{3}{5} \cdot \texttt{median}\left( \left\{ \Delta_j \right\}^{n-1}_{j=1}\right) \cdot n$ --- the default value in our implementation of Algorithm~\ref{alg:rank-estimate} --- with those obtained using $0.5 p$ and $2p$, with lower values yielding larger rank estimates.

\paragraph{Single-cell RNA sequencing.} Traditional RNA sequencing technologies could only process mixed populations of cells and as a result they were limited in their ability to identify gene expression profiles of individual cells.  The recent development of single-cell sequencing techniques \cite{tang2009mrna} has had a profound impact on both biology and medicine, although it yields massive data matrices of gene expression counts for each cell.  Typical workflows employ PCA to identify features that provide a suitable low-dimensional embedding of the gene expressions.  We apply our methodology in this context.  We consider a publicly available dataset of Peripheral Blood Mononuclear Cells \cite{10xGenomics2023PBMC3K} that consists of 2,700 single cells, and we obtain a data matrix $\bX \in \RR^{1872 \times 2700}$ after using a standard pipeline to preprocess the data \cite{wolf2018scanpy}. 
The left panel of Figure~\ref{fig:rna} gives the empirical distribution of the singular values of $\bX$; the shape of this distribution closely matches the assumptions required for our theory to hold.  The right panel of Figure~\ref{fig:rna} depicts FDR estimates obtained using the three different rank estimates.  These FDR estimates change gracefully with different rank estimates, especially exhibiting robustness for smaller subspace dimensions $k$.  For $\alpha = 0.05$, we obtain a subspace estimate of dimension $\widehat k = 4$. 

\begin{figure}[t]
    \centering
    \begin{subfigure}{0.45\textwidth}
        \centering
        \vspace{0.2em}
        \includegraphics[width = \textwidth]{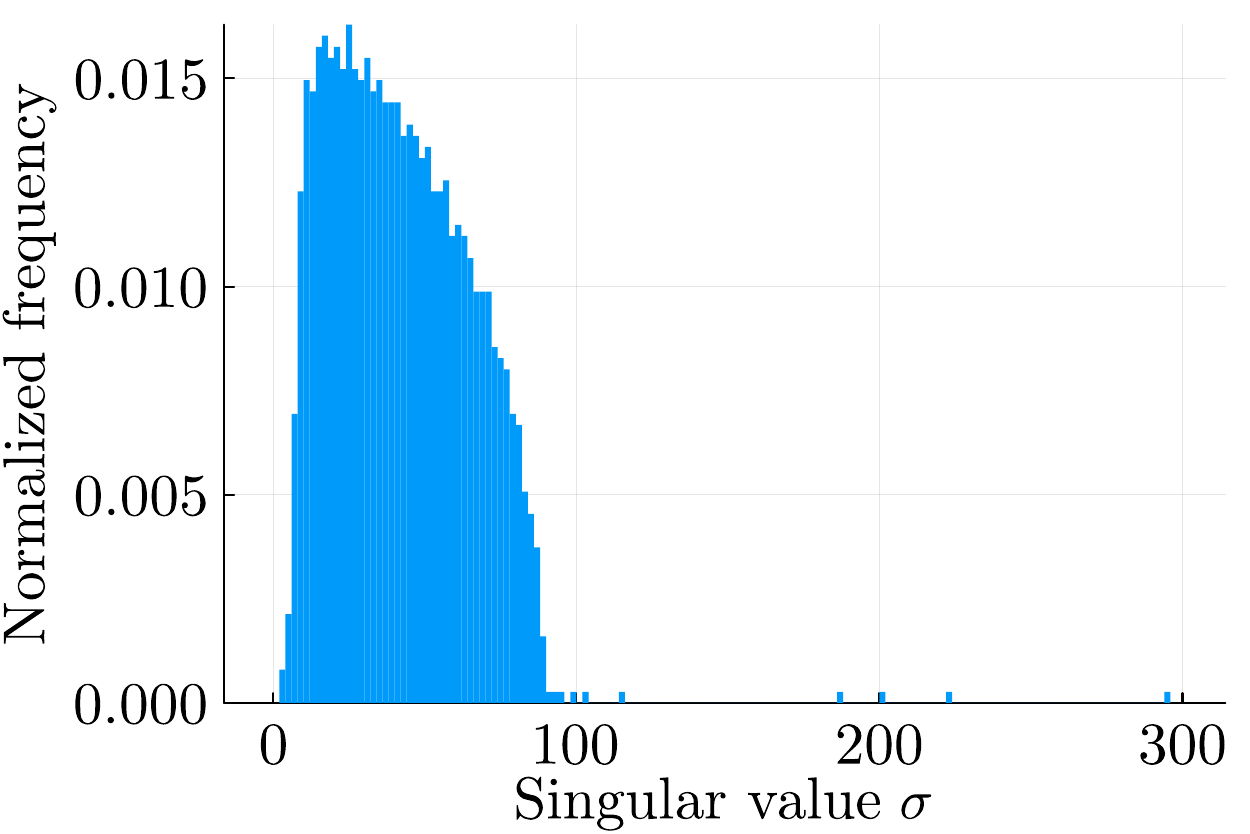}
    \end{subfigure}
    \begin{subfigure}{0.45\textwidth}
        \centering
        \vspace{0.2em}
        \includegraphics[width = \textwidth]{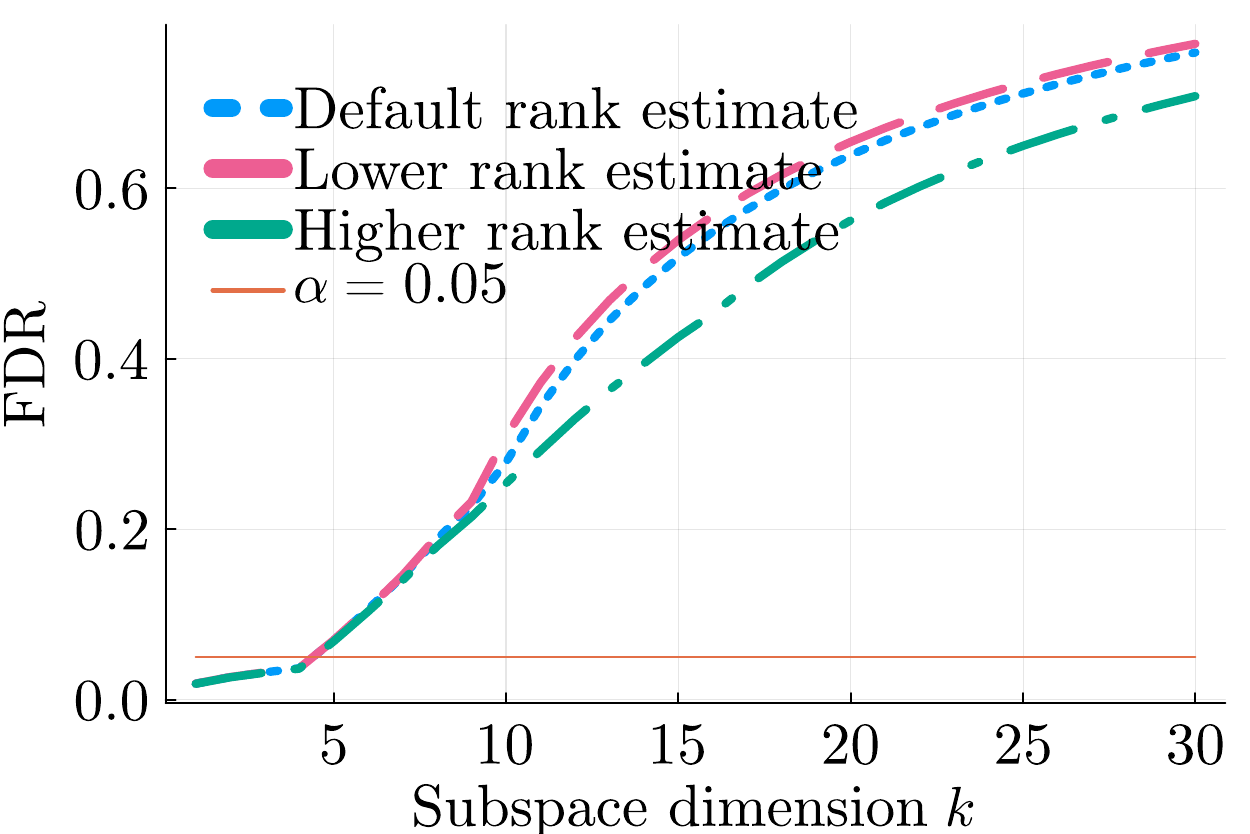}
    \end{subfigure}
    \caption{Results for single-cell RNA data: the left plot is a histogram of the singular values of the data matrix, and the right plot shows three FDR estimate using different rank estimators. 
    }
    \label{fig:rna}
\end{figure}

\paragraph{Hyperspectral imaging (HSI).} HSI is a technique that collects information across a wider spectrum of wavelengths than traditional cameras, and it is employed in application domains such as remote sensing, environmental monitoring, agriculture, and mineralogy for the identification and analysis of different materials and substances based on their unique spectral signatures.  HSI data is represented as a third-order tensor $\bW \in \RR^{n \times m \times d}$, 
where $d$ represents the number of spectral bands.  The value of $d$ is $3$ for RGB images and it is considerably larger for images obtained via HSI.  As spectral bands that are closer in wavelength exhibit high correlation, it is common to reduce dimensionality by applying PCA to the $d \times d$ outer-product matrix $\bX = \frac{1}{nm}\sum_{ij} \bW_{ij :} \bW_{ij:}^\top$.
For a comprehensive explanation of this process, we direct interested readers to \cite{rodarmel2002principal}.  For this experiment, we consider the Indian Pines dataset \cite{IndianPinesDataset} in which $\bW \in \RR^{145 \times 145 \times 220}$ and we apply our methodology to obtain estimates with FDR control of the column space of the outer-product matrix $\bX$.  The left panel of Figure~\ref{fig:hsi} displays the empirical distribution of the eigenvalues of $\bX$, while the right panel displays FDR estimates obtained using our three rank estimation methods.  The spectral profile of the HSI data is notably distinct, displaying a less pronounced spectral bulk compared to the RNA sequencing data. Although our FDR estimates are close to each other for smaller values of $k$, the absence of a well-defined bulk
in the HSI spectrum appears to negatively impact the robustness of the FDR estimates for larger $k$.

\begin{figure}[t]
    \centering
    \begin{subfigure}{0.45\textwidth}
        \centering
        \vspace{0.2em}
        \includegraphics[width = \textwidth]{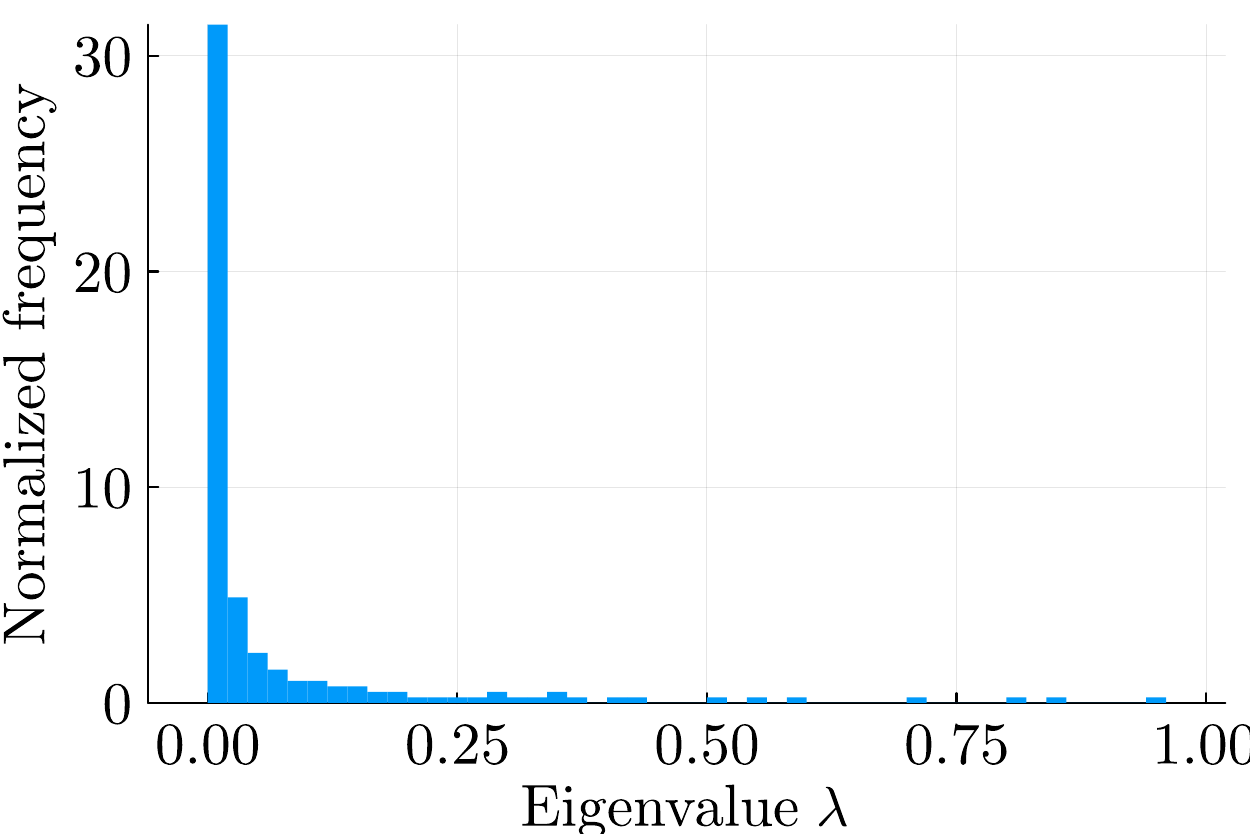}
    \end{subfigure}
    \begin{subfigure}{0.45\textwidth}
        \centering
        \vspace{0.2em}
        \includegraphics[width = \textwidth]{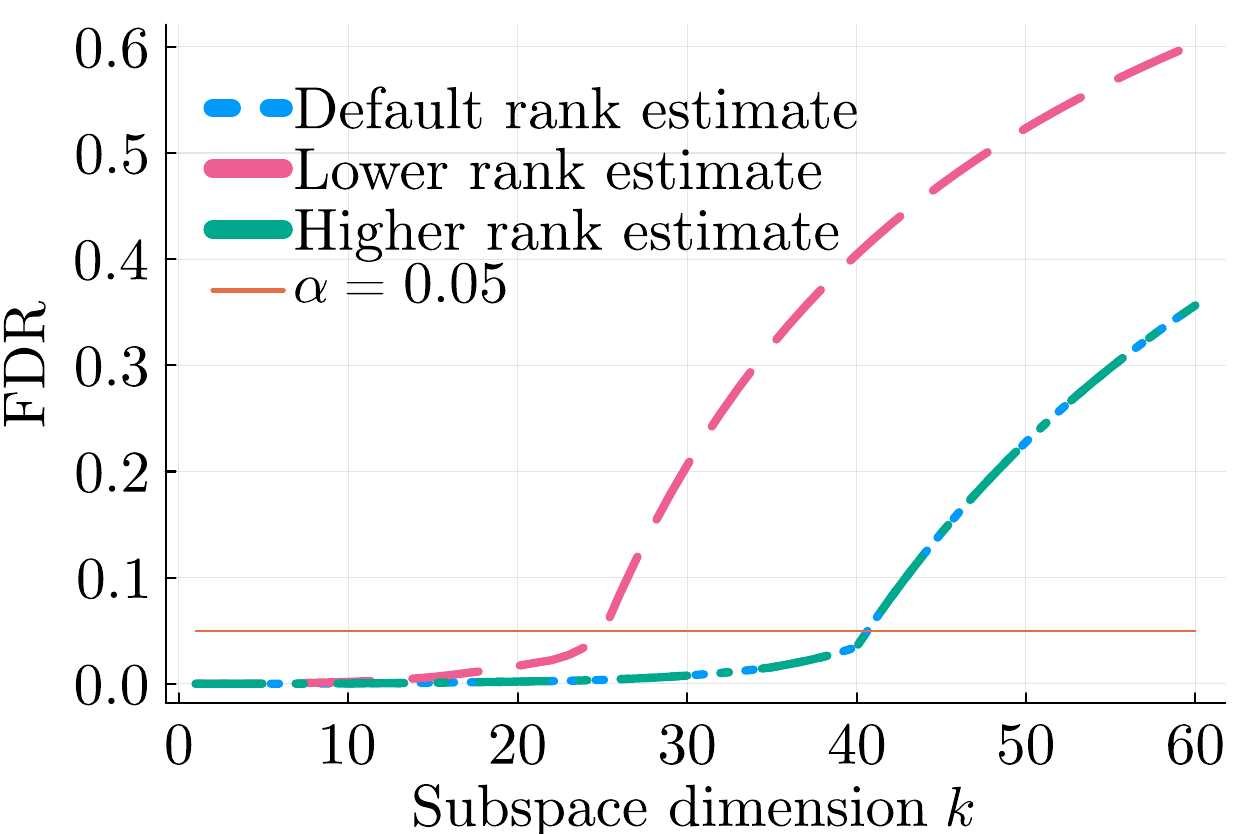}
    \end{subfigure}
    \caption{Results for Hyperspectral Imaging: the left plot is a histogram of the singular values of the data, and the right plot shows our FDR estimate using three rank estimators.}
    \label{fig:hsi}
\end{figure}

\section{Proofs}\label{sec:proofs}

In this section, we present all the proofs of the results stated earlier in the paper.  The first section concerns the symmetric case and here we give all the relevant details.  The second section concerns the asymmetric case, which largely parallels the symmetric case; hence, we only describe the modifications relative to the symmetric case. For notational convenience, we will drop the dependency on $\cU_{k}$ in $\FDR(\cU_{k})$ and simply write $\FDR(k)$.

\subsection{Symmetric Case}
\subsubsection{Proof of \cref{thm:main}} \label{sec:proof-main}
We start by 
sketching a roadmap for the proof. Recall that  $r^\star = \max\{j \in [r] \mid \theta_{j} > 1/G_{\mu_\Eb}(b^+)\} \leq r$ is the number of components that do not get subsumed by the noise. To set the stage, we define and recall a couple of FDR-related sequences that are going to be relevant in the proof.
\begin{enumerate}
     \item[] (\textbf{Limiting FDR})
\begin{equation}
  \label{eq:infty-FDR}
  \FDR^{\infty}(k) = 1 + \frac{1}{k} \sum_{i=1}^{\min\{k, r^\star\}} \frac{1}{\theta_i^2 G_{\mu_{\bE}}'(G_{\mu_{\bE}}^{-1}(1/\theta_{i}))} \quad \text{and} \quad k^\infty = \max\left\{j \in [r] \mid \FDR^\infty(j) \leq \alpha \right\}.
\end{equation}
        \item[] (\textbf{True FDR})
  \begin{equation}
    \label{eq:2}
\FDR^{(n)}(k) = \EE \left[ \frac{\Tr \left( P_{\widehat{\cU}^{(n)}_{k}} P_{{\cU^{(n)}}^{\perp}} \right)}{k}\right]. %
\end{equation}
\item[] (\textbf{Upper-bound FDR}) Let $\bar \cU = \text{span}\{u_1, \dots, u_{ {r}^\star}\}$ be the subspace generated by top $r^\star$ components of $\bA$ and define
  \begin{equation} 
    \label{eq:upper-fdr}
\overline{\FDR}^{(n)}(k) = \EE \left[ \frac{\Tr \left( P_{\widehat{\cU}^{(n)}_{k}} P_{({\bar{\cU}}^{(n)})^{\perp}} \right)}{k}\right] \quad \text{and} \quad \bar{k} = \max\{j \in [r]\mid \overline{\FDR}^{(n)}(j)\leq \alpha\}.
\end{equation}
Observe that since $\bar \cU \subseteq \cU$, we have that ${\FDR}^{(n)}(k) \leq \overline{\FDR}^{(n)}(k)$.
  \item[] (\textbf{Distribution-aware FDR Estimate})
        \begin{equation}
          \label{eq:3}
          \widetilde{\FDR}^{(n)}(k) = 1 + \frac{1}{k} \sum_{j=1}^{\min\{k,{r}^\star\}} \frac{G_{\mu_{\bE}}\left(\lambda_{i}^{(n)}\right)^{2}}{G_{\mu_{\bE}}'\left(\lambda_{j}^{(n)}\right)}.  %
        \end{equation}
        Note that unlike the FDR estimate in Algorithm~\ref{alg:FDR}, the estimate above uses the true Cauchy transform of the limiting distribution and the true observable rank $r^\star$.
\end{enumerate}

In the remainder of this section, we drop the subscript of $n$ and state $\cU, \lambda_i$ instead of $\cU^{(n)}, \lambda_i^{(n)}$ for notational brevity whenever the index of $n$ is implicit from the context.
Recall that $\widehat{\FDR}^{(n)}(k)$ denotes the FDR estimate given by Algorithm~\ref{alg:FDR}, and $\widehat k$ is the output of the same algorithm.
\cref{thm:main} follows from two simple steps:
\begin{enumerate}
    \item[] \textbf{Step 1: Convergence of FDR estimate and true FDR.} We show that for each fixed $k \in \NN$
    \begin{equation*}
        \widehat {\FDR}^{(n)}(k) \rightarrow \FDR^\infty(k) \quad \text{a.s.} \quad \text{and} \quad \overline{\FDR}^{(n)}(k) \rightarrow \FDR^\infty(k)        
    \end{equation*}
    as $n$ goes to infinity. 
    \item[] \textbf{Step 2: Equality of maximizers.} Using the convergence above, we can conclude that indeed $\bar k$ and $\widehat k$ match for large $n$ provided that the limiting $\FDR^\infty (k^\infty) \neq \alpha.$
\end{enumerate}

When $r \neq r^\star$ this gives control for the true FDR since we trivially have the upper bound $\FDR^{(n)}(\widehat k) \leq \overline{\FDR}^{(n)}(\widehat k) \leq \alpha,$ where the second inequality follows since $\widehat k = \bar k$ and by definition $\bar k$ controls the upper-bound FDR. Further, note that if $r = r^\star,$ then $\FDR^{(n)} = \overline{\FDR}^{(n)}$ and the preceding reasoning may be applied to conclude the theorem. Thus, for the remainder of this argument, we focus on bounding $\overline{\FDR}^{(n)}$ instead of $\FDR^{(n)}.$ Let us start with the formal proof.

\textbf{Step 1.} First, we establish an intermediate result that ensures that some of the FDR quantities defined above are nonincreasing and takes on values in $[0,1]$.

\begin{lemma}\label{lem:ratio-properties} Consider a probability measure over the real line $\mu$ with bounded support. Let $b = \sup\{ t : t \in \supp(\mu)\}.$ 
  Then, the map $\Phi\colon (b, \infty)\rightarrow \RR$ given 
  by $\Phi(x) = \frac{G_{\mu}(x)^2}{G_\mu'(x)}$ yields a $[-1, 0]$-valued continuous nonincreasing function.
  \end{lemma}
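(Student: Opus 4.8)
The plan is to exploit the probabilistic representation of $G_\mu$ and its derivatives as moments of a positive random variable. Fix $x \in (b,\infty)$ and, under $\mu$, let $Y = Y_x$ denote the random variable $t \mapsto (x-t)^{-1}$; since $x - t \ge x - b > 0$ for every $t \in \supp(\mu)$, $Y$ is a bounded, strictly positive random variable, so the moments $m_k(x) := \EE_\mu[Y^k] = \int (x-t)^{-k}\,d\mu(t)$ are finite and positive for all $k \ge 1$. Differentiating under the integral sign (legitimate on any compact subinterval of $(b,\infty)$, where $x-t$ is bounded below by a positive constant) yields $G_\mu(x) = m_1(x)$, $G_\mu'(x) = -m_2(x)$, and $G_\mu''(x) = 2 m_3(x)$. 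In particular $G_\mu > 0$ and $G_\mu' < 0$ throughout $(b,\infty)$, so $\Phi = G_\mu^2/G_\mu'$ is well-defined and, being a ratio of real-analytic functions with nonvanishing denominator, continuous there, with $\Phi(x) = -m_1(x)^2/m_2(x) < 0$.

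For the lower bound I would apply Cauchy--Schwarz in the form $m_1(x)^2 = \EE_\mu[Y\cdot 1]^2 \le \EE_\mu[Y^2]\,\EE_\mu[1^2] = m_2(x)$, the crucial input being that $\mu$ is a probability measure so $\EE_\mu[1] = 1$. Hence $\Phi(x) = -m_1(x)^2/m_2(x) \ge -1$, and combined with $\Phi < 0$ this shows $\Phi$ takes values in $[-1,0]$ (with the endpoints attained only in the degenerate point-mass case).

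For monotonicity I would differentiate and simplify:
\begin{equation*}
\Phi'(x) = \frac{2\,G_\mu (G_\mu')^2 - G_\mu^2\,G_\mu''}{(G_\mu')^2} = \frac{G_\mu\big(2(G_\mu')^2 - G_\mu G_\mu''\big)}{(G_\mu')^2}.
\end{equation*}
Since $G_\mu > 0$ and $(G_\mu')^2 > 0$, the sign of $\Phi'(x)$ equals that of $2(G_\mu')^2 - G_\mu G_\mu'' = 2\big(m_2(x)^2 - m_1(x)m_3(x)\big)$. The needed inequality $m_2^2 \le m_1 m_3$ is once more Cauchy--Schwarz, written as $m_2 = \EE_\mu[Y^{1/2}\cdot Y^{3/2}] \le \sqrt{\EE_\mu[Y]}\,\sqrt{\EE_\mu[Y^3]} = \sqrt{m_1 m_3}$. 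Thus $\Phi'(x) \le 0$ for all $x > b$, so $\Phi$ is nonincreasing.

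The proof is essentially routine; the only step with any content is the monotonicity claim, which I would reduce---as above---to the moment inequality $m_2^2 \le m_1 m_3$ for the positive random variable $Y_x$. The minor technical points to watch are the justification of differentiation under the integral sign (supplied by the uniform positive lower bound on $x-t$) and the observation that the normalization $\mu(\RR)=1$ is exactly what makes both Cauchy--Schwarz estimates produce the constants claimed in the statement.
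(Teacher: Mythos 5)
Your proposal is correct and follows essentially the same route as the paper's proof: both establish the range $[-1,0]$ via a Jensen/Cauchy--Schwarz bound on $G_\mu^2/G_\mu'$ and prove monotonicity by differentiating $\Phi$ and reducing the sign of the numerator to the moment inequality $\left(\EE[Y^2]\right)^2 \leq \EE[Y]\,\EE[Y^3]$ for the positive variable $Y=(x-t)^{-1}$, obtained by Cauchy--Schwarz. Your write-up is, if anything, slightly cleaner in justifying differentiation under the integral and the nonvanishing of the denominator, but the substance is identical.
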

  \begin{proof}
    Note that $$\Phi(y) = -\frac{ \left[\EE_{\theta \sim \mu} (y-\theta)^{-1} \right]^{2}}{\EE_{\theta \sim \mu}(y-\theta)^{-2}},$$ this quantity is well-defined in the domain $(b, \infty)$. Observe that $\Phi$ is defined as a ratio of continuous functions with the denominator bounded away from zero; hence we conclude that $\Phi$ is continuous. Moreover, $\Phi$ takes nonpositive values and by Jensen's inequality $\Phi(y) \geq -1$. Since both $G_\mu$ and $G_\mu'$ are integrals of bounded continuous functions over compact sets, they are differentiable. Taking a derivative of $\Phi$ reveals
  $$
  \Phi'(y) = \frac{2G_\mu(y)G'_\mu(y)^{2} - G_\mu(y)^{2}G''_\mu(y)}{G'_\mu(y)^{2}}.%
  $$
  To prove the lemma it suffices to show that the numerator is negative.
  \begin{align*}
    2G_\mu(y)G_\mu'(y)^{2} &= 2\EE(y -\theta)^{-1} \left[\EE(y - \theta)^{-2}\right]^{2} \\
      &\leq 2 \left[\EE(y -\theta)^{-1}\right]^{2} \EE (y - \theta)^{-3}\\
    &= G_\mu(y)^{2}G_\mu''(y)
  \end{align*}
  where the inequality uses that for any  random variable $X$ we have $\left[\EE X^{2}\right]^{2} \leq \EE |X| \EE |X|^{3}$, which is a simple consequence of the Cauchy-Schwarz and Jensen inequalities. This concludes the proof of the claim.
  \end{proof}  

Equipped with this lemma, we can tackle the asymptotic convergence of the different FDRs defined at the start of the section.
\begin{proposition}\label{lem:fdr-convergence}
    Fix any $k \in \NN.$ Then, the following three assertions hold.
    \begin{enumerate}
      \item As $n \rightarrow \infty$ we have $\overline{\FDR}^{(n)}(k) \rightarrow \FDR^{\infty}(k)$.
      \item As $n \rightarrow \infty$ we have $\FDRt^{(n)}(k) \rightarrow \FDR^{\infty}(k)$ almost surely.
      \item As $n \rightarrow \infty$ we have ${\widehat{\FDR}}^{(n)}(k) \rightarrow \FDR^{\infty}(k)$ almost surely.
    \end{enumerate}
  \end{proposition}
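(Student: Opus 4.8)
The plan is to establish the three convergences by combining the BBP-type results of Benaych-Georges and Nadakuditi with the monotonicity and boundedness provided by Lemma~\ref{lem:ratio-properties}. The cleanest route is to prove assertion (2) first, then (3), and finally (1), since (1) requires a dominated-convergence-type argument that leans on the almost-sure convergences already in hand.

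\textbf{Step A (Assertion 2).} Fix $i \le r^\star$, so $\theta_i > 1/G_{\mu_\bE}(b^+)$. By the BBP transition \eqref{eq:bbp}, $\lambda_i(\bX_n) \to \rho_i := G_{\mu_\bE}^{-1}(1/\theta_i)$ almost surely, and $\rho_i > b$ strictly because $\theta_i$ is strictly above the threshold. Since Assumption~\ref{ass:model2} guarantees $\lambda_{r^\star+1}(\bX_n)$ stays below $b + \varepsilon$ eventually (the bulk edge converges to $b$ and the remaining signal components $j > r^\star$ are absorbed into $b$), for all large $n$ the arguments $\lambda_1(\bX_n) \ge \cdots \ge \lambda_{\min\{k,r^\star\}}(\bX_n)$ lie in a fixed compact subset of $(b, \infty)$. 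On such a set the map $y \mapsto G_{\mu_\bE}(y)^2 / G_{\mu_\bE}'(y)$ is continuous (Lemma~\ref{lem:ratio-properties}), so each summand of $\FDRt^{(n)}(k)$ converges almost surely to $G_{\mu_\bE}(\rho_i)^2/G_{\mu_\bE}'(\rho_i)$. Using $G_{\mu_\bE}(\rho_i) = 1/\theta_i$ and differentiating the identity $G_{\mu_\bE}(G_{\mu_\bE}^{-1}(\cdot)) = \mathrm{id}$, one checks this equals $1/(\theta_i^2 G_{\mu_\bE}'(G_{\mu_\bE}^{-1}(1/\theta_i)))$, which matches \eqref{eq:infty-FDR}; hence $\FDRt^{(n)}(k) \to \FDR^\infty(k)$ a.s.

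\textbf{Step B (Assertion 3).} Here I would show the empirical Cauchy transform $\widehat G$ and $\widehat G'$ converge uniformly to $G_{\mu_\bE}, G_{\mu_\bE}'$ on compact subsets of $(b,\infty)$. Since $r^\star \le \widehat r \le r$ eventually, the excluded eigenvalues $\lambda_1(\bX_n),\dots,\lambda_{\widehat r}(\bX_n)$ are $O(1)$ in number, so $\widehat G(y) = \frac{1}{n-\widehat r}\sum_{j>\widehat r}(y-\lambda_j(\bX_n))^{-1}$ differs from $\frac{1}{n}\sum_{j=1}^n (y-\lambda_j(\bX_n))^{-1} = G_{\mu_{\bX_n}}(y)$ by a term of order $O(1/n)$ uniformly for $y$ bounded away from the relevant eigenvalues. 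The empirical spectral measure of $\bX_n$ converges weakly a.s. to $\mu_\bE$ (the rank-$r$ perturbation does not affect the limiting measure), and since $(y-t)^{-1}$ and $(y-t)^{-2}$ are bounded continuous on $\supp(\mu_\bE)$ for $y$ in a compact set $K \subset (b,\infty)$, weak convergence plus edge convergence (to rule out escaping mass near the edge) yields $\sup_{y \in K}|\widehat G(y) - G_{\mu_\bE}(y)| \to 0$ a.s., and similarly for the derivative. Combining with the fact that $\lambda_i(\bX_n)$ lands in such a $K$ for $i \le \min\{k,\widehat r\}$ and that $\min\{k,\widehat r\} = \min\{k,r^\star\}$ eventually when $k$ is fixed (once $n$ is large, either $k \le r^\star \le \widehat r$ or $k > r^\star$; in the latter case the summands for $r^\star < i \le \widehat r$ involve $\lambda_i(\bX_n) \to b$, where $G_{\mu_\bE}(b^+)^2/G_{\mu_\bE}'(b^+)$ — I need the decay-at-the-edge Assumption~\ref{ass:model2} to make $G_{\mu_\bE}'(b^+) = -\infty$ so these extra terms vanish), we get $\widehat\FDR^{(n)}(k) \to \FDR^\infty(k)$ a.s. The main obstacle is exactly this edge behavior: I expect the $\alpha \in (0,1]$ decay exponent in Assumption~\ref{ass:model2} to force $G_{\mu_\bE}'(b^+)$ to diverge, so that spurious summands from indices between $r^\star$ and $\widehat r$ (or from $\widehat r$ slightly misestimating) contribute $0$ in the limit; making this rigorous, including handling the case $\widehat r < r$ where a signal eigenvalue near the edge is excluded, is the delicate part.

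\textbf{Step C (Assertion 1).} Finally, for $\overline{\FDR}^{(n)}(k) = \EE[\Tr(P_{\widehat\cU^{(n)}_k} P_{\bar\cU^{(n)\perp}})/k]$, write $\Tr(P_{\widehat\cU_k}P_{\bar\cU^\perp}) = k - \sum_{i=1}^k \sum_{j=1}^{r^\star}\langle \widehat u_i, u_j\rangle^2$, so $\overline{\FDR}^{(n)}(k) = 1 - \frac{1}{k}\sum_{i,j}\EE\langle\widehat u_i, u_j\rangle^2$. By \cite[Theorem 2.2]{benaych2011eigenvalues}, for $i \le \min\{k,r^\star\}$ the squared correlation $\langle \widehat u_i, u_i\rangle^2 \to -G_{\mu_\bE}(\rho_i)^2/G_{\mu_\bE}'(\rho_i) \cdot (\text{appropriate expression}) = -1/(\theta_i^2 G_{\mu_\bE}'(G_{\mu_\bE}^{-1}(1/\theta_i)))$ a.s., while the cross terms $\langle \widehat u_i, u_j\rangle^2 \to 0$ for $i \ne j$ (and all terms with $i > r^\star$ vanish since those signal directions are not in $\bar\cU$). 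Since $\langle\widehat u_i,u_j\rangle^2 \in [0,1]$, bounded convergence upgrades the a.s. convergence to convergence in expectation, giving $\overline{\FDR}^{(n)}(k) \to 1 - \frac{1}{k}\sum_{i=1}^{\min\{k,r^\star\}}(-1/(\theta_i^2 G_{\mu_\bE}'(G_{\mu_\bE}^{-1}(1/\theta_i)))) = \FDR^\infty(k)$, using that $G_{\mu_\bE}' < 0$ so the signs work out. This also shows $\FDR^\infty$ is well-defined, completing the proposition. The only subtlety is invoking the eigenvector correlation formula correctly in the orthogonally-invariant setting of Assumption~\ref{ass:model1}, which is precisely the regime covered by \cite{benaych2011eigenvalues, benaych2012singular}.
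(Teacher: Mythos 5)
Your overall strategy coincides with the paper's: assertion (2) via continuity of $y\mapsto G_{\mu_{\bE}}(y)^2/G'_{\mu_{\bE}}(y)$ on $(b,\infty)$ together with the almost-sure eigenvalue limits of \cref{thm:raj}; assertion (3) via convergence of the empirical transforms at the outlier locations plus an edge argument to kill the summands with $r^\star < i \leq \widehat r$; and assertion (1) via the trace expansion $\overline{\FDR}^{(n)}(k) = 1 - \frac{1}{k}\sum_{i\le k}\sum_{j\le r^\star}\EE\langle \widehat\bu_i,\bu_j\rangle^2$, the eigenvector correlation limits, and bounded convergence. So the route is the same; the issues are in how two of the steps are closed.

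The genuine gap is in your Step C when $k > r^\star$. The double sum then contains terms $\langle\widehat\bu_i,\bu_j\rangle^2$ with $i > r^\star$ (indeed $i$ may exceed $r$) and $j \leq r^\star$, and these are \emph{not} covered by \cite[Theorem 2.2]{benaych2011eigenvalues}: its eigenvector statements (restated as \cref{thm:raj} here) only concern eigenvectors of $\bX_n$ attached to supercritical spikes and their projections onto the signal eigenspaces. Your parenthetical justification ("those signal directions are not in $\bar\cU$") does not apply, since the directions $\bu_j$ with $j\leq r^\star$ are exactly the ones spanning $\bar\cU$; what must be shown is that subcritical and bulk eigenvectors of $\bX_n$ asymptotically decorrelate from the top signal directions. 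The paper devotes a separate statement to this, \cref{prop:zero-correlation}, proved by a Haar-invariance reduction to the asymmetric decorrelation result of \cref{prop:zero-correlation-as}; without some such argument your proof of assertion (1) only covers $k \leq r^\star$. Separately, your Step B is left as an acknowledged sketch precisely where the paper works hardest: for the spurious indices $r^\star < i \leq \widehat r$ one cannot invoke locally uniform convergence of $\widehat G,\widehat G'$ on compact subsets of $(b,\infty)$, because $\lambda_i^{(n)}\to b$ approaches the edge. The paper instead bounds the empirical ratio $\widehat G(\lambda_i^{(n)})^2/\widehat G'(\lambda_i^{(n)})$ in $[-1,0]$ via \cref{lem:ratio-properties} and shows $|\widehat G'(\lambda_i^{(n)})|\to\infty$ by a truncation-and-Fatou argument combined with \cref{prop:nice-edge}; you correctly identify the mechanism (divergence of $G'_{\mu_{\bE}}$ at $b$ under the decay-at-the-edge condition) but do not supply the argument that transfers it to the empirical quantities evaluated at eigenvalues converging to the edge.
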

  \begin{proof}
We start by proving the first statement. Recall $\widehat \bU$ and $\bU$ are matrices whose columns are the eigenvectors of $\bX$ and $\bA$, respectively.  For clarity and simplicity, we assume that the eigenvalues of $\bA$ are all distinct; nonetheless, the same proof strategy follows through with repeated eigenvalues by substituting $\widehat \bu_i \widehat \bu_i^\top$ with the projector onto the eigenvalue subspace associated with the $i$th eigenvalue. 

Expanding the definition of the upper-bound FDR yields 
    \begin{align*}
  \overline{\FDR}^{(n)}(k) & = \EE \left[ \frac{\Tr \left( \widehat{\bU}_{1:k} \widehat{\bU}_{1:k}^{\top }(\bI - \bU_{1:r^\star}\bU_{1:r^\star}^{\top}) \right)}{k}\right] \\
       & = \frac{1}{k}\sum_{j = 1}^{k}  \EE \left[\Tr\left(\widehat\bu_{j} \widehat\bu_{j}^{\top} \left(\bI - \sum_{i}^{r^\star} \bu_{i} \bu_{i}^{\top} \right) \right)\right]\\
       & = \frac{1}{k}\sum_{j = 1}^{k} \left\{1 - \sum_{i}^{r^\star} \EE \left[\left(\widehat\bu_{j}^{\top} \bu_{i} \right)^{2} \right]\right\}\\
       & = 1 - \frac{1}{k}\sum_{j=1}^k \sum_{i=1}^{r^\star} \EE\left[\left(\widehat\bu_j^\top \bu_i\right)^2\right]
\end{align*}
Thus, it is enough to focus on $\EE\left[\left(\widehat{\bu}_{j}^{\top} \bu_{i}\right)^{2}\right]$. Using \cref{thm:raj} in tandem with \cref{prop:zero-correlation} yields that for all $i, j \subseteq [n]$ the following almost sure convergence holds:
\begin{align} \label{eq:assym-correlation}
\left(\widehat{\bu}_{j}^{\top} \bu_{i}\right)^{2} \rightarrow
  \begin{cases}
    -\frac{1}{\theta_i^{2}G'_{\mu_{\bE}}\left(G_{\mu_{\bE}}^{-1}(1/\theta_{i})\right)}& \text{if }i = j \text{ and } i\leq r^\star,  \\
    0 & \text{otherwise.}
  \end{cases}
\end{align}
Recall that we have suppressed the dependence on $n$, so the above convergence result should be understood in the limit $n \rightarrow \infty$.
Hence, taking expectations and substituting, we derive that
$$  \overline{\FDR}^{(n)}(k)  \rightarrow 1 + \frac{1}{k} \sum^{\min\{k, r^\star\}}_{i = 1} \frac{1}{\theta_{i}^{2}G_{\mu_\Eb}'\left(G_{\mu_\Eb}^{-1}(1/\theta_i)\right)} = \FDR^{\infty}({k}),
$$
as we wanted. 

Turning next to the convergence of $\widetilde{\FDR}^{(n)}$, note that $x \rightarrow G_{\mu_\Eb}(x)^2/G_{\mu_\Eb}'(x)$ is a continuous function for $x \notin [a, b]$ from Lemma~\ref{lem:ratio-properties}. Thus, using the limit $\lambda_i \as G_{\mu_\Eb}^{-1}(1/\theta_i)$ from \cref{thm:raj}, we get that
$$
 \frac{G_{\mu_{\bE}}(\lambda_i)^2}{G_{\mu_{\bE}}'(\lambda_i)} \as \frac{1}{\theta_i^2 G_{\mu_\Eb}'\left(G_{\mu_\Eb}^{-1}(1/\theta_i)\right)},
$$
which leads to the result after a simple substitution.

Finally, let us prove that $\widehat{\FDR}^{(n)}$ also converges to the same limit. Assume that $n$ is large enough to ensure that $r^\star \leq \widehat r \leq r$, which happens almost surely by assumption. Expanding the definitions of $\widehat{\FDR}^{(n)}, \widetilde{\FDR}^{(n)}$ and using the triangle inequality, we have for any $k$ that
\begin{equation}\label{eq:diff-fdrt}
    \left|\widehat{\FDR}^{(n)}(k) - \widetilde{\FDR}^{(n)}(k) \right| 
    \leq \frac{1}{k} \left( \underbrace{\sum_{i=1}^{\min\{k , r^\star\}} \left|  \frac{G_{\mu_{\bE}}(\lambda_i)^2}{G'_{\mu_{\bE}} (\lambda_i)} - \frac{ \widehat{G}(\lambda_i)^2}{\widehat{G}'(\lambda_i)}  \right|}_{T_1} + \underbrace{\sum_{i = r^\star + 1}^{\widehat r} \left| \frac{ \widehat{G}(\lambda_i)^2}{\widehat{G}'(\lambda_i)} \right|}_{T_2}\right).
\end{equation}
We will show that each summand goes to zero. Let us start by bounding $T_1$. 
Let $\mu_n$ be the empirical distribution of $\lambda_{r^\star+1}, \dots, \lambda_n.$ 
By Assumption~\ref{ass:amodel2} and Weyl's interlacing inequalities, we obtain that $\mu_{n}$ 
converges weakly to $\mu_{\bE}$ %
. Thus, we have that $\widehat{G}(t) \rightarrow G_{\mu_{\bE}}(t)$ for any $t$ outside the support of the bulk $\supp(\mu_{\bE}) = (a,b)$. %
Furthermore, for any $C > b$ that is greater than the right edge of the bulk, there exists $L > 0$ such that the function $G_{\mu_{\bE}}$ is $L$-Lipschitz on $[C, \infty)$. The same statement holds almost surely for $\widehat G$ once $n$ is large enough thanks to Lemma~\ref{lem:ratio-properties}.
Moreover, by Theorem~\ref{thm:raj} we have that $\lambda_i^{(n)} \rightarrow \rho_i$ for some $\rho_i > b$ as $i \leq r^\star$.  Fix $C = (b + \min_{i \leq r^\star} \rho_i)/2$, then 
\begin{align*}
   \left| \widehat{G}\left(\lambda_i^{(n)}\right) - G_{\mu_{\bE}}\left(\lambda_i^{(n)}\right)\right| & \leq \left| \widehat{G}\left(\lambda_i^{(n)}\right) - \widehat G(\rho_i) \right| +  \left|\widehat G(\rho_i) - G_{\mu_{\bE}}(\rho_i) \right| + \left|G_{\mu_{\bE}}(\rho_i) -  G_{\mu_{\bE}}\left(\lambda_i^{(n)}\right)\right| \\ 
   & \leq 2L \left|\lambda_i^{(n)} - p_i\right| +  \left|\widehat G(\rho_i) - G_{\mu_{\bE}}(\rho_i) \right|.
\end{align*}
The last expression goes to zero almost surely as $n$ goes to infinity. Thus, we have that $\widehat{G}(\lambda_i^{(n)})$ and $G_{\mu_{\bE}}(\lambda_i^{(n)})$ share the same limit. An analogous argument shows the same conclusion for $\widehat{G}'(\lambda_i^{(n)})$ and $G'_{\mu_{\bE}}(\lambda_i^{(n)}),$ and, by construction, their limit is finite and bounded away from zero. Then, the quotient rule for limits yields that the bound on $T_1$ in \eqref{eq:diff-fdrt} goes to zero.

Next, we turn to $T_2$.  
Let $i$ be any index in $\{r^\star+1, \dots, r\}$. By Lemma~\ref{lem:ratio-properties}, the ratios $\widehat G(\lambda_i^{(n)})^2/ \widehat G'(\lambda_i^{(n)})$ are bounded between $-1$ and $0$. So it suffices to show that $\widehat G'(\lambda_i^{(n)})$ goes to infinity with $n$ to conclude that $T_2 \rightarrow 0$.  For any fixed $M > 0,$ define the truncated function $f_M(z, t) =  \min\left\{M, \frac{1}{(z -t)^2}\right\}$ and the constant $l_M = \lim_{t \downarrow b}\EE_{z \sim \mu_{\bE}} f_M(z, t).$  Moreover, for any $M$ we have $|\widehat G '(\lambda_i^{(n)})| \geq \EE_{z \sim \mu_{n}} f_M(z, \lambda_i^{(n)})$. Taking inferior limits on both sides yields 
\begin{equation}\label{eq:low-semi}
    \liminf_{n\rightarrow \infty} |\widehat G '(\lambda_i^{(n)})| \geq \liminf_{n\rightarrow \infty} \EE_{z \sim \mu_{n}} f_M(z, \lambda_i) = l_M 
\end{equation}
where the last relation uses that $\lim_{n\rightarrow \infty} f_M(z, \lambda_i^{(n)}) = l_M$ %
, which follows since $f_M(\cdot, \lambda_i^{(n)})$ converges uniformly to $f_M(\cdot, b)$ and $\mu_n$ converges weakly almost surely to $\mu_{\bE}$.
Using Assumption~\ref{ass:model2},~\cref{prop:nice-edge}, and Fatou's Lemma we obtain $\liminf_{M \rightarrow \infty} l_M \geq |G'(b^+)| = \infty$.  The $M$ we used in \eqref{eq:low-semi} was arbitrary, and so we derive that $\liminf_{n\rightarrow \infty} |\widehat G '(\lambda_i^{(n)})| = \infty$, thus establishing that $T_2 \rightarrow 0$ and 
concluding the proof of the lemma.

\end{proof}

\noindent \textbf{Step 2.} To finish the proof, we need the following auxiliary lemma.

\begin{lemma} For all large $n$, the functions $\widehat{\FDR}^{(n)}(\cdot) $ and $\widetilde{\FDR}^{(n)}(\cdot)$ are nondecreasing and their image is contained in $ [0,1]$. The same properties hold for $\FDR^\infty$.
\end{lemma}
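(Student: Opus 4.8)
The plan is to reduce everything to \cref{lem:ratio-properties}. Observe that, modulo the leading $1$ and the factor $1/k$, each of the three functions is a partial sum of terms $\Phi_\mu(\lambda_i) := G_\mu(\lambda_i)^2/G_\mu'(\lambda_i)$ for a suitable bounded‑support probability measure $\mu$: for $\widehat{\FDR}^{(n)}$ the measure is the empirical measure $\widehat\mu_n := \frac{1}{n-\rh}\sum_{j=\rh+1}^{n}\delta_{\lambda_j}$, whose Cauchy transform and derivative are exactly $\widehat G$ and $\widehat G'$; for $\widetilde{\FDR}^{(n)}$ it is $\mu_{\bE}$ evaluated at the top eigenvalues $\lambda_i^{(n)}$; and for $\FDR^\infty$ it is $\mu_{\bE}$ evaluated at the limiting locations $\rho_i = G_{\mu_{\bE}}^{-1}(1/\theta_i)$, so that the $i$-th summand equals $\Phi_{\mu_{\bE}}(\rho_i) = 1/\big(\theta_i^2 G_{\mu_{\bE}}'(G_{\mu_{\bE}}^{-1}(1/\theta_i))\big)$. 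The first thing I would check is that all evaluation points lie strictly to the right of $\sup\supp(\mu)$ for the relevant $\mu$, so that \cref{lem:ratio-properties} applies: for $\widehat\mu_n$ the right edge is $\lambda_{\rh+1}$ and, since $i\le\rh$ in the sum, $\lambda_i\ge\lambda_{\rh}>\lambda_{\rh+1}$ once we assume (as we may, almost surely) that the spectrum of $\bX_n$ is simple; for $\widetilde{\FDR}^{(n)}$, \cref{thm:raj} gives $\lambda_i^{(n)}\to\rho_i>b$ for $i\le r^\star$, hence $\lambda_i^{(n)}>b$ for all large $n$; and for $\FDR^\infty$, $\rho_i>b$ by the definition of the observable rank. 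Granting this, \cref{lem:ratio-properties} yields that each summand $c_i$ satisfies $c_i\in[-1,0]$ and, because $\Phi_\mu$ is \emph{nonincreasing} while $\lambda_1\ge\lambda_2\ge\cdots$ (respectively $\rho_1\ge\rho_2\ge\cdots$, using that $G_{\mu_{\bE}}$ is decreasing on $(b,\infty)$), the sequence $(c_i)_i$ is \emph{nondecreasing}.

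For the range, with $m:=\min\{k,\rh\}\le k$ we get $\tfrac{1}{k}\sum_{i=1}^{m}c_i\in[-m/k,0]\subseteq[-1,0]$, hence $\widehat{\FDR}^{(n)}(k)=1+\tfrac1k\sum_{i=1}^{m}c_i\in[0,1]$; the identical computation (with $m=\min\{k,r^\star\}$) applies to $\widetilde{\FDR}^{(n)}$ and to $\FDR^\infty$.

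For monotonicity, fix $k$ and compare with $k+1$. If $k\ge\rh$, then $\min\{k,\rh\}=\min\{k+1,\rh\}=\rh$, so $\widehat{\FDR}^{(n)}(k)=1+\tfrac1k S$ with $S=\sum_{i=1}^{\rh}c_i\le0$, and $k\mapsto\tfrac1k S$ is nondecreasing. If $k<\rh$, writing $A=\sum_{i=1}^{k}c_i$, the inequality $\widehat{\FDR}^{(n)}(k)\le\widehat{\FDR}^{(n)}(k+1)$ is equivalent, after clearing the positive denominators, to $A\le k\,c_{k+1}$, which holds because $c_i\le c_{k+1}$ for every $i\le k$ by monotonicity of $(c_i)_i$. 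The same two cases, with $r^\star$ in place of $\rh$, give monotonicity of $\widetilde{\FDR}^{(n)}$ and of $\FDR^\infty$; alternatively, the claim for $\FDR^\infty$ is immediate from Proposition~\ref{lem:fdr-convergence}, since a pointwise limit of $[0,1]$-valued nondecreasing functions on $\NN$ is again $[0,1]$-valued and nondecreasing.

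The argument itself is short; the one point requiring care is verifying the domain condition for $\widehat\mu_n$ when the supplied rank estimate overshoots the observable rank ($\rh>r^\star$), in which case the last $\rh-r^\star$ summands of $\widehat{\FDR}^{(n)}$ are evaluated at eigenvalues that drift toward the bulk edge as $n\to\infty$ — one must still place them strictly above $\lambda_{\rh+1}=\sup\supp(\widehat\mu_n)$ for the given $n$, which is exactly what simplicity of the spectrum provides. Everything else is the elementary rearrangement above.
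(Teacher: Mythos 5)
Your proposal is correct and follows essentially the same route as the paper: reduce all three quantities to sums of $\Phi_\mu(\lambda_i)=G_\mu(\lambda_i)^2/G_\mu'(\lambda_i)$ for the empirical measure $\widehat\mu_n$ (whose Cauchy transform is exactly $\widehat G$) respectively $\mu_{\bE}$, and invoke \cref{lem:ratio-properties}. The only difference is that you spell out what the paper leaves implicit --- the check that the evaluation points lie strictly above $\sup\supp(\widehat\mu_n)=\lambda_{\rh+1}$ and the rearrangement showing that a running average $1+\frac1k\sum_{i\le \min\{k,\rh\}}c_i$ of a nondecreasing $[-1,0]$-valued sequence is nondecreasing in $k$ --- which is a welcome, not a divergent, addition.
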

\begin{proof}
  We prove the statement for $\widehat{\FDR}^{(n)}$. 
  Define $C = \frac{\lambda_{\hat{r}}+\lambda_{\hat{r} + 1}}{2}$ and the function $\Phi\colon [C, \infty) \rightarrow \RR$ given by $y \mapsto \frac{G_{\hat{\mu}_n}(y)^{2}}{G_{\hat{\mu}_n}'(y)}$ where $\widehat{\mu}_n = \frac{1}{n-\hat{r}} \sum_{i=\hat{r} +1}^n \delta_{\lambda_i}$. %
  The statement follows as direct consequence of Lemma~\ref{lem:ratio-properties} by noting that $\widehat{\FDR}^{(n)}(k)= 1 + \frac{1}{k}\sum^{\min\{k, \widehat r\}}_{j = 1} \Phi(\lambda_{j}).$ 
  The same arguments apply for $\widetilde{\FDR}^{(n)}$ after noting that for large enough $n$ the largest $r^\star$ eigenvalues are outside of the support of the asymptotic distribution $\mu_{\bE}$. The limiting FDR inherits both properties since it can be realized as the limit of $\widehat{\FDR}^{(n)}.$
\end{proof}

Finally, we can show that our estimate $\widehat k$ matches $\bar{k} = \max \{ j \in [r] \mid \overline{\FDR}^{(n)}(j) \leq \alpha \}$ for all large $n.$ To prove this, we show that both $\widehat k$ and $\bar{k}$ almost surely converge to $k^{\infty}$. Since $\FDR^{\infty}(k^{\infty}) \neq \alpha$, we have that \begin{equation}\label{eq:separation}
\FDR^{\infty}(k^{\infty}) < \alpha < \FDR^{\infty}(k^{\infty} + 1).\end{equation} Since $\widehat \FDR^{(n)}$ converges pointwise to the asymptotic ${\FDR}^{(\infty)}$ we have that for large $n$
$$
    \widehat{\FDR}^{(n)}(k^{(n)}) < \alpha < \widehat{\FDR}^{(n)}(k^{(n)} + 1).
$$ 
Further, by Lemma~\ref{lem:fdr-convergence} we have that for large enough $n$, $\widehat \FDR^{(n)}$ is nondecreasing in $k$, and so we derive that $\widehat k$ = $k^{\infty}$. Since $\overline{\FDR}^{(n)}$ converges to $\FDR^\infty$, for all $k \leq r$ and by \eqref{eq:separation} we also conclude $\bar{k} = k^{\infty}.$ 
Thus, we obtain that almost surely $\widehat k = \bar{k}$ for large enough $n$. This finishes the proof of Theorem~\ref{thm:main}. 
\hfill $\square$
\subsubsection{Proof of \cref{thm:rank-estimate}}\label{proof:rank-estimate}
By Weyl's interlacing inequalities, we have for all $i \geq n$ that
\begin{equation}
    \label{eq:inter}
    \lambda_i(\bE_n) \leq \lambda_i(\bX_n) \leq \lambda_{i-r}(\bE_n)
\end{equation} 
with the convention that $\lambda_k(\bX) = \infty$ if $k \leq 0$. Fix an index $j > r;$ then, using the interlacing inequalities combined with a telescoping sum, we obtain that 
\begin{align*}
    \Delta_j = \lambda_{j-1}(\bX_n) - \lambda_{j}(\bX_n) &\leq \lambda_{j-1-r}(\bE_n) - \lambda_{j}(\bE_n) \\
    &= \sum_{i = 0}^{r} \left(\lambda_{j-1-i}(\bE_n) - \lambda_{j-i}(\bE_n)\right)\\
    & \leq r \max_{i \in [n-1]}\left\{\lambda_{i}(\bE_n) - \lambda_{i+1}(\bE_n)\right\}.
\end{align*}
Recall the labeling $s_i := \lambda_{i}(\bE_n) - \lambda_{i+1}(\bE_n).$ Assumption~\ref{ass:spacings} ensures that for any $\varepsilon >0 $ there exists an $N > 0$ such that for all $n \geq N$, we have that $\max_{i \in [n-1]}\left\{s_i\right\} \leq \varepsilon \cdot
n^{-1/2}$. Thus, by taking $\varepsilon = p/2r$, we obtain 
\begin{equation}
    \Delta_j \leq \frac{p}{2} \cdot
    n^{-1/2} < 
    p \cdot  n^{-1/2} \quad \text{for all }j>r.
\end{equation}
Thus, we conclude that for all large enough $n$,  
$\textsc{RankEstimate}(\bX_{n}) \leq r$. Next, let us show the lower bound of $r^\star.$ By \cref{thm:raj} we have that $\lambda_{r^\star}(\bX) \rightarrow \rho_{r^\star} > b.$ Furthermore by Theorem~\ref{thm:raj} we have that $\lambda_{{r^\star} + 1}(\bX_n)$ almost surely converges to $b$. Thus, for large enough $n$, we obtain that $$\lambda_{r^\star}(\bX_n) - \lambda_{r^\star + 1}(\bX_n) \geq \frac{\rho_{r^\star} - b}{2} > 
p \cdot n^{-1/2}$$
where the last inequality holds for any $n$ large since $n^{-1/2}$ tends to zero. Hence $\textsc{RankEstimate}(\bX_{n}) \geq r^\star$
This concludes the proof of \cref{thm:rank-estimate}.
\hfill $\square$

\subsubsection{Proof of \cref{cor:wigner}} \label{proof:wigner-spacings}

It is well-known that the empirical distribution of the spectrum of Wigner matrices converges weakly almost surely to the semicircle law, which is given by the density
$$
\varrho(t) = \frac{1}{2\pi} \sqrt{(4 - t^2)_+}.
$$
The distribution is supported on the interval $[-2, 2].$ Hence, Wigner ensembles satisfy Assumption~\ref{ass:model2}. 
So to prove that~\cref{ass:spacings} holds, it suffices to show that $\sup_{i} |\lambda_i (\bE_n) - \lambda_{i+1}(\bE_{n})| = o(n^{-1/2}).$ To this end, we will use a celebrated result from random matrix theory known as eigenvalue rigidity. Intuitively, rigidity amounts to the concentration of eigenvalue around quantiles of $\mu_{\bE}.$ For any $i \in\{ 1, \dots, n\}$ define the quantiles $\gamma_i^{(n)}$ as
$$ \int_{\gamma_i^{(n)}}^{2} g(t) dt = \frac{i}{n}. $$
As we shall see, $\gamma_i^{(n)}$ gives the typical location of the $i$th largest eigenvalue $\lambda_i^{(n)}$. To avoid cluttering notation, we will often drop the super index in $\gamma_i^{(n)}$ when it is clear from the context.  

Our strategy to bound the spacing will be to control  
\begin{equation}\label{eq:triangle-spacings}
    | \lambda_i(\mathbf{H}_n) - \lambda_{i+1}(\mathbf{H}_n)| \leq |\lambda_i(\mathbf{H}_n) - \gamma_i| + |\lambda_{i+1}(\mathbf{H}_n) - \gamma_{i+1}| + |\gamma_i - \gamma_{i+1}|.
\end{equation}
Eigenvalue rigidity will give us a handle on the distance between quantiles and eigenvalues. Before we delve into rigidity, let us bound the deterministic term $|\gamma_i - \gamma_{i+1}|$. 
\begin{lemma}\label{lem:bound-quantiles}
    For large enough $n,$ we have the following bound for all $i \in \{1, \dots, n\}$:
    $$ \left| \gamma_i^{(n)} - \gamma_{i+1}^{(n)} \right| \leq \pi^2 \cdot n^{-2/3}. $$ 
\end{lemma}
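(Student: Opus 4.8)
The starting point is the identity $\int_{\gamma_{i+1}^{(n)}}^{\gamma_i^{(n)}} g(t)\,dt = \tfrac1n$ for every $i$ (subtract the defining equations for consecutive quantiles), so the spacing $\gamma_i^{(n)} - \gamma_{i+1}^{(n)}$ is governed by how small the semicircle density $g$ can be on the interval $[\gamma_{i+1}^{(n)},\gamma_i^{(n)}]$. Since $g$ vanishes like a square root at the edges $\pm 2$, the spacings are largest near the edges, and the whole content of the lemma is to quantify this; the constant $\pi^2$ is deliberately generous (the argument below yields $\approx 2.65\,n^{-2/3}$). The plan is: (i) record elementary square-root sandwich bounds for $g$; (ii) use the \emph{upper} bound to show that every quantile $\gamma_i^{(n)}$ with $1\le i\le n-1$ lies at distance at least $\delta_n:=\bigl(\tfrac{3\pi}{2n}\bigr)^{2/3}$ from both edges; and (iii) feed this into the \emph{lower} bound to control the spacing, treating the extreme gap $\gamma_{n-1}^{(n)}-\gamma_n^{(n)}$ separately.

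For (i): writing $g(t)=\tfrac{1}{2\pi}\sqrt{(2-t)(2+t)}$ and using $2+t\le 4$ and $2+|t|\ge 2$ gives, for all $t\in[-2,2]$,
\[
\frac{1}{\pi\sqrt2}\sqrt{2-|t|}\ \le\ g(t)\ \le\ \frac1\pi\sqrt{2-t},
\]
and by the mirror computation also $g(t)\le \tfrac1\pi\sqrt{2+t}$. For (ii): for any $i\ge 1$,
$\tfrac1n\le \tfrac in=\int_{\gamma_i^{(n)}}^2 g(t)\,dt\le \int_{\gamma_i^{(n)}}^2 \tfrac1\pi\sqrt{2-t}\,dt=\tfrac{2}{3\pi}\bigl(2-\gamma_i^{(n)}\bigr)^{3/2}$, hence $2-\gamma_i^{(n)}\ge\delta_n$; applying the same estimate on $[-2,\gamma_i^{(n)}]$ with $g(t)\le\tfrac1\pi\sqrt{2+t}$ gives $2+\gamma_i^{(n)}\ge\delta_n$ for any $i\le n-1$. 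Thus $2-|\gamma_i^{(n)}|\ge\delta_n$ whenever $1\le i\le n-1$.

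For (iii): fix $1\le i\le n-2$. Since $t\mapsto 2-|t|$ is concave, its minimum over $[\gamma_{i+1}^{(n)},\gamma_i^{(n)}]$ is attained at an endpoint, and by (ii) that minimum is $\ge\delta_n$; combining with the lower bound in (i),
\[
\frac1n=\int_{\gamma_{i+1}^{(n)}}^{\gamma_i^{(n)}} g(t)\,dt\ \ge\ \bigl(\gamma_i^{(n)}-\gamma_{i+1}^{(n)}\bigr)\,\frac{\sqrt{\delta_n}}{\pi\sqrt2},
\]
so $\gamma_i^{(n)}-\gamma_{i+1}^{(n)}\le \pi\sqrt2\,\delta_n^{-1/2}n^{-1}=\pi\sqrt2\,\bigl(\tfrac{2}{3\pi}\bigr)^{1/3}n^{-2/3}<\pi^2 n^{-2/3}$. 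The remaining gap is $\gamma_{n-1}^{(n)}-\gamma_n^{(n)}=\gamma_{n-1}^{(n)}+2$, where $\gamma_n^{(n)}=-2$ and, for $n$ large, $\gamma_{n-1}^{(n)}\le 0$; then $\tfrac1n=\int_{-2}^{\gamma_{n-1}^{(n)}}g(t)\,dt\ge \tfrac{\sqrt2}{3\pi}\bigl(2+\gamma_{n-1}^{(n)}\bigr)^{3/2}$ (using $4-s\ge 2$ on $s\in[0,2+\gamma_{n-1}^{(n)}]\subseteq[0,2]$), giving $\gamma_{n-1}^{(n)}+2\le\bigl(\tfrac{3\pi}{\sqrt2}\bigr)^{2/3}n^{-2/3}<\pi^2 n^{-2/3}$. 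Taking $n$ large enough for the ``for $n$ large'' provisos completes the argument.

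There is no genuine obstacle here — it is a routine estimate. The one pitfall to avoid is bounding $g$ below by a positive constant: that is false near the edges and would only yield a $\Theta(n^{-1})$ bound valid in the bulk, whereas the true edge spacing is of order $n^{-2/3}$. Accordingly, the key quantitative inputs are the square-root lower bound on $g$ together with the $n^{-2/3}$ ``edge repulsion'' of step (ii), and one must isolate the single boundary gap $i=n-1$, where one endpoint of the integration interval is exactly $-2$ and the interior argument does not apply.
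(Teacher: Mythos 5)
Your proof is correct, and it takes a genuinely different route from the paper's. The paper works with a density-level threshold: it defines $\omega_n$ as the largest point at which the semicircle density stays above $n^{-1/3}$, restricts to $i \leq n/2$ by symmetry, and then argues three cases — both quantiles in the bulk region (where the density lower bound $n^{-1/3}$ immediately gives an $n^{-2/3}$ spacing), both in the edge region (where the gap is bounded by the width $2-\omega_n$ of that region, estimated by Taylor-expanding the inverse of the edge density), and the straddling case (handled via monotonicity of the quantile spacings together with $\gamma_1 > \omega_n$). You instead extract an edge-repulsion estimate for the quantiles themselves, $2-|\gamma_i^{(n)}| \geq (3\pi/2n)^{2/3}$ for $1 \leq i \leq n-1$, directly from the square-root upper bound on the density, and then apply the matching square-root lower bound uniformly on each gap interval (concavity of $t \mapsto 2-|t|$ placing the minimum at an endpoint), isolating only the single boundary gap ending at $\gamma_n^{(n)}=-2$. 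This avoids the case split, the inverse-function expansion, and the monotonicity argument, yields explicit constants, and in fact works for every $n \geq 2$ rather than only for $n$ large; on the other hand, the paper's threshold construction is stated so that it transfers verbatim to the Marchenko--Pastur setting, though your argument uses the semicircle only through its two-sided square-root edge bounds and so generalizes equally well wherever Assumption~\ref{ass:model2}-type edge decay holds. Two trivial remarks: the lemma's stated range $i \in \{1,\dots,n\}$ can only sensibly mean $i \leq n-1$ (with $\gamma_n^{(n)}=-2$), which is what you, and the paper's use of the bound in \eqref{eq:triangle-spacings}, actually need; and your advertised constant $\approx 2.65$ covers the interior gaps only, the boundary gap giving $\approx 3.54$ — both comfortably below $\pi^2$, so nothing is affected.
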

\begin{proof}
     For simplicity, we focus on the larger quantiles $i \leq n/2$; the argument follows symmetrically for the other half. Define $\omega_n \in (0, 2)$ to be the largest scalar such that if $t \in \Omega_n = [-\omega_n, \omega_n]$ then $\varrho(t) \geq n^{-1/3}.$ 
     We consider a couple of cases. 
     
     \textbf{Case 1}. First, we consider the case where quantiles $\gamma_i, \gamma_{i+1} \in \Omega$. By definition, we have that 
$$ n^{-1/3} \cdot |\gamma_{i} - \gamma_{i+1}| \ \leq \int_{\gamma_{i+1}}^{\gamma_i}\varrho(t) dt = n^{-1},$$
and so rearranging terms gives the desired bound with an even better constant.

\textbf{Case 2}. Next, assume that $\gamma_{i}, \gamma_{i+1} \in (\omega_n, 2].$ Observe that it suffices to prove that $|2- \omega_n| \leq \pi^2 \cdot n^{-2/3}.$ We can write $\omega_n = 2 - h(n^{-1/3})$ where $h(\cdot) = f^{-1}(\cdot)$ is the functional inverse of $f(t) = \varrho(t-2)$ restricted to the interval $[0, 2]$ so that the inverse is well-defined. Thus, we focus on bounding $| 2 - \omega_n| = |h(n^{-1/3})|$. A simple computation reveals that $$h(t) = 2 - 2\sqrt{1 - \pi \cdot t^2}.$$ Near zero, the second derivative of this function is $Q$-Lipschitz  for some $Q>0$; to see this just note that its third derivative is bounded. Thus, the Taylor approximation error is bounded by
\begin{equation*}
    \left|h(t) - \frac{h''(0)}{2} t^2\right| \leq \frac{Q}{6} t^3,
\end{equation*}
here we used that $h(0) = h'(0) = 0$; see~\cite[Lemma 1.2.4]{nesterov2013introductory}.
Therefore, for $t$ small enough we obtain $\frac{\pi^2}{2} t^2 = \frac{h''(0)t^2}{4} \leq  h(t) \leq h''(0) t^2 \leq 2\pi^2 t^2$. For $n$ is large enough, we might substitute $t = n^{-1/3}$ and obtain
\begin{equation}\label{eq:quantile-approx}
\frac{1}{2} \pi^2 \cdot n^{-2/3} \leq | 2- \omega_n| \leq \pi^2 \cdot n^{-2/3},
\end{equation} 
as we wanted.

\textbf{Case 3}. Finally, assume that $\gamma_i \in (-\omega_n, 2)$ and $\gamma_{i+1} \in [-\omega_n, \omega_n].$ The first $\lfloor n/2 \rfloor$ spacings are monotonically decreasing
$$
|2- \gamma_1| > |\gamma_1 - \gamma_2| > \dots > |\gamma_{\lfloor n/2 \rfloor + 1} - \gamma_{\lfloor n/2 \rfloor}|,
$$
which follows since $\int_{\gamma_{i+1}}^{\gamma_i}\varrho(t) dt = n^{-1}$ and $\varrho$ is positive and monotonically decreasing on the positive real line. So, it suffices to prove that $\gamma_1 > \omega_n.$ Using the concavity of $\varrho$, we can lower bound the integral
\begin{align}
    \int^2_{\omega_n} \varrho(t) dt \geq \frac{1}{2} (2 - \omega_n) \cdot \varrho(\omega_n) = \frac{1}{2}(2 - \omega_n) \cdot n^{-1/3} \geq \frac{\pi^2}{4} n^{-1} > n^{-1}, 
\end{align}
where the first equality follows by the definition of $\omega_n$ and the second inequality follows by the lower bound in \eqref{eq:quantile-approx}. Thus, we derive that $\gamma_1 > \omega_n$. This concludes the proof of the lemma.
\end{proof}

Next, we establish bounds on the remaining terms in \eqref{eq:triangle-spacings}. To this end, we invoke the following result. This theorem was originally established in \cite{erdHos2012rigidity}; a more accessible proof can be found in \cite[Theorem 2.9]{benaych2016lectures}.
\begin{theorem}[\textbf{Wigner eigenvalue rigidity}]\label{thm:rigidity-wigner}
    Assume that $\mathbf{\bH}_n \in \RR^{n\times n}$ is a Wigner ensemble. Then, for all small $\varepsilon > 0$ and large $M$ we have that 
    \begin{equation}
        \sup_{i} \PP\left( |\lambda_i(\mathbf{H}_n) - \gamma_i| \geq n^{\varepsilon} \cdot n^{-2/3}\left(\min \left\{i, (n + 1 - i) \right\} \right)^{-1/3} \right) \leq n^{-M}
    \end{equation}
    for all large enough $n.$
\end{theorem}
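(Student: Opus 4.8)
The plan is to follow the standard route from the local semicircle law to eigenvalue rigidity developed by Erd\H{o}s, Yau, and Yin. The analytic core is a high-probability estimate on the Stieltjes transform $m_n(z) = \frac{1}{n}\Tr(\bH_n - z)^{-1}$ of the empirical spectral measure, valid on a spectral domain whose imaginary part reaches down to order $n^{-1+\gamma}$. Concretely, I would first establish that for $z = E + \mathrm{i}\eta$ with $|E| \le 3$ and $n^{-1+\gamma} \le \eta \le 1$, with probability at least $1 - n^{-M}$,
$$
|m_n(z) - m_{sc}(z)| \le n^{\varepsilon}\left(\frac{1}{n\eta} + \sqrt{\frac{\Ima m_{sc}(z)}{n\eta}}\right),
$$
where $m_{sc}$ is the Stieltjes transform of the semicircle law, characterized by $m_{sc}(z)^2 + z\, m_{sc}(z) + 1 = 0$. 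This is the step where the three defining properties of a Wigner ensemble are used: the Schur complement formula applied to the diagonal resolvent entries $G_{kk}(z) = ((\bH_n - z)^{-1})_{kk}$ writes each $G_{kk}$ as $-(z + m_n(z) + \cE_k)^{-1}$ with $\cE_k$ a centered quadratic form in the $k$-th column of $\bH_n$; independence of the entries together with the uniform moment bounds $\|\sqrt{n}\,\bH_{ij}\|_p \le C_p$ feed a Hanson--Wright-type large-deviation estimate that controls $\cE_k$, while the variance normalization makes the deterministic part exactly $-(z + m_n(z))^{-1}$. Averaging over $k$ shows that $m_n$ approximately solves $m^2 + zm + 1 = 0$, and a stability analysis of this quadratic --- whose stability constant degrades like $(\kappa + \eta)^{-1/2}$ with $\kappa = \dist(E, \{-2, 2\})$, which is the source of the edge behavior --- converts the approximate equation into the displayed bound through a continuity/bootstrap argument in $\eta$.

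Given the local law, the rigidity statement follows from an essentially deterministic argument. First I would pass from the Stieltjes transform to the counting function $N(E) = \#\{i : \lambda_i(\bH_n) \ge E\}$ (using the paper's nonincreasing ordering), showing via a Helffer--Sj\"ostrand representation of the indicator, or the standard smoothing-on-scale-$\eta$ inequality, that on a single event of probability at least $1 - n^{-M}$,
$$
\left| N(E) - n \int_{E}^{2} \varrho(t)\, dt \right| \le n^{\varepsilon} \qquad \text{uniformly in } E,
$$
with a sharper estimate when $E$ is close to $\pm 2$. Then I would invert this: $\gamma_i$ is defined by $\int_{\gamma_i}^{2} \varrho = i/n$, so $|N(\gamma_i) - i| \le n^{\varepsilon}$ and hence $n\big|\int_{\lambda_i}^{\gamma_i} \varrho\big| \le 2 n^{\varepsilon}$. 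Near the top edge one has $\varrho(t) \asymp \sqrt{2 - t}$, so $\frac{2}{3}(2-\gamma_i)^{3/2} = i/n$ gives $2 - \gamma_i \asymp (i/n)^{2/3}$ and therefore $\varrho \asymp (i/n)^{1/3}$ on the interval joining $\lambda_i$ and $\gamma_i$ (which a preliminary weak bound places within a constant factor); dividing the $n^{\varepsilon}$ bound on the $\varrho$-integral by this density yields $|\lambda_i - \gamma_i| \lesssim n^{\varepsilon} n^{-2/3} i^{-1/3}$, and the symmetric estimate near the lower edge produces the $(\min\{i, n+1-i\})^{-1/3}$ factor, while in the bulk both reduce to the order-$n^{\varepsilon-1}$ rigidity. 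Since the counting-function estimate is simultaneous in $E$, the resulting bound on $|\lambda_i - \gamma_i|$ holds for all $i$ at once, which in particular gives the asserted $\sup_i$ bound after running the local law with parameters slightly better than the target $\varepsilon$ and $M$.

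The main obstacle is the optimal-scale local semicircle law: controlling the quadratic-form fluctuations $\cE_k$ uniformly in $k$ and all the way down to $\eta \sim n^{-1+\gamma}$, and carrying out the self-consistent-equation bootstrap so that the error genuinely has the form $\frac{1}{n\eta} + \sqrt{\Ima m_{sc}(z)/(n\eta)}$ rather than something weaker. This refinement is essential, because only the $\sqrt{\Ima m_{sc}/(n\eta)}$ term --- which is small at the edge precisely because $\Ima m_{sc}$ is --- yields the $n^{-2/3}$ edge scaling; a cruder bound would give only an $n^{-1/2}$-type estimate, which is insufficient for the spacing consequences drawn later. Since this machinery is developed in full in \cite{erdHos2012rigidity} and, in streamlined form, in \cite[Chapter 2]{benaych2016lectures}, in the write-up I would cite those references for the local law and present only the counting-function inversion in detail.
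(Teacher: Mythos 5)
The paper does not actually prove Theorem~\ref{thm:rigidity-wigner}: it imports it verbatim from \cite{erdHos2012rigidity}, pointing to \cite{benaych2016lectures} for an accessible proof, and your overall plan --- local semicircle law via the Schur complement and the self-consistent equation $m^2+zm+1=0$, then a Helffer--Sj\"ostrand counting-function estimate, then inversion of the quantile map using the square-root behavior of $\varrho$ at the edges --- is precisely the route taken in those references. Your decision to cite them for the local-law machinery and only write out the counting/inversion step is therefore consistent with (indeed more detailed than) what the paper itself does, and that part of your sketch is sound, up to inessential constants (e.g.\ the $1/\pi$ missing in $\int_{\gamma_i}^2\varrho$) and the usual separate treatment of the extreme indices $i=O(1)$, where one needs the bound on $\lambda_1(\bH_n)$ to rule out eigenvalues far outside $[-2,2]$ before the divide-by-density argument applies.

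There is, however, one concrete gap in the stated plan: the local law you display, with error $n^{\varepsilon}\bigl(\tfrac{1}{n\eta}+\sqrt{\Ima m_{sc}(z)/(n\eta)}\bigr)$, is the \emph{entrywise} bound (for $G_{kk}$), not the bound you need for $m_n=\tfrac1n\Tr(\bH_n-z)^{-1}$. The counting-function estimate $|N(E)-n\int_E^2\varrho|\le n^{\varepsilon}$ requires the fluctuation-averaged law $|m_n(z)-m_{sc}(z)|\le n^{\varepsilon}(n\eta)^{-1}$: in the Helffer--Sj\"ostrand step the error enters through quantities of the form $n\,\eta\,|m_n-m_{sc}|$, which stay of size $n^{\varepsilon}$ under the averaged bound but grow like $n^{\varepsilon}(n\eta)^{1/2}$ under yours, so in the bulk you would only obtain a counting error of order $n^{1/2+\varepsilon}$ and hence rigidity at scale $n^{-1/2}$ --- short of the stated $n^{\varepsilon-1}$ bulk bound, and also useless for the paper's downstream spacing estimate, which needs gaps $o(n^{-1/2})$. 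Relatedly, your remark that the $\sqrt{\Ima m_{sc}/(n\eta)}$ term is what produces the $n^{-2/3}$ edge scaling has it backwards for the averaged quantity: it is the $(n\eta)^{-1}$ averaged bound (with its improvement outside the spectrum), obtained by fluctuation averaging over $k$ of the self-consistent equation, that drives both the bulk and edge rigidity. The fix is simply to quote the averaged local law from \cite{erdHos2012rigidity} or \cite{benaych2016lectures} as your input; with that substitution the rest of your argument goes through as in those references.
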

To complete the proof of \cref{cor:wigner}, take $\varepsilon  < 1/6$ and $D \geq 3$. Define the events $$
\mathcal{E}_n = \left\{\text{For all }i \in [n], \,\, |\lambda_i(\mathbf{H}_n) - \gamma_i| \geq n^{\varepsilon} \cdot n^{-2/3}\left(\min \left\{i, (n + 1 - i) \right\} \right)^{-1/3}\right\}.
$$
Using the union bound in tandem with~\cref{thm:rigidity-wigner} 
we derive that $\PP(\mathcal{E}_n) \leq n^{1-M}$. By our choice of $M$, these probabilities are summable. So, the Borel-Cantelli Lemma ensures that, with probability one, only finitely many $\mathcal{E}_n$ occur. Thus, almost surely, there exists an $N$ such that for all $n \geq N$ we have  
\begin{equation}\label{eq:rigidity-conclusion}
|\lambda_i(\mathbf{H}_n) - \gamma_i| \leq n^{\varepsilon} \cdot n^{-2/3}\left(\min \left\{i, (n + 1 - i) \right\} \right)^{-1/3}\leq n^{\varepsilon -2/3} \quad \text{ for all } i \in [n].
\end{equation}
By our choice of $\varepsilon$, we obtain that $|\lambda_i(\mathbf{H}_n) - \gamma_i| = o(n^{-1/2}).$

Combining \eqref{eq:triangle-spacings}, \cref{lem:bound-quantiles}, and \eqref{eq:rigidity-conclusion} we conclude that for all large enough $n$, we have that 
$$
| \lambda_i(\mathbf{H}_n) - \lambda_{i+1}(\mathbf{H}_n)| \leq 2 n^{\varepsilon -2/3} + \pi^2 n^{-2/3} = o(n^{-1/2}) \quad \text{for all }i \in [n].
$$
This finishes the proof of the proposition. \hfill $\square$

\subsubsection{Proof of \cref{prop:covariance}}\label{proof:wishart-spacings}

The proof follows the exact same reasoning as that of~\cref{cor:wigner}. 
Again, for each $i \in [n]$, define the typical location $\gamma_i^{(n)}$ as 
$$
\int_{\gamma_i^{(n)}}^{c_+} \nu(t) dt = \frac{i}{n}, 
$$
where $\nu$ denotes the Marchenko-Pastur distribution \eqref{eq:mar-pas}. In order for the arguments to go through, we need an equivalent of Lemma~\ref{lem:bound-quantiles} and Theorem~\ref{thm:rigidity}. Let us state their counterparts for Wishart matrices. 
\begin{lemma}
    There exists a constant $C > 0$  depending only on $\vartheta$ such that for large enough $n,$
    $$| \gamma_i^{(n)} - \gamma_{i+1}^{(n)} | \leq C \cdot n^{-2/3} .$$
\end{lemma}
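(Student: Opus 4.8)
The plan is to follow the proof of \cref{lem:bound-quantiles} for the semicircle law almost verbatim, the key structural input being that when $\vartheta \neq 1$ the Marchenko--Pastur density \eqref{eq:mar-pas} is supported on an interval $[c_-,c_+]$ bounded away from $0$ (indeed $c_- = (1-\sqrt{\vartheta})^2 > 0$ precisely because $\vartheta\neq 1$), is continuous and strictly positive on $(c_-,c_+)$, and vanishes like a square root at each endpoint. Concretely, I would first write $\nu(t) = \frac{1}{2\pi t}\sqrt{c_+-t}\cdot\sqrt{t-c_-}$ and observe that $t\mapsto \frac{\sqrt{t-c_-}}{2\pi t}$ is smooth and bounded between positive constants on a fixed neighborhood of $c_+$, and symmetrically $t\mapsto\frac{\sqrt{c_+-t}}{2\pi t}$ near $c_-$. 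This yields two-sided bounds $c_1\sqrt{c_+-t}\leq \nu(t)\leq c_2\sqrt{c_+-t}$ near $c_+$ and $c_1\sqrt{t-c_-}\leq\nu(t)\leq c_2\sqrt{t-c_-}$ near $c_-$, with $0<c_1\leq c_2$ depending only on $\vartheta$, and it shows $\nu$ is monotone (increasing near $c_-$, decreasing near $c_+$) on those neighborhoods.

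With this in hand, set $\Omega_n = \{t\in[c_-,c_+] : \nu(t)\geq n^{-1/3}\}$. For all large $n$ the level $n^{-1/3}$ lies below the minimum of $\nu$ on any compact subinterval of $(c_-,c_+)$, so $[c_-,c_+]\setminus\Omega_n$ is confined to the two edge neighborhoods, where monotonicity makes it exactly $[c_-,\omega_n^-)\cup(\omega_n^+,c_+]$ for the unique solutions $\omega_n^{\pm}$ of $\nu=n^{-1/3}$; the lower square-root bounds then give $\omega_n^- - c_- \leq c_1^{-2} n^{-2/3}$ and $c_+-\omega_n^+\leq c_1^{-2}n^{-2/3}$. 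Now fix $i$ and decompose $[\gamma_{i+1}^{(n)},\gamma_i^{(n)}]$ into its intersection with $\Omega_n$ and its intersection with the complement. On the $\Omega_n$ part $\nu\geq n^{-1/3}$, so its Lebesgue measure is at most $n^{1/3}\int_{\gamma_{i+1}^{(n)}}^{\gamma_i^{(n)}}\nu(t)\,dt = n^{1/3}\cdot n^{-1} = n^{-2/3}$, using $\int_{\gamma_{i+1}^{(n)}}^{\gamma_i^{(n)}}\nu = 1/n$ from the definition of the quantiles. The complementary part has measure at most $(\omega_n^- - c_-)+(c_+-\omega_n^+)\leq 2c_1^{-2}n^{-2/3}$. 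Adding, $|\gamma_i^{(n)}-\gamma_{i+1}^{(n)}| \leq (1+2c_1^{-2})n^{-2/3}$, which is the claim with $C = 1+2c_1^{-2}$ depending only on $\vartheta$.

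I do not anticipate a real obstacle; the only spot requiring care is upgrading the leading-order edge asymptotics of $\nu$ to genuine two-sided bounds on a fixed edge neighborhood (so that $\omega_n^{\pm}$ is within $O(n^{-2/3})$ of the edge and the complement of $\Omega_n$ is a true $O(n^{-2/3})$-neighborhood of the edges), and this is immediate from the explicit factorization above precisely because $c_->0$. The remainder is the same integration-against-the-density bookkeeping as in the Wigner case, so one can even skip the three-case split used in \cref{lem:bound-quantiles}. Once this lemma is established, the proof of \cref{prop:covariance} proceeds exactly as that of \cref{cor:wigner}: combine this quantile-spacing bound with the Marchenko--Pastur rigidity estimate of \cite{pillai2014universality} via the triangle inequality \eqref{eq:triangle-spacings} and a Borel--Cantelli argument to obtain $\sup_i|\lambda_i(\bE_n)-\lambda_{i+1}(\bE_n)| = o(n^{-1/2})$ almost surely, which together with the known weak convergence to \eqref{eq:mar-pas} gives \cref{ass:model2} and \cref{ass:spacings}.
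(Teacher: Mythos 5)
Your argument is correct, and it is a genuinely streamlined variant of what the paper intends (the paper omits the proof, saying only that it parallels \cref{lem:bound-quantiles}). The common core is the same: each quantile interval carries mass exactly $1/n$, and the square-root vanishing of the density at the spectral edges sets the $n^{-2/3}$ scale for the region where the density drops below the level $n^{-1/3}$. Where you differ is in the bookkeeping. The paper's template (\cref{lem:bound-quantiles}) runs a three-case analysis depending on whether $\gamma_i,\gamma_{i+1}$ lie inside or outside the sub-level set $\Omega_n$, and in the edge case it computes the width of $[\,\omega_n, \text{edge}\,]$ by an explicit Taylor expansion of the functional inverse of the density; the one-in-one-out case then needs a monotonicity-of-spacings argument. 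You instead decompose each interval $[\gamma_{i+1}^{(n)},\gamma_i^{(n)}]$ into its intersection with $\Omega_n$ (measure at most $n^{1/3}\cdot n^{-1}$) and with the edge strips (measure at most $2c_1^{-2}n^{-2/3}$ from the lower comparison bound $\nu(t)\geq c_1\sqrt{c_+-t}$, resp.\ $c_1\sqrt{t-c_-}$), which removes the case split and the inverse-function expansion entirely, handles both edges symmetrically, and makes transparent exactly where $\vartheta\neq 1$ (i.e.\ $c_->0$) is used — namely that the prefactor $1/(2\pi t)$ in \eqref{eq:mar-pas} is bounded between positive constants near $c_-$. Two small remarks: the upper comparison bound $\nu(t)\leq c_2\sqrt{c_+-t}$ and the monotonicity of $\nu$ near the edges that you invoke are never actually needed (the lower bound alone confines $\{\nu<n^{-1/3}\}$ to $O(n^{-2/3})$-strips at the edges), and your concluding paragraph on assembling \cref{prop:covariance} from this lemma, the rigidity estimate of Pillai--Yin, and Borel--Cantelli matches the paper's route through \cref{cor:wigner} exactly.
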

The proof parallels that of~\cref{lem:bound-quantiles}, and so we omit the details.
The eigenvalue rigidity of Wishart matrices was established in \cite{pillai2014universality}.

\begin{theorem}[\textbf{Wishart eigenvalue rigidity}]\label{thm:rigidity}
    Assume that $\mathbf{\bW}_n \in \RR^{n\times n}$ is a Wishart ensemble satisfying $\vartheta \neq 1.$ Then, for all small $\varepsilon > 0$ and large $M$ we have that 
    \begin{equation}
        \sup_{i \in [\min\{n_1, n_2\}]} \PP\left( |\lambda_i(\mathbf{H}_n) - \gamma_i| \geq n_1^{\varepsilon} \cdot n_1^{-2/3}\left(\min \left\{i, (\min\{n_1, n_2\} + 1 - i) \right\} \right)^{-1/3} \right) \leq n_1^{-M}
    \end{equation}
    for all large enough $n.$
\end{theorem}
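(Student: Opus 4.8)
The statement is exactly the Wishart rigidity estimate established by Pillai and Yin \cite{pillai2014universality}; the plan is to recall the by-now standard local-law-to-rigidity route and specialize it to our hypotheses on the ensemble (\cref{def:wishart}) together with the separation $\vartheta \neq 1$. Write $\bW_n = \bY\bY^\top$ with $\bY \in \RR^{n\times m}$ and set $\vartheta_n = n/m \to \vartheta$. Let $m_{\mathrm{MP}}$ be the Stieltjes transform of the Marchenko--Pastur law with ratio $\vartheta_n$ and edges $c_\pm^{(n)} = (1\pm\sqrt{\vartheta_n})^2$, i.e.\ the upper-half-plane-valued root of the quadratic $z\,m^2 + (z+\vartheta_n-1)\,m + 1 = 0$. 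The first step is to record the square-root behaviour of the density $\nu$ at $c_\pm^{(n)}$ — this is where $\vartheta \neq 1$ enters, since it keeps $c_-^{(n)}$ bounded away from $0$ and makes the lower edge a genuine square-root edge rather than the hard edge at the origin — and to prove the stability estimate for the quadratic: any approximate solution $m$ with residual $\delta$ satisfies $|m - m_{\mathrm{MP}}(z)| \lesssim \delta/\sqrt{\kappa+\eta}$ in the bulk and $\lesssim \sqrt\delta$ near an edge, where $z = E + i\eta$ and $\kappa = \dist(E, [c_-^{(n)}, c_+^{(n)}])$.

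The core step is the local Marchenko--Pastur law. Let $m_n(z) = \tfrac1n \Tr(\bW_n - z)^{-1}$. Using Schur-complement identities for $\bY\bY^\top$, together with large-deviation bounds for quadratic forms in the rows of $\bY$ (supplied by the tail-decay condition of \cref{def:wishart}), one derives an approximate self-consistent equation for $m_n$ whose random residual is controlled by a fluctuation-averaging lemma. A multiscale bootstrap in $\eta$ — initialized at $\eta \asymp 1$, where concentration is elementary, and pushed down to $\eta \gtrsim n^{-1+\varepsilon}$ in the bulk and $\eta \gtrsim n^{-2/3+\varepsilon}$ near the edges — combined with the stability estimate yields, with probability at least $1 - n^{-M}$,
$$ |m_n(z) - m_{\mathrm{MP}}(z)| \le n^{\varepsilon}\Big(\tfrac{1}{n\eta} + \tfrac{1}{n\sqrt{\kappa+\eta}}\Big) $$
uniformly over $E$ in a fixed neighbourhood of $[c_-^{(n)}, c_+^{(n)}]$ and admissible $\eta$, along with the complementary bound that no eigenvalue of $\bW_n$ lies at distance $\ge n^{-2/3+\varepsilon}$ outside $[c_-^{(n)}, c_+^{(n)}]$.

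Rigidity then follows by the usual counting-function argument. Let $\mathcal{N}_n(E) = \#\{i : \lambda_i(\bW_n) \le E\}$ and $\mathcal{N}(E) = n\int_{-\infty}^E \nu$. A Helffer--Sjöstrand representation expresses $\mathcal{N}_n(E) - \mathcal{N}(E)$ as a contour integral of $\Ima(m_n - m_{\mathrm{MP}})$ taken down to height $n^{-1+\varepsilon}$; inserting the local law gives $\sup_E |\mathcal{N}_n(E) - \mathcal{N}(E)| \le n^{\varepsilon}$ with probability $\ge 1 - n^{-M}$. Inverting this estimate and using that $\nu$ vanishes like a square root at $c_\pm^{(n)}$ — so that an $O(n^\varepsilon)$ discrepancy in the count translates into a location error that is smallest in the bulk and grows toward the edges — produces
$$ |\lambda_i(\bW_n) - \gamma_i^{(n)}| \le n^{\varepsilon}\, n^{-2/3}\big(\min\{\,i,\ \min\{n,m\}+1-i\,\}\big)^{-1/3} $$
with probability $\ge 1 - n^{-M}$, which is the claimed bound with $(n_1,n_2) = (n,m)$; a union bound over $i$ is absorbed by enlarging $M$.

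The genuine obstacle is the edge part of the local law — pushing the self-consistent comparison for $m_n$ down to scale $n^{-2/3+\varepsilon}$ at the spectral edges uniformly — which requires the fluctuation-averaging argument and a delicate stability analysis at the square-root zero of $\nu$; this is precisely the technical heart of \cite{pillai2014universality}, and a fully self-contained treatment would reproduce it. The remaining ingredients (quadratic-equation stability, the Helffer--Sjöstrand inversion, and the union bound) are routine and run in close parallel with the Wigner case already invoked in the proof of \cref{cor:wigner}.
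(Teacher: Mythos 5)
Your proposal is correct and matches the paper's treatment: the paper does not prove this statement but transcribes it (in slightly weakened form) from \cite[Theorem 3.3]{pillai2014universality}, and you likewise defer the technical core — the edge local Marchenko--Pastur law — to that same reference. The local-law-to-rigidity sketch you give (stability of the self-consistent equation with square-root edges guaranteed by $\vartheta \neq 1$, multiscale bootstrap, Helffer--Sj\"ostrand counting, inversion via the square-root vanishing of $\nu$) is an accurate description of the cited proof, so no gap beyond what the citation itself is meant to supply.
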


The statement we transcribed here is a little weaker than that of \cite[Theorem 3.3]{pillai2014universality}, but it suffices for our purposes. Armed with these two results, the reasoning follows exactly the same logic as the proof of \cref{cor:wigner}. We omit the details.

\subsection{Asymmetric Case}

\subsubsection{Proof of~\cref{thm:main-as}} \label{sec:proof-main-as}

The proof is analogous to that of Theorem~\ref{thm:main}. Let us summarize the main changes one must implement for the same proof to follow. First, define the analogous FDR quantities~\eqref{eq:infty-FDR}, \eqref{eq:2}, \eqref{eq:upper-fdr}, and \eqref{eq:3} accordingly to account for the asymmetry. Then, use Theorem~\ref{thm:raj-as}, Proposition~\ref{prop:nice-edge-as}, and Proposition~\ref{prop:zero-correlation-as} in place of Theorem~\ref{thm:raj}, Proposition~\ref{prop:nice-edge}, and Proposition~\ref{prop:zero-correlation}. With these changes in place, almost all the same arguments follow, with the only exception being the equivalent of Lemma~\ref{lem:ratio-properties}, which we now state and prove. We omit all other details since they are equivalent to those in the proof of Theorem~\ref{thm:main}. %

\begin{lemma}\label{lem:ratio-properties-as} Consider a probability measure over the real line $\mu$ with bounded support such that $\supp(\mu) \subset [0, \infty)$ and let $b = \sup\{t : t \in \supp(\mu) \}$.
  Then, the map $\Phi\colon (b, \infty)\rightarrow \RR$ given
  by $\Phi(x) = 2\frac{D_{\mu}(x) \varphi_{\mu}(x; 1)}{D_{\mu}'(x)}$ yields a $[-1, 0]$-valued continuous nonincreasing function. 
  \end{lemma}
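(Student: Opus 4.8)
The plan is to mimic the proof of Lemma~\ref{lem:ratio-properties}, with the extra structure of the $D$-transform absorbed into a change of variables. First I would push $\mu$ forward under $t \mapsto t^2$ to obtain a measure $\rho$ supported on $[0,\beta]$ with $\beta = b^2$, and note that for $x > b$, writing $w = x^2 > \beta$ and $G_\rho(w) = \int (w-s)^{-1}\,d\rho(s) = \int (x^2-t^2)^{-1}\,d\mu(t)$, one has $\varphi_\mu(x;1) = x\,G_\rho(w)$, $\varphi_\mu(x;\vartheta) = \vartheta x\,G_\rho(w) + (1-\vartheta)/x$, and therefore $D_\mu(x) = \vartheta w\,G_\rho(w)^2 + (1-\vartheta)G_\rho(w) =: \tilde D(w)$. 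By the chain rule $D_\mu'(x) = 2x\,\tilde D'(w)$, so $\Phi(x) = 2 D_\mu(x)\varphi_\mu(x;1)/D_\mu'(x) = \tilde D(w)G_\rho(w)/\tilde D'(w) =: \Psi(w)$. Since $x \mapsto x^2$ is a continuous increasing bijection of $(b,\infty)$ onto $(\beta,\infty)$, it suffices to prove that $\Psi$ is continuous, $[-1,0]$-valued, and nonincreasing on $(\beta,\infty)$. I will abbreviate $m_k = m_k(w) = \int (w-s)^{-k}\,d\rho(s) > 0$, so $G_\rho = m_1$, $G_\rho' = -m_2$, $G_\rho'' = 2m_3$; the only inequalities needed are the Cauchy--Schwarz bounds $m_j^2 \le m_{j-1}m_{j+1}$ (with $m_0 = 1$), which give in particular $m_1^2 \le m_2$ and $m_2^2 \le m_1 m_3$, and the elementary bound $w m_k \ge m_{k-1}$ (because $w \ge w - s$ on $\supp\rho$).

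Next I would record the sign facts. Writing $\tilde D = m_1 h$ with $h(w) = \vartheta w m_1 + (1-\vartheta)$, positivity of all the $m_k$ together with $\vartheta \in (0,1]$ gives $h > 0$ and hence $\tilde D > 0$; a one-line computation gives $\tilde D' = \vartheta m_1^2 - (2\vartheta w m_1 + 1 - \vartheta)m_2$, and combining $m_2 \ge m_1^2$ with $w m_1 \ge 1$ (whence $2\vartheta w m_1 + 1 - \vartheta \ge 1 + \vartheta > \vartheta$) yields $\tilde D' < 0$. Thus $\Psi = \tilde D G_\rho/\tilde D'$ is a ratio of continuous functions with non-vanishing denominator on $(\beta,\infty)$, hence continuous, and its sign is $(+)(+)/(-)$, giving $\Psi < 0$. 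For the lower bound $\Psi \ge -1$, since $\tilde D' < 0$ this is equivalent to $\tilde D m_1 + \tilde D' \le 0$; expanding gives $\tilde D m_1 + \tilde D' = m_1^2(\vartheta w m_1 + 1) - m_2(2\vartheta w m_1 + 1 - \vartheta)$, and bounding $m_2 \ge m_1^2$ in the subtracted (positive) term leaves this $\le \vartheta m_1^2(1 - w m_1) \le 0$ --- the direct analogue of the Jensen step in Lemma~\ref{lem:ratio-properties}.

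The monotonicity claim $\Psi' \le 0$ on $(\beta,\infty)$ is the step I expect to require the most care. Clearing the positive factor $(\tilde D')^2$ and an overall $m_1$ from the numerator of $\Psi'$, and using $\tilde D = m_1 h$, $m_1' = -m_2$, $m_2' = -2m_3$, one arrives at
\[
  N/m_1 \;=\; 2\bigl(m_2^2 - m_1 m_3\bigr)\,h^2 \;-\; m_1 m_2\, h h' \;+\; m_1^2\bigl(h'^2 - h h''\bigr),
\]
and the goal is $N/m_1 \le 0$. The first term is $\le 0$ by Cauchy--Schwarz. The remaining two terms have opposite individual signs (note $h' \le 0$, so $-m_1 m_2 h h' \ge 0$) and must be estimated jointly: substituting $h' = \vartheta(m_1 - w m_2)$ and $h'' = 2\vartheta(w m_3 - m_2)$, the needed inequality $-m_1 m_2 h h' + m_1^2(h'^2 - h h'') \le 0$ reduces to $\vartheta m_1 (w m_2 - m_1)^2 \le h\,\bigl[\,2 w m_1 m_3 - m_1 m_2 - w m_2^2\,\bigr]$. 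One checks the bracket is $\ge m_2(w m_2 - m_1) \ge 0$ via $m_3 \ge m_2^2/m_1$ and $w m_2 \ge m_1$; then, using $h \ge \vartheta w m_1$ and the trivial $w m_2 - m_1 \le w m_2$, the right-hand side is $\ge \vartheta w m_1 m_2 (w m_2 - m_1) \ge \vartheta m_1 (w m_2 - m_1)^2$, which closes the argument. The main obstacle is purely bookkeeping: one must expand $N$ carefully so that the $\vartheta$-dependent and $\vartheta$-free pieces stay grouped and so that each invocation of a moment inequality points in the correct direction. With $\Psi' \le 0$ established, pulling everything back through $w = x^2$ delivers the three assertions for $\Phi$.
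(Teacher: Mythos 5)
Your proposal is correct, and I checked the key computations: the change of variables $w=x^2$ does give $\varphi_\mu(x;1)=x\,m_1(w)$, $D_\mu(x)=\tilde D(w)=m_1 h$ with $h=\vartheta w m_1+(1-\vartheta)$, $D_\mu'(x)=2x\tilde D'(w)$, hence $\Phi(x)=\Psi(w)=\tilde D\,m_1/\tilde D'$; your expansion $N/m_1=2(m_2^2-m_1m_3)h^2-m_1m_2\,h h'+m_1^2(h'^2-hh'')$ is exact, and the reduction of the last two terms to $\vartheta m_1(wm_2-m_1)^2\le h\,[\,2wm_1m_3-m_1m_2-wm_2^2\,]$, together with the bounds $m_3\ge m_2^2/m_1$, $wm_2\ge m_1$, $h\ge\vartheta wm_1$, closes the monotonicity; the sign and $\ge -1$ arguments via $m_2\ge m_1^2$, $wm_1\ge1$ are likewise valid (both use $\supp\mu\subset[0,\infty)$, which the hypothesis provides). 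This is, however, a genuinely different route from the paper's. The paper works directly with the transforms $\varphi_\mu(\cdot;1)$ and $\varphi_\mu(\cdot;q)$: it proves boundedness via Jensen plus the identities expressing $2\EE[x^2/(x^2-t^2)^2]$ and $2\EE[t^2/(x^2-t^2)^2]$ in terms of $\varphi$ and $\varphi'$, and proves monotonicity by passing to $\Theta=1/\Phi$, splitting it as $T_1+T_2(\cdot;q)$, establishing a separate claim (Claim~\ref{claim:omg}) that $T_2'(x;q)\ge T_2'(x;0)$ so that the worst case is the degenerate aspect ratio $q=0$, and then handling $q=0$ by a Cauchy--Schwarz computation. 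Your argument bypasses the extremal-$q$ reduction entirely: after pushing $\mu$ forward under $t\mapsto t^2$, the $D$-transform becomes a low-degree expression in the ordinary moments $m_k$ of the squared measure, and a single chain of moment inequalities handles every $\vartheta\in(0,1]$ uniformly. What the paper's approach buys is that it stays in the native singular-value transforms used elsewhere in the asymmetric analysis; what yours buys is a shorter, more unified proof that makes the asymmetric lemma structurally parallel to the symmetric Lemma~\ref{lem:ratio-properties} and avoids the auxiliary claim, at the cost of some bookkeeping in the expansion of $N$, which you carried out correctly.
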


    \begin{proof}
  Recall that
  \begin{equation*}
 {\varphi}_{\mu_{\bE}}(x; s) := s \cdot \int \frac{x}{x^2 - t^2} d\mu(t) + \frac{1 - s}{x}
 \quad \text{and} \quad {\varphi}_{\mu}'(x;s) := - \left(s \cdot \int \frac{x^2 + t^2}{\left(x^2 - t^2\right)^2} d\mu(t) + \frac{1 - s}{x^2} \right),
\end{equation*}
for any $s \in (0,1]$ and, for a fixed $q\in (0,1]$ the $D$-transform and its derivative are given by
\begin{equation*}
    D_{\mu}(x) = \varphi_{\mu}(x; 1) \cdot \varphi_{\mu}(x; q).
    \quad \text{and} \quad D_{\mu}'(x) = \varphi_{\mu}'(x; 1) \cdot \varphi_{\mu}(x; q) + \varphi_{\mu}(x; 1) \cdot \varphi_{\mu}'(x; q).
  \end{equation*}
  Immediately we see that $\Phi(x) < 0$ because $\varphi_{\mu}(x, s) > 0$ and $\varphi_{\mu}'(x; s) < 0$ for any $s \in (0,1]$ and $x > b \geq 0$. Next, we show $\Phi(x) \geq -1.$ Let $\tilde \mu = q \mu + (1-q) \delta_0,$ then $\varphi_{\mu}(x; q) = \varphi_{\tilde \mu}(x,1).$ Expanding reveals that the following relations hold
  \begin{align}
      \label{eq:oh-nice}2 \EE_{t \sim \mu} \left[\frac{x^2}{(x^2 - t^2)^2}\right] &= \frac{\varphi_\mu(x, 1)}{x} - \varphi_\mu'(x, 1), \\
      \label{eq:oh-nice2} 2 \EE_{t \sim \tilde\mu} \left[\frac{t^2}{(x^2 - t^2)^2}\right] &= -\frac{\varphi_\mu(x, q)}{x} - \varphi_\mu'(x, q).
  \end{align}
  Then, 
  \begin{align*}
       2 D_{\mu}(x) \varphi_{\mu}(x; 1) &= 2 \varphi_{\mu}(x; q) \varphi_{\mu}(x; 1)^{2}\\ 
       &= 2 \varphi_{\mu}(x; q)\EE_{t\sim \mu} \left[\frac{x}{x^{2}-t^{2}}\right]^{2} \\ 
       &\leq 2 \varphi_{\mu}(x; q) \EE_{t\sim \mu}\left[\frac{x^{2}}{(x^{2}-t^{2})^{2}}\right] \\       
       &=  \frac{\varphi_{\mu}(x; q)\varphi_\mu(x; 1)}{x} - \varphi_{\mu}(x; q)\varphi'(x; 1)\\
       &= -\varphi_\mu(x; 1)\left[2 \EE_{t \sim \tilde\mu} \left[\frac{t^2}{(x^2 - t^2)^2}\right] + \varphi_\mu'(x, q) \right] - \varphi_{\mu}(x; q) \varphi'(x; 1) \\
       & \leq -\varphi_\mu(x; 1) \varphi_\mu'(x, q) - \varphi_{\mu}(x; q) \varphi'(x; 1) = - D'_{\mu}(x)
  \end{align*}
  where the third line uses Jensen's inequality, the fourth line uses \eqref{eq:oh-nice}, and the second to last line uses \eqref{eq:oh-nice2}. Multiplying by the negative number $1/ D'_{\mu}(x)$ yields $\Phi(x) \geq -1.$

  Finally, we prove that $\Phi(x)$ is nonincreasing. Since $\Phi(x) < 0$, the function $\Phi(\cdot)$ is nonincreasing if, and only if, $\Theta(\cdot) = 1/\Phi(\cdot)$ is nondecreasing. Let's show that $\Theta(\cdot)$ is nondecreasing. Expanding,
  $$
  \Theta(x) = \frac{1}{2}\Big(\underbrace{\frac{\varphi'_{\mu}(x;1)}{\varphi_{\mu}(x;1)^{2}}}_{T_{1}(x):=}
  + \underbrace{\frac{\varphi_{\mu}'(x;q)}{\varphi_{\mu}(x;q) \varphi(x;1)}}_{T_{2}(x;q):=} \Big).$$ 
  In turn, the first term $T_1$ is strictly increasing, while the second term $T_2$ is increasing for every $x$ when $q$ is close to one and decreasing for every $x$ when $q$ gets closer to zero. However, we shall see $T_1$ increases fast enough to guarantee that the whole function is nondecreasing for every $x$ for any fixed 
  $q$. The second term has the fastest decrease when $q$ goes to zero, as the next claim shows.  
  
  \begin{claim}\label{claim:omg}
      For any fixed $x$, we have that 
      $$T'_2(x;q) \geq T'_2(x;0)$$
      for all $q \in [0,1].$
  \end{claim}
  We defer the proof of this claim to the end of this subsection
  and focus on the extreme case $q = 0$. In this case, the function $\Theta$ reduces to 
  $$
  \Theta(x) = \frac{1}{2}\left({\frac{x\varphi'_{\mu}(x;1) -  \varphi_{\mu}(x;1)}{x\varphi_{\mu}(x;1)^{2}}}\right).$$ 
  Taking a derivative yields 
  \begin{align*}
        \Theta'(x)  
        &= \left({\frac{ x^2 \left(\varphi''_{\mu}(x;1)\varphi_{\mu}(x;1)^{2} - 2\varphi_{\mu}(x;1) \varphi_{\mu}'(x;1)^2 \right) +\varphi_{\mu}(x;1)^{3} + x \varphi_{\mu}(x;1)^{2} \varphi_{\mu}'(x;1) }{2x^2\varphi_{\mu}(x;1)^{4}}}\right)\\
        &= \left({\frac{ x^2 \left(\varphi''_{\mu}(x;1)\varphi_{\mu}(x;1)^{2} - 2\varphi_{\mu}(x;1) \varphi_{\mu}'(x;1)^2 \right) - 2x \varphi_{\mu}(x;1)^{2}\EE\left( \frac{t^2}{(x^2 - t^2)^2}\right) }{2x^2\varphi_{\mu}(x;1)^{4}}}\right)\\
        &= \left({\frac{ x^2\varphi_{\mu}(x;1) \left(  \left(\varphi''_{\mu}(x;1) - 2\EE\left( \frac{t^2/x}{(x^2 - t^2)^2}\right) \right) \varphi_{\mu}(x;1) - 2\varphi_{\mu}'(x;1)^2 \right)  }{2x^2\varphi_{\mu}(x;1)^{4}}}\right)
  \end{align*}
  where the second equality follows by~\eqref{eq:oh-nice2}. 
Thus, to show that $\Theta$ is nondecreasing, it suffices to show that the numerator is positive. We upper bound
\begin{align*}
  2\varphi_{\mu}'(x;1)^{2} =2\EE\left[\frac{x^{1/2}}{(x^{2}-t^{2})^{1/2}}\frac{\left(x^{3/2}+\frac{t^{2}}{x^{1/2}}\right)}{(x^{2}-t^{2})^{3/2}} \right]^{2} 
  &\leq2\EE\left[\frac{x}{x^{2}-t^{2}}\right] \EE \left[\frac{\left(x^{3/2}+\frac{t^{2}}{x^{1/2}}\right)^{2}}{(x^{2}-t^{2})^{3}} \right] \\
&=2\EE\left[\frac{x}{x^{2}-t^{2}}\right] \EE \left[\frac{x\left(x+\frac{t^{2}}{x}\right)^{2}}{(x^{2}-t^{2})^{3}} \right] \\
  &=2\EE\left[\frac{x}{x^{2}-t^{2}}\right] \EE \left[\frac{x\left(x^{2}+2t^{2}+\frac{t^{4}}{x^{2}}\right)}{(x^{2}-t^{2})^{3}} \right] \\
                                             &=  2\EE\left[\frac{x}{(x^{2}-t^{2})} \right]\left( \EE\left[\frac{x(x^{2}+3t^{2})}{(x^{2}-t^{2})^{3}}\right] -\EE\left[\frac{ x(t^{2} - t^4/x^2)}{(x^{2}-t^{2})^{3}}\right] \right) \\
  & = \varphi_{\mu}(x; 1) \left(\varphi_{\mu}''(x; 1)  - 2 \EE\left[\frac{ t^2/x}{(x^{2}-t^{2})^{2}}\right]\right)
\end{align*}
where the first inequality follows by Cauchy-Schwarz. This establishes that $\Theta$ is nondecreasing, and so $\Phi$ is nonincreasing, thus proving the Lemma.
  \end{proof}

\paragraph{Proof of Claim~\ref{claim:omg}.} Taking derivatives 
      $$T_2'(x;q)= \frac{\varphi''(x;q)\varphi(x;q)\varphi(x;1) - \varphi'(x;q)\left(\varphi'(x;q)\varphi(x;1) + \varphi(x;q)\varphi'(x;1)\right)}{\varphi(x;q)^2\varphi(x;1)^2}.$$
      Evaluating at $q = 0$ yields
      $$T_2'(x;0)= \frac{\varphi(x;1) + x\varphi'(x;1)}{x^2\varphi(x;1)^2}.$$
      To simplify the notation, we drop the dependency on $x$ via $\varphi_q := \varphi(x; q).$ Thus, 
      \begin{align*}
          T_2'(x;q) - T_2'(x;0) & = \frac{x^2\varphi''_q\varphi_q\varphi_1 - x^2\varphi'_q\left(\varphi'_q\varphi_1 + \varphi_q\varphi'_1\right)-\varphi_q^2 \left(\varphi_1 + x\varphi'_1\right) }{x^2\varphi_q^2\varphi_1^2}.
      \end{align*}
      It suffices to show that the numerator is nonnegative. Reorganizing yields that the numerator is equal to
      \begin{align*}
          &\left(x^2\varphi_q'' \varphi_q - x^2 (\varphi_q')^2 - \varphi_q^2\right)\varphi_1 - \left(x\varphi_q^2 + x^2 \varphi'_q \varphi_q\right) \varphi_1'\\ 
          & \geq \left(x^2\varphi_q'' \varphi_q/2 - \varphi_q^2\right)\varphi_1 - \left(x\varphi_q + x^2 \varphi'_q \right)\varphi_q \varphi_1' \\
          & = \left[\left(x^2\varphi_q''/2 - \varphi_q\right)\varphi_1 + 2\EE_{\tilde \mu}\left(\frac{x^2t^2}{(x^2 - t^2)^2}\right)   \varphi_1'\right]\varphi_q \\
          & = \left[\EE_{\tilde \mu}\left(\frac{xt^2 (5x^2 - t^2)}{(x^2 - t^2)^3}\right)\varphi_1 + 2\EE_{\tilde \mu}\left(\frac{xt^2}{(x^2 - t^2)^2}\right)   x\varphi_1'\right]\varphi_q \\
        & = \left[\EE_{\tilde \mu}\left(\frac{xt^2 (5x^2 - t^2)}{(x^2 - t^2)^3}\right)\varphi_1 - 2\EE_{\tilde \mu}\left(\frac{xt^2}{(x^2 - t^2)^2}\right)\left( \varphi_1 + 2 \EE_\mu \left(\frac{xt^2}{(x^2 -t^2)^2}\right)\right)\right]\varphi_q \\
        & = \left[\EE_{\tilde \mu}\left(\frac{xt^2 (3x^2 + t^2)}{(x^2 - t^2)^3}\right)\varphi_1 - 4\EE_{\tilde \mu}\left(\frac{xt^2}{(x^2 - t^2)^2}\right)\left( \EE_\mu \left(\frac{xt^2}{(x^2 -t^2)^2}\right)\right)\right]\varphi_q \\
        & =q \left[\EE_{\mu}\left(\frac{xt^2 (3x^2 + t^2)}{(x^2 - t^2)^3}\right)\varphi_1 - 4x^2\EE_{\mu}\left(\frac{t^2}{(x^2 - t^2)^2}\right)^2\right]\varphi_q \\
         & \geq q \left[\EE_{\mu}\left(\frac{xt^2 (3x^2 + t^2)}{(x^2 - t^2)^3}\right)\varphi_1 - 4x^2\EE_{\mu}\left(\frac{t^3}{(x^2 - t^2)^3}\right)\EE_{\mu}\left(\frac{t}{(x^2 - t^2)}\right)\right]\varphi_q \\
         & \geq q \left[\EE_{\mu}\left(\frac{xt^2 (3x^2 + t^2)}{(x^2 - t^2)^3}\right)\varphi_1 - 4x^2\EE_{\mu}\left(\frac{t^3}{(x^2 - t^2)^3}\right)\varphi_1\right]\varphi_q \\
         & = q \left[\EE_{\mu}\left(\frac{xt^2 (3x^2 + t^2 - 4xt)}{(x^2 - t^2)^3}\right)\varphi_1 \right]\varphi_q \\
         & = q \EE_{\mu}\left(\frac{xt^2 (3x-t)(x-t)}{(x^2 - t^2)^3}\right)\varphi_1 \varphi_q \\
         & \geq 0
      \end{align*}
This concludes the proof.

\subsubsection{Proof of~\cref{thm:rank-estimate-as}}\label{sec:proof-rank-estimate-as}
By singular-value interlacing \cite[Theorem 3.1.2]{roger1994topics}, we have that for all $i \geq n$
\begin{equation}
    \label{eq:inter}
    \sigma_{i+r}(\bE_n) \leq \sigma_i(\bX_n) \leq \sigma_{i-r}(\bE_n)
\end{equation} 
with the convention that $\sigma_k(\bX) = \infty$ if $k \leq 0$. Fix an index $\lceil n/2 \rceil \geq j > r;$ then, using the interlacing inequalities combined with a telescoping sum, we obtain that 
\begin{align*}
    \Delta_j = \sigma_{j-1}(\bX_n) - \sigma_{j}(\bX_n) &\leq \sigma_{j-1-r}(\bE_n) - \sigma_{j+r}(\bE_n) \\
    &= \sum_{i = -r}^{r} \left(\sigma_{j-1-i}(\bE_n) - \sigma_{j-i}(\bE_n)\right)\\
    & \leq 2r \max_{i \in [n-1]}\left\{\sigma_{i}(\bE_n) - \sigma_{i+1}(\bE_n)\right\}.
\end{align*}
Recall the labeling $s_i := \sigma_{i}(\bE_n) - \sigma_{i+1}(\bE_n).$ Assumption~\ref{ass:spacings} ensures that for any $\varepsilon >0 $ there exists an $N > 0$ such that for all $n \geq N$, we have that $\max_{i \in [n-1]}\left\{s_i\right\} \leq \varepsilon \cdot
n^{-1/2}$. Thus, by taking $\varepsilon = p/4r$, we obtain 
\begin{equation}
    \Delta_j \leq \frac{p}{2} \cdot
    n^{-1/2} < 
    p \cdot  n^{-1/2} \quad \text{for all }j>r.
\end{equation}
Thus, we conclude that for all large enough $n$,  
$\textsc{RankEstimate}(\bX_{n}) \leq r$. Next, let us show the lower bound of $r^\star.$ By \cref{thm:raj} we have that $\lambda_{r^\star}(\bX) \rightarrow \rho_{r^\star} > b.$ Furthermore by Theorem~\ref{thm:raj-as} we have that $\sigma_{{r^\star} + 1}(\bX_n)$ almost surely converges to $b$. Thus, for large enough $n$, we obtain that $$\sigma_{r^\star}(\bX_n) - \sigma_{r^\star + 1}(\bX_n) \geq \frac{\rho_{r^\star} - b}{2} > 
p \cdot n^{-1/2}$$
where the last inequality holds for any $n$ large since $n^{-1/2}$ tends to zero. Hence $\textsc{RankEstimate}(\bX_{n}) \geq r^\star$
This concludes the proof of \cref{thm:rank-estimate-as}.
\hfill $\square$

\subsection{Proof of Proposition~\ref{prop:data}} \label{sec:proof-data}
 By definition, the product $\bW_n = \bE_n \bE_n^\top$ is a Wishart ensemble; moreover, the left singular vectors of $\bE_n$ match the eigenvectors of $\bW_n.$ Then, invoking~\cref{prop:covariance} we obtain that $\bW_n$ satisfies Assumptions~\ref{ass:model2}, and~\ref{ass:spacings}. Combining this with the fact that $\sigma_i^2(\bE_n) = \lambda_i(\bW_n)$ shows that $\{\bE_n\}$ satisfies Assumption~\ref{ass:amodel1} with a limiting singular value distribution $\mu_{\bE}$ that is characterized by the density 
      $$
      f_{\bE}(t) = 2t \cdot \nu(t^2) = \frac{1}{\pi \vartheta t} \sqrt{\left((c_+ - t^2)(t^2 - c_-)\right)_+} \quad \text{with}\quad c_{\pm} = \left(1 \pm \sqrt{\vartheta}\right)^2,
      $$
      which is supported on the interval $\left(\sqrt{c_-}, \sqrt{c_+}\right).$ Thus, the first part of Assumption~\ref{ass:spacings-as} also holds true. To prove that the second part of Assumption~\ref{ass:spacings-as} holds, recall that $\vartheta < 1$. Therefore, for all large $n$ and $i \leq n$ we have $\sigma_i(\bX_n) \in \left(\frac{\sqrt{c_-}}{2}, 2\sqrt{c_+}\right)$. Within that interval, the function $x \mapsto \sqrt{x}$ is $L$-Lipschitz for some $L > 0$. Therefore,
      $$
      |\sigma_i(\bE_n) - \sigma_{i+1}(\bE_n)| \leq L \cdot |\sigma_i(\bE_n)^2 - \sigma_{i+1}(\bE_n)^2| =  L \cdot |\lambda_i(\bW_n) - \lambda_{i+1}(\bW_n)| = o(n^{-1/2}),
      $$
      where the last relation follows since~\cref{ass:spacings} holds for $\bW_n;$ see Proposition~\ref{prop:covariance}. This completes the proof.
\hfill $\square$

\section{Future Directions} \label{sec:future}

We presented methodology for subspace selection in PCA in which the objective is to obtain subspace estimates that come with FDR control guarantees.  Although subspace estimation has a long history, the perspective in our paper is grounded in multiple testing and this is not common in the literature.  Our methods are free of tuning parameters and although our analysis is asymptotic in nature, the empirical results showcase strong performance even for modest-sized problem instances.

There are a number of research directions that are suggested by our work.  It is of interest to evaluate the extent to which some of our method yields non-asymptotic FDR control; progress in this direction likely require new advances in random matrix theory.  More generally, subspace estimation arises in many applications in which one may only have partial observations of a data matrix (e.g., matrix completion and collaborative filtering), and generalizing our methods to such settings would broaden their applicability.  Finally, in recent work~\cite{taeb2024model} natural notions of FDR, generalizing those for multiple testing and subspace selection, were obtained for a variety of model selection problems such as clustering, ranking, and causal inference.  Developing methodology for controlling FDR in these contexts would substantially expand the scope of the FDR control paradigm.

\section*{Acknowledgements}
We are profoundly grateful to Jorge Garza-Vargas for many insightful conversations during the development of this work.  VC was supported in part by AFOSR grants FA9550-23-1-0204, FA9550-23-1-0070 and NSF grant DMS 2113724.
\bibliographystyle{abbrvnat}
\bibliography{biblio}
\appendix
\section{Additional Numerical Experiments}\label{appendix:experiments}
In this Appendix, we include the additional numerical experiments missing from Section~\ref{sec:experiments}. In particular, we include plots for all the ensembles in Table~\ref{table:noise}. The setup here is exactly the same as the one described in Section~\ref{sec:generated}. The proportions of all the experiments with rectangular matrices are set $n/m = 1/2.$ 

 \begin{figure}[t!]
    \def\arraystretch{0}%
    \setlength{\imagewidth}{\dimexpr \textwidth - 4\tabcolsep}%
    \divide \imagewidth by 3
    \hspace*{\dimexpr -\baselineskip - 2\tabcolsep}%
    \begin{tabular}{@{}cIII@{}}
      \stepcounter{imagerow}\raisebox{0.35\imagewidth}{\rotatebox[origin=c]{90}%
        {\strut \texttt{well-separated}}} &
      \includegraphics[width=\imagewidth]{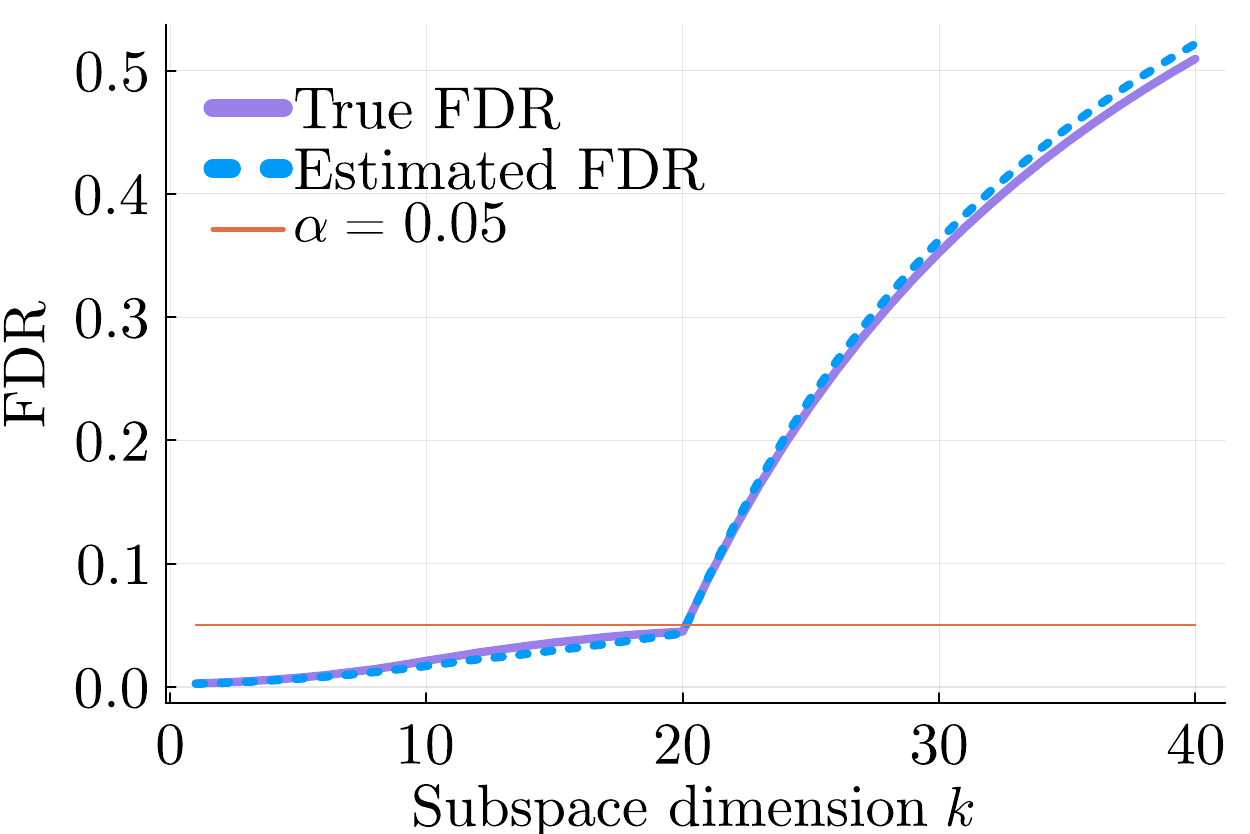} &
      \includegraphics[width=\imagewidth]{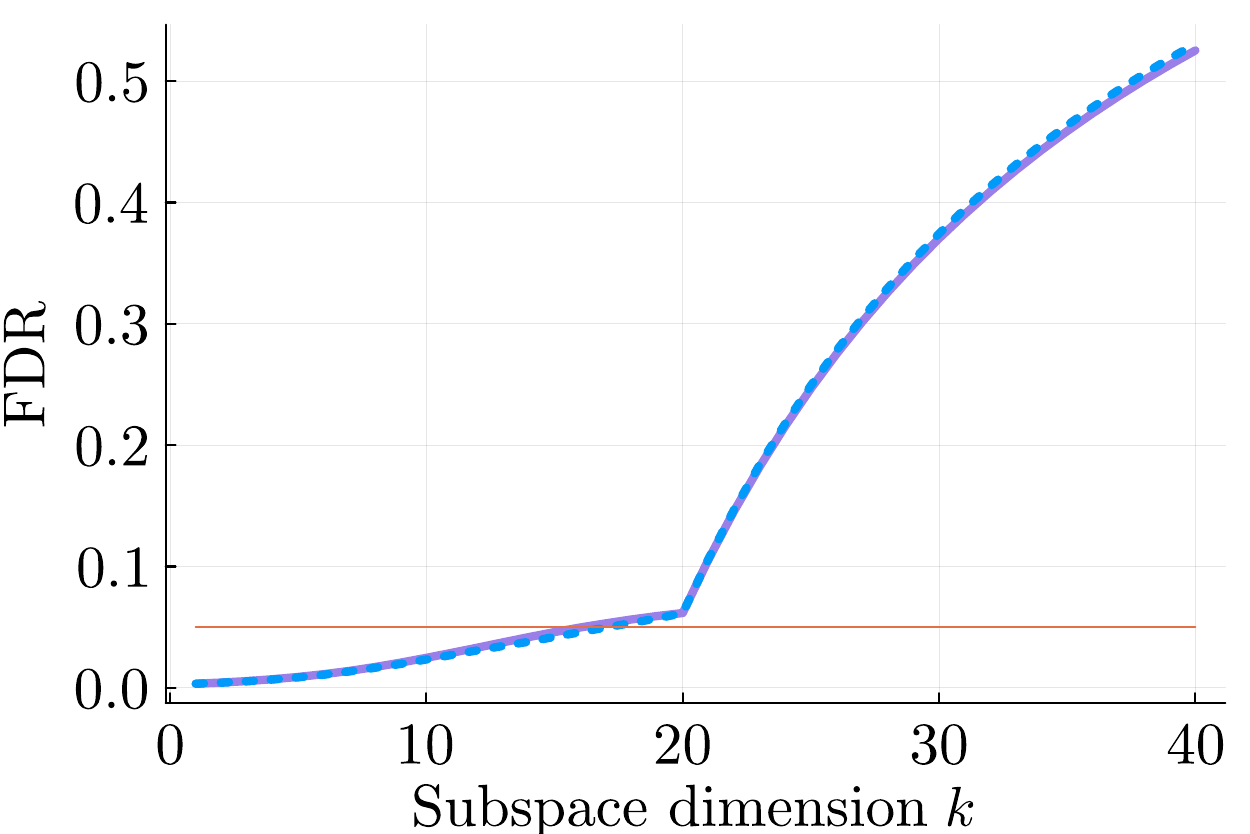}\label{example} &
      \includegraphics[width=\imagewidth]{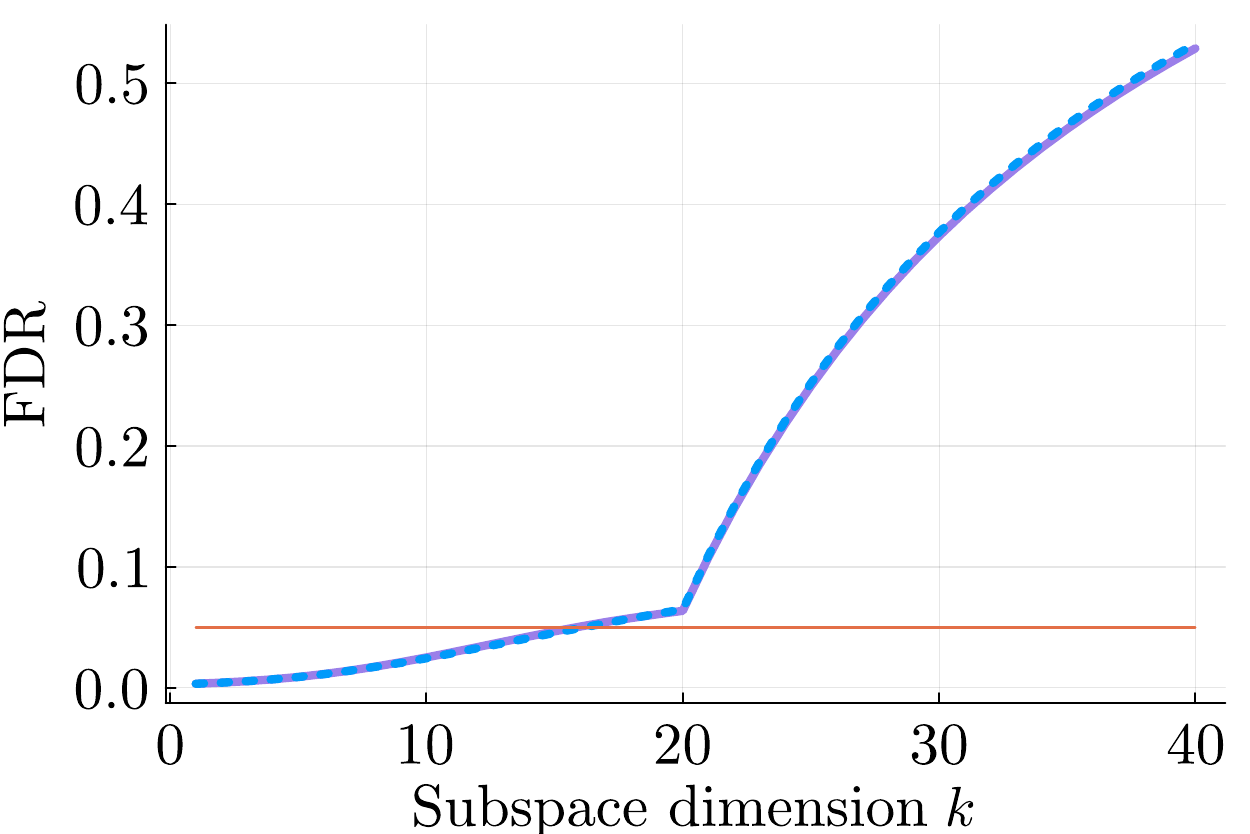} \\[2\tabcolsep]%
      \stepcounter{imagerow}\raisebox{0.35\imagewidth}{\rotatebox[origin=c]{90}%
        {\strut \texttt{barely-separated}}} &
      \includegraphics[width=\imagewidth]{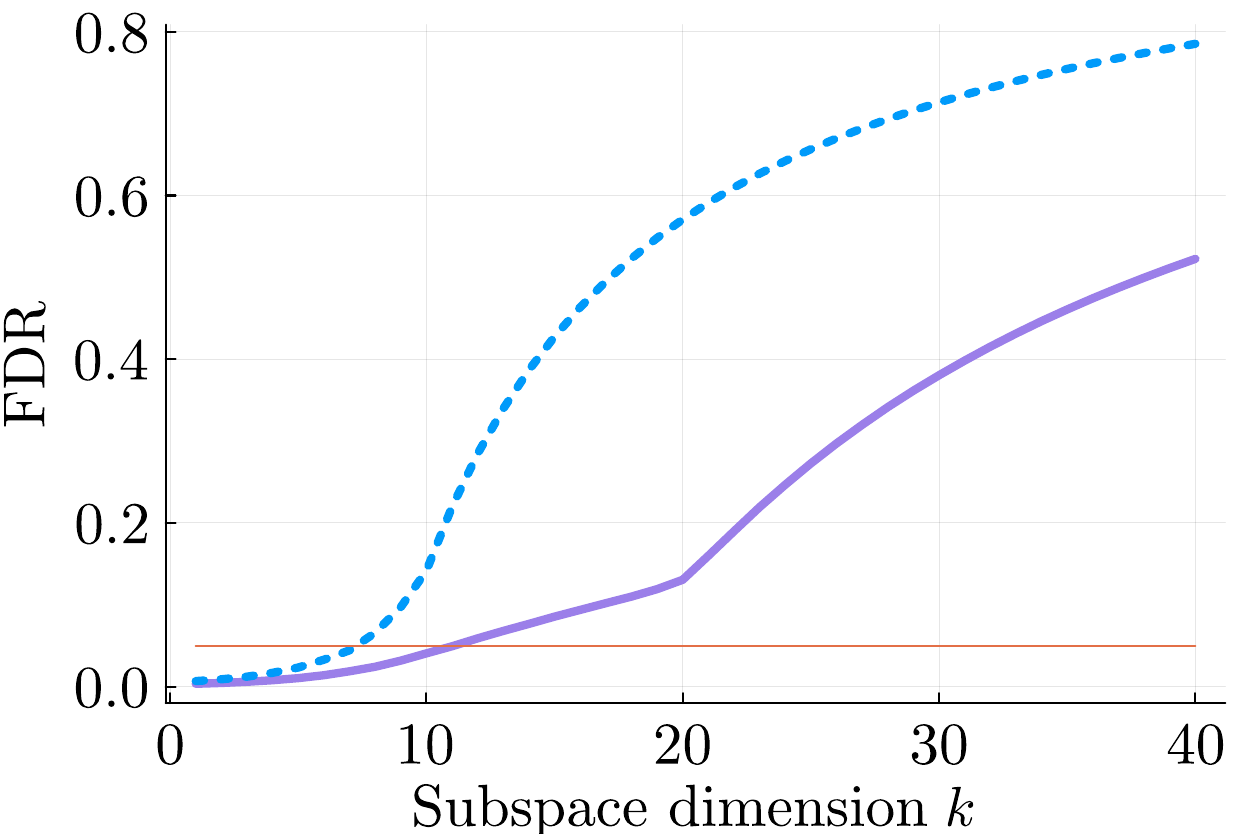} &
      \includegraphics[width=\imagewidth]{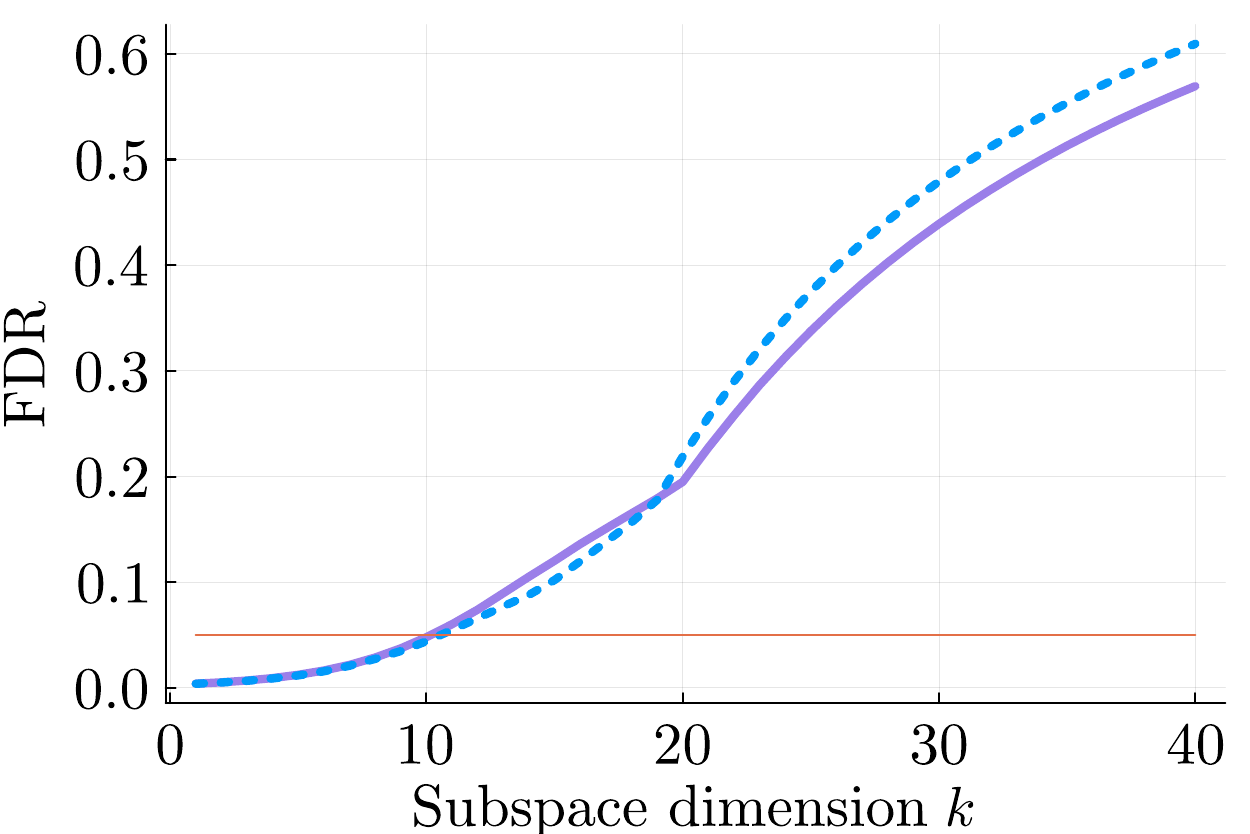} &
      \includegraphics[width=\imagewidth]{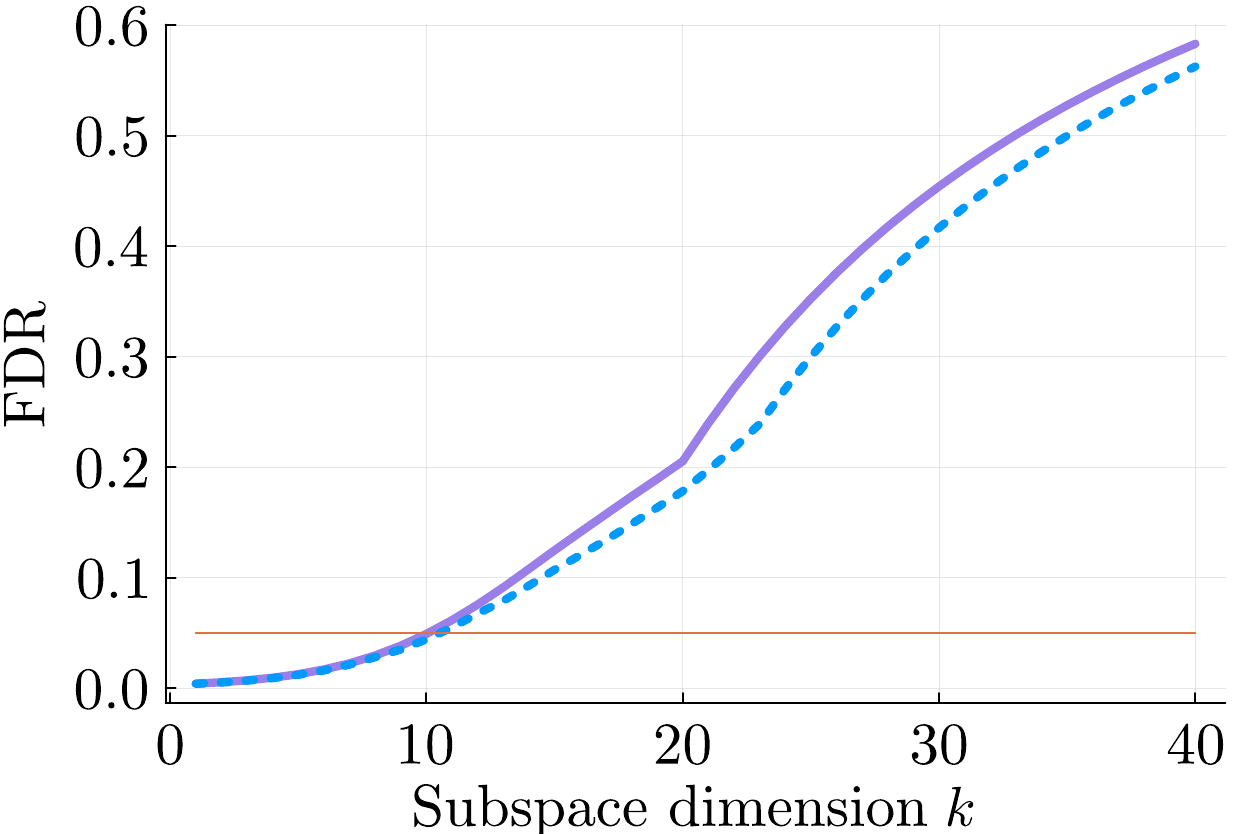} \\[2\tabcolsep]
      \stepcounter{imagerow}\raisebox{0.35\imagewidth}{\rotatebox[origin=c]{90}%
     {\strut \texttt{entangled}}} &
      \includegraphics[width=\imagewidth]{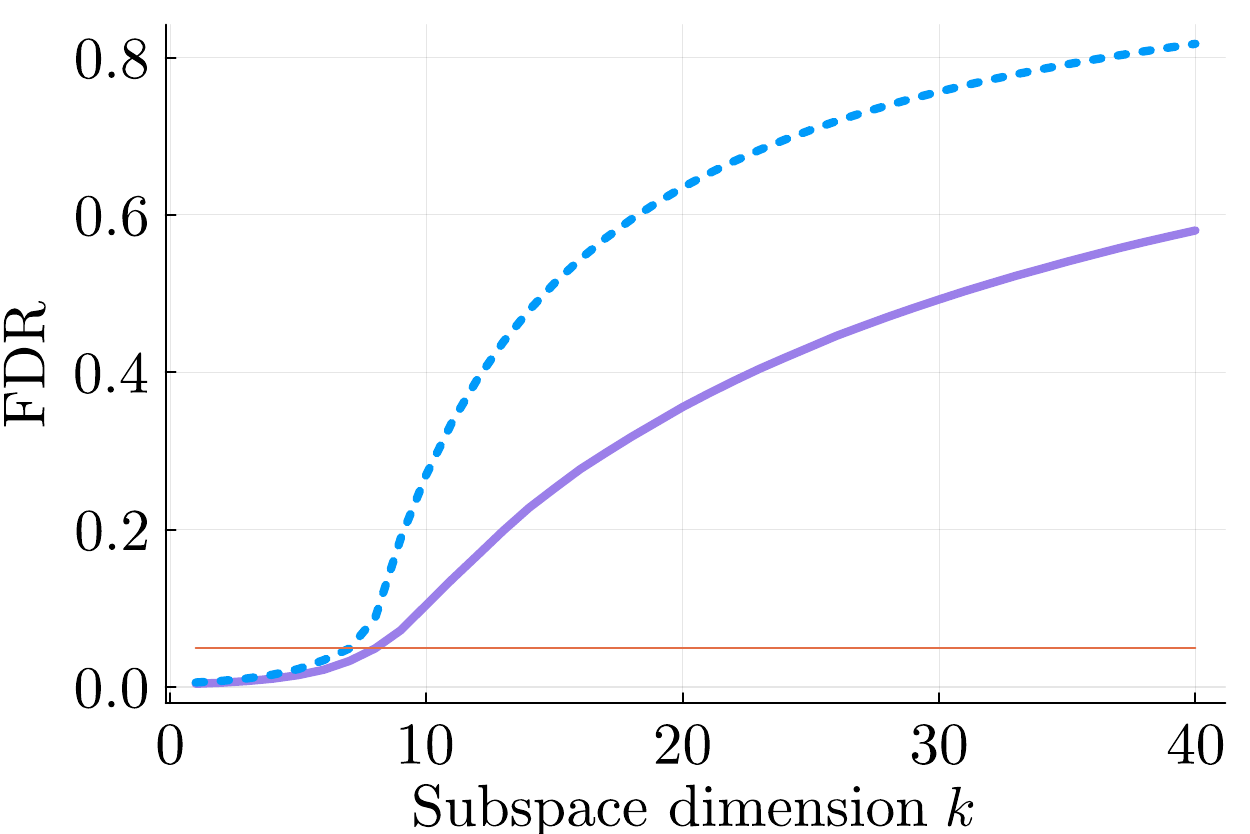} &
      \includegraphics[width=\imagewidth]{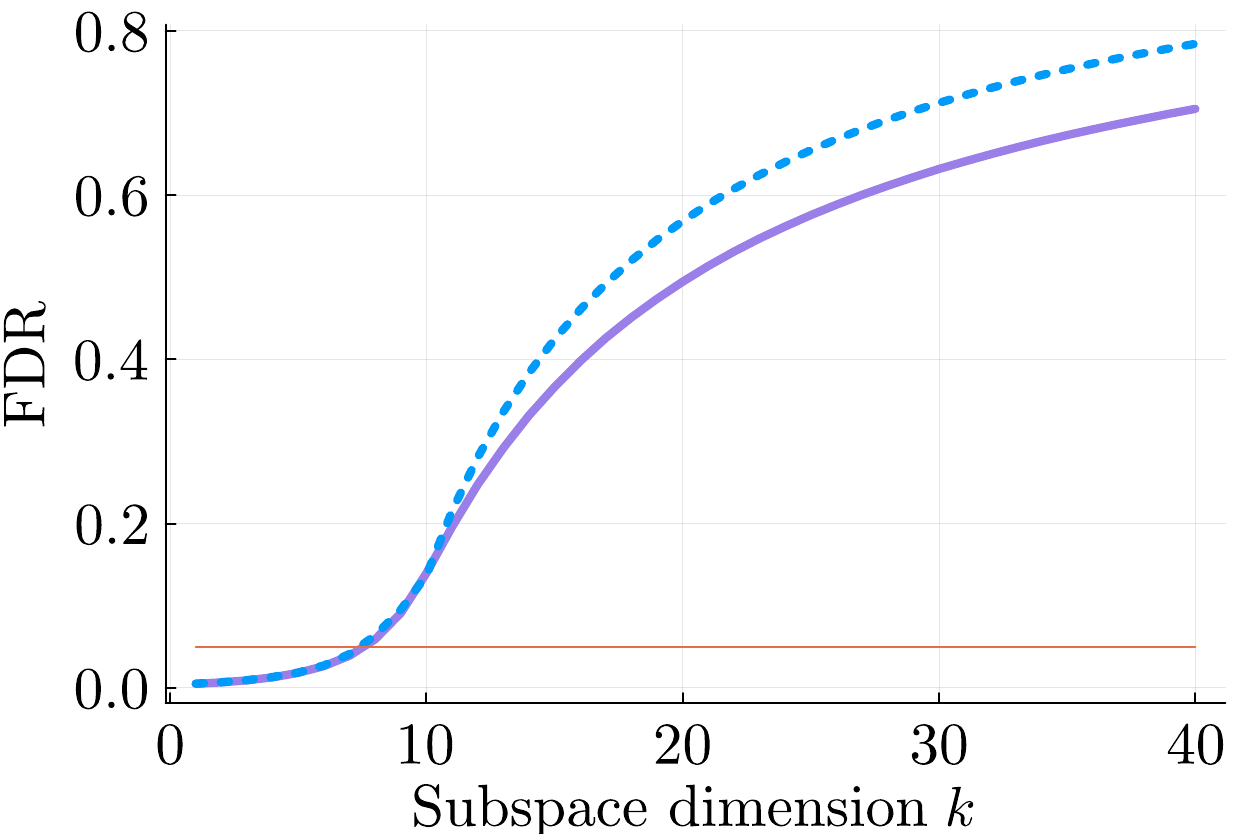} &
      \includegraphics[width=\imagewidth]{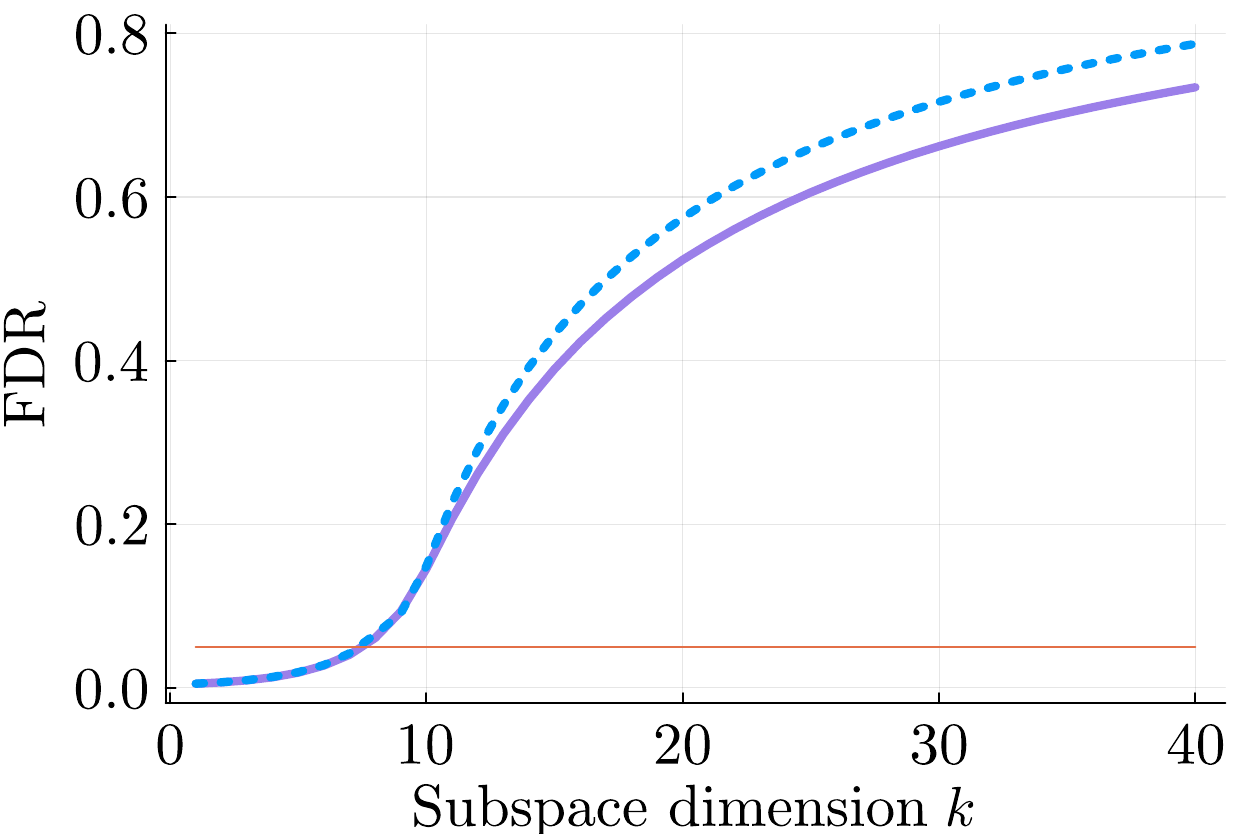} \\[2\tabcolsep]
        \setcounter{imagecolumn}{0} &%
        Dimension $n = 100$ &
        Dimension $n = 500$ &
        Dimenson $n = 1000$
    \end{tabular}
    \caption{Results for estimating the FDR of the \texttt{Wishart} ensemble using Algorithms~\ref{alg:FDR} and~\ref{alg:rank-estimate}.}
	\label{fig:res-wishart}
\end{figure}
    \begin{figure}[t]
    \def\arraystretch{0}%
    \setlength{\imagewidth}{\dimexpr \textwidth - 4\tabcolsep}%
    \divide \imagewidth by 3
    \hspace*{\dimexpr -\baselineskip - 2\tabcolsep}%
    \begin{tabular}{@{}cIII@{}}
      \stepcounter{imagerow}\raisebox{0.35\imagewidth}{\rotatebox[origin=c]{90}%
        {\strut \texttt{well-separated}}} &
      \includegraphics[width=\imagewidth]{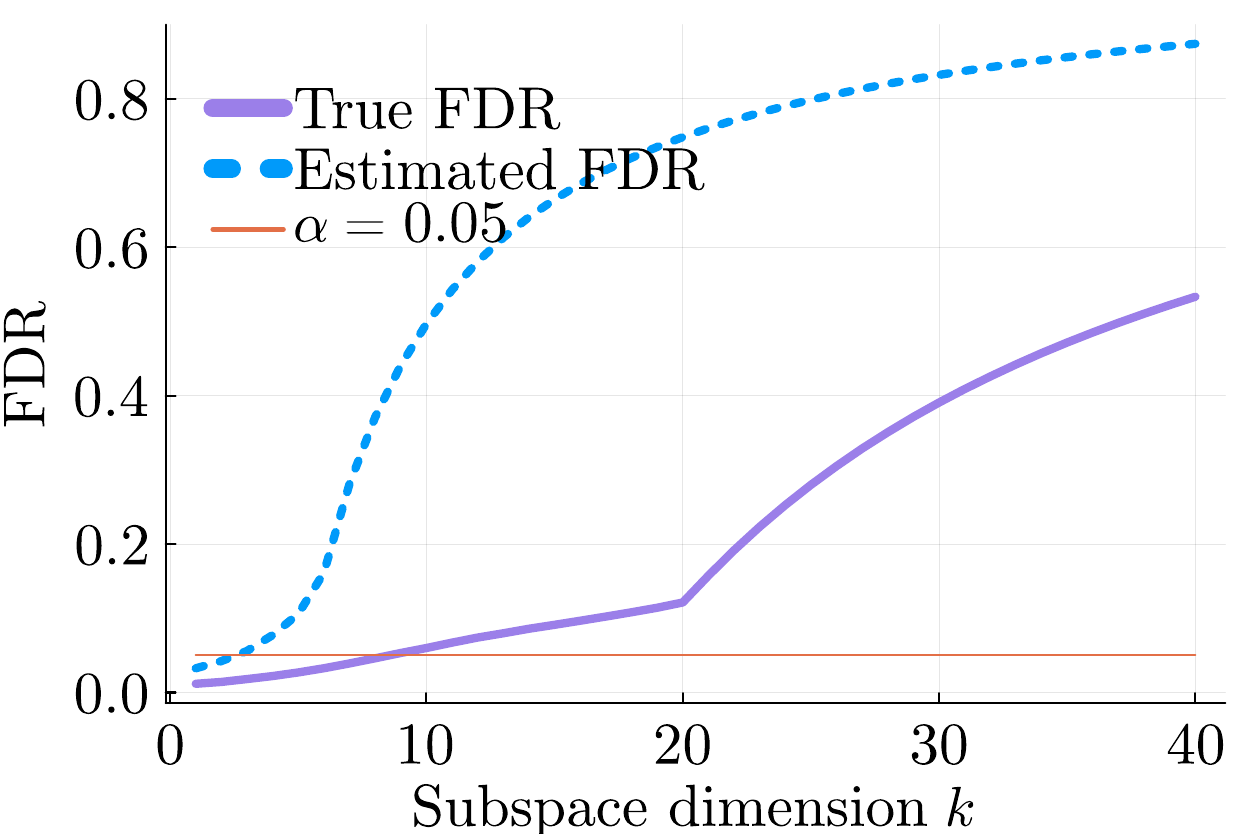} &
      \includegraphics[width=\imagewidth]{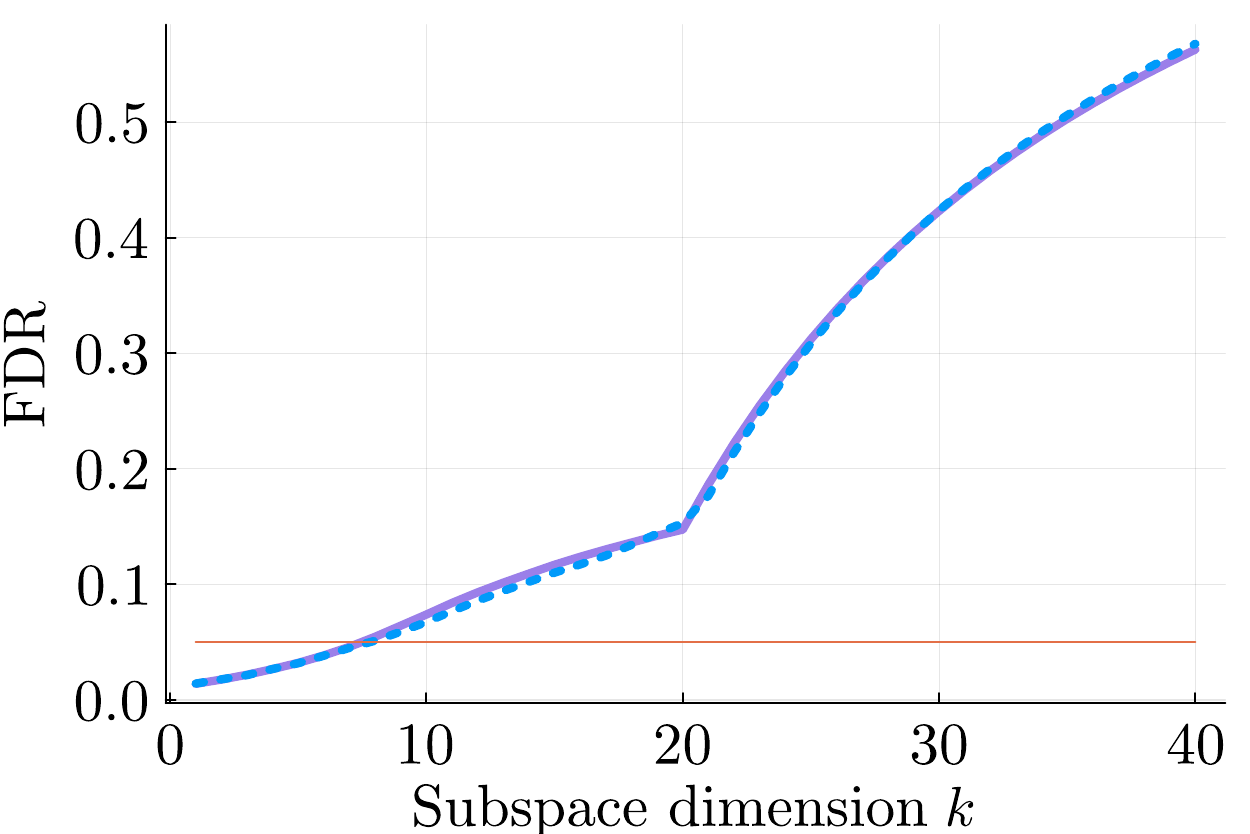}\label{example} &
      \includegraphics[width=\imagewidth]{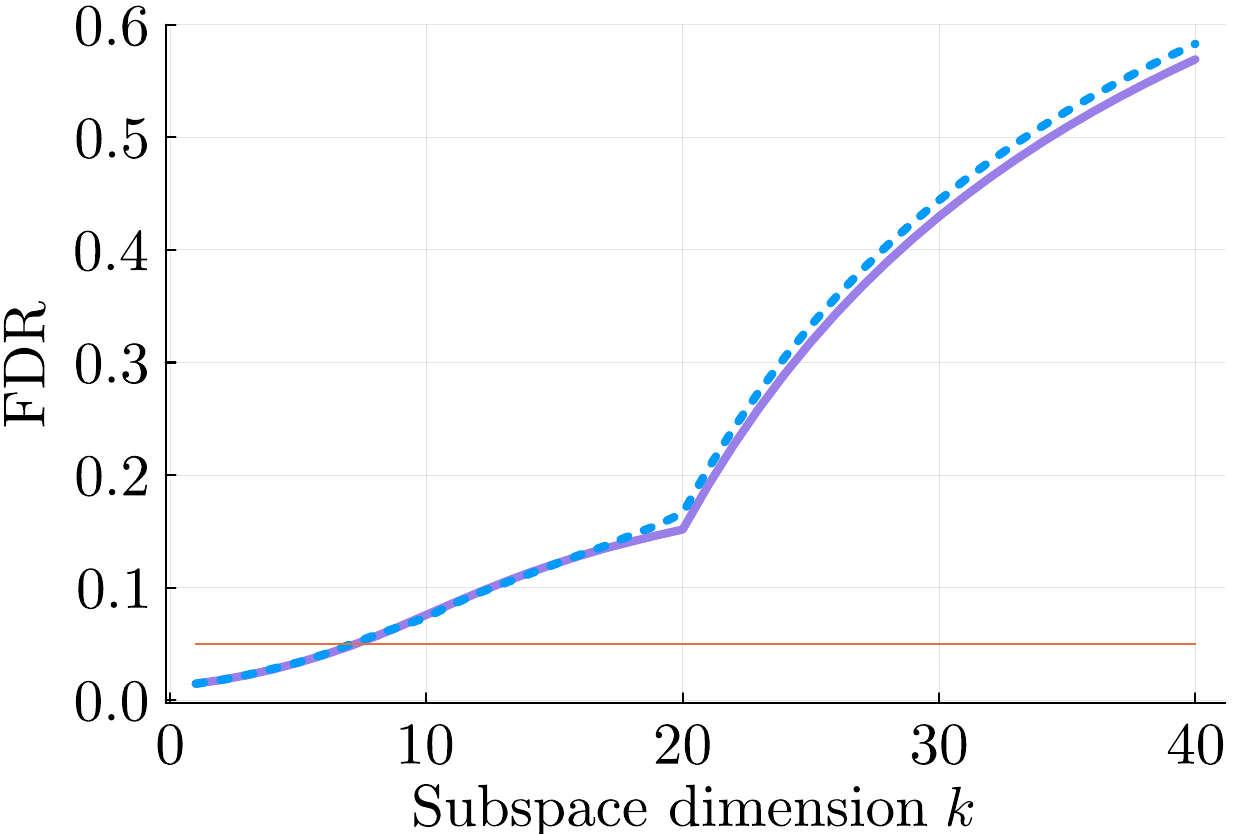} \\[2\tabcolsep]%
      \stepcounter{imagerow}\raisebox{0.35\imagewidth}{\rotatebox[origin=c]{90}%
        {\strut \texttt{barely-separated}}} &
      \includegraphics[width=\imagewidth]{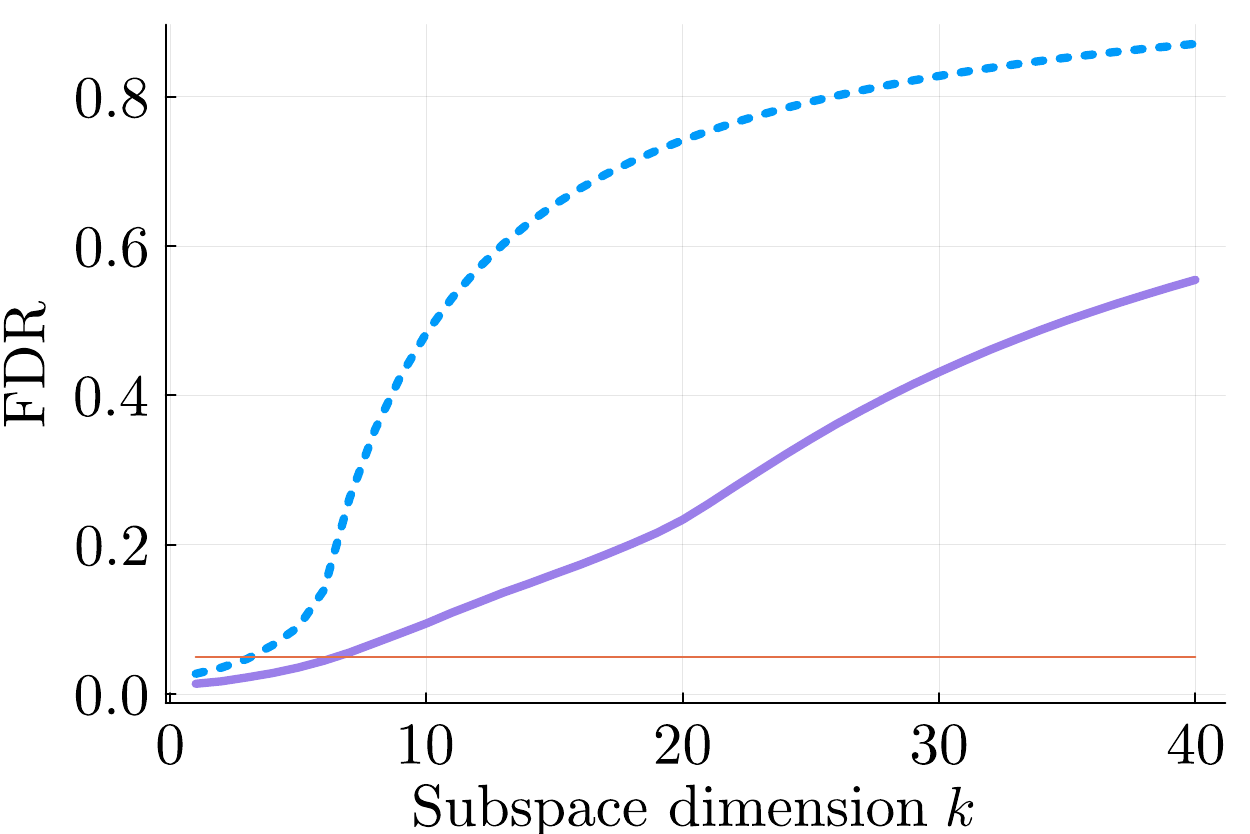} &
      \includegraphics[width=\imagewidth]{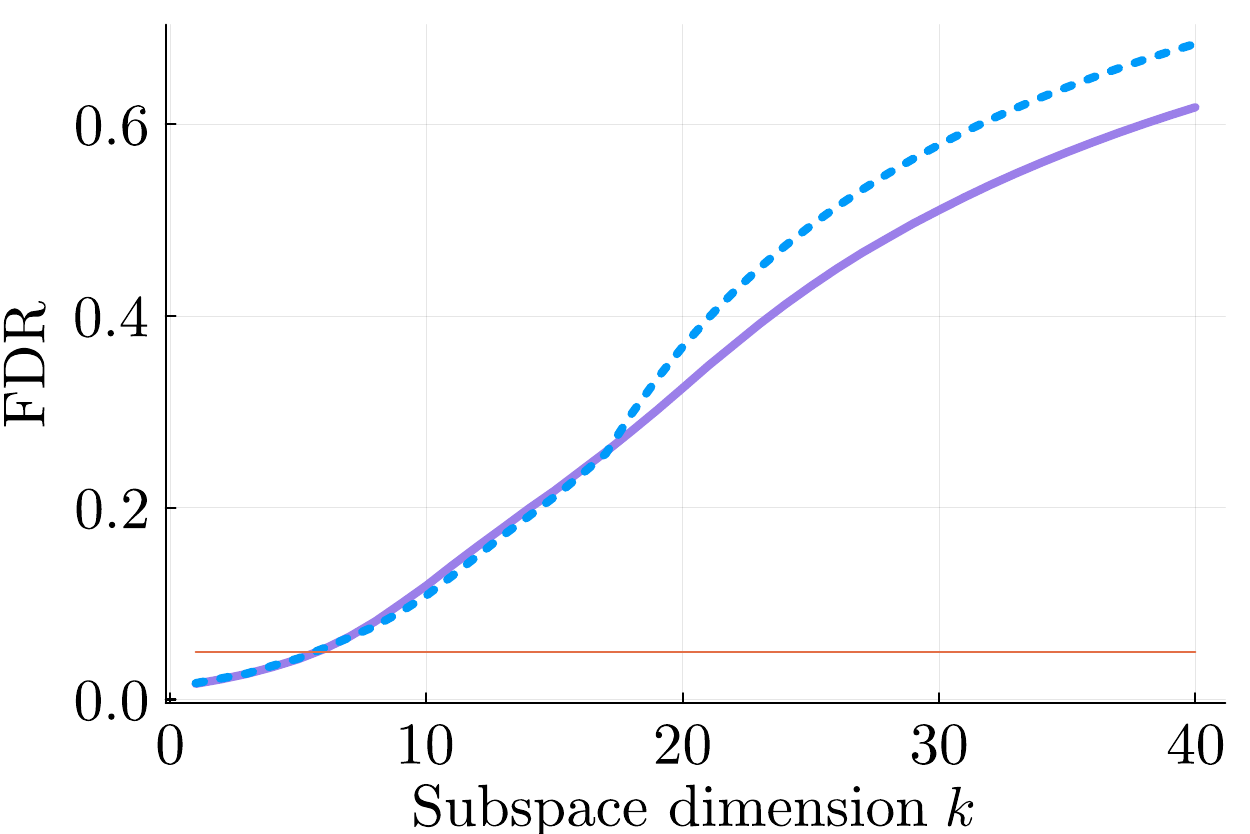} &
      \includegraphics[width=\imagewidth]{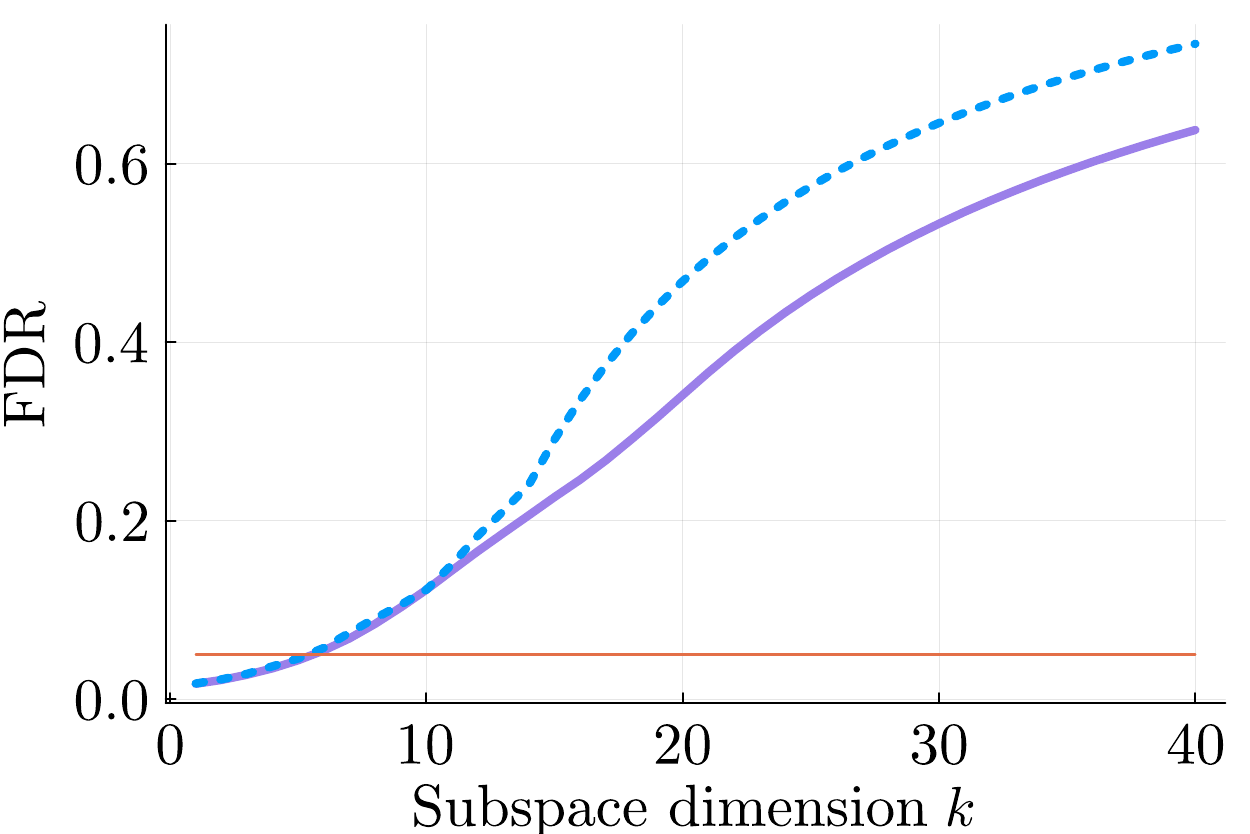} \\[2\tabcolsep]
      \stepcounter{imagerow}\raisebox{0.35\imagewidth}{\rotatebox[origin=c]{90}%
     {\strut \texttt{entangled}}} &
      \includegraphics[width=\imagewidth]{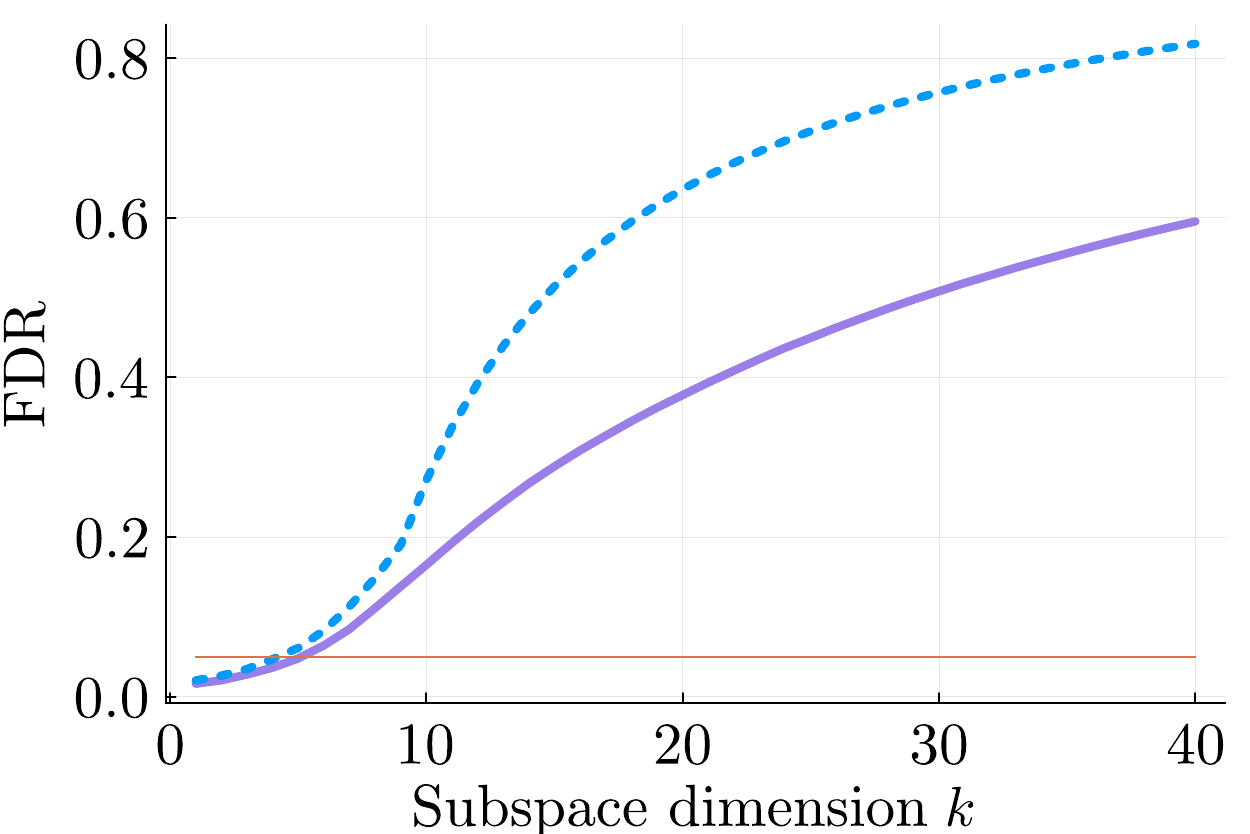} &
      \includegraphics[width=\imagewidth]{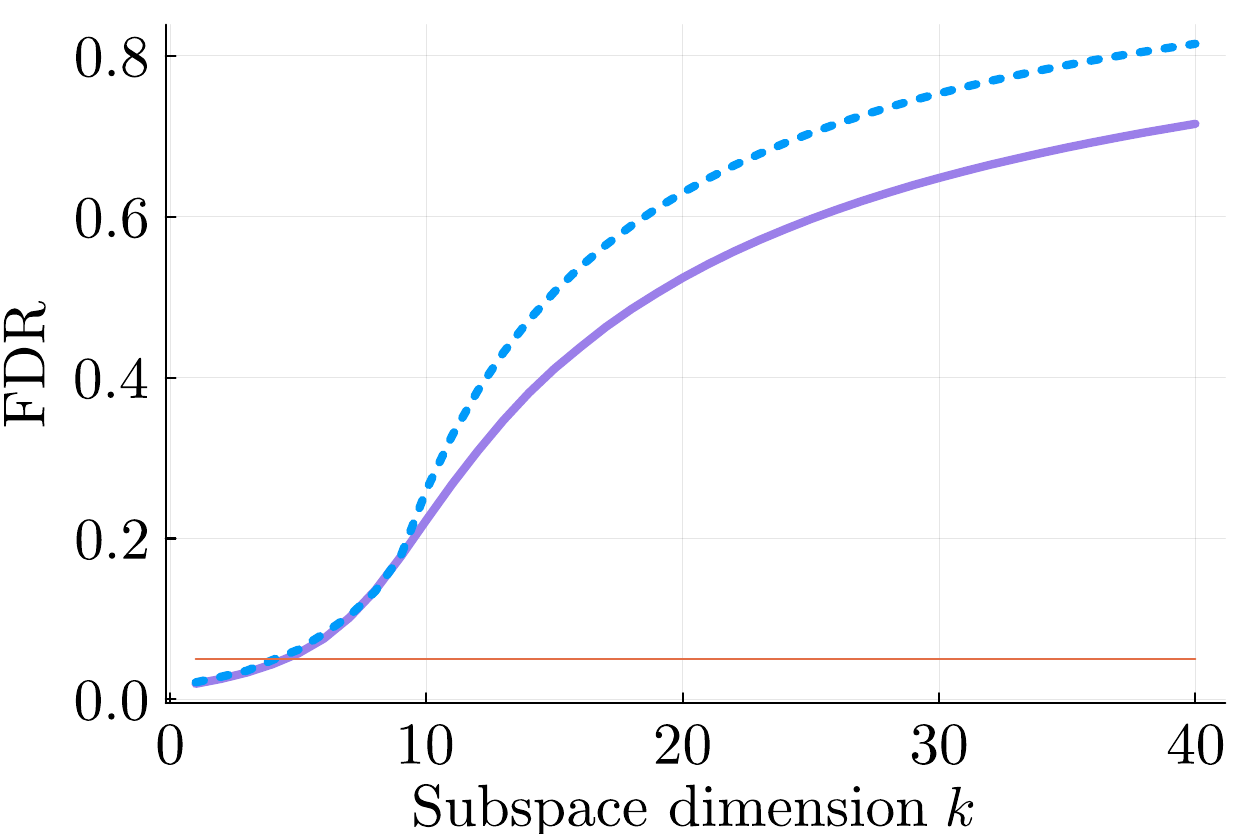} &
      \includegraphics[width=\imagewidth]{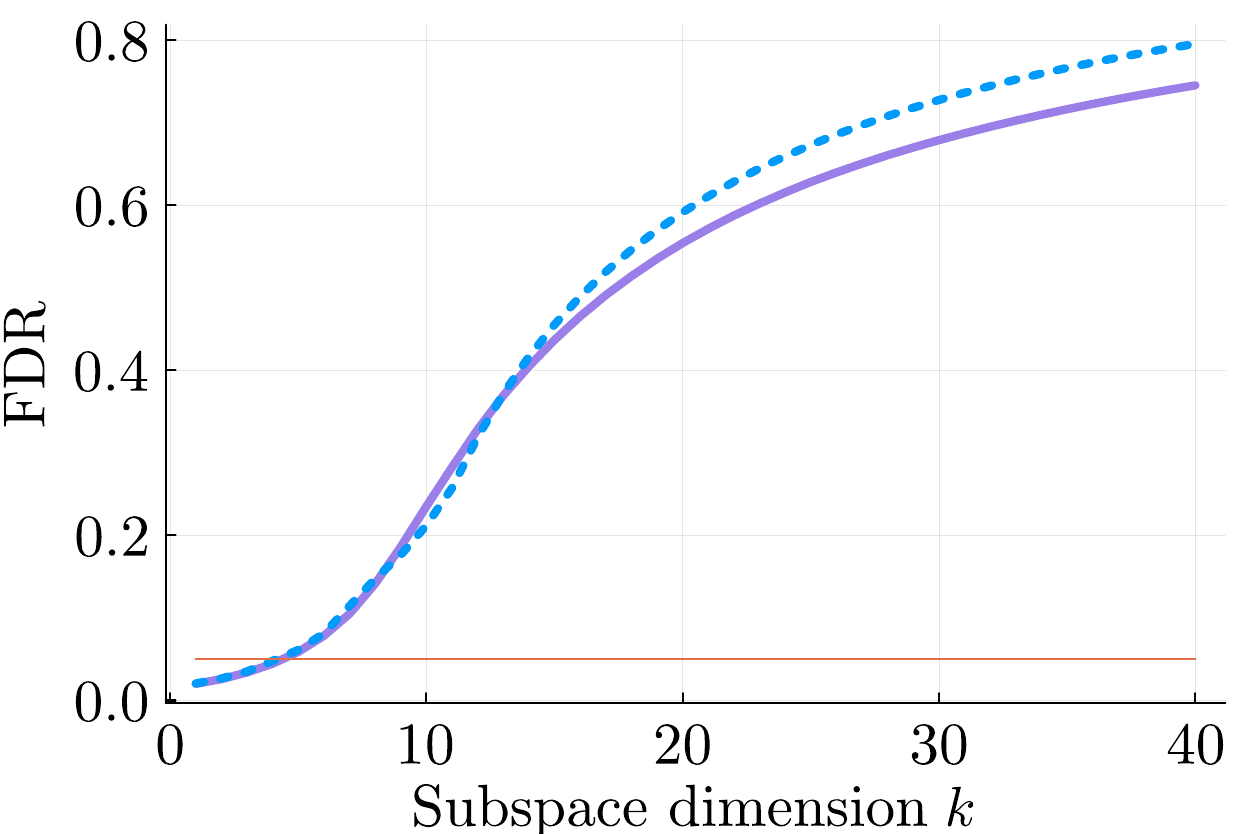} \\[2\tabcolsep]
        \setcounter{imagecolumn}{0} &%
        Dimension $n = 100$ &
        Dimension $n = 500$ &
        Dimenson $n = 1000$
    \end{tabular}
    \caption{Results for estimating the FDR of the \texttt{Uniform} ensemble using Algorithms~\ref{alg:FDR} and~\ref{alg:rank-estimate}.}
  \end{figure}
 \begin{figure}[t!]
    \def\arraystretch{0}%
    \setlength{\imagewidth}{\dimexpr \textwidth - 4\tabcolsep}%
    \divide \imagewidth by 3
    \hspace*{\dimexpr -\baselineskip - 2\tabcolsep}%
    \begin{tabular}{@{}cIII@{}}
      \stepcounter{imagerow}\raisebox{0.35\imagewidth}{\rotatebox[origin=c]{90}%
        {\strut \texttt{well-separated}}} &
      \includegraphics[width=\imagewidth]{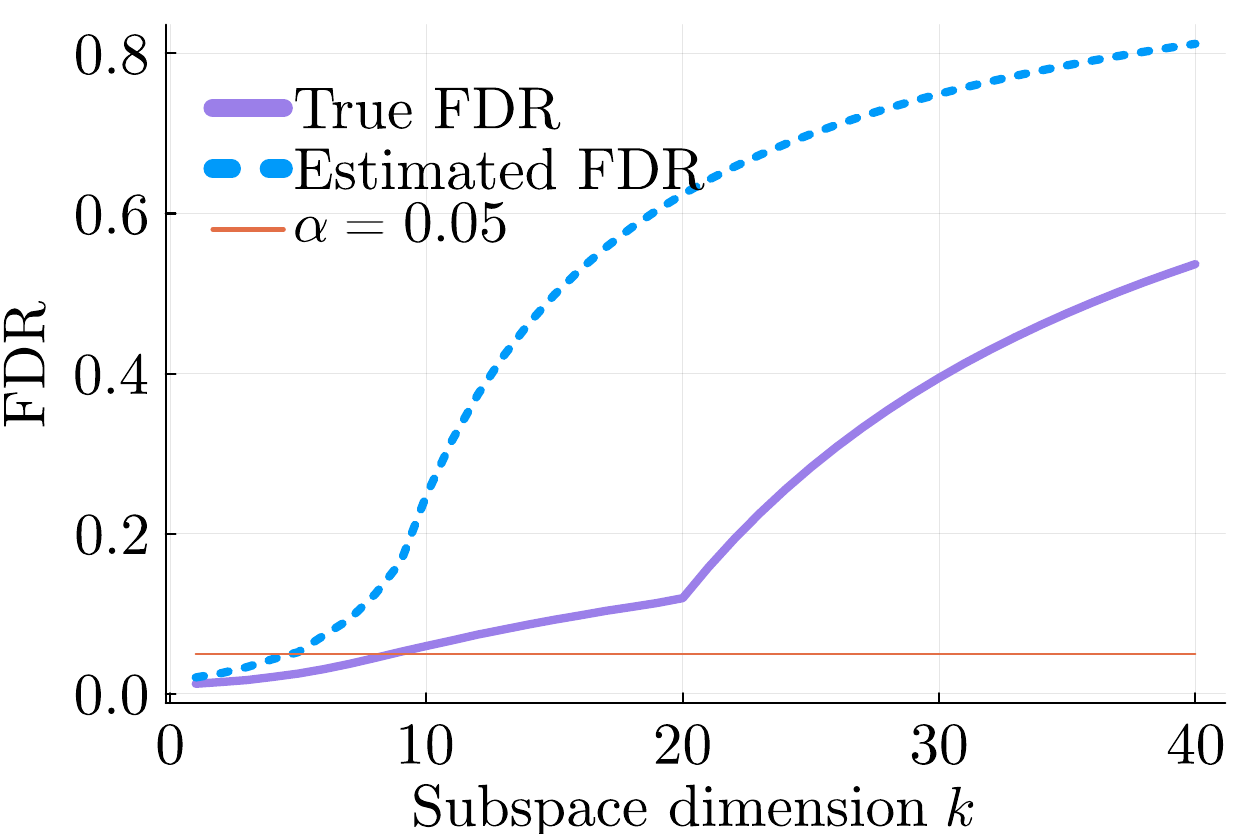} &
      \includegraphics[width=\imagewidth]{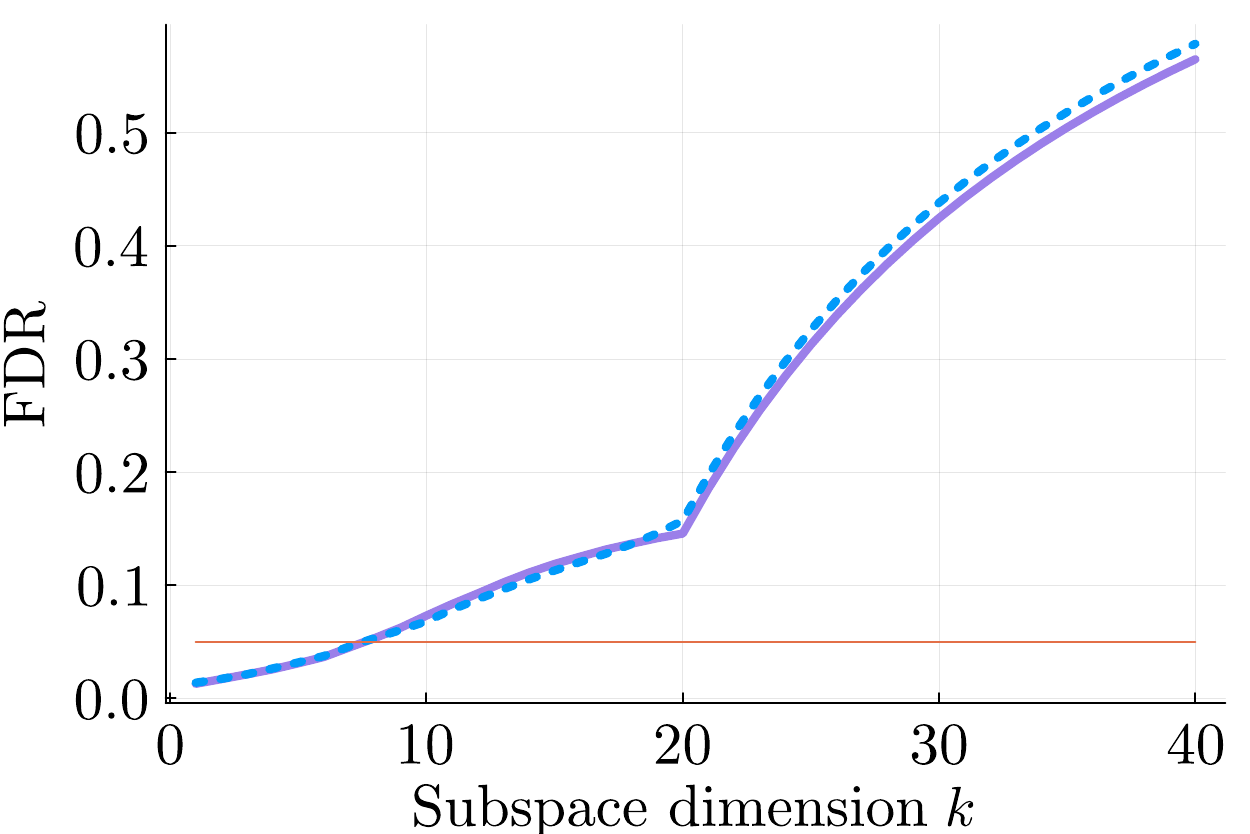}\label{example} &
      \includegraphics[width=\imagewidth]{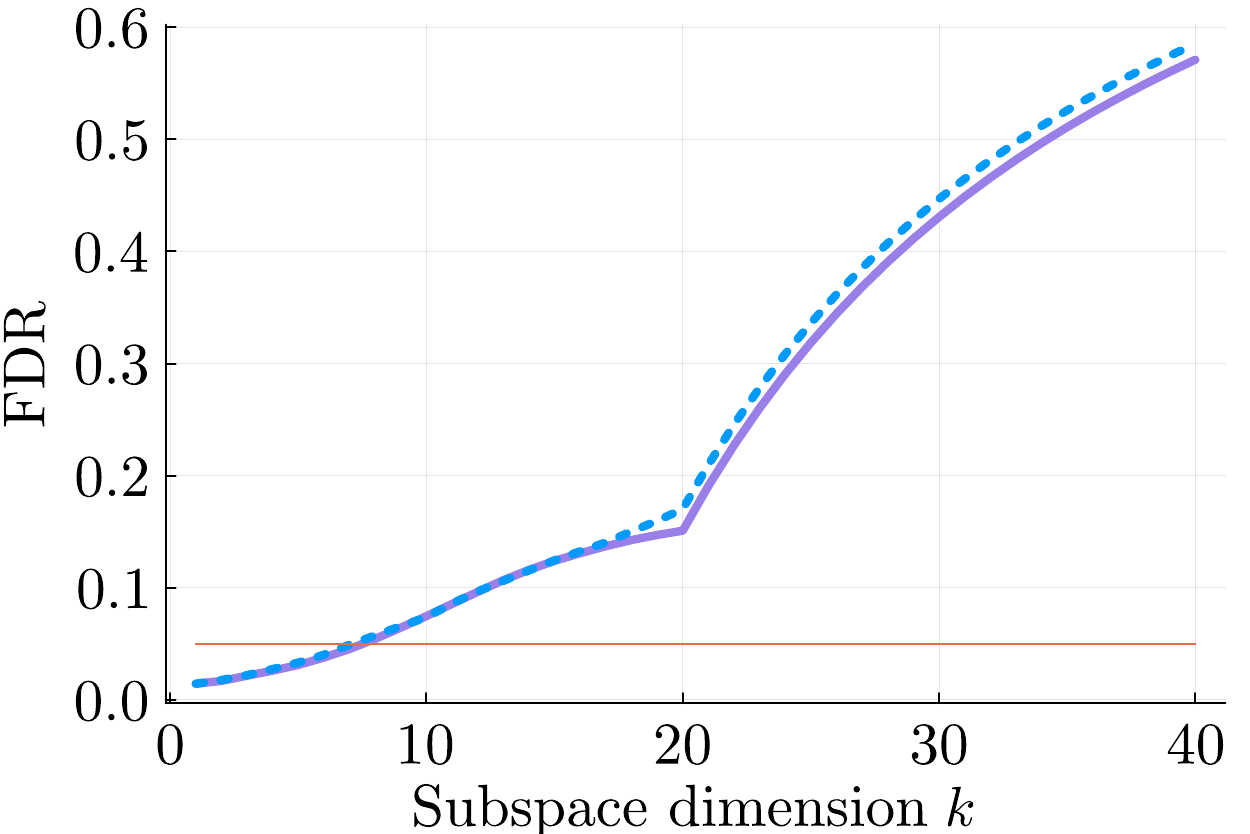} \\[2\tabcolsep]%
      \stepcounter{imagerow}\raisebox{0.35\imagewidth}{\rotatebox[origin=c]{90}%
        {\strut \texttt{barely-separated}}} &
      \includegraphics[width=\imagewidth]{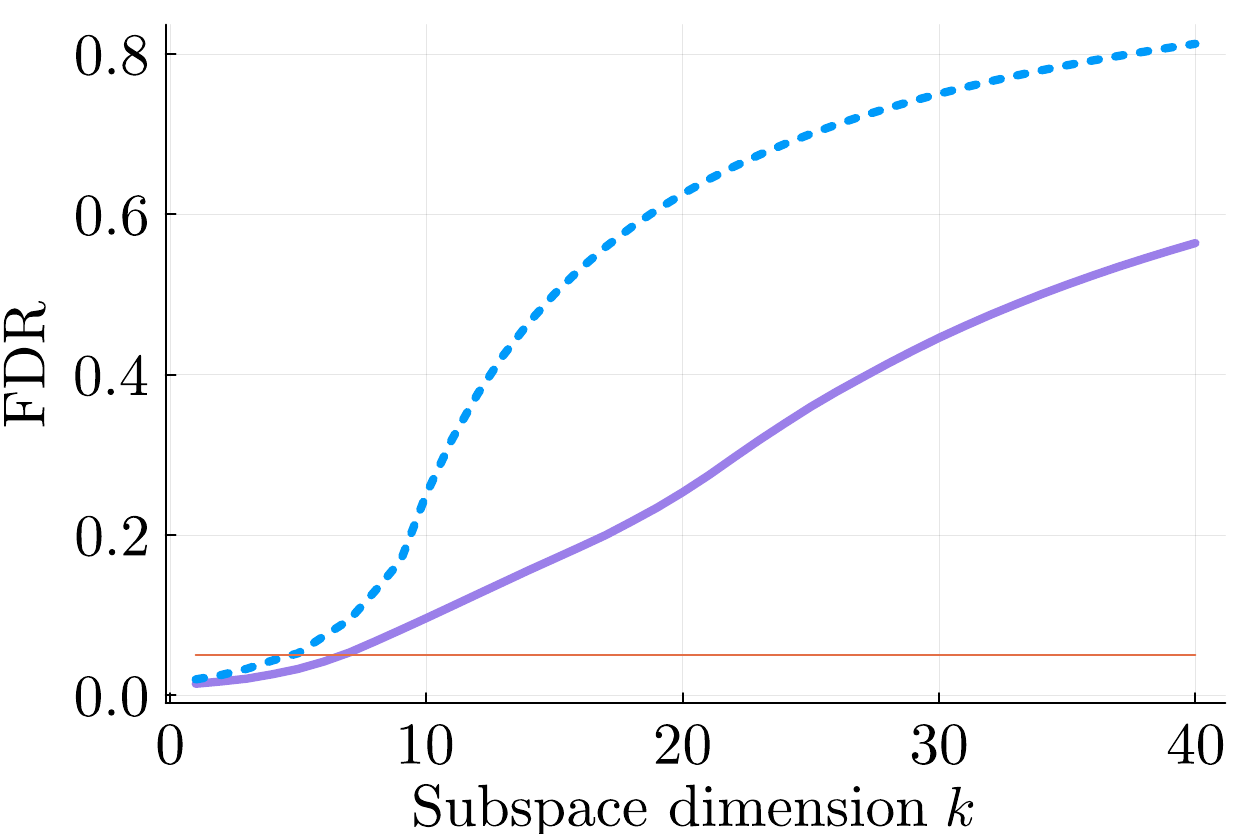} &
      \includegraphics[width=\imagewidth]{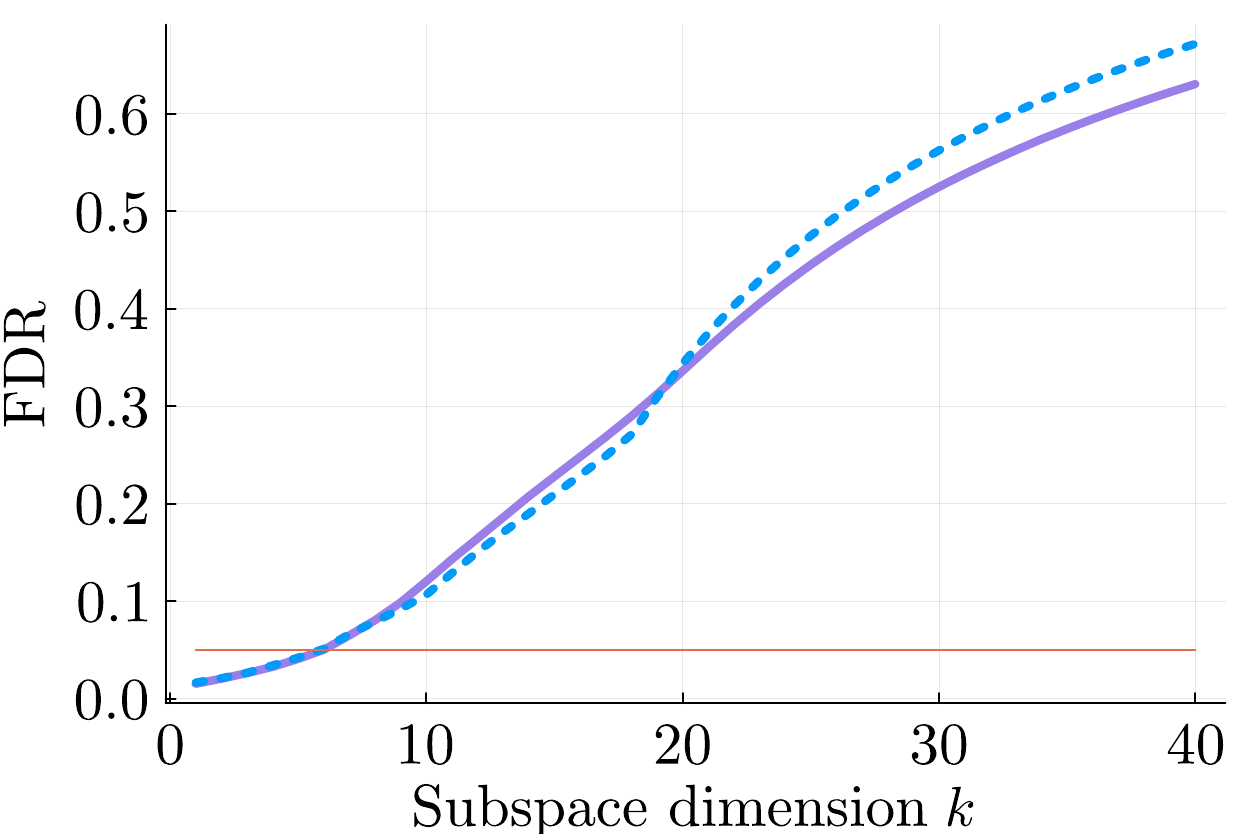} &
      \includegraphics[width=\imagewidth]{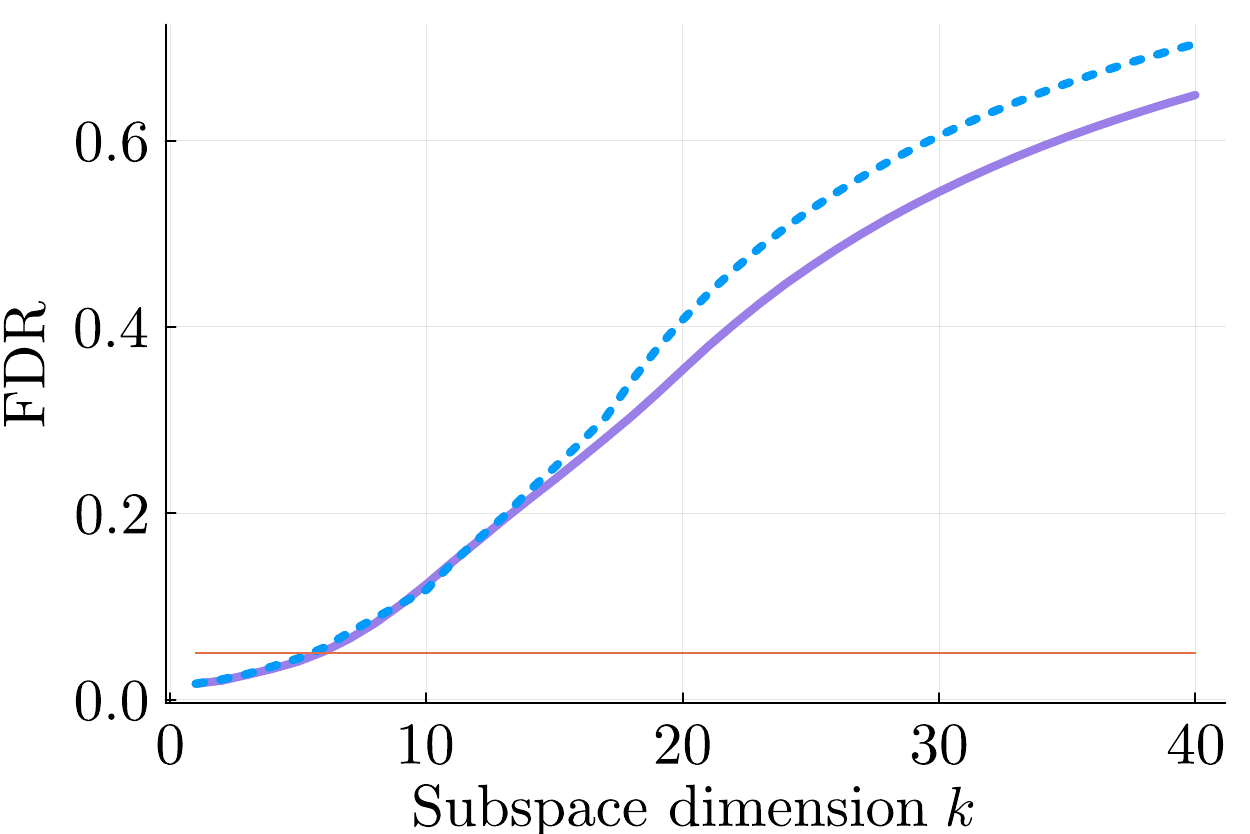} \\[2\tabcolsep]
      \stepcounter{imagerow}\raisebox{0.35\imagewidth}{\rotatebox[origin=c]{90}%
     {\strut \texttt{entangled}}} &
      \includegraphics[width=\imagewidth]{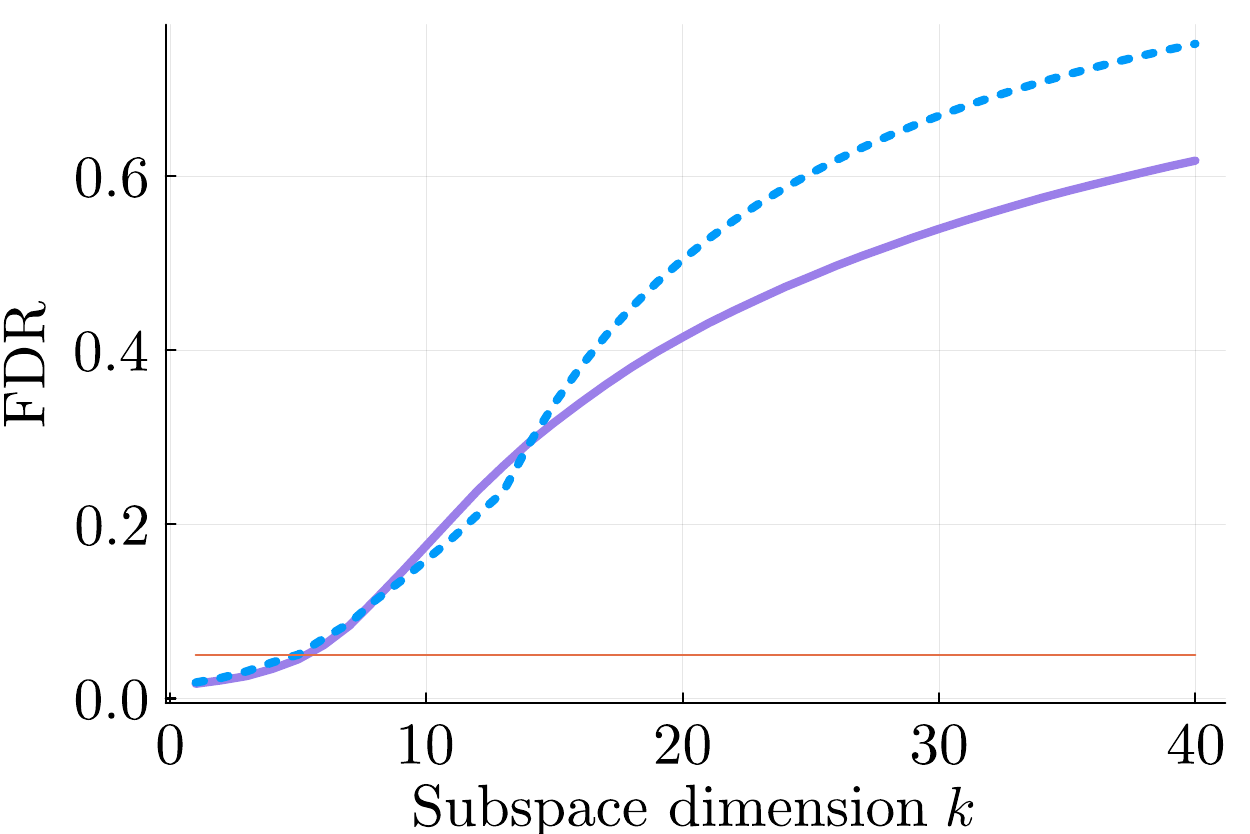} &
      \includegraphics[width=\imagewidth]{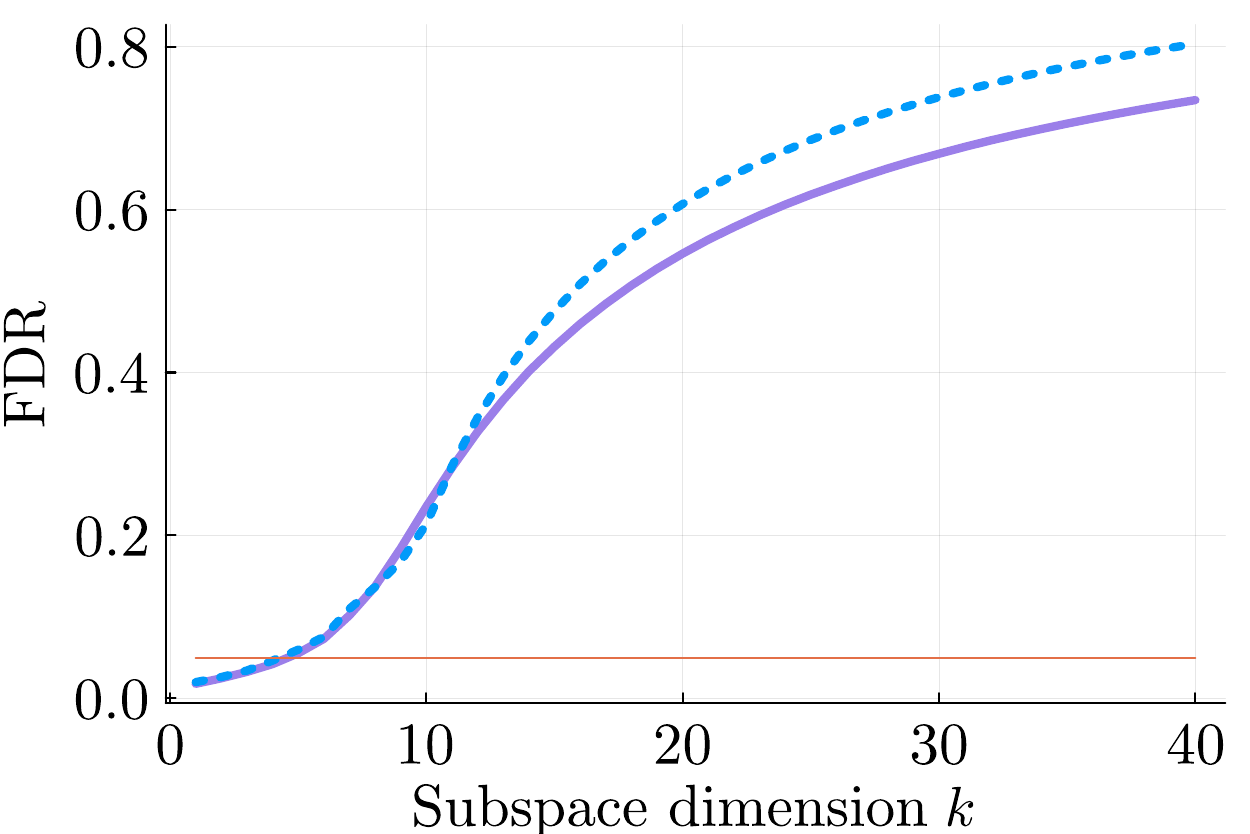} &
      \includegraphics[width=\imagewidth]{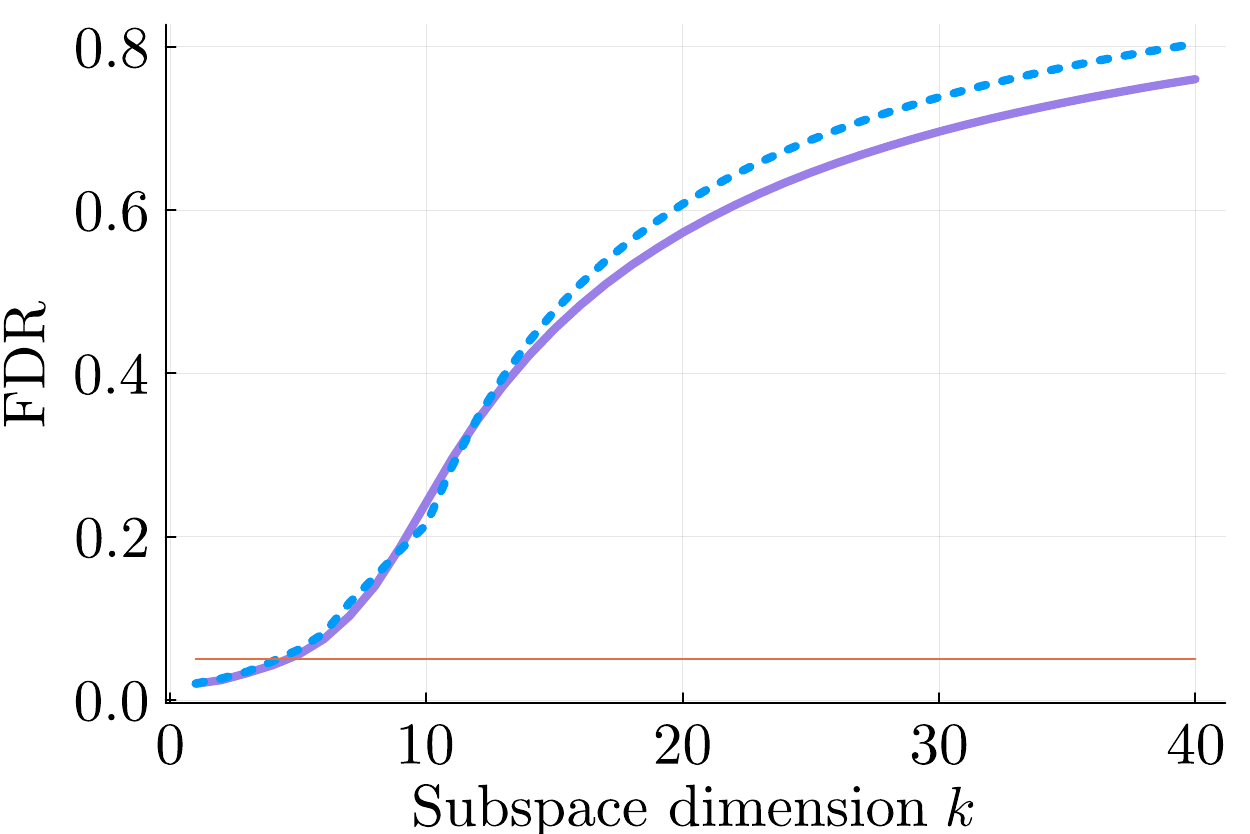} \\[2\tabcolsep]
        \setcounter{imagecolumn}{0} &%
        Dimension $n = 100$ &
        Dimension $n = 500$ &
        Dimenson $n = 1000$
    \end{tabular}
    \caption{Results for estimating the FDR of the \texttt{UniformFactor} ensemble using Algorithms~\ref{alg:FDR-as} and~\ref{alg:rank-estimate-as}.}
\end{figure}

\begin{figure}[t]
    \def\arraystretch{0}%
    \setlength{\imagewidth}{\dimexpr \textwidth - 4\tabcolsep}%
    \divide \imagewidth by 3
    \hspace*{\dimexpr -\baselineskip - 2\tabcolsep}%
    \begin{tabular}{@{}cIII@{}}
      \stepcounter{imagerow}\raisebox{0.35\imagewidth}{\rotatebox[origin=c]{90}%
        {\strut \texttt{well-separated}}} &
      \includegraphics[width=\imagewidth]{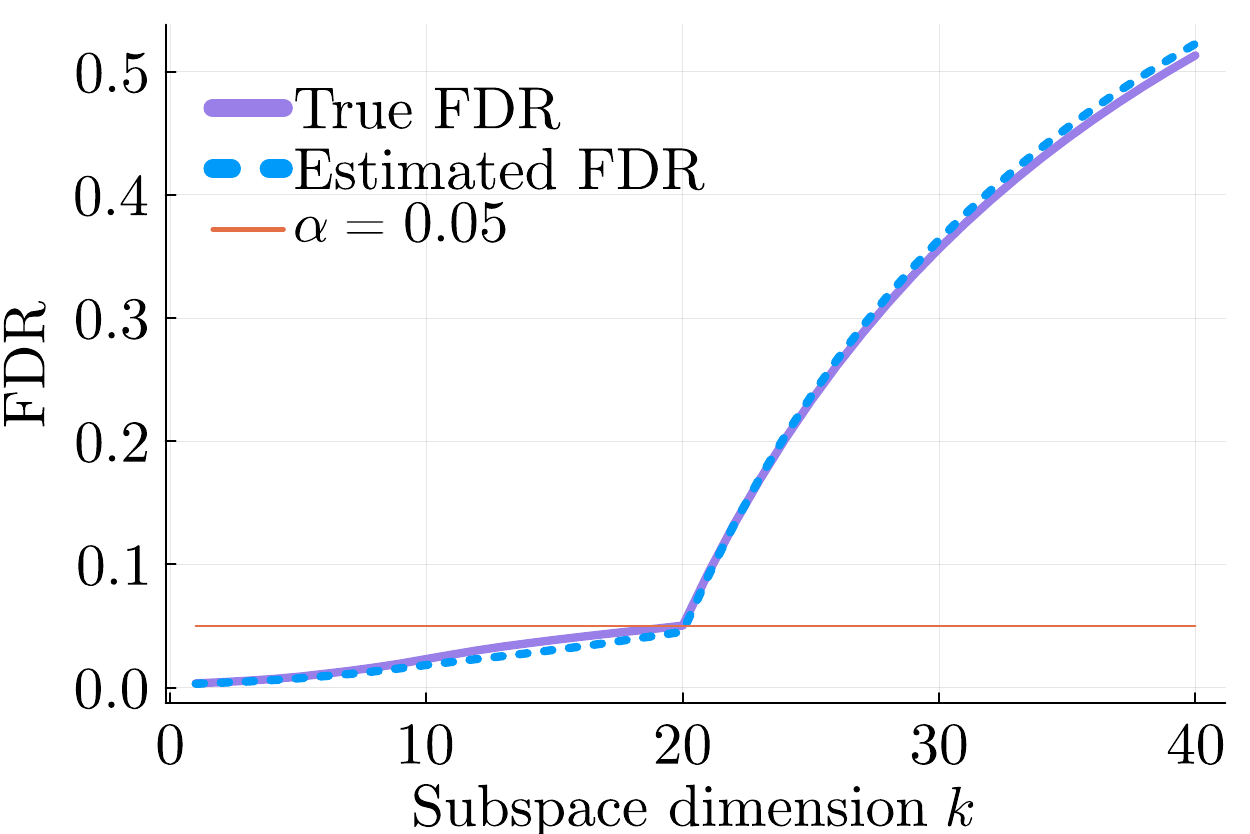} &
      \includegraphics[width=\imagewidth]{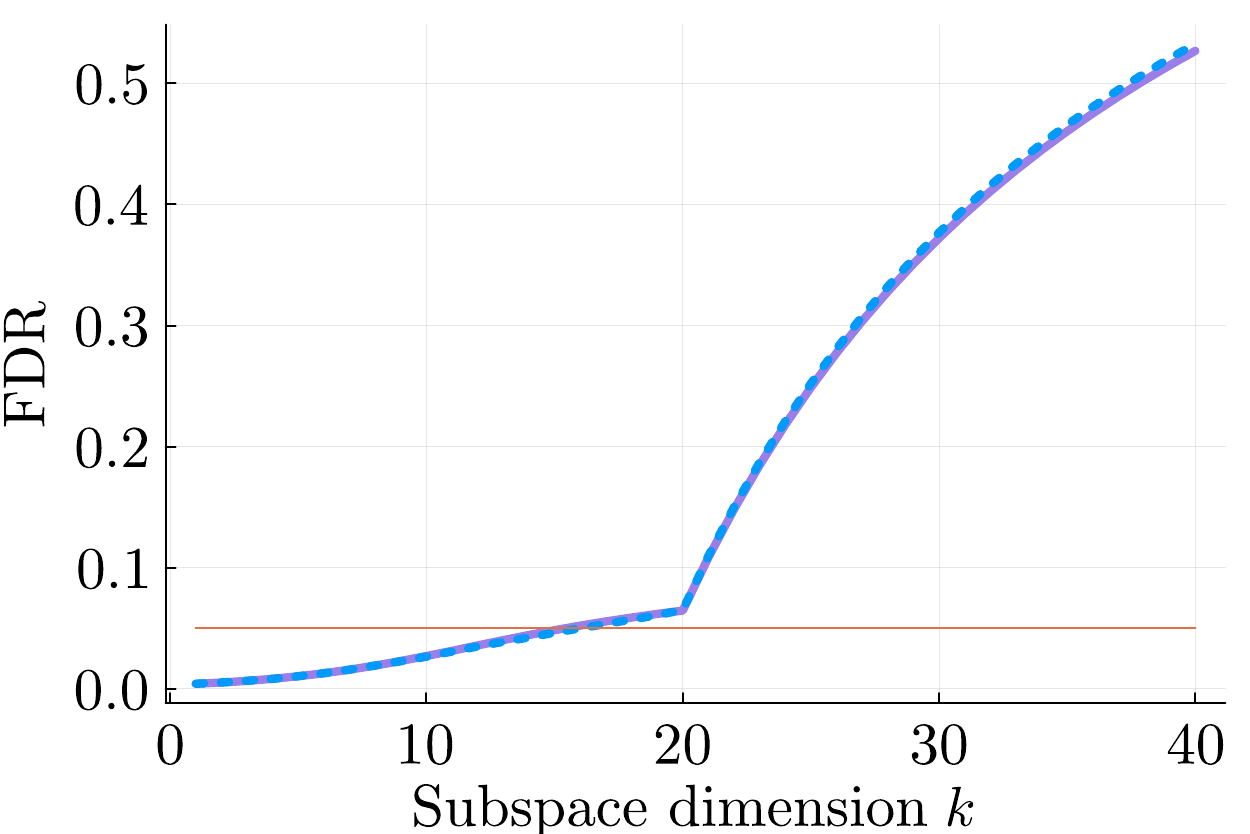}\label{example} &
      \includegraphics[width=\imagewidth]{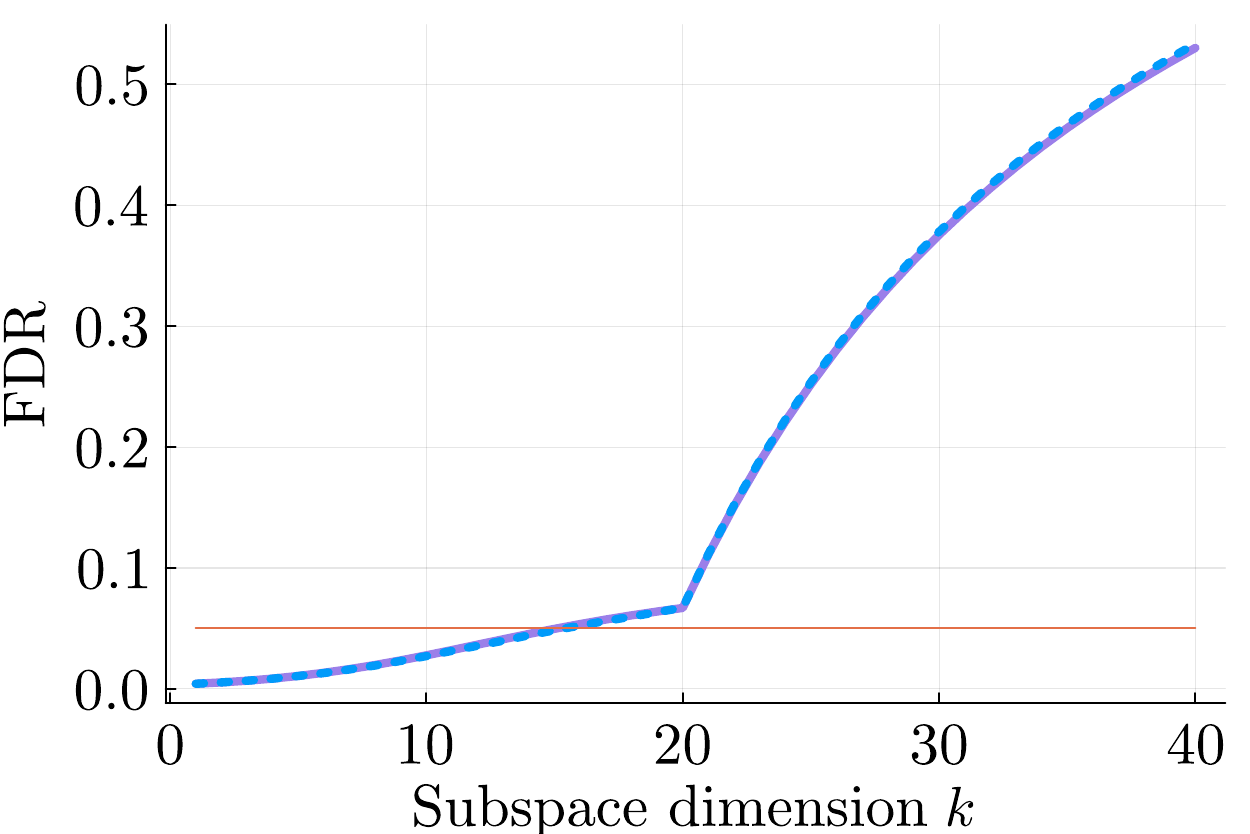} \\[2\tabcolsep]%
      \stepcounter{imagerow}\raisebox{0.35\imagewidth}{\rotatebox[origin=c]{90}%
        {\strut \texttt{barely-separated}}} &
      \includegraphics[width=\imagewidth]{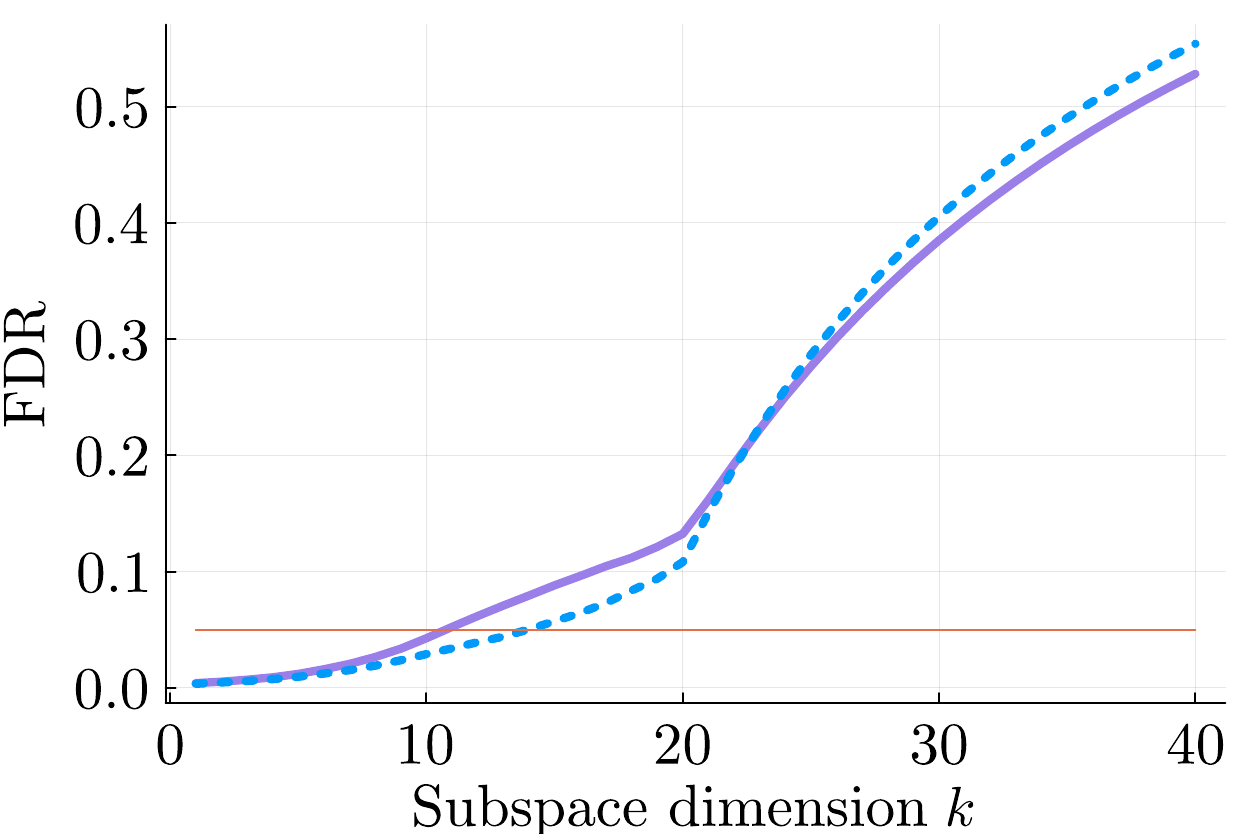} &
      \includegraphics[width=\imagewidth]{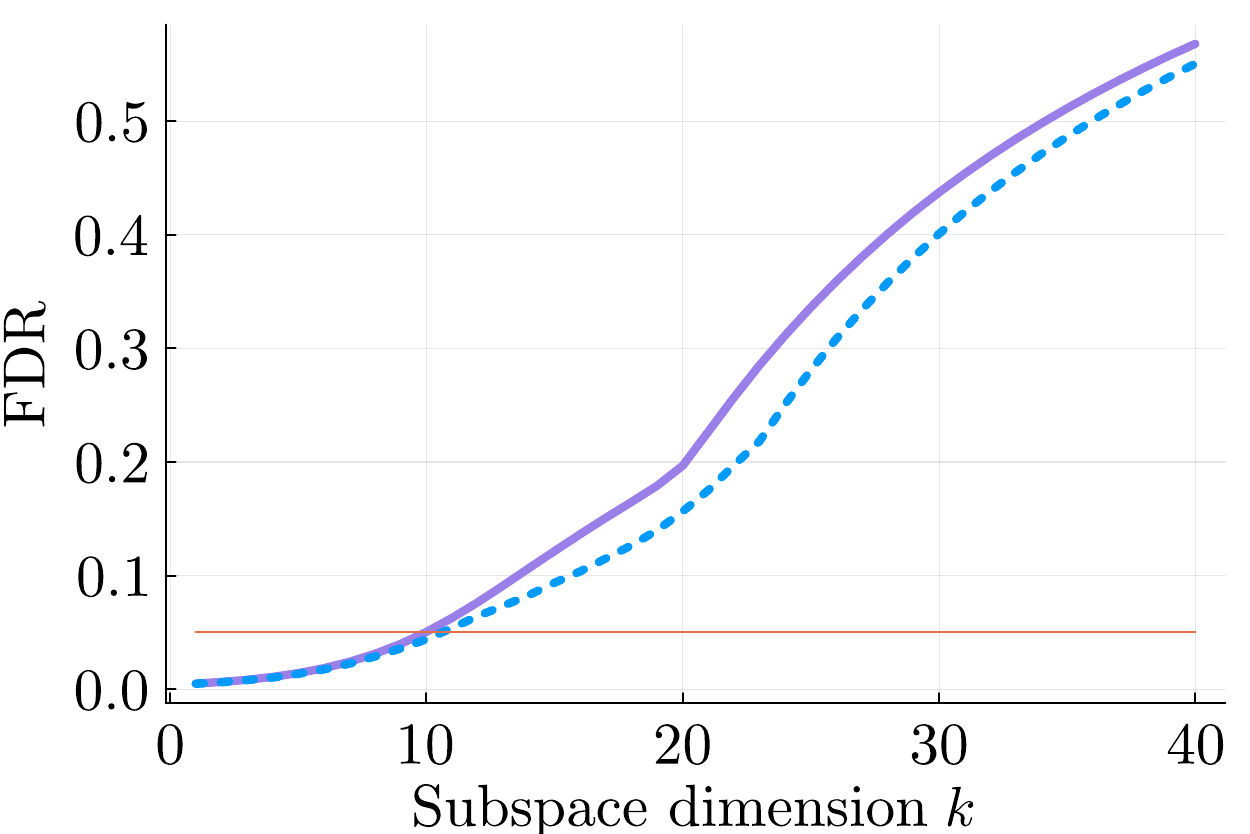} &
      \includegraphics[width=\imagewidth]{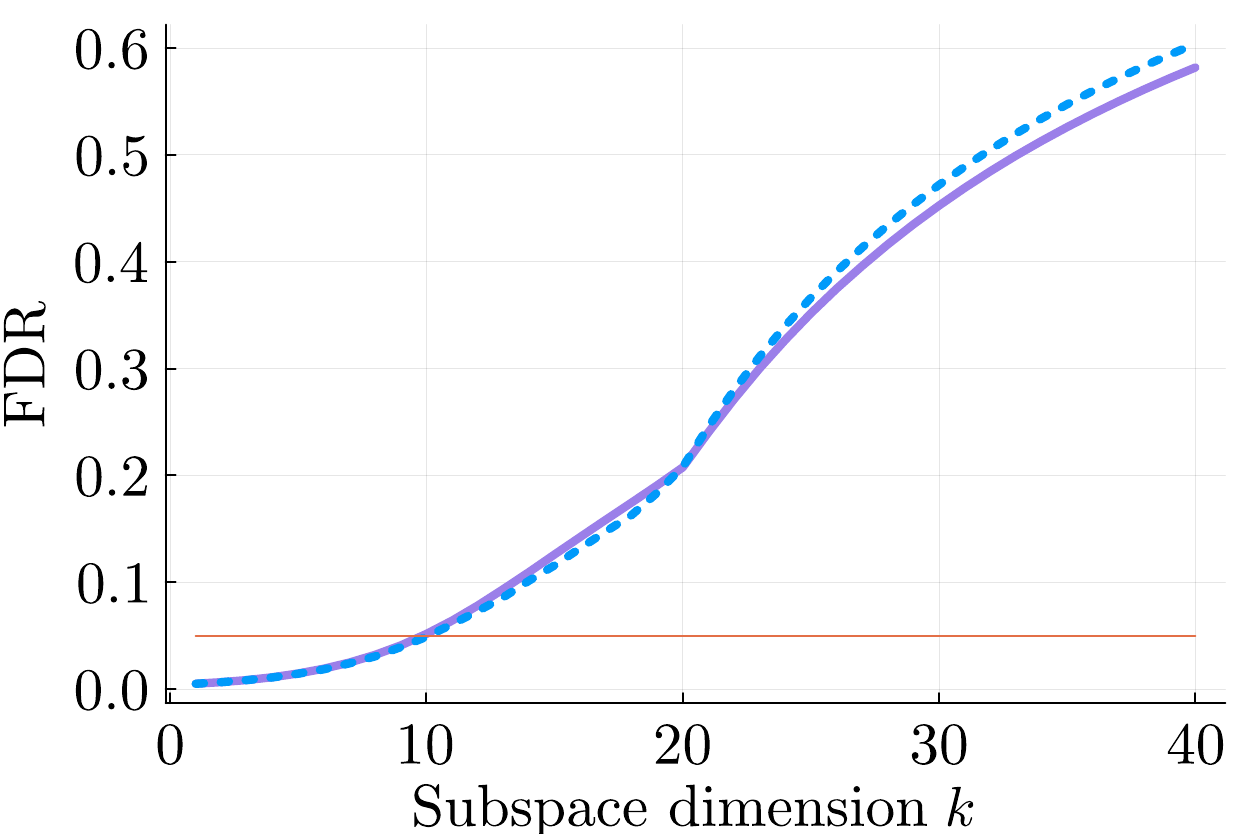} \\[2\tabcolsep]
      \stepcounter{imagerow}\raisebox{0.35\imagewidth}{\rotatebox[origin=c]{90}%
     {\strut \texttt{entangled}}} &
      \includegraphics[width=\imagewidth]{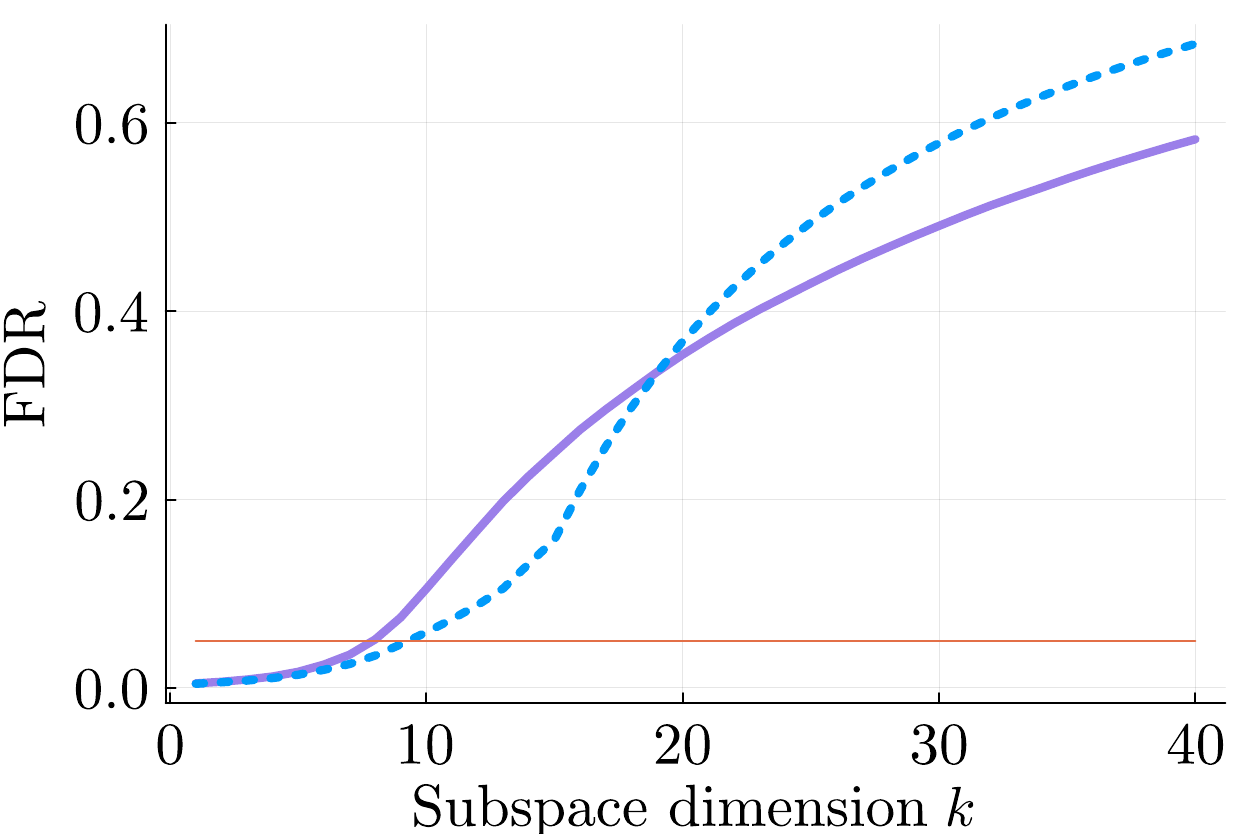} &
      \includegraphics[width=\imagewidth]{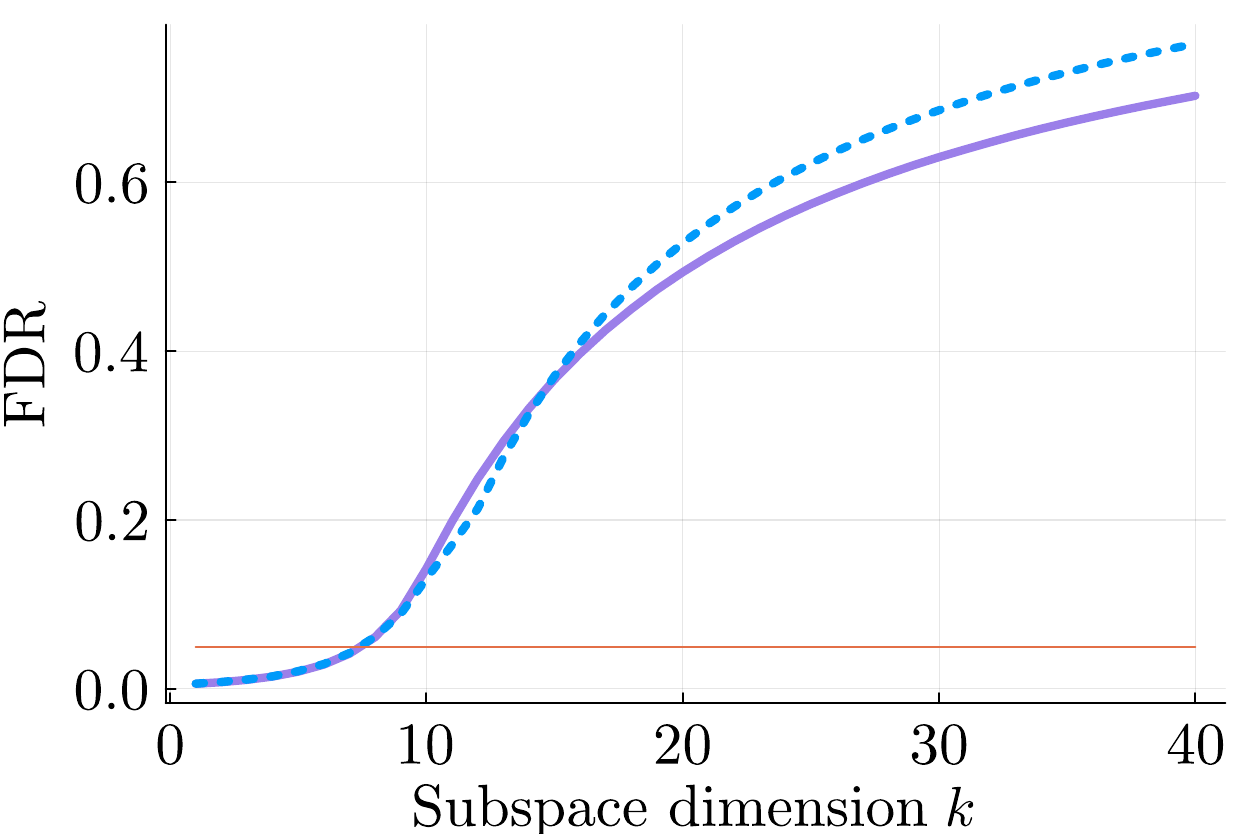} &
      \includegraphics[width=\imagewidth]{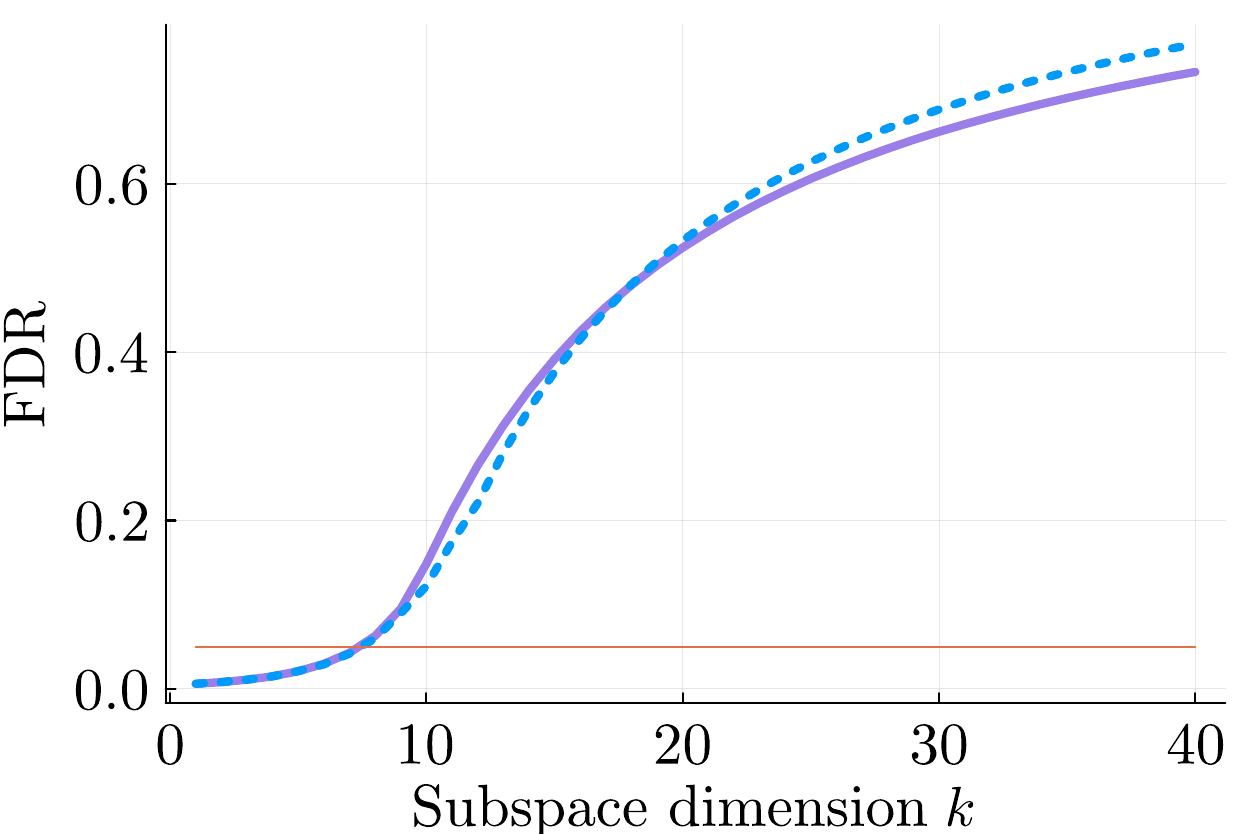} \\[2\tabcolsep]
        \setcounter{imagecolumn}{0} &%
        Dimension $n = 100$ &
        Dimension $n = 500$ &
        Dimenson $n = 1000$
    \end{tabular}
    \caption{Results for estimating the FDR of the \texttt{Fisher} ensemble using Algorithms~\ref{alg:FDR} and~\ref{alg:rank-estimate}.}
  \end{figure}
 \begin{figure}[t!]
    \def\arraystretch{0}%
    \setlength{\imagewidth}{\dimexpr \textwidth - 4\tabcolsep}%
    \divide \imagewidth by 3
    \hspace*{\dimexpr -\baselineskip - 2\tabcolsep}%
    \begin{tabular}{@{}cIII@{}}
      \stepcounter{imagerow}\raisebox{0.35\imagewidth}{\rotatebox[origin=c]{90}%
        {\strut \texttt{well-separated}}} &
      \includegraphics[width=\imagewidth]{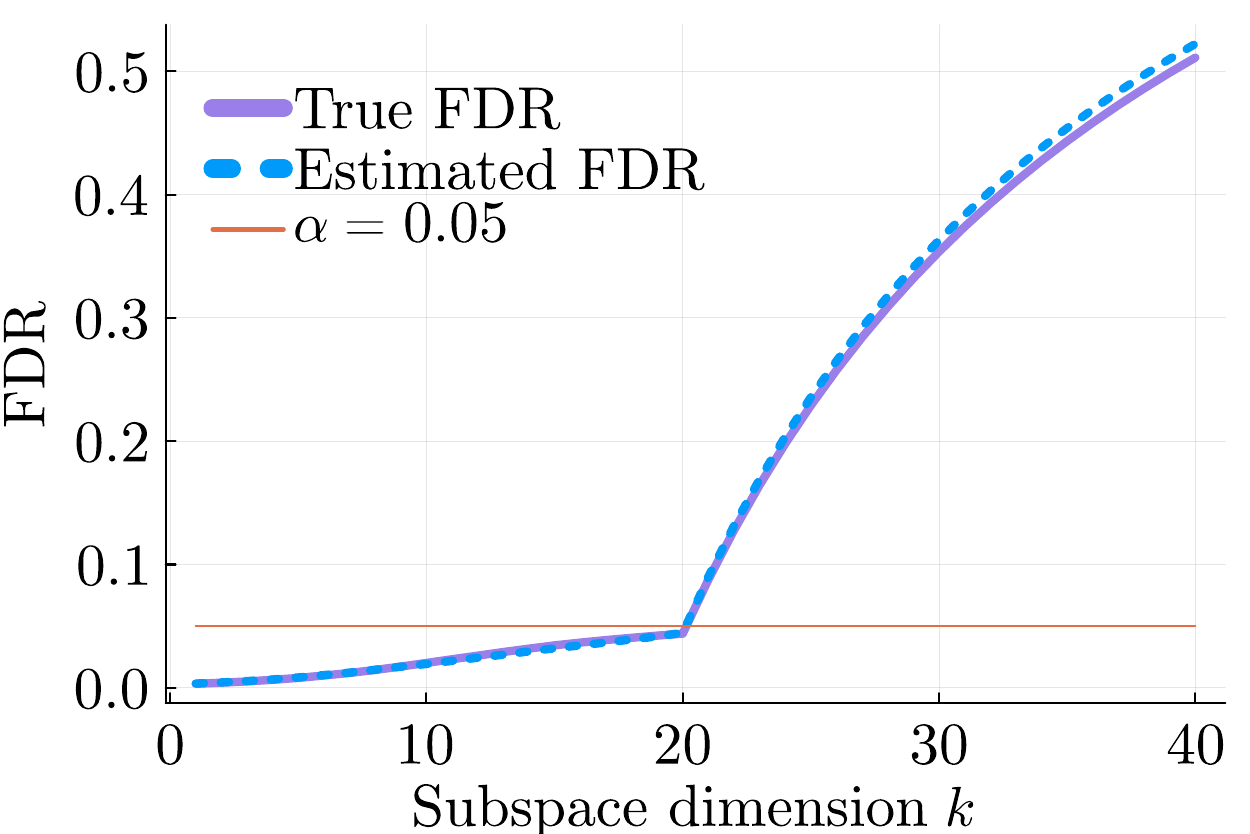} &
      \includegraphics[width=\imagewidth]{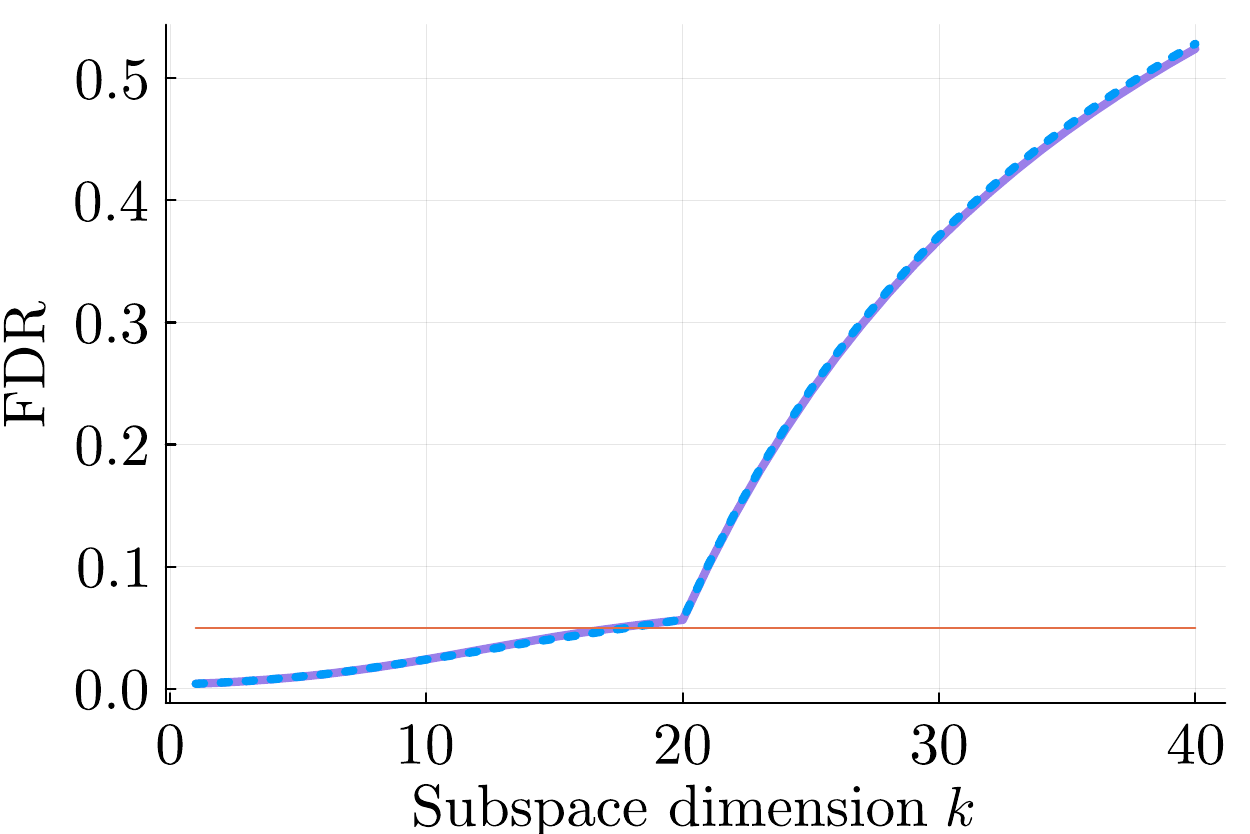}\label{example} &
      \includegraphics[width=\imagewidth]{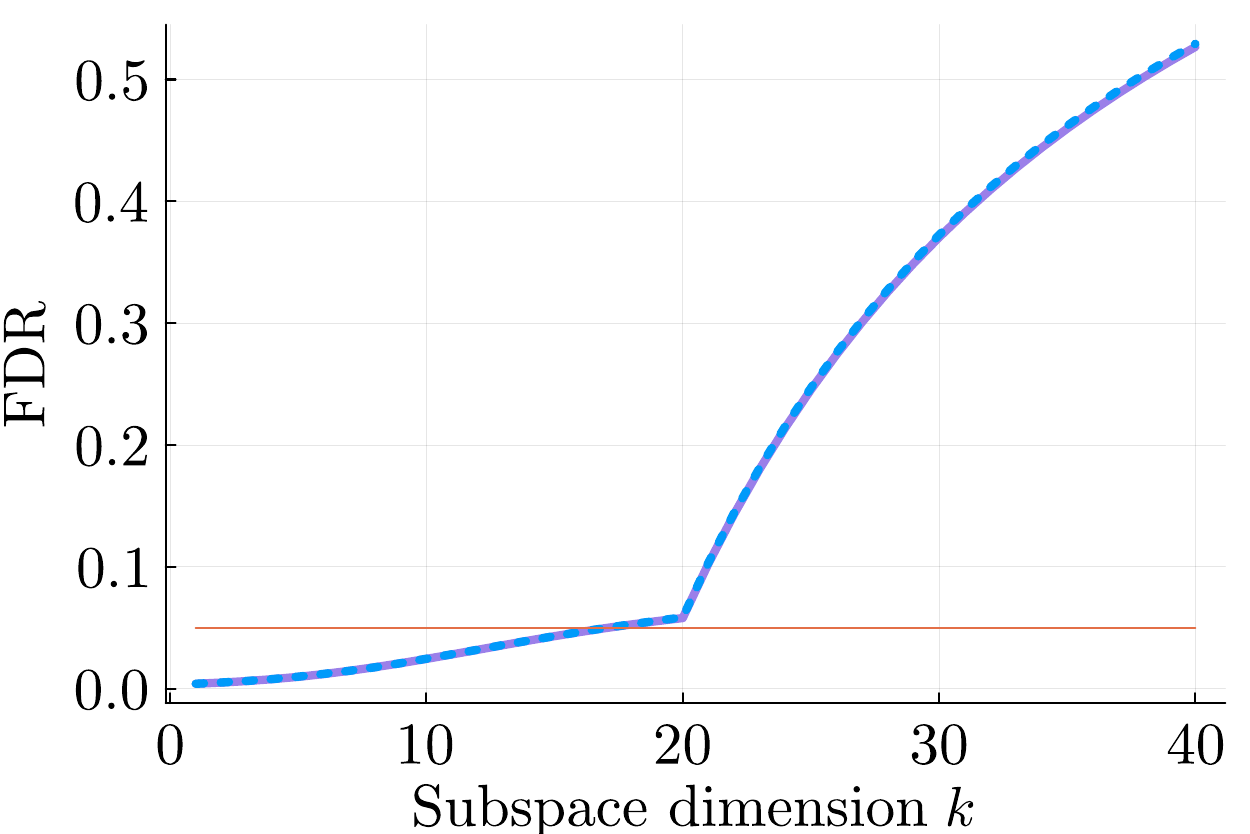} \\[2\tabcolsep]%
      \stepcounter{imagerow}\raisebox{0.35\imagewidth}{\rotatebox[origin=c]{90}%
        {\strut \texttt{barely-separated}}} &
      \includegraphics[width=\imagewidth]{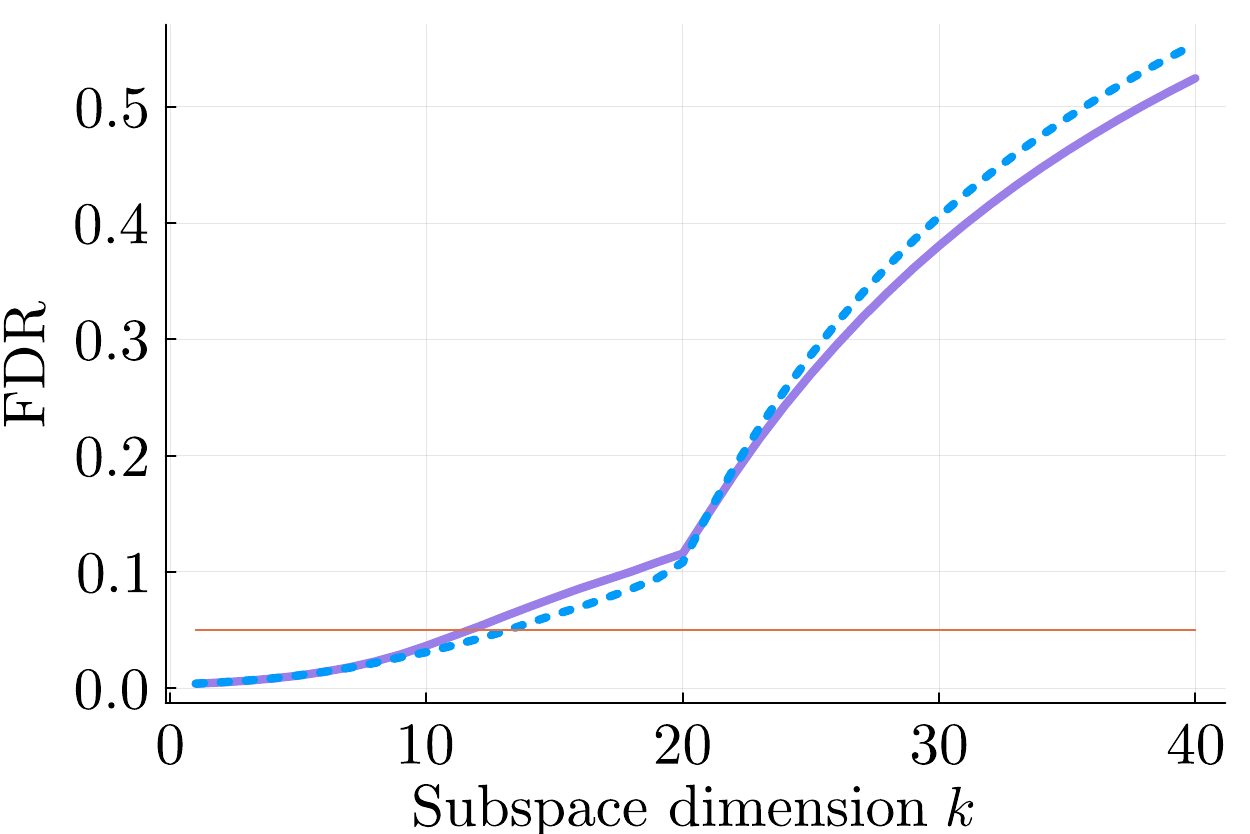} &
      \includegraphics[width=\imagewidth]{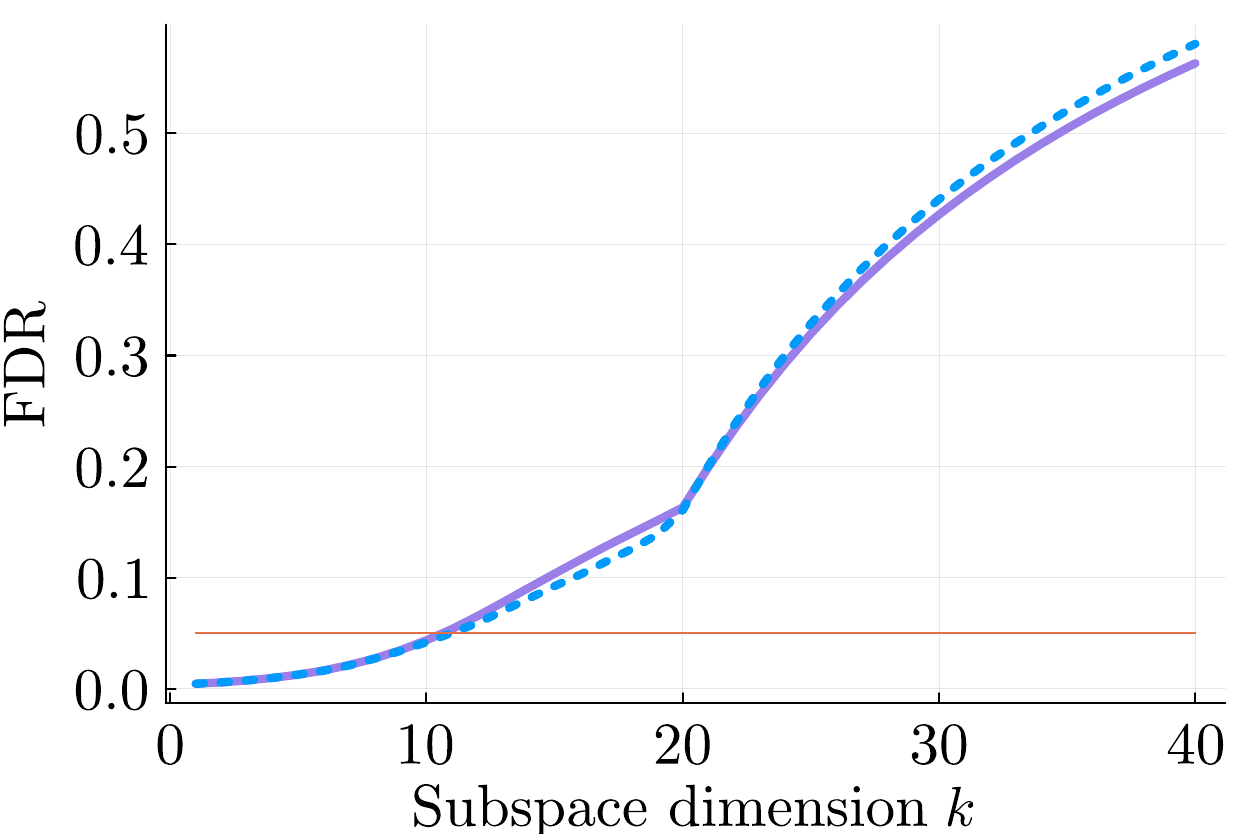} &
      \includegraphics[width=\imagewidth]{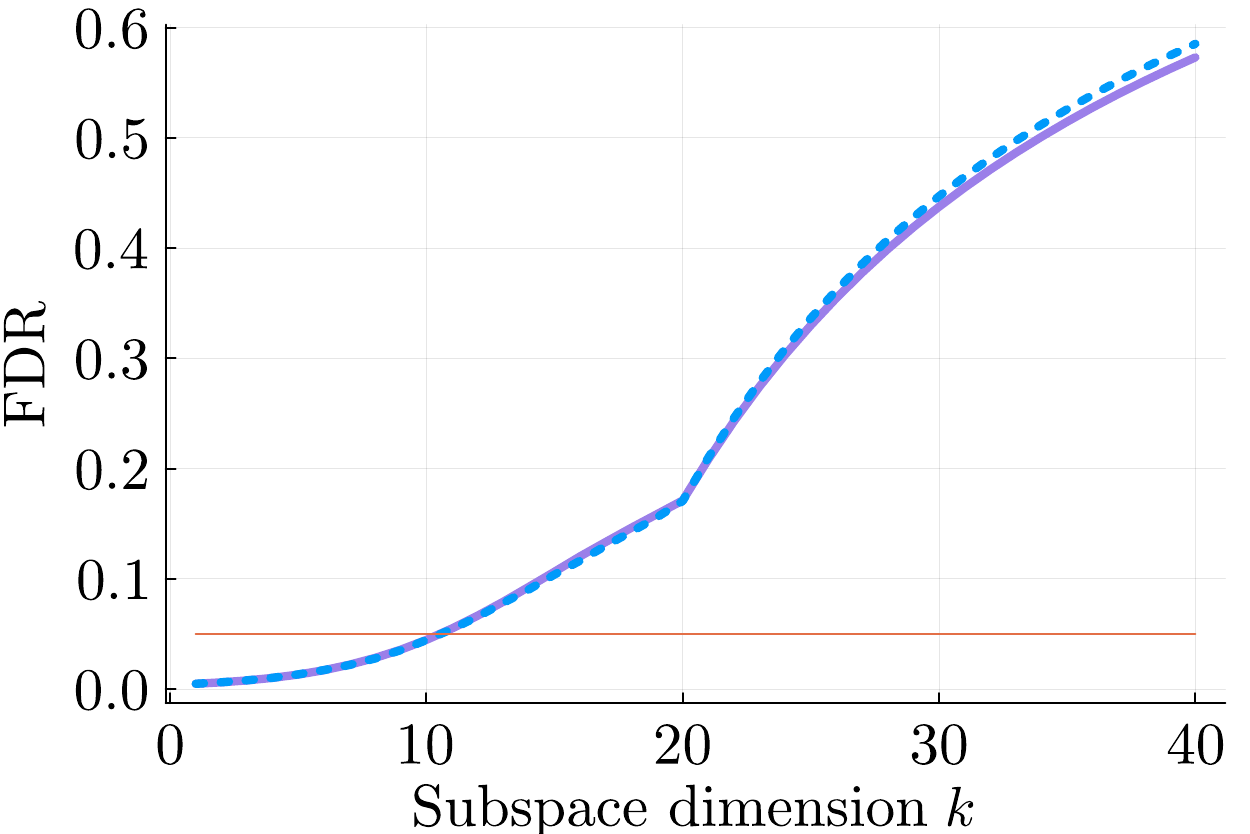} \\[2\tabcolsep]
      \stepcounter{imagerow}\raisebox{0.35\imagewidth}{\rotatebox[origin=c]{90}%
     {\strut \texttt{entangled}}} &
      \includegraphics[width=\imagewidth]{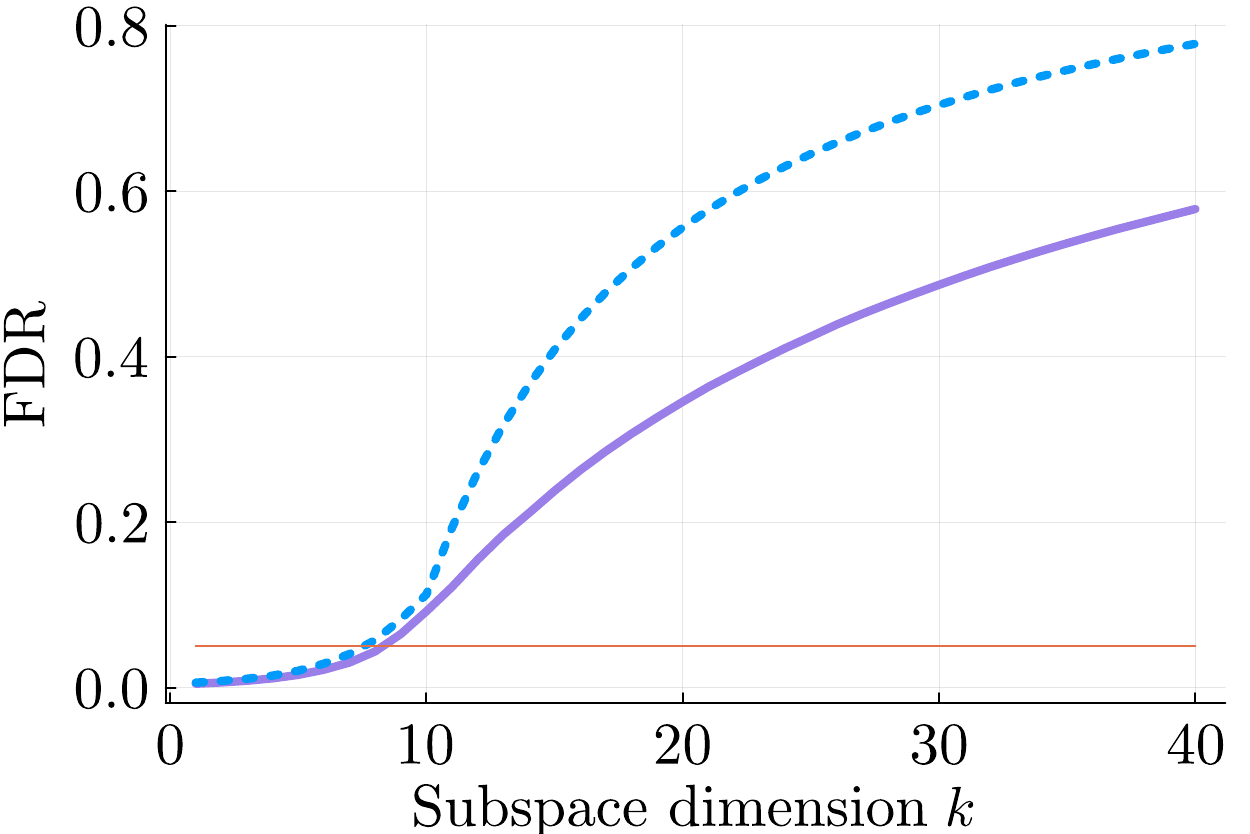} &
      \includegraphics[width=\imagewidth]{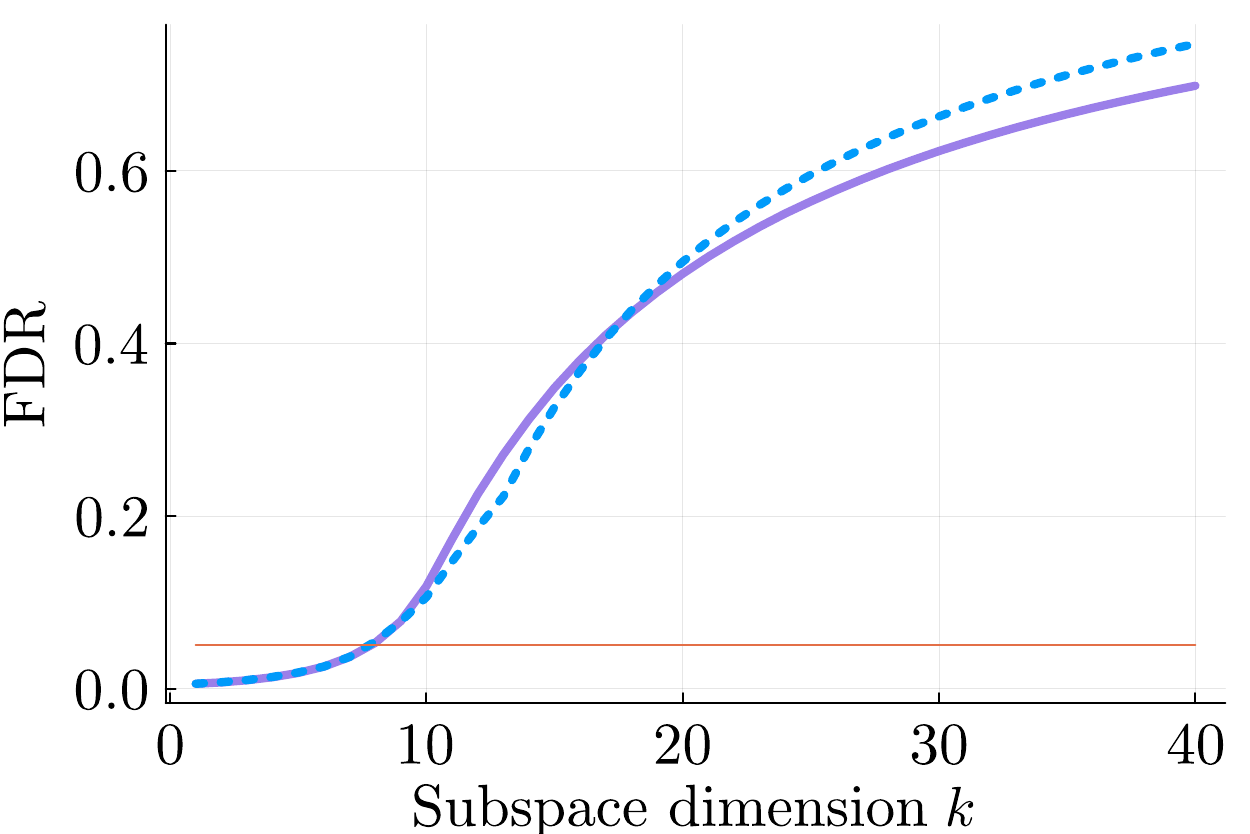} &
      \includegraphics[width=\imagewidth]{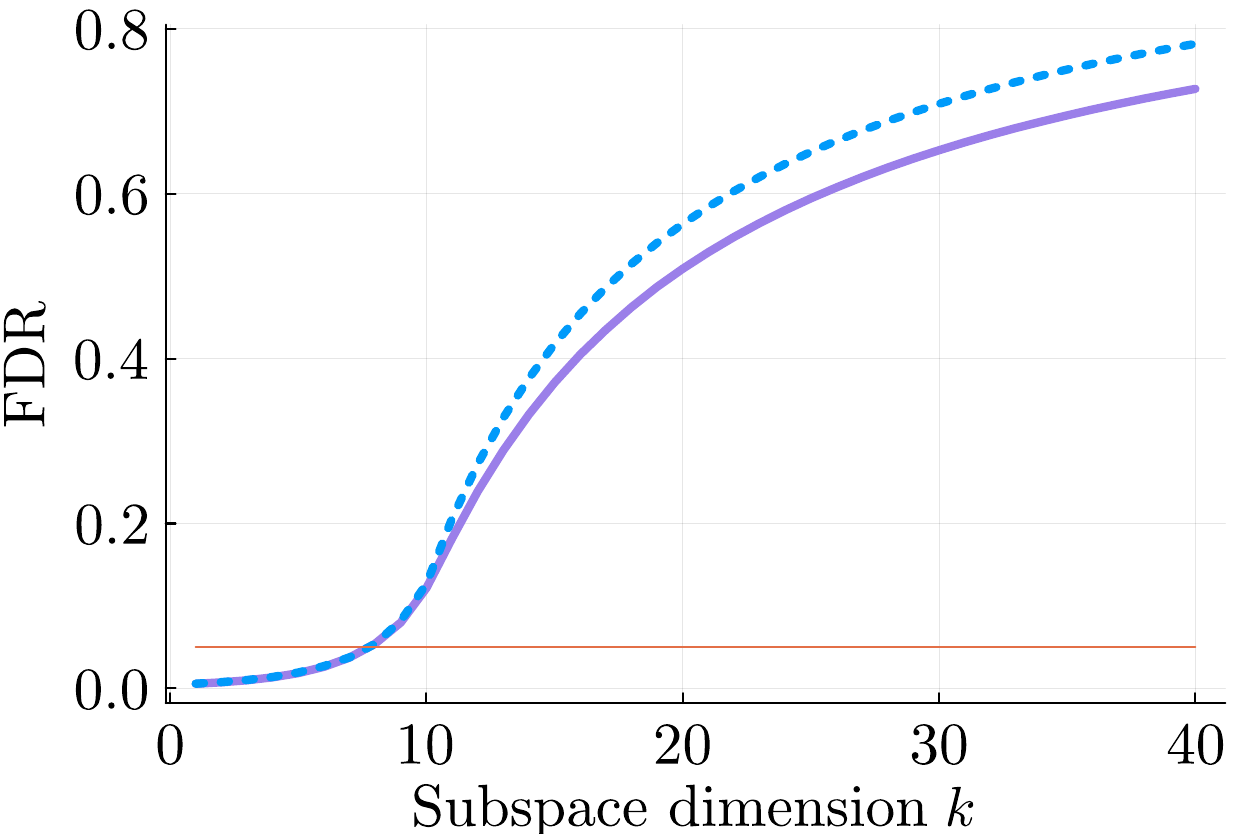} \\[2\tabcolsep]
        \setcounter{imagecolumn}{0} &%
        Dimension $n = 100$ &
        Dimension $n = 500$ &
        Dimenson $n = 1000$
    \end{tabular}
    \caption{Results for estimating the FDR of the \texttt{FisherFactor} ensemble using Algorithms~\ref{alg:FDR-as} and~\ref{alg:rank-estimate-as}.}
\end{figure}

\section{Supplementary Results} \label{sec:background}
  This section reviews the necessary background to establish theoretical guarantees for our methods. We highlight that the results in this section are known.

\subsection{Symmetric Case}
Let us start with an asymptotic characterization of the eigenvalues and eigenvectors of $\bX$.
    \begin{theorem}[Theorems 2.1 and 2.2 in \cite{benaych2011eigenvalues}]\label{thm:raj}
      Suppose that Assumption~\ref{ass:model1} and \ref{ass:model2} hold. For each $n$, let $\lambda_{i}^{(n)} = \lambda_{i}(\bX_{n})$ and let $\tilde u$ be a unit-norm eigenvector associated with $\lambda_{i}^{(n)}$. Then, we have that as $n$ grows to infinity,
      \begin{itemize}
              \item[] (\textbf{Eigenvalue limit}) Fix $j \in \{1, \dots, r\}$, then,
      \begin{equation}
        \label{eq:ass_eigenvalue}
 \lambda_{i}^{(n)} \as \begin{cases} \rho_i := G_{\mu_{\bE}}^{-1}(1/\theta_{i}) & \text{if }\theta_{j} > 1/G_{\mu_{\bE}}(b^{+}),  \\
 b  & \text{otherwise.}
 \end{cases}
      \end{equation}
              \item[] (\textbf{Eigenvector correlation limit})  Fix $j \in \{1, \dots, r\}$ such that $\frac{1}{\theta_{j}} \in \big(G_{\mu_{\bE}}(a^{-}), G_{\mu_{\bE}}(b^{+})\big)$, let $\cW_{j} = \ker(\theta_{j}\bI_{n} - \bA_{n})$ be the subspace of eigenvectors associated with $\theta_{i}$, and let $\widehat u$ be a unit norm eigenvector associated with $\lambda_j$. Then, 
      \begin{equation}
        \label{eq:ass_eigenvector}
        \|\cP_{\cW_{j}}(\widehat u)\|^{2} \xrightarrow{a.s.} -\frac{1}{\theta_j^2G'_{\mu_{\bE}}(\rho_j)}  = - \frac{G_{\mu_{\bE}}(\rho_j)^{2}}{G'_{\mu_{\bE}}(\rho_j)}.
      \end{equation}
      Furthermore, let $\cT_j = \bigoplus_{i \neq j}^r \ker(\theta_i \bI_n - \bA_n)$ be the eigenspace corresponding to the first $r$ eigenvectors that are not associated with $\theta_i$. Then,
      \begin{equation}\label{eq:decor} \|\cP_{\cT_{j}}(\widehat u)\|^{2} \as 0. \end{equation}
    \end{itemize}
    \end{theorem}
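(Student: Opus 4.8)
The plan is to follow the resolvent-plus-determinant route introduced by Benaych-Georges and Nadakuditi. Write $\bA_n = \bU_n\bTheta\bU_n^\top$ with $\bTheta = \diag(\theta_1,\dots,\theta_r)$ and $\bU_n \in \R^{n\times r}$ having orthonormal columns $u_1,\dots,u_r$. The first step is to exploit the invariance hypothesis in Assumption~\ref{ass:model1}: since either the eigenvectors of $\bA_n$ or those of $\bE_n$ are orthogonally invariant, I may assume $\bU_n$ is Haar-distributed on the Stiefel manifold and independent of $\bE_n$. For $\lambda\notin\mathrm{spec}(\bE_n)$, the Weinstein--Aronszajn (Sylvester) determinant identity gives $\det(\lambda\bI-\bX_n) = \det(\lambda\bI-\bE_n)\cdot\det\!\big(\bI_r - \bTheta\,M_n(\lambda)\big)$ with $M_n(\lambda) := \bU_n^\top(\lambda\bI-\bE_n)^{-1}\bU_n\in\R^{r\times r}$, so such a $\lambda$ is an eigenvalue of $\bX_n$ exactly when $\det(\bI_r-\bTheta M_n(\lambda))=0$.

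The analytic core is the concentration of $M_n(\lambda)$. Using that $\bU_n$ is Haar and independent of $\bE_n$, a quadratic-form (Hanson--Wright type) argument combined with the almost-sure weak convergence $\mu_{\bE_n}\to\mu_{\bE}$ and the edge convergence in Assumption~\ref{ass:model2} shows that, almost surely and uniformly for $\lambda$ in compact subsets of $(b,\infty)$, $u_i^\top(\lambda\bI-\bE_n)^{-1}u_j\to\delta_{ij}G_{\mu_{\bE}}(\lambda)$ and $u_i^\top(\lambda\bI-\bE_n)^{-2}u_j\to-\delta_{ij}G'_{\mu_{\bE}}(\lambda)$; in particular $M_n(\lambda)\to G_{\mu_{\bE}}(\lambda)\bI_r$. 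Hence the eigenvalue equation becomes, in the limit, $\prod_{i=1}^r\big(1-\theta_iG_{\mu_{\bE}}(\lambda)\big)=0$. Since $G_{\mu_{\bE}}$ is continuous and strictly decreasing on $(b,\infty)$, with $G_{\mu_{\bE}}(\lambda)\downarrow0$ as $\lambda\to\infty$ and $G_{\mu_{\bE}}(\lambda)\uparrow G_{\mu_{\bE}}(b^+)$ as $\lambda\downarrow b$, it has a root $\rho_j=G_{\mu_{\bE}}^{-1}(1/\theta_j)>b$ iff $\theta_j>1/G_{\mu_{\bE}}(b^+)$. In that case one shows $\lambda_j^{(n)}\to\rho_j$; otherwise, the lower Weyl bound $\lambda_j(\bX_n)\ge\lambda_j(\bE_n)\to b$ (using $\bA_n\succeq0$) together with the edge-convergence assumption and the counting of eigenvalues of $\bX_n$ that exceed $b+\varepsilon$ pins $\lambda_j^{(n)}$ at $b$. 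This establishes \eqref{eq:ass_eigenvalue}.

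For the eigenvector statement, suppose $\bX_n\widehat u=\lambda\widehat u$ with $\lambda\notin\mathrm{spec}(\bE_n)$; then $\widehat u=(\lambda\bI-\bE_n)^{-1}\bU_n z$ where $z:=\bTheta\bU_n^\top\widehat u$ satisfies $(\bI_r-\bTheta M_n(\lambda))z=0$. Taking $\lambda=\lambda_j^{(n)}\to\rho_j$ and using $M_n(\lambda)\to G_{\mu_{\bE}}(\rho_j)\bI_r$ with $\theta_jG_{\mu_{\bE}}(\rho_j)=1$ and $\theta_iG_{\mu_{\bE}}(\rho_j)\ne1$ for $i\ne j$, the normalized vector $z$ must converge to a multiple $c\,e_j$. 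Then $\bU_n^\top\widehat u=M_n(\lambda)z\to G_{\mu_{\bE}}(\rho_j)\,c\,e_j$, which gives $(u_i^\top\widehat u)^2\to0$ for $i\ne j$, hence \eqref{eq:decor}, while the normalization $1=\|\widehat u\|^2=z^\top\bU_n^\top(\lambda\bI-\bE_n)^{-2}\bU_n z\to-G'_{\mu_{\bE}}(\rho_j)\,c^2$ forces $c^2=-1/G'_{\mu_{\bE}}(\rho_j)$. Therefore, in the simple-eigenvalue case $\cW_j=\mathrm{span}(u_j)$ and $\|\cP_{\cW_j}(\widehat u)\|^2=(u_j^\top\widehat u)^2\to G_{\mu_{\bE}}(\rho_j)^2c^2=-1/(\theta_j^2G'_{\mu_{\bE}}(\rho_j))$, which is \eqref{eq:ass_eigenvector}; the general multiplicity case follows by summing over the columns of $\bU_n$ that span $\cW_j$.

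The main obstacle I anticipate is the uniform almost-sure control of the resolvent bilinear forms $M_n(\lambda)$ near the edge $\lambda=b$ and the exclusion of spurious outliers emerging from the bulk: these require a careful truncation of the resolvent away from the top eigenvalue of $\bE_n$, the edge-convergence hypothesis, and the decay-at-the-edge condition of Assumption~\ref{ass:model2}. The reduction and the determinant identity are routine; the probabilistic concentration and the edge analysis carry the weight. As this statement is precisely Theorems~2.1 and 2.2 of \cite{benaych2011eigenvalues}, I would ultimately invoke their argument for these steps.
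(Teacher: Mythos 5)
The paper does not prove this statement at all: it is imported verbatim as Theorems 2.1 and 2.2 of \cite{benaych2011eigenvalues} (see the appendix remark that all results in that section are known), and your outline — the Weinstein--Aronszajn determinant identity, concentration of the Haar-induced resolvent bilinear forms $M_n(\lambda)$, the secular equation $\theta_j G_{\mu_{\bE}}(\lambda)=1$ for the outlier locations, and the resolvent representation of $\widehat u$ for the eigenvector overlaps — is a faithful sketch of exactly the argument in that reference. So your proposal is correct and takes essentially the same route as the (cited) proof the paper relies on.
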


The following proposition unveils the behavior of the Cauchy transform and its derivative at the edge of the spectrum. It relies crucially on the square-root decay assumption. The previous Theorem holds without the need for the square-root decay condition. 
    \begin{proposition}[Proposition 2.4 in \cite{benaych2011eigenvalues}]
        \label{prop:nice-edge}
        Suppose that~\cref{ass:model2} holds. Then, we have that $$G_{\mu_{\bE}}(b^+) < \infty \quad \text{and} \quad G'_{\mu_{\bE}}(b^+) = \infty.$$
    \end{proposition}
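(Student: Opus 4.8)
The plan is to reduce the two one-sided limits at $b$ to ordinary integrals against $d\mu_{\bE}$ via monotone convergence, and then read off both claims from the prescribed power-law decay $f_{\bE}(t)\sim c\,(b-t)^{\alpha}$ of the density near the upper edge. For $z>b$ the functions $t\mapsto (z-t)^{-1}$ and $t\mapsto(z-t)^{-2}$ are positive and bounded on $\supp(\mu_{\bE})\subseteq[a,b]$, so $G_{\mu_{\bE}}(z)$ and $G_{\mu_{\bE}}'(z)=-\int (z-t)^{-2}\,d\mu_{\bE}(t)$ are well defined; as $z\downarrow b$ these integrands increase pointwise to $(b-t)^{-1}$ and $(b-t)^{-2}$ for every $t\in[a,b)$, and $\mu_{\bE}(\{b\})=0$ since $\mu_{\bE}$ has a density. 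Hence the monotone convergence theorem gives
\begin{equation*}
G_{\mu_{\bE}}(b^+)=\int_{[a,b)}\frac{d\mu_{\bE}(t)}{b-t}\qquad\text{and}\qquad \bigl|G_{\mu_{\bE}}'(b^+)\bigr|=\int_{[a,b)}\frac{d\mu_{\bE}(t)}{(b-t)^2},
\end{equation*}
so it suffices to show the first integral is finite and the second is infinite.

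For the first, fix $\delta>0$ small enough that the asymptotic $f_{\bE}(t)\sim c(b-t)^{\alpha}$ yields $f_{\bE}(t)\le 2c\,(b-t)^{\alpha}$ for $t\in(b-\delta,b)$. On $[a,b-\delta]$ the integrand is at most $1/\delta$, contributing at most $\delta^{-1}\mu_{\bE}([a,b-\delta])\le\delta^{-1}$; on $(b-\delta,b)$, substituting $s=b-t$,
\begin{equation*}
\int_{b-\delta}^{b}\frac{f_{\bE}(t)}{b-t}\,dt\le 2c\int_{0}^{\delta}s^{\alpha-1}\,ds=\frac{2c}{\alpha}\,\delta^{\alpha}<\infty,
\end{equation*}
using $\alpha>0$. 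Summing the two pieces gives $G_{\mu_{\bE}}(b^+)<\infty$.

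For the second, shrink $\delta$ if necessary so that $f_{\bE}(t)\ge \frac{c}{2}(b-t)^{\alpha}$ on $(b-\delta,b)$. Discarding the nonnegative contribution from $[a,b-\delta]$ and substituting $s=b-t$,
\begin{equation*}
\int_{[a,b)}\frac{d\mu_{\bE}(t)}{(b-t)^2}\ge\int_{b-\delta}^{b}\frac{f_{\bE}(t)}{(b-t)^2}\,dt\ge\frac{c}{2}\int_{0}^{\delta}s^{\alpha-2}\,ds=+\infty,
\end{equation*}
because $\alpha\le 1$ forces $\alpha-2\le-1$, so $s^{\alpha-2}$ is not integrable near $0$ (this covers the borderline case $\alpha=1$, where the integral is $\int_0^\delta s^{-1}\,ds$). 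Thus $|G_{\mu_{\bE}}'(b^+)|=\infty$, i.e., $G_{\mu_{\bE}}'(b^+)=-\infty$, which is the second assertion.

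The argument is essentially routine, and I do not foresee a substantive obstacle. The only points needing a little care are the interchange of limit and integral — which rests on monotone convergence together with $\mu_{\bE}$ being atomless at $b$ — and bookkeeping which power, $s^{\alpha-1}$ versus $s^{\alpha-2}$, controls each integral; both regimes $\alpha\in(0,1)$ and $\alpha=1$ are handled uniformly by this dichotomy. (This recovers Proposition~2.4 of \cite{benaych2011eigenvalues}, whose proof follows the same lines; note that \cref{thm:raj} did not require the square-root-type decay condition, whereas this edge statement does.)
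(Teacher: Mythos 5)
Your argument is correct: the monotone-convergence reduction to $\int_{[a,b)}(b-t)^{-1}d\mu_{\bE}$ and $\int_{[a,b)}(b-t)^{-2}d\mu_{\bE}$, followed by the two-sided comparison with $c\,(b-t)^{\alpha}$ near the edge, gives exactly the finiteness of the first integral (since $\alpha>0$) and the divergence of the second (since $\alpha\le 1$), with the sign remark $G_{\mu_{\bE}}'(b^+)=-\infty$ being the precise form of the paper's loosely stated $G_{\mu_{\bE}}'(b^+)=\infty$. Note that the paper itself gives no proof of this proposition --- it is imported verbatim as Proposition~2.4 of \cite{benaych2011eigenvalues} in the supplementary background section --- and your direct computation is essentially the standard argument underlying that cited result, so there is nothing further to reconcile.
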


    Finally, the following proposition shows that eigenvectors of $\bX$ and $\bA$ that are not associated with the same eigenvalue are decorrelated. 
    \begin{proposition}\label{prop:zero-correlation}
      Suppose that Assumption~\ref{ass:model1} and~\ref{ass:model2} hold. Fix $j \in \{1, \dots, r\}$ and $i \neq j.$
      Let $\bu_{j}$ be a unit-norm eigenvector of $\bA_{n}$ associated with $\theta_j$ and let $\widehat \bu_{i}$ be a unit-norm eigenvector of $\bX_{n}$ associated with $\lambda_i(\bX)$. Then,
      \begin{equation}
        \label{eq:decorrelation}
        \dotp{\widehat u_{\ell}, u_{j}} \xrightarrow{a.s.} 0 \qquad \text{almost surely.}
      \end{equation}
    \end{proposition}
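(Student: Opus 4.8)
The plan is to split on the index $i$ of the eigenvector $\widehat\bu_i$ of $\bX_n$, separating the regime in which $\lambda_i(\bX_n)$ escapes $\supp(\mu_{\bE})$ from the regime in which it does not. As in the proof of \cref{lem:fdr-convergence}, I first reduce to the case $\theta_1 > \cdots > \theta_r$; when some of the $\theta_m$ coincide one replaces each $\bu_m\bu_m^\top$ by the orthogonal projector onto $\ker(\theta_m\bI_n - \bA_n)$ and nothing below changes. In the supercritical case, i.e.\ $i \le r$ and $\theta_i > 1/G_{\mu_{\bE}}(b^+)$, the claim is immediate from \cref{thm:raj}: since $j \ne i$ and the $\theta_m$ are distinct, $\bu_j$ lies in $\cT_i := \bigoplus_{\ell \ne i}\ker(\theta_\ell\bI_n - \bA_n)$, so $|\dotp{\widehat\bu_i, \bu_j}| \le \|\cP_{\cT_i}(\widehat\bu_i)\| \as 0$ by \eqref{eq:decor}.

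The substantive case is when $\lambda_i(\bX_n) \to b$, that is, $i \le r$ with $\theta_i \le 1/G_{\mu_{\bE}}(b^+)$, or $i > r$; here the plan is to lean on the orthogonal invariance in \cref{ass:model1} through the resolvent description of eigenvectors of a bounded-rank perturbation from \cite{benaych2011eigenvalues}. Write $\bX_n = \bE_n + \bU_n\bTheta\bU_n^\top$ with $\bU_n = [\,\bu_1\ \cdots\ \bu_r\,]$ and $\bTheta = \diag(\theta_1,\dots,\theta_r)$. Whenever $\lambda_i(\bX_n)$ is not an eigenvalue of $\bE_n$ — which holds almost surely — one has $\widehat\bu_i \propto (\lambda_i(\bX_n)\bI_n - \bE_n)^{-1}\bU_n\bTheta\mathbf{w}$, where $\mathbf{w} \in \R^r$ is a unit eigenvector, with eigenvalue $1$, of the $r\times r$ matrix $\bU_n^\top(\lambda_i(\bX_n)\bI_n - \bE_n)^{-1}\bU_n\,\bTheta$; hence
\[
\dotp{\widehat\bu_i,\bu_j}^2 \;=\; \frac{\mathbf{w}_j^{\,2}}{\,(\bTheta\mathbf{w})^\top\, \bU_n^\top\bigl(\lambda_i(\bX_n)\bI_n - \bE_n\bigr)^{-2}\bU_n\,(\bTheta\mathbf{w})\,}.
\]
By the invariance assumption — applied to the eigenbasis of $\bE_n$, or to that of $\bA_n$ after an orthogonal conjugation — the columns of $\bU_n$ behave as a Haar frame on the Stiefel manifold independent of the spectrum of $\bE_n$, so for $\lambda$ at a fixed distance from that spectrum the matrices $\bU_n^\top(\lambda\bI_n - \bE_n)^{-1}\bU_n$ and $\bU_n^\top(\lambda\bI_n - \bE_n)^{-2}\bU_n$ concentrate around $G_{\mu_{\bE_n}}(\lambda)\bI_r$ and $-G'_{\mu_{\bE_n}}(\lambda)\bI_r$.

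The hard part will be running this concentration when $\lambda = \lambda_i(\bX_n)$ marches into the edge $b$, i.e.\ toward the spectrum of $\bE_n$, where those $r\times r$ resolvents need no longer be close to scalar matrices. This is exactly where the edge-convergence and square-root-type decay conditions in \cref{ass:model2} become essential: through \cref{prop:nice-edge} they give $G_{\mu_{\bE}}(b^+) < \infty$ and $|G'_{\mu_{\bE}}(b^+)| = \infty$, and more quantitatively they control the rate at which $\int (\lambda - t)^{-2}\,d\mu_{\bE_n}(t)$ diverges as $\lambda \downarrow b$; the goal is to show that this divergence makes the denominator above outgrow $\mathbf{w}_j^2$ for every $j \ne i$, so that $\dotp{\widehat\bu_i,\bu_j}^2 \as 0$. (The matched quantity $\dotp{\widehat\bu_\ell,\bu_\ell}$ for $\theta_\ell$ below threshold is governed by the same $r\times r$ eigenproblem but remains the open problem noted before \eqref{eq:upper-fdr-main}; the bound above only controls directions transverse to the coordinate that $\mathbf{w}$ selects, and so says nothing about that case.) Since $[r]$ has fixed size, the almost-sure statement then follows from a union bound together with Borel--Cantelli, as in the proof of \cref{cor:wigner}, and passing to $\EE[\dotp{\widehat\bu_i,\bu_j}^2] \to 0$ — the form in which the result is used in \cref{lem:fdr-convergence} — is immediate by bounded convergence, as $\dotp{\widehat\bu_i,\bu_j}^2 \le 1$.
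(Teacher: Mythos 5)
Your supercritical case is fine, and your reduction to distinct $\theta_m$'s is harmless; the formula $\dotp{\widehat\bu_i,\bu_j}^2 = \mathbf{w}_j^2 \big/ (\bTheta\mathbf{w})^\top \bU_n^\top(\lambda_i\bI_n-\bE_n)^{-2}\bU_n(\bTheta\mathbf{w})$ coming from the Benaych-Georges--Nadakuditi resolvent equation is also correctly derived. But the case that carries all the content of the proposition --- fixed $i$ with $\lambda_i(\bX_n)\to b$, i.e.\ subcritical $i\le r$ or $i>r$ --- is not actually proved: you state that ``the goal is to show'' that the quadratic form in the denominator diverges as $\lambda_i(\bX_n)$ approaches the edge, and you yourself flag this as ``the hard part.'' That step is precisely where the difficulty lives. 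The concentration of $\bU_n^\top(\lambda\bI_n-\bE_n)^{-1}\bU_n$ and $\bU_n^\top(\lambda\bI_n-\bE_n)^{-2}\bU_n$ around $G_{\mu_{\bE_n}}(\lambda)\bI_r$ and $-G'_{\mu_{\bE_n}}(\lambda)\bI_r$ is standard only when $\lambda$ stays at a fixed distance from $\supp(\mu_{\bE_n})$; when $\lambda\to b$ the operator norm of these resolvents can blow up much faster than their normalized traces, so a Haar-vector concentration bound does not by itself force the denominator to diverge, and \cref{prop:nice-edge} only describes the limiting transform $G_{\mu_{\bE}}$, not the finite-$n$ quadratic forms at the random point $\lambda_i(\bX_n)$. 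The paper's own caveat that the matched correlation $\dotp{\widehat\bu_i,\bu_i}$ below the BBP threshold is an open problem (governed by the very same $r\times r$ eigenproblem you write down) is a warning that eigenvector behavior at the edge cannot be dispatched by this kind of heuristic; your argument as written would need an isotropic-local-law-type input that is not available under Assumptions~\ref{ass:model1}--\ref{ass:model2}.

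For comparison, the paper avoids this edge analysis entirely: it post-multiplies $\bX_n$ by a Haar orthogonal matrix, uses the invariance facts to make the singular vectors of $\tilde\bA_n=\bA_n\bW$ and $\tilde\bE_n=\bE_n\bW$ (and the left/right singular vectors of $\tilde\bE_n$) independent, checks that the resulting asymmetric model satisfies Assumptions~\ref{ass:amodel1}--\ref{ass:amodel2}, and then invokes the known singular-vector decorrelation result \cref{prop:zero-correlation-as} (Proposition~2 of \cite{donoho2023screenot}), whose proof already handles the subcritical/edge regime. So your outline identifies the right obstruction but does not overcome it; completing your route would amount to reproving that cited result rather than using it.
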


 The conclusion of this guarantee is similar to \eqref{eq:decor}, but it does not assume that $i \leq r$. 

 \begin{proof}
     The proof uses an analogous result for singular vectors; see \cref{prop:zero-correlation-as} below. We will leverage the following folklore result. 
     \begin{fact} \label{fact:more-randomness}
         Let $\bU, \bV, \bW \in \RR^{d\times d}$ be two independent orthogonal matrices with uniform distribution over the orthogonal group. Then, the product $\bU\bW$ and $\bV \bW$ are independent.
     \end{fact}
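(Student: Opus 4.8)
The plan is to prove the independence by conditioning on $\bW$ and invoking the right-invariance of the Haar measure on the orthogonal group $O(d)$ (the natural reading of the hypothesis being that $\bU,\bV,\bW$ are mutually independent, each Haar-distributed). Two elementary facts drive the argument: (i) if $\bU$ is Haar on $O(d)$ and $w\in O(d)$ is fixed, then $\bU w$ is again Haar on $O(d)$; and (ii) because $\bU,\bV,\bW$ are mutually independent, $\bU$ and $\bV$ remain independent conditionally on $\bW$.

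Concretely, I would test the joint law of $(\bU\bW,\bV\bW)$ against arbitrary bounded measurable functions $f,g\colon O(d)\to\RR$. Writing $\Phi(w):=\EE[f(\bU w)]$ and $\Psi(w):=\EE[g(\bV w)]$, the tower property together with fact (ii) gives
\[
\EE\big[f(\bU\bW)\,g(\bV\bW)\big] \;=\; \EE_{\bW}\big[\Phi(\bW)\,\Psi(\bW)\big].
\]
By fact (i), $\Phi(w)=\EE[f(\bU)]$ and $\Psi(w)=\EE[g(\bV)]$ for every $w\in O(d)$, so the right-hand side equals $\EE[f(\bU)]\,\EE[g(\bV)]$. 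Taking $g\equiv 1$ yields $\EE[f(\bU\bW)]=\EE[f(\bU)]$, and taking $f\equiv 1$ yields $\EE[g(\bV\bW)]=\EE[g(\bV)]$; substituting back, we obtain $\EE[f(\bU\bW)\,g(\bV\bW)]=\EE[f(\bU\bW)]\,\EE[g(\bV\bW)]$, which is exactly the asserted independence of $\bU\bW$ and $\bV\bW$. As a byproduct the same computation shows $(\bU\bW,\bV\bW)$ is distributed as a pair of independent Haar matrices and is jointly independent of $\bW$, which is the form in which the fact is used downstream in the proof of Proposition~\ref{prop:zero-correlation}.

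There is no real obstacle here — the statement is ``folklore'' precisely because the proof is a one-line application of Haar invariance. The only point meriting a word of care is the measure-theoretic bookkeeping: since $O(d)$ is a compact group, its Haar measure is a bona fide probability measure, so Fubini and the tower property apply verbatim and all the conditional expectations above are well defined. If desired, the identical argument extends with no change to show that for independent Haar matrices $\bU_1,\dots,\bU_k$ and any random $\bW\in O(d)$ independent of them, the tuple $(\bU_1\bW,\dots,\bU_k\bW)$ consists of independent Haar matrices that are jointly independent of $\bW$.
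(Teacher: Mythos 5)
Your proof is correct. Note that the paper itself offers no argument for this statement---it is invoked as a ``folklore result'' inside the proof of Proposition~\ref{prop:zero-correlation} without justification---so there is no authorial proof to compare against; your conditioning-on-$\bW$ argument is the standard way to make the folklore precise. The two ingredients you use are exactly the right ones: conditionally on $\bW=w$ the matrices $\bU w$ and $\bV w$ are independent because $\bU$ and $\bV$ are, and by right-invariance of Haar measure on the compact group $O(d)$ each has the (fixed) Haar law, so the factorization $\EE[f(\bU\bW)g(\bV\bW)]=\EE[f(\bU)]\,\EE[g(\bV)]=\EE[f(\bU\bW)]\,\EE[g(\bV\bW)]$ holds for all bounded measurable $f,g$, which is precisely independence. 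Your byproduct---that $(\bU\bW,\bV\bW)$ is a pair of independent Haar matrices jointly independent of $\bW$---is also correct (the conditional law given $\bW=w$ is the product of Haar measures for every $w$) and is in fact the form of the statement, together with Fact~\ref{fact:more-randomness2}, that the downstream argument in Proposition~\ref{prop:zero-correlation} actually relies on. The only reading issue is the paper's typo ``two independent'' for the three matrices $\bU,\bV,\bW$; your interpretation of mutual independence is the intended one and is what your proof uses.
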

     \begin{fact}\label{fact:more-randomness2}
         Let $\bU, \bW \in \RR^{d\times d}$ be two independent orthogonal matrices with uniform distribution over the orthogonal group. Then, the product $\bU\bW$ is also uniform over the orthogonal orthogonal group, and $\bU\bW$ is independent of both $\bU$ and $\bW.$ 
     \end{fact}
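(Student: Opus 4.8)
The plan is to deduce everything from the defining bi-invariance of the Haar probability measure $\mu$ on the compact group $O(d)$: it is the unique Borel probability measure satisfying $\mu(vA) = \mu(Av) = \mu(A)$ for all $v \in O(d)$ and all Borel $A \subseteq O(d)$. Since $O(d)$ is compact such a $\mu$ exists and is unique, so I would simply invoke left- and right-invariance as standard facts and carry out a short test-function computation; the heavy lifting is already done by the existence/uniqueness of Haar measure.

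\textbf{Step 1 (the product is Haar, and independent of $\bU$).} I would compute, for arbitrary bounded measurable $f,g\colon O(d)\to\RR$, using independence of $\bU,\bW$ and Fubini,
\[
\EE\big[f(\bU\bW)\,g(\bU)\big] \;=\; \EE_{\bU}\!\Big[\, g(\bU)\;\EE_{\bW}\!\big[f(\bU\bW)\big]\Big].
\]
For each fixed $u\in O(d)$, left-invariance of $\mu$ gives $\EE_{\bW}[f(u\bW)] = \int f(uw)\,d\mu(w) = \int f(w)\,d\mu(w) =: \bar f$, a constant independent of $u$. Hence the right-hand side equals $\bar f\,\EE[g(\bU)]$. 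Taking $g\equiv 1$ shows $\EE[f(\bU\bW)] = \bar f$, i.e.\ $\bU\bW\sim\mu$; feeding this back in gives $\EE[f(\bU\bW)g(\bU)] = \EE[f(\bU\bW)]\,\EE[g(\bU)]$ for all bounded measurable $f,g$, which is precisely independence of $\bU\bW$ and $\bU$.

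\textbf{Step 2 (independence of $\bW$).} This is the mirror image: conditioning on $\bW$ and using \emph{right}-invariance of $\mu$, one has $\EE_{\bU}[f(\bU w)] = \int f(uw)\,d\mu(u) = \int f(u)\,d\mu(u) = \bar f$ for every fixed $w$, so the same argument with any bounded measurable $h$ in place of $g$ yields $\EE[f(\bU\bW)h(\bW)] = \bar f\,\EE[h(\bW)] = \EE[f(\bU\bW)]\,\EE[h(\bW)]$, hence $\bU\bW\perp\bW$.

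\textbf{Main obstacle.} There is no substantive obstacle: the statement is a textbook consequence of the invariance of Haar measure, and I would present it as such. The only point needing care is bookkeeping — one must not assert that $\bU\bW$ is independent of the \emph{pair} $(\bU,\bW)$, since it is a deterministic function of that pair; only separate independence from each coordinate is claimed, and that is exactly what the two one-variable identities above establish. If one prefers to avoid even this much machinery, an equivalent route is to observe that the conditional law of $\bU\bW$ given $\bU=u$ equals the law of $u\bW$, which by left-invariance is $\mu$ regardless of $u$, and likewise the conditional law given $\bW=w$ is $\mu$ regardless of $w$; constancy of the conditional law in each case is precisely the asserted independence.
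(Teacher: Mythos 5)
Your proof is correct and complete: the paper itself states this Fact as folklore without proof, and your argument via left/right invariance of Haar measure on the compact group $O(d)$, with the factorization $\EE[f(\bU\bW)g(\bU)]=\EE[f(\bU\bW)]\,\EE[g(\bU)]$ (and its mirror for $\bW$), is exactly the standard way to establish it. Your caution that independence holds from $\bU$ and from $\bW$ separately but not from the pair $(\bU,\bW)$ is precisely the right bookkeeping, since that is all the paper's application (\cref{prop:zero-correlation}) requires.
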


     Thus, without loss of generality, we can assume that the distributions of the eigenvectors of both $\bA$ and $\bE$ are orthogonally invariant. Moreover, we can post-multiply the matrix $\bX$ by a uniform at random orthogonal matrix $\bW$, then 
     $\tilde \bX = \tilde \bA + \tilde \bE$ where $\tilde \bA = \bA \bW$ and $\tilde \bE = \bE \bW$. The left singular vectors of $\tilde \bX$, $\tilde \bE$ and $\tilde \bA$ match the eigenvectors of $\bX$, $\bE$ and $\bA$, respectively. Moreover, thanks to Facts~\ref{fact:more-randomness} and \ref{fact:more-randomness2}, the singular vectors of $\tilde \bA$ and $\tilde \bE$ are independent, and so are the left and right singular vectors of $\bE$. Notice that this implies that $\tilde \bA$ and $\tilde \bE$ satisfy Assumptions~\ref{ass:amodel1} and~\ref{ass:amodel2}. Thus, the result follows after invoking~\cref{prop:zero-correlation-as}. 
     
     \end{proof}
\subsection{Asymmetric Case}\label{sec:background-as}     

    \begin{theorem}[Theorems 2.8 and 2.9 in \cite{benaych2012singular}]\label{thm:raj-as}
      Suppose that Assumption~\ref{ass:amodel1} and \ref{ass:amodel2} hold. For each $n$, let $\sigma_{i}^{(n)} = \sigma_{i}(\bX_{n})$ and let $\widehat \bu$ and $\widehat \bv$ be unit-norm left and right singular vectors associated with $\sigma_{i}^{(n)}$. Then, we have that as $n$ grows to infinity,
      \begin{itemize}
              \item[] (\textbf{Eigenvalue limit}) Fix $j \in \{1, \dots, r\}$, then,
      \begin{equation}
        \label{eq:ass_singularvalue}
 \sigma_{i}^{(n)} \as \begin{cases} \rho_i := D_{\mu_{\bE}}^{-1}(1/\theta_{i}^2) & \text{if }\theta_{j}^2 > 1/D_{\mu_{\bE}}(b^{+}),  \\
 b  & \text{otherwise.}
 \end{cases}
      \end{equation}
              \item[] (\textbf{Eigenvector correlation limit})  Fix $j \in \{1, \dots, r\}$ such that $\theta_{j}^2 \geq 1/D_{\mu_{\bE}}(b^{+})$, let $\cW_{j} = \ker(\theta_{j}\bI_{n} - \bA_{n}\bA_{n}^\top)$ be the subspace of singula vectors associated with $\theta_{i}$, and let $\widehat u$ be a unit norm singular-vector of $\bX_n$ associated with $\sigma_j$. Then, 
      \begin{equation}
        \label{eq:ass_singularvector}
        \|\cP_{\cW_{j}}(\widehat u)\|^{2} \xrightarrow{a.s.} -\frac{2\varphi_\mu(\rho_j; 1)}{\theta_j^2D'_{\mu_{\bE}}(\rho_j)}  = - 2\frac{D_{\mu_{\bE}}(\rho_j)\varphi_\mu(x;1)}{D'_{\mu_{\bE}}(\rho_j)}.
      \end{equation}
      Furthermore, let $\cT_j = \bigoplus_{i \neq j}^r \ker(\theta_i \bI_n - \bA_n\bA_n^\top)$ be the eigenspace corresponding to the first $r$ eigenvectors that are not associated with $\theta_i$. Then,
      \begin{equation}\label{eq:decor} \|\cP_{\cT_{j}}(\widehat u)\|^{2} \as 0. \end{equation}
    \end{itemize}
    \end{theorem}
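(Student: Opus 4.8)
This statement is a restatement, in the $\varphi_{\mu_{\bE}}$ and $D_{\mu_{\bE}}$ notation of the present paper, of Theorems~2.8 and~2.9 of Benaych-Georges and Nadakuditi \cite{benaych2012singular}. The plan is therefore to invoke those results after verifying that Assumptions~\ref{ass:amodel1} and~\ref{ass:amodel2} supply precisely their hypotheses. The points to check are: that $\bA_n$ has fixed rank $r$ with deterministic singular values $\theta_1 \geq \cdots \geq \theta_r > 0$; that either the left/right singular-vector frame of $\bA_n$ or that of $\bE_n$ is bi-orthogonally invariant and independent of the other (the isotropy input used in the quadratic-form computation below); that $\mu_{\bE_n}$ converges weakly almost surely to a compactly supported $\mu_{\bE}$ and that the extreme singular values of $\bE_n$ converge to the edges $a,b$ of $\supp(\mu_{\bE})$; and that $n/m \to \vartheta \in (0,1]$, which fixes the mass $1-\vartheta$ of the atom at $0$ appearing when one symmetrizes to $\bX_n\bX_n^\top$ and thus pins down the $q=\vartheta$ branch of $\varphi_{\mu_{\bE}}$. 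The square-root--type decay in Assumption~\ref{ass:amodel2} is \emph{not} needed here; it enters only later through Proposition~\ref{prop:nice-edge-as}.

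For completeness I would also record the mechanism. Writing $\bX_n\bX_n^\top = (\bA_n+\bE_n)(\bA_n+\bE_n)^\top$ exhibits $\bX_n\bX_n^\top$ as a rank-$2r$ perturbation of $\bE_n\bE_n^\top$, so the finite-rank perturbation analysis of \cite{benaych2012singular} applies. Via the Weinstein--Aronszajn determinant identity, a scalar $z$ with $z^2$ outside the spectrum of $\bE_n\bE_n^\top$ is a singular value of $\bX_n$ if and only if a fixed-size determinant, whose entries are bilinear forms of the type $\widehat\bu^\top (z^2\bI - \bE_n\bE_n^\top)^{-1}\widehat\bu'$ and $\widehat\bv^\top (z^2\bI - \bE_n^\top\bE_n)^{-1}\widehat\bv'$ in the singular vectors of $\bA_n$, vanishes. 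By the invariance hypothesis and the law of large numbers for quadratic forms in a Haar-random vector against an independent matrix, each diagonal bilinear form concentrates almost surely, uniformly in $z$ on compact subsets of $(b,\infty)$, on a deterministic limit equal to $\varphi_{\mu_{\bE}}(z;1)$ or to $\varphi_{\mu_{\bE}}(z;\vartheta)$ according to side, while the off-diagonal forms vanish. In the limit the secular equation collapses to $\theta_i^2\, D_{\mu_{\bE}}(z)=1$, whose unique root in $(b,\infty)$ is $\rho_i = D_{\mu_{\bE}}^{-1}(1/\theta_i^2)$ precisely when $\theta_i^2 > 1/D_{\mu_{\bE}}(b^+)$; when this inequality fails no root lies above $b$, and combining the absence of a solution with the edge convergence of $\bE_n$ and singular-value interlacing pins $\sigma_i^{(n)} \as b$. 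For the overlap \eqref{eq:ass_singularvector}, one identifies $\|\cP_{\cW_j}(\widehat u)\|^2$ with the residue at $z=\rho_j$ of the corresponding resolvent bilinear form: differentiating the secular function yields $D'_{\mu_{\bE}}(\rho_j)$ in the denominator and, from differentiating $D_{\mu_{\bE}} = \varphi_{\mu_{\bE}}(\,\cdot\,;1)\varphi_{\mu_{\bE}}(\,\cdot\,;\vartheta)$ together with the passage back from $\bX_n\bX_n^\top$ to $\bX_n$, the factor $\varphi_{\mu_{\bE}}(\rho_j;1)$ in the numerator, while the resolvent contributions of the eigenspaces $\cW_i$ with $\theta_i \neq \theta_j$ carry no pole at $\rho_j$, giving the decorrelation statement.

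The only genuinely technical step is the uniform almost-sure concentration of these bilinear forms, in particular its behavior as $z \downarrow b$, and this is where the bi-orthogonal invariance and any entrywise moment control actually enter. Since exactly this is carried out in \cite{benaych2012singular}, the cleanest route is to cite their Theorems~2.8 and~2.9 directly once the hypothesis translation of the first paragraph is in place, rather than reproducing the concentration estimates.
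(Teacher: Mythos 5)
Your proposal is correct and matches what the paper itself does: Theorem~\ref{thm:raj-as} is stated as a known result and is justified solely by citing Theorems~2.8 and~2.9 of \cite{benaych2012singular} after the hypothesis translation from Assumptions~\ref{ass:amodel1} and~\ref{ass:amodel2}, with no proof reproduced. Your added sketch of the secular-equation mechanism and the observation that the edge-decay condition is only needed for Proposition~\ref{prop:nice-edge-as} are accurate but go beyond what the paper records.
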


Similarly to the one in the symmetric case, the following proposition unveils the behavior of the D-transform and its derivative at the edge of the spectrum. It relies crucially on the square-root decay assumption. 
    \begin{proposition}[Proposition 2.11 in \cite{benaych2012singular}]
        \label{prop:nice-edge-as}
        Suppose that~\cref{ass:amodel2} holds. Then, we have that $$D_{\mu_{\bE}}(b^+)^{-1/2} > 0 \quad \text{and} \quad \varphi'_{\mu_{\bE}}(b^+;1) =\varphi'_{\mu_{\bE}}(b^+;q) = -\infty.$$
    \end{proposition}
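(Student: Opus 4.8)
The plan is to prove this exactly as one proves the symmetric statement Proposition~\ref{prop:nice-edge}: isolate the behavior of the two defining integrals near the right edge $b$ and use only the square-root-type decay in Assumption~\ref{ass:amodel2}. Write $\mu=\mu_{\bE}$, $f=f_{\bE}$, and assume $b>0$ (if $b=0$ then $\mu=\delta_0$ and every assertion is trivial). Recall
\[
\varphi_{\mu}(x;q) = q\int \frac{x}{x^{2}-t^{2}}\,d\mu(t) + \frac{1-q}{x},
\qquad
\varphi_{\mu}'(x;q) = -\,q\int \frac{x^{2}+t^{2}}{(x^{2}-t^{2})^{2}}\,d\mu(t) - \frac{1-q}{x^{2}} ,
\]
and $D_{\mu}(x)=\varphi_{\mu}(x;1)\,\varphi_{\mu}(x;\vartheta)$. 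A useful reformulation is $\varphi_{\mu}(x;1)=x\,G_{\nu}(x^{2})$ and $\varphi_{\mu}'(x;1)=G_{\nu}(x^{2})+2x^{2}G_{\nu}'(x^{2})$, where $\nu$ is the pushforward of $\mu$ under $t\mapsto t^{2}$; a change of variables turns $f(t)\sim c(b-t)^{\alpha}$ into a decay of the form $(b^{2}-u)^{\alpha}$ at the upper edge $b^{2}$ of $\nu$, so this is the $G$-statement of Proposition~\ref{prop:nice-edge} applied to $\nu$ near its upper edge (that part of Proposition~\ref{prop:nice-edge} uses only integrability of the density and its decay at the \emph{upper} edge, so irregularity of $\nu$ near $0$ when $a=0$ is irrelevant). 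I would nonetheless carry out the two estimates directly, to keep constants transparent and to cover $q\neq 1$ uniformly.

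\emph{Finiteness and positivity.} Fix $q\in(0,1]$, pick a small $\delta\in(0,b)$, and split the integral in $\varphi_{\mu}(x;q)$ over $\{t\le b-\delta\}$ and $[b-\delta,b]$. On the first set the integrand is bounded for $x$ near $b^{+}$, so that part has a finite limit as $x\downarrow b$. On $[b-\delta,b]$ the factor $x+t$ is bounded above and below by positive constants, so $\tfrac{x}{x^{2}-t^{2}}\asymp\tfrac{1}{x-t}$, and since $x\mapsto \tfrac{x}{x^{2}-t^{2}}$ is decreasing on $\{x>t\}$, monotone convergence gives
\[
\lim_{x\downarrow b}\int_{b-\delta}^{b}\frac{x}{x^{2}-t^{2}}\,f(t)\,dt \;\asymp\; \int_{b-\delta}^{b}(b-t)^{\alpha-1}\,dt \;<\;\infty ,
\]
the finiteness using $\alpha>0$. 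Hence $\varphi_{\mu}(b^{+};q)<\infty$, and it is strictly positive since the integrand is positive on $\{x>b\ge t\}$ and $(1-q)/x\ge 0$. Taking $q=1$ and $q=\vartheta$ shows $D_{\mu}(b^{+})=\varphi_{\mu}(b^{+};1)\,\varphi_{\mu}(b^{+};\vartheta)\in(0,\infty)$, so $D_{\mu}(b^{+})^{-1/2}\in(0,\infty)$, in particular $D_{\mu}(b^{+})^{-1/2}>0$.

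\emph{Divergence of the derivatives.} Fix $q\in(0,1]$ and split the integral in $\varphi_{\mu}'(x;q)$ the same way. The part over $\{t\le b-\delta\}$ is bounded for $x$ near $b^{+}$ and has a finite limit, and $(1-q)/x^{2}$ stays bounded. On $[b-\delta,b]$, the numerator $x^{2}+t^{2}$ is bounded away from $0$ and $\infty$ and $(x^{2}-t^{2})^{2}=(x-t)^{2}(x+t)^{2}\asymp (x-t)^{2}$, so $\tfrac{x^{2}+t^{2}}{(x^{2}-t^{2})^{2}}\asymp (x-t)^{-2}$; this is again decreasing in $x$, so by monotone convergence
\[
\lim_{x\downarrow b}\int_{b-\delta}^{b}\frac{x^{2}+t^{2}}{(x^{2}-t^{2})^{2}}\,f(t)\,dt \;\asymp\; \int_{b-\delta}^{b}(b-t)^{\alpha-2}\,dt \;=\;+\infty ,
\]
the divergence using exactly $\alpha\le 1$ (the exponent $\alpha-2\le -1$). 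Since $q>0$, this forces $\varphi_{\mu}'(b^{+};q)=-\infty$; taking $q=1$ and $q=\vartheta$ yields $\varphi_{\mu}'(b^{+};1)=\varphi_{\mu}'(b^{+};\vartheta)=-\infty$, as claimed.

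There is no conceptual obstacle; the care lies entirely in (i) justifying the interchange of limit and integral near $t=b$, for which I would invoke monotone convergence after checking that $\tfrac{x}{x^{2}-t^{2}}$ and $\tfrac{x^{2}+t^{2}}{(x^{2}-t^{2})^{2}}$ are monotone in $x$ on $\{x>t\ge 0\}$; (ii) tracking where $\alpha\in(0,1]$ is used — $\alpha>0$ for the finiteness of $\varphi_{\mu}(b^{+};\cdot)$ and $\alpha\le 1$ for the divergence of $\varphi_{\mu}'(b^{+};\cdot)$; and (iii) disposing of the degenerate case $b=0$ and verifying that any non-regular behavior of the density at the lower edge $a$ (in particular when $a=0$) never enters, since it contributes only a bounded quantity to both integrals.
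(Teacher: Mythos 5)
Your argument is correct, and in fact the paper does not prove this statement at all: it is quoted verbatim as Proposition 2.11 of \cite{benaych2012singular}, so there is no internal proof to compare against. Your direct derivation is the standard one and mirrors the symmetric analogue (\cref{prop:nice-edge}): split each integral at $b-\delta$, note that the far-from-edge piece and the $(1-q)$ terms stay bounded because $b>0$, use that $x\mapsto \frac{x}{x^{2}-t^{2}}$ and $x\mapsto\frac{x^{2}+t^{2}}{(x^{2}-t^{2})^{2}}$ are decreasing on $\{x>t\ge 0\}$ to pass to the limit by monotone convergence, and then read off finiteness of $\varphi_{\mu_{\bE}}(b^{+};q)$ from $\int (b-t)^{\alpha-1}\,dt<\infty$ (needs $\alpha>0$) and divergence of $\varphi'_{\mu_{\bE}}(b^{+};q)$ from $\int (b-t)^{\alpha-2}\,dt=\infty$ (needs $\alpha\le 1$), with the two-sided bound on $f_{\bE}$ near $b$ supplied by the asymptotic equivalence in \cref{ass:amodel2}; your pushforward reformulation $\varphi_{\mu}(x;1)=xG_{\nu}(x^{2})$ is a clean way to see this is literally the symmetric edge statement in disguise. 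One small correction: the case $b=0$ is not one where ``every assertion is trivial'' --- for $\mu=\delta_{0}$ one has $D_{\mu}(0^{+})=+\infty$, so $D_{\mu}(0^{+})^{-1/2}=0$ and the first assertion would actually fail; the right way to dispose of it is that \cref{ass:amodel2} requires $\mu_{\bE}$ to have a density with the stated decay at $b$, which forces $b>0$, so the degenerate case simply cannot occur under the hypotheses.
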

Finally, we close with the asymptotic decorrelation of singular values associated with different singular values.
     \begin{proposition}[Proposition 2 in \cite{donoho2023screenot}]\label{prop:zero-correlation-as}
      Suppose that Assumption~\ref{ass:amodel1} and~\ref{ass:amodel2} hold. Fix $j \in \{1, \dots, r\}$ and $i \neq j.$
      Let $\bu_{j}$ and $\bv_j$ be unit-norm left and right singular value of $\bA_{n}$ associated with $\theta_j$ and let $\widehat \bu_{i}$ and $\widehat \bv_i$ be the left and right unit-norm eigenvectors of $\bX_{n}$ associated with $\sigma_i(\bX)$. Then,
      \begin{equation}
        \label{eq:decorrelation}
        \dotp{\widehat u_{i}, u_{j}} \xrightarrow{a.s.} 0 \qquad \text{and} \quad \dotp{\widehat v_{i}, v_{j}} \xrightarrow{a.s.} 0 \qquad \text{almost surely.}
      \end{equation}
    \end{proposition}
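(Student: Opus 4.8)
The plan is to first use orthogonal invariance to put the problem in a canonical form, and then to split the argument according to whether the index $i$ corresponds to a singular value of $\bX_n$ that escapes the noise bulk. By \cref{fact:more-randomness} and \cref{fact:more-randomness2} we may assume that the left and right singular vectors of both $\bA_n$ and $\bE_n$ are independent and Haar‑distributed; conjugating by the full SVD of $\bA_n$ then lets us take $\bA_n = \bTheta_n$, the $n\times m$ ``diagonal'' matrix with entries $\theta_1,\dots,\theta_r$, so that $u_j$ and $v_j$ become standard basis vectors $e_j$, while $\bE_n$ retains its (two‑sided) orthogonally invariant distribution. It therefore suffices to show that the $i$‑th left and right singular vectors $\widehat u_i,\widehat v_i$ of $\bX_n=\bTheta_n+\bE_n$ satisfy $\langle\widehat u_i,e_j\rangle\to 0$ and $\langle\widehat v_i,e_j\rangle\to 0$ almost surely whenever $i\neq j$ and $j\le r$.

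For the \emph{outlier} indices $i\le r^\star$ the singular value $\sigma_i(\bX_n)$ converges to $\rho_i=D_{\mu_{\bE}}^{-1}(1/\theta_i^2)>b$, so for all large $n$ we have $\sigma_i(\bX_n)^2>\lambda_1(\bE_n\bE_n^\top)$ and the resolvent $(\sigma_i(\bX_n)^2\bI-\bE_n\bE_n^\top)^{-1}$ is well defined. Writing the singular‑vector equations $\bX_n\widehat v_i=\sigma_i\widehat u_i$ and $\bX_n^\top\widehat u_i=\sigma_i\widehat v_i$ and eliminating $\widehat v_i$ gives $\widehat u_i=(\sigma_i^2\bI-\bE_n\bE_n^\top)^{-1}\big(\sum_k\theta_k\bE_n v_k a_k+\sigma_i\sum_k\theta_k u_k b_k\big)$, with $a_k=\langle\widehat u_i,u_k\rangle$ and $b_k=\langle\widehat v_i,v_k\rangle$; projecting onto the signal directions $u_l,v_l$ yields a $2r\times 2r$ linear system $(\bI-\bT(\sigma_i))\,(a;b)=0$. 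Since the $u_k,v_k$ are Haar vectors independent of $\bE_n$, the quadratic forms assembling $\bT(\sigma)$ concentrate almost surely on deterministic limits that vanish off the diagonal, so $\bT(\sigma_i)$ converges to a matrix that is block‑diagonal in the index $k$ and singular only in the $k=i$ block at $\sigma_i\to\rho_i$; hence $a_k,b_k\to 0$ for every $k\neq i$. (This is precisely — and is already subsumed by — the decorrelation recorded in \cref{thm:raj-as}.)

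The main obstacle is the \emph{bulk} case $i>r^\star$, where $\sigma_i(\bX_n)\to b\in\supp(\mu_{\bE})$ (or smaller) and the resolvent above becomes ill‑conditioned, so the finite‑dimensional reduction collapses. Here the conclusion must instead come from a delocalization statement: the mass that the fixed bounded‑rank direction $u_j$ places on the bulk singular vectors is asymptotically spread over $\Omega(n)$ of them. One writes $\bX_n\bX_n^\top=\bE_n\bE_n^\top+\bR_n$, where $\bR_n$ has rank at most $3r$ and column space inside the bounded‑dimensional subspace $\cS_n=\mathrm{span}\{u_1,\dots,u_r,\bE_n v_1,\dots,\bE_n v_r\}$, so every bulk eigenvector of $\bX_n\bX_n^\top$ lies in $(\sigma_i^2\bI-\bE_n\bE_n^\top)^{-1}\cS_n$; combined with the concentration of the quadratic forms $\langle u_k,f(\bE_n\bE_n^\top)u_j\rangle$ and $\langle v_k,g(\bE_n\bE_n^\top)v_j\rangle$ granted by Haar invariance, this forces the weighted empirical measure $\sum_i\langle\widehat u_i,u_j\rangle^2\,\delta_{\sigma_i(\bX_n)^2}$ to converge weakly almost surely to the pushforward of the limiting spectral law of $\bE_n\bE_n^\top$, which has a density by \cref{ass:amodel2} and hence no atoms — ruling out any macroscopic coefficient. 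This is the content of \cite[Proposition 2]{donoho2023screenot}, built on \cite{benaych2012singular}, which is why the paper imports it rather than reproving it.

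The delicate point — the step I expect to be genuinely hard — is upgrading ``the weighted limiting measure is non‑atomic'' to ``every individual $\langle\widehat u_i,u_j\rangle\to 0$''. Once $\sigma_i(\bX_n)^2$ sits a fixed distance inside the bulk, delocalization of a bounded‑rank direction against $\Omega(n)$ eigenvectors is routine; the trouble is the finitely many indices $r^\star<i\le r$ whose singular values pile up at the edge $b$, where no loss can be afforded. Controlling those requires the square‑root‑type edge decay of \cref{ass:amodel2} together with a careful sandwiching of $\sigma_i(\bX_n)$ between extreme singular values of $\bE_n$ via Weyl interlacing, and this edge analysis is the crux of the argument.
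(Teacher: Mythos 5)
The first thing to note is that the paper does not prove this proposition at all: it is imported verbatim from \cite{donoho2023screenot} (their Proposition~2), and the only original content in the paper is the closing remark that, although the cited statement bounds the \emph{product} $\dotp{\widehat u_i,u_j}\dotp{\widehat v_i,v_j}$, an inspection of that proof shows each factor tends to zero separately. So your attempt has to be judged as an independent sketch rather than against an in-paper argument. Within that framing, your reduction via orthogonal invariance and your treatment of the outlier indices $i\le r^\star$ are fine; as you note yourself, that case is already subsumed by the $\|\cP_{\cT_j}(\widehat u)\|^2\to 0$ statement in \cref{thm:raj-as}, so the resolvent/linear-system derivation, while correct in outline, adds nothing the paper does not already have for free.

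The substantive issue is the bulk case $i>r^\star$, which is precisely where the proposition goes beyond \cref{thm:raj-as}. There your argument stops at the point where it would have to start: you show (heuristically) that the weighted spectral measure $\sum_i \dotp{\widehat u_i,u_j}^2\,\delta_{\sigma_i(\bX_n)^2}$ converges to a non-atomic limit, and you correctly flag that passing from non-atomicity of this measure to $\dotp{\widehat u_i,u_j}\to 0$ for \emph{each individual} index --- in particular for the finitely many indices $r^\star<i\le r$ whose singular values accumulate at the edge $b$, where the edge-decay condition of \cref{ass:amodel2} must be invoked --- is the crux, and you do not carry it out; instead you defer to the very citation the paper uses. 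That is not in conflict with the paper (the paper defers entirely), but as a standalone proof it leaves the hardest step unestablished, so the proposal should be read as ``cite \cite{donoho2023screenot}'' plus a redundant outlier computation rather than as a proof. One further point worth addressing if you do rely on the citation: the cited statement concerns the product of the two inner products, so you would still need the paper's observation that the proof there controls $\dotp{\widehat u_i,u_j}$ and $\dotp{\widehat v_i,v_j}$ separately; your sketch never touches the right singular vectors $\widehat v_i$ at all.
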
   
    We highlighted that although the statement of the proposition in \cite{donoho2023screenot} states the conclusion for the product of the two inner products, a quick examination of the proof reveals that the conclusion holds for both terms separately.

\end{document}